\titleformat{\subsection}[runin]{\normalfont\bfseries}{\thesubsection}{1em}{}
\titleformat{\subsubsection}[runin]{\normalfont\bfseries}{\thesubsubsection}{1em}{}
\renewcommand{\thesubsubsection}{(\thesubsection.\arabic{subsubsection})}
\DeclareSymbolFont{cyrletters}{OT2}{wncyr}{m}{n}
\DeclareMathSymbol{\Sha}{\mathalpha}{cyrletters}{"58}
\theoremstyle{plain}
\newtheorem{theorem}[subsubsection]{Theorem}
\newtheorem{thm}[]{Theorem}
\newtheorem{corollary}[subsubsection]{Corollary}
\newtheorem{lemma}[subsubsection]{Lemma}
\newtheorem{proposition}[subsubsection]{Proposition}
\newtheorem{AppProp}[subsection]{Proposition}
\newtheorem{AppLem}[subsection]{Lemma}
\newtheorem{defn}[thm]{Definition}
\theoremstyle{definition}
\newtheorem{definition}[subsubsection]{Definition}
\newtheorem{remark}[subsubsection]{Remark}
\newtheorem{set-up}[equation]{Set-up}
\newtheorem{notation}[subsubsection]{Notation}
\newcommand{\IA}{\mathbb{A}}
\newcommand{\IB}{\mathbb{B}}
\newcommand{\IC}{\mathbb{C}}
\newcommand{\ID}{\mathbb{D}}
\newcommand{\IF}{\mathbb{F}}
\newcommand{\IG}{\mathbb{G}}
\newcommand{\IP}{\mathbb{P}}
\newcommand{\IQ}{\mathbb{Q}}
\newcommand{\IR}{\mathbb{R}}
\newcommand{\IZ}{\mathbb{Z}}
\newcommand{\sC}{\mathcal{C}}
\newcommand{\sK}{\mathcal{K}}
\newcommand{\shO}{\mathscr{O}}
\newcommand{\shT}{\mathscr{T}}
\newcommand{\End}{\mathrm{End}}
\newcommand{\tr}{\mathrm{tr}}
\newcommand{\Hom}{\mathrm{Hom}}
\newcommand{\Aut}{\mathrm{Aut}}
\newcommand{\Spin}{\mathrm{Spin}}
\newcommand{\Spec}{\mathrm{Spec}}
\newcommand{\Lie}{\mathrm{Lie}}
\newcommand{\Rep}{\mathrm{Rep}}
\newcommand{\MF}{\mathrm{MF}}
\newcommand{\rank}{\rm rank} 
\newcommand\iso{\,{\cong}\,} 
\newcommand\tensor{{\otimes}}
\newcommand\union{\bigcup} 
\newcommand{\<}{\langle}
\renewcommand{\>}{\rangle}
\newcommand{\into}{\hookrightarrow}
\def\d/{/\mspace{-6.0mu}/}
\def\wt{\widetilde}
\def\what{\widehat}
\newcommand{\sI}{\mathcal{I}}
\newcommand{\w}{\omega}
\newcommand{\cl}{\mathrm{cl}}
\newcommand{\Ohm}{\Omega}
\newcommand{\fm}{\mathfrak{m}}
\newcommand{\fp}{\mathfrak{p}}
\newcommand{\Pic}{\mathrm{Pic}\,}
\newcommand{\sE}{\mathcal{E}}
\newcommand{\sL}{\mathcal{L}}
\newcommand{\sM}{\mathcal{M}}
\newcommand{\sO}{\mathcal{O}}
\newcommand{\sP}{\mathcal{P}}
\newcommand{\sZ}{\mathcal{Z}}
\newcommand{\fk}{\mathfrak{k}}
\newcommand{\Cl}{\mathrm{Cl}}
\newcommand{\Gal}{\mathrm{Gal}}
\newcommand{\NS}{\mathrm{NS}}
\newcommand{\Hdg}{\mathrm{Hdg}}
\newcommand{\Hilb}{\mathrm{Hilb}}
\newcommand{\et}{\mathrm{\acute{e}t}}
\newcommand{\Sh}{\mathrm{Sh}}
\newcommand{\shS}{\mathscr{S}}
\newcommand{\shA}{\mathscr{A}}
\newcommand{\CSpin}{\mathrm{CSpin}}
\newcommand{\SO}{\mathrm{SO}}
\newcommand{\ad}{\mathrm{ad}}
\renewcommand{\sp}{\mathrm{sp}}
\newcommand{\GSp}{\mathrm{GSp}}
\newcommand{\GL}{\mathrm{GL}}
\newcommand{\sA}{\mathcal{A}}
\newcommand{\Br}{\mathrm{Br}}
\newcommand{\Mod}{\mathsf{M}}
\newcommand{\sX}{\mathsf{X}}
\newcommand{\cris}{\mathrm{cris}}
\newcommand{\MT}{\mathrm{MT}}
\newcommand{\bpi}{\mathbf{\pi}}
\newcommand{\dR}{\mathrm{dR}}
\newcommand{\Fil}{\mathrm{Fil}}
\newcommand{\shX}{\mathscr{X}}
\newcommand{\shD}{\mathscr{D}}
\newcommand{\Cris}{\mathrm{Cris}}
\newcommand{\shE}{\mathscr{E}}
\newcommand{\Ch}{\mathrm{Ch}}
\newcommand{\AJ}{\mathrm{AJ}}
\newcommand{\sT}{\mathcal{T}}
\renewcommand{\sK}{\mathsf{K}}
\newcommand{\LEnd}{\mathrm{LEnd}}
\newcommand{\num}{\mathrm{num}}
\renewcommand{\Spec}{\mathrm{Spec\,}}
\newcommand{\BK}{\mathrm{BK}}
\newcommand{\fS}{\mathfrak{S}}
\renewcommand{\ss}{\mathrm{ss}}
\newcommand{\sB}{\mathcal{B}}
\newcommand{\disc}{\mathrm{disc}}
\newcommand{\bH}{\mathbf{H}}
\newcommand{\bP}{\mathbf{P}}
\newcommand{\Mon}{\mathrm{Mon}}
\newcommand{\bL}{\mathbf{L}}
\newcommand{\shY}{\mathscr{Y}}
\renewcommand{\O}{\mathrm{O}}
\newcommand{\CF}{\mathsf{CF}}
\newcommand{\HKn}{\mathrm{K3}^{[n]}}
\newcommand{\Def}{\mathrm{Def}}
\newcommand{\shZ}{\mathscr{Z}}
\newcommand{\uX}{\underline{X}}
\newcommand{\PL}{\mathrm{PL}}
\newcommand{\shP}{\mathscr{P}}
\renewcommand{\sX}{\mathcal{X}}
\newcommand{\sS}{\mathcal{S}}
\newcommand{\fM}{\mathfrak{M}}
\newcommand{\sR}{\mathscr{R}}
\newcommand{\bd}{\boldsymbol{\delta}}
\renewcommand{\bpi}{\boldsymbol{\pi}}
\newcommand{\loc}{\mathrm{loc}}
\newcommand{\fl}{\mathrm{fl}}
\newcommand{\sY}{\mathcal{Y}}
\newcommand{\bxi}{\boldsymbol{\xi}}
\newcommand{\bzeta}{\boldsymbol{\zeta}}
\newcommand{\conj}{\mathrm{conj}}
\newcommand{\sQ}{\mathcal{Q}}
\newcommand{\shM}{\mathscr{M}}
\newcommand{\rat}{\iso_{\mathrm{bir}}}
\renewcommand{\H}{\mathrm{H}}
\renewcommand{\P}{\mathrm{P}}
\newcommand{\bU}{\mathbf{U}}
\newcommand{\sto}{\stackrel{\sim}{\to}}
\title{{\large{\textbf{On Irreducible Symplectic Varieties of $\HKn$-type in Positive Characteristic}}}
\author{\normalsize{Ziquan Yang}}
\date{\vspace{-5ex}}}
\begin{document}

\maketitle

\begin{abstract}
    We show that there is a good notion of irreducible sympelectic varieties of $\HKn$-type over an arbitrary field of characteristic zero or $p > n + 1$. Then we construct mixed characteristic moduli spaces for these varieties. Our main result is a generalization of Ogus' crystalline Torelli theorem for supersingular K3 surfaces. For applications, we answer a slight variant of a question asked by F. Charles on moduli spaces of sheaves on K3 surfaces and give a crystalline Torelli theorem for supersingular cubic fourfolds. 
\end{abstract}

\tableofcontents

\section{Introduction}
A complex \textit{irreducible symplectic} manifold, or a \textit{hyperk\"ahler} manifold, is a compact simply connected K\"ahler manifold $M$ whose $\H^0(M, \Ohm_M^2)$ is generated by a nowhere degenerate $2$-form. Similarly, a smooth projective variety $X$ over $\IC$, or rather over any algebraically closed field of characteristic zero, is an irreducible symplectic variety if its \'etale fundamental group is trivial, and its $\H^0(X, \Ohm_X^2)$ satisfies the same condition. One of the earliest known classes of irreducible symplectic manifolds are those of $\HKn$-type, i.e., those obtained as deformations of the Hilbert scheme of $n$ points on a K3 surface. 

The first goal of our paper is to show that there is a good notion of $\HKn$-type varieties over a general field $k$, when $\mathrm{char\,} k = 0$ or $p > n + 1$. In the following, whenever $k$ denotes a perfect field of characteristic $p >0$, we write the associated ring of Witt vectors $W(k)$ as $W$. For any projective variety $Y$, $Y^{[n]}$ denotes the Hilbert scheme of $n$-points on $Y$. For any field $k$, $k = \bar{k}$ means that $k$ is algebraically closed. 
\begin{defn}
\label{def} 
Let $X$ be a smooth projective variety over a field $k$. 
\begin{enumerate}[label=\upshape{(\alph*)}]
    \item If $k = \bar{k}$ and $\mathrm{char\,} k = 0$, $X$ is said to be of $\HKn$-type if for some connected variety $S$ over $k$ there exists a smooth projective family $\sX \to S$ of irreducible symplectic varieties over $k$ and points $s, s' \in S(k)$ such that $\sX_s \iso X$ and $\sX_{s'}$ is birational to $Y^{[n]}$ for some K3 surface $Y$ over $k$. 
    \item If $\bar{k} = k$ and $\mathrm{char\,} k = p$, we say $X$ is a $\HKn$-type variety if for some finite flat extension $V$ of $W$, there exists a smooth projective scheme $X_V$ over $V$ lifting $X$ such that (i) $X_V$ carries a lifting of a primitive polarization on $X$ and (ii) some geometric generic fiber of $X_V$ is of $\HKn$-type and has the same Hodge numbers as $X$. 
    \item If $k$ is not algebraically closed, $X$ is said to be of $\HKn$-type if for some algebraic closure $\bar{k}$ of $k$, the base change $X_{\bar{k}}$ is of $\HKn$-type. 
\end{enumerate}
\end{defn}

If $k = \IC$, $X$ is of $\HKn$-type if and only if its underlying manifold is of $\HKn$-type (see \ref{defcompC}). Part (b) of our definition is motivated by Deligne's result \cite[Cor.\,\,1.7]{Deligne2} that every (polarized) K3 surface in characteristic $p$ lifts to characteristic zero. Therefore, when $n = 1$, our definition is equivalent to the usual definition. We will verify that when $p > n + 1$, part (b) is independent of the choices involved: 
\begin{thm}
\label{transmit}
Let $X$ be a $\HKn$-type variety over an algebraically closed field $k$ with $\mathrm{char\,} k = p > n + 1$.
\begin{enumerate}[label=\upshape{(\alph*)}]
    \item For any line bundle $\zeta \in \Pic(X)$, there exists a finite flat extension $W'$ of $W$ together with a formal deformation $\what{X}_{W'} \to \mathrm{Spf\,} W'$ of $X$ such that $\zeta$ extends to $\what{X}_{W'}$.
    \item If $X_{V'}$ is another deformation of $X$ over a finite flat extension $V'$ of $W$ such that some primitive polarization on $X$ extends to $X_{V'}$, then every geometric generic fiber of $X_{V'}$ is of $\HKn$-type. 
\end{enumerate}
\end{thm}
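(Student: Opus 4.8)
The plan is to reduce both parts to the deformation theory of $X$ and, for (b), to a crystalline period morphism. Using the structural facts about $\HKn$-type varieties in characteristic $p>n+1$ (established earlier) — that $X$ carries a nowhere-degenerate $\sigma\in H^0(X,\Omega^2_X)$ spanning that one-dimensional space and trivializing $\omega_X$, that $X$ has the Hodge and ($\ell$- and crystalline-)Betti numbers of $\HKn$, and that $H^2_{\cris}(X/W)$ is free with the Beauville--Bogomolov form — one gets, via the symplectic identification $T_X\cong\Omega^1_X$ and $h^{1,2}(X)=0$, that $H^2(X,T_X)\cong H^2(X,\Omega^1_X)=0$; hence $\Def(X/W)$ is formally smooth over $W$, say $\Def(X/W)=\mathrm{Spf}\,R$ with $R\cong W[[t_1,\dots,t_r]]$, $r=b_2-2$, and universal formal deformation $\widehat{\mathcal X}\to\mathrm{Spf}\,R$. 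The hypothesis $p>n+1$ is used twice: to secure these cohomological facts, and — decisively for (b) — to force $p\nmid 2(n-1)$, so that the Beauville--Bogomolov lattice $\Lambda$ of $\HKn$ is self-dual at $p$.

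For part (a): since $H^1(X,\shO_X)=0$ persists along deformations, $\zeta$ extends uniquely if at all, and the obstruction to extending it along a square-zero thickening lies in the one-dimensional space $H^2(X,\shO_X)$. So the sub-functor of $\Def(X/W)$ of deformations admitting an extension of $\zeta$ is pro-represented by $R_\zeta=R/(f_\zeta)$ for a single $f_\zeta\in R$, and it contains the closed point; pulling $\widehat{\mathcal X}$ (which over $\mathrm{Spf}\,R_\zeta$ carries a tautological extension of $\zeta$, by the Berthelot--Ogus lifting criterion for line bundles) back along a $W'$-point of $\mathrm{Spf}\,R_\zeta$ over the closed point yields $\widehat X_{W'}$. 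To find such a point: if $c_1^{\dR}(\zeta)=0$ in $H^2_{\dR}(X/k)$ then crystalline deformation theory gives $f_\zeta=0$ and one takes any $W$-point of $\mathrm{Spf}\,R$; otherwise $f_\zeta\notin pR$, so $p$ is a nonzerodivisor on $R_\zeta$ (a quotient of a regular local ring by a non-$p$-divisible element), and after a Weierstrass preparation and setting the complementary parameters to zero one extracts from $R_\zeta$ a quotient which is a discrete valuation ring $W'$ finite flat over $W$. This settles (a); it is the easier half.

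For part (b): let $X_V$ be a lift as in Definition~\ref{def}(b) (primitive polarization $\lambda$ extending, $X_{V,\bar\eta}$ of $\HKn$-type) and $X_{V'}$ the given lift (some primitive polarization $\mu$ extending). First, $X_{V',\bar\eta}$ has the Hodge numbers of $\HKn$: upper semicontinuity gives $h^{p,q}(X_{V',\bar\eta})\le h^{p,q}(X)$ termwise, smooth proper base change gives $b_m(X_{V',\bar\eta})=b_m(X)$, and summing forces equality; in particular $h^{1,0}=0$, $h^{0,2}=1$, and by comparing $\ell$-adic and crystalline cohomology with $X$'s, $H^2(X_{V',\bar\eta},\IZ)$ is torsion-free and isometric to $\Lambda$. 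Next, $h^{1,2}(X)=0$ makes the obstruction to extending $\sigma$ along square-zero thickenings (living in $H^1(X,\Omega^2_X)$) vanish, so $\sigma$ extends to a relative $2$-form $\widetilde\sigma$ on the algebraization $\mathcal X_\mu\to\mathrm{Spec}\,R_\mu$ of $\widehat{\mathcal X}$ over the $\mu$-extending locus $R_\mu=R/(f_\mu)$ — here $\mu$ supplies the relative ampleness needed for Grothendieck algebraization. The zero locus of $\widetilde\sigma^{\wedge n}$ is closed in $\mathcal X_\mu$ and misses the special fibre $X$ (where $\sigma^{\wedge n}$ is nowhere zero), so its image in the local scheme $\mathrm{Spec}\,R_\mu$ is a closed set avoiding the closed point, hence empty; thus $\widetilde\sigma^{\wedge n}$ is nowhere zero, and $X_{V',\bar\eta}$ carries a non-degenerate holomorphic $2$-form trivializing its canonical bundle. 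After base change to $\IC$ the Beauville--Bogomolov decomposition applies and, combined with $h^{1,0}=0$, $h^{0,2}=1$ and non-degeneracy of the form (which kills the Calabi--Yau factors), confines $X_{V',\bar\eta}$ to a free quotient of a product $\IC^g\times\prod_j S_j$ of a vector space and irreducible symplectic manifolds.

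The remaining task is the heart of the matter: to show that $X_{V',\bar\eta}$ is itself an irreducible symplectic variety (the torus part and the deck group above being trivial) and is of $\HKn$-type. The obstacle is genuine: the lattice $\Lambda$ does not pin down the deformation type, and the only a priori link between the two lifts is their common reduction $X$ in characteristic $p$, so no connecting family over a characteristic-$0$ base is available by naïve interpolation. The route I would take is the crystalline period morphism: via the Kuga--Satake construction one attaches to $X$ (with a polarization) a point $\bar x$ of the integral canonical model $\sM$ over $W$ of the orthogonal Shimura variety of $\Lambda$, which is smooth over $W$ precisely because $p\nmid 2(n-1)$; a crystalline local Torelli theorem identifies $\Def(X/W)$ with the completion of $\sM$ at $\bar x$, so both $X_V$ and $X_{V'}$ give points of the residue disc of $\bar x$. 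The locus of $\sM$ over which the universal motive is realized by a polarized $\HKn$-variety is open — its complement in the generic fibre is a union of Heegner divisors — and contains $\bar x$ because $X$ is such a variety over $k$ with $\mu$ ample and primitive; hence it contains the whole residue disc of $\bar x$, so $X_{V',\bar\eta}$, whose period lies in that disc, is of $\HKn$-type. Carrying this out rigorously — constructing the period morphism integrally, proving crystalline local Torelli (this is where part (a) enters, to control the Picard lattice along the deformation), and handling the supersingular points of $\sM$ via a generalization of Ogus' theory of supersingular K3 crystals to $\HKn$-crystals — is where I expect the bulk of the work, and the real difficulty, to lie.
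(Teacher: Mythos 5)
Your part (a) rests on an assertion that is precisely the hard point. The relevant dichotomy is not $c_{1,\dR}(\zeta)=0$ versus $c_{1,\dR}(\zeta)\neq 0$ (for $\zeta$ primitive the first case never occurs, by torsion-freeness of the cokernel of $c_1$), but whether $c_{1,\dR}(\zeta)\in\Fil^2\H^2_\dR(X/k)$. If $c_{1,\dR}(\zeta)\notin\Fil^2$, then $d\log\zeta\neq 0$ and the perfect pairing $\H^1(\Ohm^1_X)\times\H^1(T_X)\to\H^2(\sO_X)$ gives smoothness of $\Def(X;\zeta)$ — that is the easy case. But if $c_{1,\dR}(\zeta)\in\Fil^2$ (which happens when $X$ is supersingular), every equicharacteristic first-order deformation automatically carries $\zeta$: isotropy of the lifted $\Fil^2$ already forces the orthogonality condition of \ref{loc}(b), so $f_\zeta\in\mathfrak{m}^2+pR$ and the tangent-space reasoning gives nothing. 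Your claim ``otherwise $f_\zeta\notin pR$'' is therefore unproved exactly where it matters; the paper establishes it by the local-model computation of \ref{flatloc}(b) (adapting \cite[Prop.~6.20]{CSpin} via the Langer--Zink pairing of \ref{loc}), which identifies the local ring with $W[\![x_1,\dots,x_{b-2}]\!]/(x_1^2+\cdots+x_{h^{1,1}}^2+p)$ and in particular gives flatness over $W$, hence the $W'$-point you need.

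For part (b) you concede that the decisive step is not carried out, and the route you sketch would not close the gap as stated. First, the two lifts extend \emph{different} primitive polarizations, so they do not sit in a single polarized period space attached to one pointed lattice $(\Lambda,\lambda)$; your ``both give points of the residue disc of $\bar x$'' has no meaning until the polarizations are bridged. Second, even granting an integral period point in an ``$\HKn$ locus,'' knowing that the period of $X_{V',\bar\eta}$ agrees with that of some $\HKn$ variety does not make $X_{V',\bar\eta}$ itself of $\HKn$-type without a Torelli-type or connected-family argument, and the openness of such a locus on the integral model is not available. The paper's actual mechanism is different: it proves that ``being of $\HKn$-type'' propagates in connected characteristic-zero families (\ref{defstrengthen}, using Mongardi--Pacienza density and Huybrechts) and across characteristic $p$ via Hilbert schemes whose relevant components are flat over $W$ with reduced special fiber, hence have connected generic fiber (\ref{Hilbconnected}, \ref{charpspread}, both resting again on \ref{flatloc}(b)); and it bridges the two polarizations by lifting them \emph{simultaneously} to a finite flat extension of $W$ (\ref{liftpair}), which in the supersingular case requires the dimension bound $\dim S_0^{\ss}\le 10$ (\ref{ssdim}), itself using the Tate conjecture for supersingular $X$ through the Kuga--Satake period morphism. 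None of these ingredients — the simultaneous lifting of line bundles, the supersingular-locus bound, or the Hilbert-scheme connectedness over $W$ — appears in your proposal, so the passage from the definitional lift to the arbitrary lift $X_{V'}$ is missing.
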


Under mild restrictions, the following constructions provide examples of $\HKn$-type varieties (see \S5.): (i) The Hilbert scheme of $n$ points on a K3 surface. (ii) The moduli space stable sheaves on a K3 surface with a fixed Mukai vector. (iii) The Fano variety of lines on a cubic fourfold. Deformations of $\HKn$-type varieties in reasonable families are still $\HKn$-type varieties in an arithmetic setting (see \S2.3). 

Next, we show that $\HKn$-type varieties have a good mixed-characteristic moduli theory. We consider the functor $\Mod_{n, d}$ which sends each scheme $S$ over $\IZ_{(p)}$ to the groupoid of families of primitively polarized $\HKn$-type varieties of degree $d$ over $S$. Our result is:
\begin{thm}
\label{moduli}
Assume that $p > n + 1$. $\Mod_{n, d}$ is a Deligne-Mumford stack of finite type over $\IZ_{(p)}$. Moreover, $\Mod_{n, d}$ has a finite \'etale cover which is representable by a regular quasi-projective scheme. 
\end{thm}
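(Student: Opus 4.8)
The plan is to construct a period map from $\Mod_{n,d}$ to a suitable orthogonal Shimura variety (or rather an integral model thereof) and to deduce the representability and finiteness statements from the known structure of that target, together with a local Torelli / infinitesimal-Torelli input coming from the deformation theory of $\HKn$-type varieties. First I would use part (b) of Definition \ref{def} together with Theorem \ref{transmit} to see that $\Mod_{n,d}$ is well-defined as a fibered category over $\Spec \IZ_{(p)}$: an object over $S$ is a smooth projective family $\sX \to S$ with a relative primitive polarization $\lambda$ of degree $d$ whose geometric fibers are $\HKn$-type varieties. The condition "$\HKn$-type" is by Theorem \ref{transmit}(b) independent of auxiliary lifting data, so it is a reasonable condition to impose fiberwise and it is open and closed in families (deformations of $\HKn$-type varieties in polarized families stay $\HKn$-type, as asserted in \S2.3 of the excerpt).

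Next I would establish that $\Mod_{n,d}$ is a Deligne--Mumford stack of finite type. Boundedness: over a field of characteristic $0$ this follows from the theory of polarized hyperkähler manifolds (finiteness of deformation types with bounded Beauville--Bogomolov degree), and in mixed characteristic one transports this via the lifting built into Definition \ref{def}(b) — every $\HKn$-type variety in characteristic $p > n+1$ lifts a polarization to $W'$, and the generic fiber lies in one of finitely many deformation families; Matsusaka-type big-line-bundle bounds then give an embedding into a fixed projective space $\IP^N$ and realize $\Mod_{n,d}$ as a quotient stack $[\Hilb/\PGL_{N+1}]$ of a locally closed subscheme of a Hilbert scheme. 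Separatedness and properness of the diagonal reduce to the statement that a $\HKn$-type variety has no infinitesimal automorphisms fixing the polarization and finite automorphism group — this uses $\H^0(X,\sT_X) = \H^0(X,\Ohm_X^1)^{\vee} = 0$ (the Hodge numbers are those of $\HKn$ by the definition, and $b_1 = 0$), valid since $p > n+1$ keeps Hodge-de Rham degeneration and the relevant Hodge symmetries. This gives the DM and finite-type claims.

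For the finite étale cover by a regular quasi-projective scheme, the key tool is the second cohomology $\P^2(X) := \H^2$ together with its Beauville--Bogomolov form, which for $\HKn$-type varieties carries a lattice structure of signature $(3, b_2-3)$ (characteristic $0$) and an analogous $F$-crystal / $\ell$-adic structure in characteristic $p$. One rigidifies $\Mod_{n,d}$ by adding a full (prime-to-$p$) level structure on the primitive part of $\P^2$, obtaining a scheme $\Mod_{n,d}^{\mathrm{lev}} \to \Mod_{n,d}$ that is finite étale (the orthogonal group acts faithfully once the level is large, by the same no-automorphisms input). Then the period morphism $\Mod_{n,d}^{\mathrm{lev}} \to \shS$ to the integral canonical model of the relevant $\SO(2, b_2^{\mathrm{prim}}-1)$-Shimura variety — which is smooth and quasi-projective over $\IZ_{(p)}$ by the work of Kisin/Madapusi Pera, exactly as in the K3 case $n=1$ — is étale by infinitesimal Torelli for $\HKn$-type varieties (the Kodaira--Spencer map identifies first-order deformations with the tangent space to the period domain, a statement one reduces to the K3 case via the description of the deformation functor, valid for $p > n+1$). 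An étale morphism from a DM stack to a regular quasi-projective scheme forces the source to be a regular quasi-projective scheme (or at least an algebraic space that one checks is a scheme using quasi-projectivity of the target and the level structure), giving the desired cover.

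The main obstacle I expect is the infinitesimal Torelli statement in mixed characteristic: one must show the period map is étale, i.e. that the Kodaira--Spencer map of a polarized $\HKn$-family into the crystalline/de Rham period domain is an isomorphism, and that the integral model $\shS$ receiving it is the right smooth object. This requires controlling $\H^2_{\dR}$ of $\HKn$-type varieties and its variational structure well enough to know it behaves like that of a K3 surface (Kuga--Satake-theoretically), which is where the hypothesis $p > n+1$ and the lifting in Definition \ref{def}(b) are essential — one bootstraps from the characteristic-zero local Torelli theorem for hyperkählers (Beauville) across the lift, checking that reduction mod $p$ does not destroy the isomorphism. Boundedness in mixed characteristic (making "finite type" rigorous without already knowing the moduli space) is a secondary technical point, handled by the lifting argument plus effective very-ampleness bounds.
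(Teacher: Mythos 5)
The overall architecture you propose (boundedness, DM-ness via Hilbert schemes, then a finite \'etale rigidification mapping \'etale-ly to an orthogonal Shimura variety) matches the paper's, but your treatment of the single genuinely hard point — boundedness in characteristic $p$ — has a gap. You claim that one "transports" the K\'ollar--Matsusaka bounds across the lift of Definition~\ref{def}(b): the lift's generic fiber lies in one of finitely many complex deformation families, so a uniform $m$ makes $m\xi$ very ample there. But very ampleness and the vanishing of higher cohomology do \emph{not} descend from the generic fiber of a lift to its special fiber (semicontinuity goes the wrong way: $h^i$ can jump, and embeddings of the generic fiber do not specialize to embeddings of the closed fiber), and no effective Matsusaka-type theorem is available in positive characteristic for these varieties. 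This is precisely why the question of Charles addressed in~\ref{ChQues} was open, and why the paper does not argue as you do. Instead, the paper's Proposition~\ref{prop: bounded} fixes a complex pair $(Y,\zeta)$ and shows that \emph{all} char-$p$ pairs $(X,\xi)$ liftable into that deformation class occur as fibers of finitely many universal deformation families: one maps each such family by a period morphism into a single $\shS^\ad_\sK(L)_W$, uses quasi-compactness of the open image, and identifies fibers over the same period point by combining Verbitsky's Torelli theorem, Markman's computation of $\Mon^2$ (Theorem~\ref{Markman}, which is also why level/orientation data must be tracked), and Matsusaka--Mumford specialization; boundedness of $\HKn$-type varieties is thus reduced to that of abelian varieties, and Theorem~\ref{bounded} then follows together with the spreading-out lemma~\ref{charpspread}. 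Your proposal is missing this idea, and the step it replaces would fail as stated.

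Two secondary points. First, your assertion that a large prime-to-$p$ level structure kills automorphisms "by the same no-automorphisms input" is not enough: one needs Proposition~\ref{killauto}(c), i.e.\ that an automorphism fixing a polarization and acting trivially on $\P^2_\et(X,\what{\IZ}^p)$ mod $m\ge 3$ is the identity; in characteristic $p$ its proof requires the injectivity of $\Aut(X)\to\O(\H^2)$, which rests on lifting automorphisms of non-supersingular varieties to characteristic zero (Proposition~\ref{liftauto}) and on the supersingular Tate-type statement~\ref{tateImprove}, plus a Serre-style eigenvalue argument. Second, the finite \'etale cover is only \emph{regular}, not smooth, over $\IZ_{(p)}$: when $p$ divides the Beauville--Bogomolov square of the polarization the local deformation ring has the form~(\ref{+p}), so your appeal to smoothness of the source (and of the integral model) should be weakened to regularity, exactly as in the statement of the theorem and in Proposition~\ref{flatloc}.
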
 

More precisely, when equipped with orientations and appropriate level structures, primitively polarized $\HKn$-type varieties of degree $d$ have a fine moduli space which is representable by a scheme and admits a finite \'etale forgetful morphism to $\Mod_{n, d}$ (see \ref{fineMod} for details). The $n = 1$ case was treated thoroughly in \cite{Rizov1}, \cite{Keerthi} and \cite{Maulik}. When we take $n > 1$, the above theorem in particular addresses a question asked by F. Charles in \cite{Charles2} on the boundedness of moduli spaces of stable sheaves on K3 surfaces (see \ref{ChQues}).

Thanks to the foundational work by Verbitsky \cite{Verbitsky}, $\HKn$-type varieties over $\IC$, as hyperk\"ahler manifolds, satisfy a global Torelli theorem. In positive characteristics, Ogus proved a crystalline Torelli theorem \cite[Thm II]{Ogus2} for supersingular K3 surfaces. We generalize Ogus' theorem to supersingular $\HKn$-type varieties: 

\begin{thm} 
\label{crysTor}
Assume $p > n + 1$ and let $k$ be an algebraically closed field of characteristic $p$. Let $X$ and $X'$ be two supersingular $\HKn$-type varieties over $k$ and let $\psi : \NS(X) \stackrel{\sim}{\to} \NS(X')$ be an isomorphism which respects the top intersection numbers and sends some ample class to another ample class. Then $\psi$ is induced by an isomorphism $f : X' \stackrel{\sim}{\to} X$ if and only if $\psi$ fits into commutative diagrams
    \begin{center}
        \begin{tikzcd}
        \NS(X) \arrow{r}{\psi} \arrow{d}{c_1} & \NS(X') \arrow{d}{c_1} \\
        \H^2_\cris(X/W) \arrow{r}{\psi_\cris} & \H^2_\cris(X'/W)
        \end{tikzcd}
        \begin{tikzcd}
        \NS(X) \arrow{r}{\psi} \arrow{d}{c_1} & \NS(X') \arrow{d}{c_1} \\
        \H^2_\et(X, \what{\IZ}^p) \arrow{r}{\psi_\et} & \H^2_\et(X', \what{\IZ}^p)
        \end{tikzcd}
    \end{center}
    with an isomorphism $\psi_\cris$ of $W$-modules and a primitively polarizable \'etale parallel transport operator $\psi_\et$. If $n - 1 = 0, 1,$ or a prime power, then $\psi$ is induced by an isomorphism $f$ if and only if $\psi$ fits into a diagram as the one on the left with some isomorphism $\psi_\cris$. 
\end{thm}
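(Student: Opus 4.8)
If $\psi=f^\ast$ for an isomorphism $f\colon X'\xrightarrow{\sim}X$, put $\psi_\cris=\H^2_\cris(f/W)$ and $\psi_\et=\H^2_\et(f,\what{\IZ}^p)$; these are isomorphisms of $W$-, resp.\ $\what{\IZ}^p$-modules, and both squares commute because cycle class maps are compatible with pullback. Here $\psi_\et$ is an \'etale parallel transport operator since an isomorphism is a morphism in the constant family, and it is primitively polarizable because $\psi$ carries an ample class --- hence a positive multiple of a primitive polarization --- to an ample class. So all the content is in the converse, together with the assertion that the crystalline datum alone suffices when $n-1\in\{0,1\}$ or $n-1$ is a prime power.

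\textbf{Converse: reduction to Torelli for a period morphism.} Assume $\psi$ fits in both diagrams. After rescaling we may assume $\psi$ sends a primitive polarization $\xi$ on $X$ to one $\xi'$ on $X'$ of the same degree $d$, so that $(X,\xi)$ and $(X',\xi')$ give $k$-points of $\Mod_{n,d}$ and, by Theorem~\ref{moduli}, of a regular finite \'etale cover $\wt{\Mod}\to\Mod_{n,d}$ obtained by adjoining an orientation and a prime-to-$p$ level structure. On $\wt{\Mod}$ there is a period morphism to an integral model of the GSpin Shimura variety attached to the $\HKn$-lattice $\Lambda=U^{3}\oplus E_8(-1)^2\oplus\langle -2(n-1)\rangle$, and on the supersingular locus $\wt{\Mod}^{\ss}$ a crystalline refinement valued in Ogus' space of supersingular $\Lambda$-crystals of fixed Artin invariant. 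Using supersingularity (so that $\NS(X)\otimes\IQ$ spans $\H^2_\cris(X/W)\otimes\IQ$), the crystalline hypothesis on $\psi$ says precisely that, for appropriate markings, $X$ and $X'$ lie in one fibre of the crystalline period morphism, and the \'etale hypothesis says these markings can be chosen prime-to-$p$ compatibly with $\psi$. So it is enough to show that the crystalline period morphism on $\wt{\Mod}^{\ss}$ is \'etale, that its geometric fibres become single points once the prime-to-$p$ level data is recorded, and that the ample-cone hypothesis on $\psi$ pins down the birational model.

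\textbf{Proving the period Torelli.} \'Etaleness is deformation theory: $\HKn$-type varieties are unobstructed and their first-order deformations are governed by the Hodge, resp.\ crystalline, filtration on $\H^2$ --- the Beauville--Bogomolov picture, moved to characteristic $p$ using $p>n+1$ and Theorem~\ref{transmit} --- and the Kodaira--Spencer map is identified with that of the universal $\Lambda$-crystal, giving local crystalline Torelli. For global injectivity, the plan is to reduce to Ogus' theorem \cite[Thm II]{Ogus2}: on a Zariski-dense subset of each fibre of the crystalline period morphism one realizes the supersingular $\HKn$-type variety as a moduli space of (possibly twisted) sheaves on a supersingular K3 surface with controlled Artin invariant and Mukai vector, where Ogus' theorem upgrades an isometry of N\'eron--Severi lattices respecting crystalline periods to an isomorphism; \'etaleness of the period morphism then propagates this over the whole fibre. (Alternatively, factor through the supersingular locus of the GSpin Shimura variety and invoke the Tate conjecture, valid because the Kuga--Satake abelian variety is supersingular, to pass from a crystalline isomorphism to an isogeny and then to an isomorphism of $\HKn$-type varieties after the usual polarization bookkeeping.) An isomorphism obtained this way is a priori determined only up to the monodromy group acting through reflections in $(-2)$-classes and in walls of the ample cone; the structure of the movable and ample cones of $\HKn$-type varieties, lifted from characteristic $0$ in the style of Markman--Mongardi, together with the requirement that $\psi$ send an ample class to an ample class, selects the unique biregular $f$ inducing $\psi$. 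The main obstacle is exactly this global injectivity step --- carrying Ogus' theorem across the correspondence with moduli of sheaves while keeping the primitive polarization, the Artin invariant, and the birational model under control.

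\textbf{The prime-power refinement.} Since $p\nmid 2(n-1)$ under our hypotheses, the crystalline period records nothing about the discriminant group $\Lambda^{\vee}/\Lambda\cong\IZ/2(n-1)$, whereas the \'etale parallel transport operators are exactly the orientation-preserving isometries acting on that group through $\{\pm1\}$ (Markman's computation of the $\HKn$-monodromy, transported to characteristic $p$). A direct computation shows that the automorphism group of the discriminant form of $\Lambda$ equals $\{\pm1\}$ precisely when $n-1\in\{0,1\}$ or $n-1$ is a prime power; in those cases every isometry of $\NS(X)$ sending an ample class to an ample class is automatically a parallel transport operator, so the crystalline diagram already forces the \'etale one, which may then be dropped. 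For the remaining $n$ the discriminant form has extra automorphisms, realized by crystalline-compatible but non-geometric isometries of $\NS(X)$, and the condition on $\psi_\et$ cannot be omitted. The last piece is the bookkeeping matching ``primitively polarizable parallel transport operator'' with ``isometry acting on the discriminant form in the distinguished $\{\pm1\}$-coset.''
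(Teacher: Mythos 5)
Your necessity direction and the overall shape ("turn $\psi$ into a period-level statement and invoke a Torelli theorem") are fine, but the core of your converse is a genuine gap. You propose to prove a crystalline Torelli theorem for the supersingular period morphism directly in characteristic $p$, with the global injectivity step reduced to Ogus' theorem by realizing supersingular $\HKn$-type varieties as moduli of sheaves on supersingular K3 surfaces (or, in the parenthetical variant, by passing from a crystalline isomorphism to an isogeny of Kuga--Satake abelian varieties and then to an isomorphism of the varieties "after the usual polarization bookkeeping"). Neither route is available as stated: supersingular $\HKn$-type varieties in the sense of this paper are not known to be (even birationally) moduli spaces of sheaves on K3 surfaces, and even if they were, Ogus' theorem for the underlying K3 does not formally upgrade to a Torelli statement for the moduli space inducing a \emph{prescribed} isometry of $\H^2$ --- already over $\IC$ the $\HKn$ Torelli theorem is not a consequence of the K3 one but requires Verbitsky's theorem together with Markman's monodromy computation. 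Likewise there is no "usual" passage from a CSpin-isogeny of Kuga--Satake abelian varieties to an isomorphism of the hyperk\"ahler varieties in characteristic $p$; that passage \emph{is} the theorem. The paper instead lifts: it chooses $\zeta$ with $p\nmid q_X(\zeta)$ (so the relevant lattice is self-dual at $p$), lifts $(X,\xi,\zeta)$ via \ref{liftpair}, transports the lift to $X'$ via \ref{locTor}/\ref{lem: induce lifting}, and in \ref{AHLift} uses the single supersingular isogeny class on the GSpin Shimura variety, \ref{relpos}, lifting of the isogeny by Berthelot--Ogus, and integral $p$-adic Hodge theory (Breuil/BMS) to produce an \emph{integral} Hodge isometry $\psi_B$ on $\H^2(\cdot(\IC),\IZ)$ --- a step your sketch silently assumes when you treat the crystalline datum as already giving a Betti-integral isometry.

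Your use of the \'etale hypothesis is also misplaced. In the paper $\psi_\et$ is not a statement about "markings chosen prime-to-$p$ compatibly" inside a fibre of a period map; it is the datum of an \'etale path in a polarizable family, which (after mapping the family to the moduli space $\wt{\Mod}^{(\Lambda,\lambda)}_{n,d,\sK^p}$ and using the connectedness lemma \ref{Sasha}) is interpolated with a topological path over $\IC$, so that $\gamma\circ\psi_B$ lands in the image of the \'etale monodromy, acts trivially on the discriminant because of the $\sK^{p,\ad}$-level structure, and hence lies in $\Mon^2$ by Markman; only then does Verbitsky apply, and one specializes back by Matsusaka--Mumford. Your prime-power case is in the right spirit (it matches the paper's use of $\Mon^2=\O_+(\Lambda)$), but your criterion is stated as "$\O(\disc\Lambda)=\{\pm1\}$ exactly when $n-1$ is $0$, $1$ or a prime power," which is not the correct group-theoretic statement (what is needed, and what Markman proves, is $\Mon^2(X)=\O_+(\Lambda)$ in those cases), and your claimed converse ("the condition on $\psi_\et$ cannot be omitted" otherwise) is not asserted or needed. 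Finally, you quietly use the divisorial Tate conjecture for these varieties ("$\NS(X)\otimes\IQ$ spans $\H^2_\cris$"); in the paper this is itself a consequence of the Kuga--Satake machinery (\ref{tateImprove}), not an input one may assume.
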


Clearly this requires some explanation. In general, if $k$ is a perfect field of characteristic $p > 0$, a $\HKn$-type variety $X$ over $k$ is said to be \textit{supersingular} if the Newton polygon of $\H^2_\cris(X/W)$ is a straight line. Just as Ogus' theorem was modeled on the classical Hodge theoretic Torelli theorem for K3 surfaces, the above theorem is modeled on Verbitsky's theorem (\cite[Thm~1.18]{Verbitsky}, see also \cite[Thm~1.3]{MarkmanSurvey}): 

\begin{thm}
\label{Verbitsky}
\emph{(Verbitsky)} Let $X, X'$ be complex irreducible symplectic varieties. An isomorphism of Hodge structures $\psi : \H^2(X, \IZ) \to \H^2(X', \IZ)$ is induced by an isomorphism $f : X' \stackrel{\sim}{\to} X$ if and only if $\psi$ is a polarizable parallel transport operator.
\end{thm}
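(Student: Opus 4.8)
The plan is to reduce this to the Hodge-theoretic global Torelli theorem at the level of the Teichm\"uller space, and then carry out the bookkeeping with monodromy groups and K\"ahler cones. The ``only if'' direction is the easy one: an isomorphism $f : X' \stackrel{\sim}{\to} X$ induces an isomorphism of Hodge structures $f^* : \H^2(X, \IZ) \to \H^2(X', \IZ)$; this is a parallel transport operator because the constant family $X' \times \IC \to \IC$, with the fibre over $1$ relabelled via $f$, realizes $f^*$ by parallel transport along a path, and it is polarizable since $f^*$ sends an ample (K\"ahler) class to an ample one. So all the content lies in the ``if'' direction.

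I would begin by recalling the set-up of Verbitsky's argument. Fix a lattice $\Lambda \iso \H^2(X, \IZ)$ with its Beauville--Bogomolov--Fujiki form $q$, let $\mathrm{Teich}$ denote the marked Teichm\"uller space of the common deformation type, and let $\mathcal{D} = \{ [\ell] \in \IP(\Lambda_\IC) : q(\ell, \ell) = 0,\, q(\ell, \bar\ell) > 0 \}$ be the period domain, with period map $P : \mathrm{Teich} \to \mathcal{D}$. The theorem rests on four facts, which I would review and assemble: (i) local Torelli --- Bogomolov--Tian--Todorov unobstructedness --- so that $P$ is a local biholomorphism and $\mathrm{Teich}$ is a (possibly non-Hausdorff) complex manifold; (ii) surjectivity of $P$ on each connected component (Huybrechts); (iii) two points of $\mathrm{Teich}$ with equal period that are not separated by disjoint open sets correspond to birational (bimeromorphic) manifolds, and a birational map between two irreducible symplectic varieties induces a parallel transport operator (Huybrechts); and (iv) the crux: the map $P_b : \mathrm{Teich}^0_b \to \mathcal{D}$ induced on the Hausdorff reduction of a connected component $\mathrm{Teich}^0$ is a covering, hence an isomorphism because $\mathcal{D}$ is simply connected. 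Fact (iv) is where Verbitsky's genuine input lies: combining Markman's description of the monodromy group $\mathrm{Mon}^2 \subset \O(\Lambda)$ as a finite-index (arithmetic) subgroup with an ergodicity statement --- a very general complex structure in $\mathrm{Teich}^0$ has trivial Picard group and $\mathrm{Mon}^2$ acts ergodically on $\mathcal{D}$ --- one controls the global geometry of $P$ and excludes the a priori possibility of infinite or accumulating fibres.

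Granting this Hodge-theoretic Torelli theorem, I would finish the ``if'' direction as follows. A parallel transport operator preserves $q$ (the Fujiki relation pins $q$ down, up to scale, from the cup product, and this is transported along paths), so $\psi$ is in particular a Hodge isometry. Fix a marking $\eta : \H^2(X, \IZ) \xrightarrow{\sim} \Lambda$ and put $\eta' := \eta \circ \psi^{-1}$, a marking of $X'$. A smooth proper family realizing $\psi$, with the marking transported along a path from the $X$-fibre to the $X'$-fibre, exhibits $(X, \eta)$ and $(X', \eta')$ as points of a single connected component $\mathrm{Teich}^0$; and $\psi$ being a Hodge isometry gives $\eta(\H^{2,0}(X)) = \eta'(\H^{2,0}(X'))$, i.e.\ $P(X, \eta) = P(X', \eta')$. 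By fact (iv) the two marked pairs have the same image under $P_b$, so by fact (iii) $X$ and $X'$ are birational, say via $g$, which induces a parallel transport operator $g_*$; then $\theta := g_*^{-1} \circ \psi$ lies in the subgroup $\mathrm{Mon}^2_{\mathrm{Hdg}}(X)$ of elements of $\mathrm{Mon}^2(X)$ fixing $\H^{2,0}(X)$. Finally, polarizability supplies a K\"ahler class $\kappa$ on $X$ with $\psi(\kappa)$ K\"ahler on $X'$, so $\theta(\kappa)$ lies in the chamber $g_*^{-1}(\mathcal{K}_{X'})$. Since the K\"ahler cone is precisely one chamber of the decomposition of the positive cone cut out by the wall (MBM) classes, and $\mathrm{Mon}^2_{\mathrm{Hdg}}$ together with the various birational models of $X$ realizes exactly this chamber structure (Markman, Hassett--Tschinkel, Mongardi, Amerik--Verbitsky), one deduces that $\theta$, and hence $\psi = g_* \circ \theta$, is induced by an isomorphism $f : X' \stackrel{\sim}{\to} X$.

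The step I expect to be the main obstacle is fact (iv): upgrading the local homeomorphism given by local Torelli, together with the birational description of non-separated points, to the statement that $P_b : \mathrm{Teich}^0_b \to \mathcal{D}$ is honestly a covering of the simply connected period domain. This is the deep core of Verbitsky's theorem and relies essentially on the ergodicity of the $\mathrm{Mon}^2$-action together with Markman's arithmeticity results for $\mathrm{Mon}^2$; by comparison, the remaining bookkeeping --- the compatibility of markings and components, and the chamber analysis of K\"ahler cones in the last paragraph --- is delicate but, given those inputs, essentially formal.
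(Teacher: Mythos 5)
The paper does not prove this statement: it is quoted as Verbitsky's theorem, with references to Verbitsky's original article and to Markman's survey (Thm~1.3 there), and is used as a black box in the proofs of Theorems \ref{crysTor} and \ref{bounded}. So there is no in-paper argument to compare against; what you have written is a sketch of the standard proof from the literature, and its architecture is the right one --- it is essentially Markman's derivation of the Hodge-theoretic Torelli theorem from Verbitsky's Teichm\"uller-theoretic statement: local Torelli, surjectivity of the period map, Huybrechts' identification of inseparable points with bimeromorphic manifolds, the covering/isomorphism statement for the period map on the Hausdorff reduction, and then the K\"ahler-class bookkeeping to upgrade ``bimeromorphic'' to ``isomorphic''. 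Your treatment of the ``only if'' direction and of the fact that a polarizable parallel transport operator carries an ample class to an ample class (via the flat section given by $c_1$ of the relative polarization) is fine.

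Two caveats on the parts you flag as the core. First, your attribution of fact (iv) is off: Verbitsky's proof that the period map on the Hausdorff reduction of a component of Teichm\"uller space is a covering rests on the geometry of twistor lines (hence on the existence of hyperk\"ahler metrics), not on Markman's arithmeticity of $\Mon^2$ combined with ergodicity --- ergodicity of the monodromy action is a later, separate result, arithmeticity is deduced from (not used in) the Torelli theorem, and Markman's explicit computation of $\Mon^2$ is specific to $\HKn$-type, whereas the theorem holds for all irreducible symplectic manifolds. Second, your final step (passing from $\theta \in \Mon^2_{\mathrm{Hdg}}(X)$ preserving a K\"ahler chamber to an actual isomorphism) invokes the MBM/wall-divisor chamber description, which is heavier than needed and partly circular, since that description of the K\"ahler chambers is itself usually derived from the Torelli theorem; the standard route is Huybrechts' refinement of the inseparable-points theorem (the correspondence $\Gamma = Z + \sum Y_i$ inducing the given Hodge isometry, together with the statement that a bimeromorphic map whose induced isometry sends some K\"ahler class to a K\"ahler class extends to an isomorphism). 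Neither point is fatal --- the statement you are reproving is a cited theorem --- but as written the sketch misidentifies where the genuinely hard input lies.
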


As the name suggests, $\psi$ is a \textbf{parallel transport operator} if and only if there exists a complex analytic family $\sX \to S$ of irreducible symplectic manifolds which contains $X, X'$ as fibers and realizes $\psi$ via parallel transport along a topological path $\gamma : [0, 1] \to S$. We define \textbf{\'etale parallel transport operators} for the \'etale cohomology of $\HKn$-type varieties over $k$ in the same way except that $S$ is replaced by a connected $W$-scheme and topological paths are replaced by \'etale paths (see \ref{defetPT}). In either case, $\psi$ is said to be (primitively) \textbf{polarizable} if the family $\sX/S$ can be chosen to admit a relative (primitive) polarization. We will show that deformations of line bundles in families of supersingular $\HKn$-type varieties naturally give rise to \'etale parallel transport operators (see \ref{ssptoetPT}). The notion of parallel transport operators play a fundamental role in the study of complex hyperk\"ahler geometry. It is the author's hope to illustrate by example of $\HKn$-type varieties how one can adapt and work with this notion in an arithmetic setting. 

We study cubic fourfolds as applications. Let $k$ be a perfect field with $\mathrm{char\,} k = p > 0$. A cubic hypersurface $Y \subset \IP^5$, or cubic fourfold, is said to be supersingular if the Newton polygon of $\H^i_\cris(X/W)$ is a straight line for every $i$. Below we denote by $\Ch^j(-)_\num$ the Chow group of codimension $j$ cycles modulo numerical equivalence. Let $\bar{\IF}_p$ denote an algebraic closure of $\IF_p$.
\begin{thm}
\label{cubicTorelli}
Assume $p \ge 7$. Let $Y$ and $Y'$ be two supersingular cubic fourfolds over $\bar{\IF}_p$. Let $h$ and $h'$ denote their hyperplane sections. An isomorphism $\psi : \Ch^2(Y')_\num \stackrel{\sim}{\to} \Ch^2(Y)_\num$ is induced by a projective isomorphism $f : Y \stackrel{\sim}{\to} Y'$ if and only if $\psi$ sends $h'^2$ to $h^2$ and fits into a commutative diagram
    \begin{center}
        \begin{tikzcd}
        \Ch^2(Y')_\num \arrow{r}{\psi} \arrow{d}{\cl} & \Ch^2(Y)_\num \arrow{d}{\cl} \\
        \H^4_\cris(Y'/W) \arrow{r}{\psi_\cris} & \H^4_\cris(Y/W)
        \end{tikzcd}
    \end{center}
    with an isomorphism of $W$-modules $\psi_\cris$. 
\end{thm}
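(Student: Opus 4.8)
The plan is to reduce Theorem~\ref{cubicTorelli} to the crystalline Torelli theorem for $\HK2$-type varieties (Theorem~\ref{crysTor}) via the Fano variety of lines construction. Let $F(Y)$ and $F(Y')$ denote the Fano varieties of lines on $Y$ and $Y'$; by the discussion in \S5 these are $\HK2$-type varieties over $\bar\IF_p$, and since $Y, Y'$ are supersingular, the Beauville--Donagi correspondence (or rather its crystalline incarnation) shows that $F(Y)$ and $F(Y')$ are supersingular $\HK2$-type varieties as well. The first step is to set up the Abel--Jacobi-type isomorphism $\alpha \colon \Ch^2(Y)_\num \xrightarrow{\sim} \NS(F(Y))$ (after suitable twist/normalization, using the incidence correspondence $P \subset F(Y) \times Y$ and the map $\xi \mapsto p_{1*}(p_2^*\xi \cdot [P])$), compatibly with cycle class maps into $\H^4_\cris(Y/W) \cong \H^2_\cris(F(Y)/W)(-1)$; the key input here is that $\H^2_\cris(F(Y)/W)$ is generated by algebraic classes in the supersingular case, so that $\alpha$ is an isomorphism of lattices respecting the relevant intersection forms (the Beauville--Donagi quadratic form on $\Ch^2(Y)_\num$ corresponds to the Beauville--Bogomolov form on $\NS(F(Y))$ up to scaling). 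One must check that $\alpha$ matches $h^2$ with a specific class $\lambda \in \NS(F(Y))$, namely the Pl\"ucker polarization, which is primitive and ample.

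The second step: given $\psi \colon \Ch^2(Y')_\num \xrightarrow{\sim} \Ch^2(Y)_\num$ sending $h'^2 \mapsto h^2$ and compatible with crystalline cohomology via $\psi_\cris$, transport it through $\alpha$ to get $\bar\psi \colon \NS(F(Y')) \xrightarrow{\sim} \NS(F(Y))$ sending the Pl\"ucker polarization to the Pl\"ucker polarization (hence ample to ample), respecting the top intersection numbers (equivalently the BBF form, since for $\HK2$-type fourfolds the top self-intersection is a fixed multiple of $q(\cdot)^2$), and fitting into the crystalline diagram with $\bar\psi_\cris$ induced by $\psi_\cris$ under the Beauville--Donagi identification of $\H^2_\cris(F(Y)/W)$ with $\H^4_\cris(Y/W)$. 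Since $n - 1 = 1$, the last clause of Theorem~\ref{crysTor} applies: $\bar\psi$ is induced by an isomorphism $g \colon F(Y) \xrightarrow{\sim} F(Y')$. The final step is to descend $g$ to a projective isomorphism $f \colon Y \xrightarrow{\sim} Y'$. Because $g$ sends the Pl\"ucker polarization on $F(Y)$ to that on $F(Y')$, and the Pl\"ucker embedding realizes $F(Y) \hookrightarrow \mathrm{Gr}(2,6) \hookrightarrow \IP^{14}$, the isomorphism $g$ extends to an automorphism of $\IP^{14}$ preserving the Grassmannian, hence comes from an element of $\PGL_6$; one then verifies (e.g.\ by a reconstruction argument: $Y$ is recovered from $F(Y)$ together with its embedding, since a general point of $Y$ lies on lines parametrized by a surface in $F(Y)$, or via the Voisin-type statement that the cubic is determined by its variety of lines) that this element carries $Y$ to $Y'$ and induces $f$ with $f^* = \psi$ on $\Ch^2_\num$.

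The main obstacle I expect is the second half of the first step and the last step, i.e.\ the bookkeeping that the Beauville--Donagi correspondence behaves correctly integrally and crystalline-compatibly in characteristic $p \geq 7$: one needs that $P_* \colon \H^4_\cris(Y/W) \to \H^2_\cris(F(Y)/W)$ is an isomorphism of $F$-crystals (up to Tate twist) carrying the cycle-class lattice $\Ch^2(Y)_\num$ isomorphically onto $\NS(F(Y))$ and matching intersection forms on the nose (not just up to isogeny), and that the resulting $g \in \PGL_6$ genuinely comes from a projective isomorphism of the cubics rather than merely an abstract isomorphism of Fano schemes. The reconstruction of $Y$ from $(F(Y), \text{Pl\"ucker polarization})$ is classical over $\IC$ (Cook, or the $\IP^4$-worth of conics), but one should confirm it survives in characteristic $p \geq 7$; I would either cite the relevant positive-characteristic version or argue directly using that, for $p \geq 7$, a smooth cubic fourfold and its Fano scheme of lines have well-behaved incidence geometry (lines through a general point, the surface $S_y \subset F(Y)$, and the fact that the union of these surfaces reconstructs $Y$). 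Given the compatibilities, everything else is formal.
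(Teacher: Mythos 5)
Your proposal is correct in outline but takes a genuinely different route from the paper. You transport $\psi$ through the Beauville--Donagi/Abel--Jacobi correspondence to an isomorphism $\NS(F(Y'))\to\NS(F(Y))$ of the supersingular $\mathrm{K3}^{[2]}$-type Fano varieties and then invoke Theorem~\ref{crysTor} in characteristic $p$ (its last clause, since $n-1=1$), finishing by reconstructing the cubic from its polarized Fano variety. The paper never applies Theorem~\ref{crysTor} here: it reruns the Kuga--Satake/lifting mechanism directly on the cubics --- the analogues of \ref{compMotive} and \ref{AHLift} for $\P^4$ via the period map of \cite[6.13]{Keerthi}, with integrality supplied by Charles--Pirutka (\ref{C-P}) --- lifts $Y,Y'$ to characteristic $0$ so that $\psi_\cris$ preserves Hodge filtrations, applies Voisin's Torelli theorem over $\IC$ to get $f_\IC$, and only then passes to Fano varieties so that Matsusaka--Mumford \cite[Thm~2]{MM} can be applied (cubic fourfolds may be rational, hence ruled), recovering $f$ from $F(f)$ by \cite[Prop.~4]{CharlesCubic}; your final step is exactly this citation, and it is available in characteristic $p$. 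What your route buys is reuse of the hard theorem already proved, avoiding a second pass through the absolute-Hodge/isogeny argument; what it costs is precisely the point you flag: you need the incidence correspondence to give an \emph{integral} isomorphism $\Ch^2(Y)_\num\to\NS(F(Y))$ and an \emph{integral} $W$-module isomorphism $\H^4_\cris(Y/W)(2)\to\H^2_\cris(F(Y)/W)(1)$, since \ref{crysTor} demands an integral $\psi_\cris$; the paper states the crystalline Abel--Jacobi map only after $\tensor\, K_0$, though it uses the integral primitive isometry in \ref{sspCubic}. These integral statements are fillable with tools already in the paper: for the $\IZ$-lattice claim combine \ref{C-P}, \ref{C-Pcris} and \ref{tateImprove} with the orthogonal splittings off $h^2$ and $g$ (legitimate since $h^4=3$ and $q(g)=6$ are units for $p\ge 7$), and for the crystalline claim run the Breuil--Kisin/BMS argument of \ref{extBBF0} for the correspondence, using torsion-freeness of $\H^4_\cris(Y/W)$ and $\H^2_\cris(F(Y)/W)$. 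One bookkeeping caveat: your assertion that the transported $\bar\psi$ respects top intersection numbers rests on reading the hypothesis as saying $\psi$ is an isometry for the intersection pairing (the paper's own proof implicitly reads it this way too); granted that, the primitive-part isometry together with $h'^2\mapsto h^2$, $g'\mapsto g$ and $q(g')=q(g)=6$ yields exactly the hypotheses of \ref{crysTor}, and the rest of your argument goes through.
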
 
For a quick example of a supersingular cubic fourfold, the Fermat cubic 
$$ X_0^3 + X_1^3 + \cdots + X_5^3 = 0 $$
is supersingular in every characteristic $p \equiv -1 \mod 3$, by a general result of Shioda and Katsura \cite{Fermat}. In fact, in characteristic $p \ge 7$, the Fermat cubic is \textit{superspecial} whenver it is supersingular (see \ref{sspCubic}). We will show that superspecial cubic fourfolds exist for almost every prime \ref{ssext}. 
\begin{thm}
Superspecial cubic fourfolds over $\bar{\IF}_p$ exist for a set of primes $p$ of Dirichlet density $1$. 
\end{thm}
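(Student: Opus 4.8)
The strategy I would follow combines the explicit Fermat example with a reduction argument for cubic fourfolds with complex multiplication, the Dirichlet density entering through a Chebotarev-type count.

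For $p \equiv -1 \bmod 3$ and $p \ge 7$ the Fermat cubic fourfold is superspecial over $\bar\IF_p$ by \ref{sspCubic}, which already accounts for a set of primes of density $1/2$; the point is to reach a density-one set, in particular the primes $p \equiv 1 \bmod 3$. For this I would degenerate CM cubic fourfolds. For each squarefree $m > 0$, using the surjectivity of the period map for smooth cubic fourfolds (a period point off Hassett's divisors $\sC_2, \sC_6$ being realised by an honest smooth cubic fourfold), I would produce a cubic fourfold $Y_m$ over a number field whose primitive fourth cohomology has transcendental part a rank-$2$ Hodge structure with complex multiplication by $E_m = \IQ(\sqrt{-m})$ and whose period lies off $\sC_2 \cup \sC_6$; such $Y_m$ exists for all $m$ outside a thin set, because one is free to choose the rank-$2$ positive definite transcendental lattice (in the appropriate discriminant class) so that the complementary algebraic lattice contains none of the distinguished rank-$2$ sublattices of discriminant $2$ or $6$. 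Being a variety with complex multiplication --- equivalently, having a Kuga--Satake abelian variety with complex multiplication, or a hyperk\"ahler Fano variety of lines $F(Y_m)$ that is an $\HKn$-type variety with $n = 2$ and CM --- $Y_m$ has potentially good reduction at every finite place.

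I would then invoke the reduction theory of special points on the moduli spaces of \ref{moduli}. Fix a place $\mathfrak p \mid p$ of good reduction for $Y_m$ with $p \nmid 6m$. Then $Y_{m,\mathfrak p}$ is a smooth cubic fourfold over $\bar\IF_p$, and I claim it is superspecial exactly when $p$ is non-split in $E_m$: non-splitness forces Frobenius on the transcendental part of $\H^4_\cris(Y_{m,\mathfrak p}/W)$ to act with a single slope, so $Y_{m,\mathfrak p}$ is supersingular, and since the CM order is maximal at $p$ (as $p \nmid m$) the Artin invariant of the resulting supersingular $F$-crystal is pinned down to $1$; one passes this across the Beauville--Donagi isomorphism $\H^4_{\prim}(Y_{m,\mathfrak p})(-1) \iso \H^2_{\prim}(F(Y_{m,\mathfrak p}))$, which persists in characteristic $p > 3$, to conclude superspeciality of $Y_{m,\mathfrak p}$ itself. (When $p$ splits in $E_m$ the reduction is ordinary.) Now let $p > 3$: the squarefree $m$ with $p \nmid m$ and $-m$ a non-square modulo $p$ --- equivalently $p$ inert in $E_m$ --- form a union of residue classes modulo $4p$ of positive density, hence an infinite set, so after discarding the thin set of the previous paragraph and the finitely many $m$ at which $p$ is a place of bad reduction one may still choose $m = m(p)$; then $Y_{m(p)}$ reduces at a suitable $\mathfrak p \mid p$ to a superspecial cubic fourfold over $\bar\IF_p$. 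This handles all but a set of primes of Dirichlet density $0$, which together with the Fermat case proves the claim.

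The step I expect to be genuinely hard is the reduction statement of the third paragraph --- upgrading ``$p$ inert in $E_m \Rightarrow$ supersingular reduction'' to ``$\Rightarrow$ \emph{superspecial} reduction'', i.e.\ controlling the Artin invariant of the reduction of a CM cubic fourfold. I would attack this through the integral canonical models of \ref{moduli}, the crystalline realisation of their CM points, and the crystalline Torelli theorem \ref{crysTor}, reducing it to the analogous and classical statement for CM (singular) K3 surfaces or abelian varieties via Kuga--Satake. A secondary point that needs care is the realisability input in the second paragraph, namely that every imaginary quadratic field is $E_m$ for some smooth cubic fourfold off $\sC_2 \cup \sC_6$; this is a lattice-embedding computation feeding into Laza--Looijenga surjectivity.
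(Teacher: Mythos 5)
Your overall strategy (CM cubic fourfolds with rank-two transcendental lattice, reduction at inert primes, Dirichlet density via quadratic fields) is the same as the paper's, but the proposal has two genuine gaps, and they sit exactly where the proof content lies. First, the step you yourself flag as ``genuinely hard'' --- upgrading supersingular reduction to Artin invariant $1$ --- is left as a hope, whereas it admits a short lattice argument that your sketch of ``integral models, CM points, crystalline Torelli'' does not supply and does not need. The paper's argument (Lemma \ref{prodssp}) is: since $\rank T(Y_\IC)=2$ and $p\nmid \disc(T(Y_\IC))$, the specialization map gives an isometric embedding $\Ch^2(Y_\IC)_\num \hookrightarrow \Ch^2(Y)_\num$ whose image is self-dual at $p$ and of corank $2$; by \ref{C-Pcris} the target is $\H^4_\cris(Y/W)^{F=p^2}$, which for a supersingular $Y$ has the shape $T_1\oplus pT_0$ with $\rank T_0 = 2\sigma$ by Ogus' structure theorem, so a corank-$2$ self-dual sublattice forces $\sigma=1$. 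Without this (or an equivalent), you have not proved superspeciality; crystalline Torelli is not the relevant tool here. Relatedly, even the supersingularity of the reduction needs an argument: the paper gets it by arranging $Y_\IC$ to lie on Hassett's divisor $\sC_{14}$ so that $F(Y_\IC)\iso X_\IC^{[2]}$ for a CM K3 $X_\IC$, whence the Kuga--Satake abelian variety is isogenous to a power of a CM elliptic curve and is supersingular precisely at inert primes; your ``single slope'' assertion is the desired conclusion, not a proof.

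Second, your density bookkeeping has the quantifiers in the wrong order. You fix $p$ and then choose $m$ with $p$ inert in $E_m$, discarding ``the finitely many $m$ at which $p$ is a place of bad reduction''; but for a fixed $p$ there is no finiteness statement about which $Y_m$ have bad reduction at $p$ --- the bad primes depend on the chosen model of $Y_m$, and a priori every admissible $m$ could be bad at your $p$. The only way to rescue your per-prime selection is the asserted ``CM implies potentially good reduction at every finite place,'' which is not available for cubic fourfolds (it is already delicate for K3 surfaces) and, if true, would prove something stronger than the theorem. The paper avoids this entirely: for each fixed imaginary quadratic field $\IQ(\sqrt{-d})$ it shows (Lemma \ref{prodsupsing}) that \emph{all but finitely many} primes inert in $\IQ(\sqrt{-d})$ carry a superspecial cubic fourfold, and then takes $r$ fields $\IQ(\sqrt{-p_1}),\dots,\IQ(\sqrt{-p_r})$: the set of primes inert in at least one has density $1-2^{-r}$, and letting $r\to\infty$ gives density $1$. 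This ordering of quantifiers is what makes the statement a density-one statement rather than an all-but-finitely-many statement, and it is the piece your proposal is missing. (The Fermat example for $p\equiv -1 \bmod 3$ is fine but, as you note, only gives density $1/2$.)
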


\paragraph{Sketch of Main Ideas} We first explain the proof of Thm~\ref{crysTor}, which is the main interest of the paper. This is done in two steps. Let $X, X'$ be two supersingular $\HKn$-type varieties and $\psi$ and $\psi_\cris$ be as in Thm~\ref{crysTor}. First, the local Torelli theorem for $X, X'$ allows us to lift them to $X_K, X_K'$ over a finite extension $K$ of $K_0 := W[1/p]$, such that the isomorphism $\psi_\dR : \H^2_\dR(X_K/K) \stackrel{\sim}{\to} \H^2_\dR(X'_K/K)$ induced by $\psi_\cris$ via the Berthelot-Ogus isomorphism preserves the Hodge filtrations. Then we show that $\psi_\dR$ is \textit{absolutely Hodge}, which is predicted by the $p$-adic variational Hodge conjecture. To give an unconditional proof, we choose appropriate polarizations and construct for $X, X'$ Kuga-Satake abelian varieties $A, A'$, which are abelian varieties with CSpin structures. By the isogeny theory of mod $p$ points of Shimura varieties of Hodge type, $\psi$ is induced by an isogeny $\wt{\psi} : A \to A'$ which commutes with CSpin structures. We show that $\wt{\psi}$ can be lifted to induce the desired absolute Hodge cycle. Moreover, with the aid of integral $p$-adic Hodge theory, we show that after choosing an embedding $K \into \IC$, we get an \textit{integral} Hodge isometry $\psi_{\dR, \IC} : \H^2(X_K(\IC), \IZ) \stackrel{\sim}{\to} \H^2(X'_K(\IC), \IZ)$. 

Next, we need to show that $\psi_{\dR, \IC}$ is induced by parallel transport along a topological path. If $n - 1$ has only one prime factor, this is automatic. The key ingredient to show this for general $n$ is Markman's explicit description of the monodromy groups of $\HKn$-type manifolds. If $Y$ is a manifold of $\HKn$-type, its monodromy group $\Mon^2(Y)$ is (up to sign) a congruence subgroup of $\O(\H^2(Y, \IZ))$. Moreover, if $p \nmid 2(n-1)$, the $p$-component of this congruence subgroup is the full group. This allows us to check whether $\psi_{\dR, \IC}$ is a parallel transport operator by passing to \'etale cohomology with prime-to-$p$ coefficients, of which we have good control. The trick then is to interpolate topological path with the \'etale path produced by $\psi[1/p]$ using the moduli spaces we construct for Thm~\ref{moduli}, so that we can ``lift'' the \'etale path to a topological path. To finish the proof, apply Verbitsky's theorem to get an isomorphism $f_\IC : X_K'(\IC) \stackrel{\sim}{\to} X_K(\IC)$ and then use \cite[Thm~2]{MM} to specialize this isomorphism to the desired $f : X' \stackrel{\sim}{\to} X$.

The main step for Thm~\ref{transmit} is to show that ``being of $\HKn$-type'' is a property which is invariant under specialization and generization in reasonable families (see \ref{defstrengthen} and \ref{charpspread}). Then Thm~\ref{transmit} will follow from some connectedness properties of local deformation spaces. For K3 surfaces, Thm~\ref{moduli} is the main result of Rizov's paper \cite{Rizov1}, with some additions from \cite{Keerthi}. The constructions in Rizov's paper can easily be generalized provided that we can prove (A) polarized $\HKn$-type varieties of fixed degree form a bounded family and (B) non-trivial automorphisms of such varieties can be killed by adding level structures on second \'etale cohomology. For K3 surfaces, Saint-Donat proved (A) in \cite{SD} using purely algebro-geometric arguments. By contrast, our proof of (A) is a simplified variant of that of Thm~\ref{crysTor} sketched above. The philosophy is to combine Verbitsky and Markman's results to reduce the boundedness of $\HKn$-type varieties to that of abelian varieties. (B) is known for $\HKn$-type varieties over $\IC$. To treat the $\mathrm{char\,} p$ case, we show that automorphisms of non-supersingular $\HKn$-type varieties over $\bar{\IF}_p$ always lift to characteristic zero. The supersingular case will be treated by a different argument, which entails proving the divisorial Tate conjecture in advance.  

\paragraph{Organization of Paper} In \S2, we establish some basic properties of $\HKn$-type varieties, with an emphasis on deformation theory. In addition, we will also give preparational lemmas on quadratic lattices and recap Markman's monodromy results. In \S3, we review and extend results on spinor Shimura varieties and the Kuga-Satake period morphisms. In \S4.1, 4.2, and 4.3, we prove the Theorem 2, 3, and 4 respectively. In \S4.4, we give examples of \'etale parallel transport operators and relate to Ogus' work \cite{Ogus2}. In \S5, we discuss concrete examples of $\HKn$-type varieties and prove Theorems 6 and 7.

\paragraph{Notations and Conventions}
\begin{itemize}
    \item We write $\what{\IZ}$ for the profinite completion of $\IZ$, $\IA_f$ for $\what{\IZ} \tensor \IQ$, and $\IA_f^p$ for the prime-to-$p$ component of $\IA_f$. For any field $\kappa$, $\bar{\kappa}$ denotes an algebraic closure of $\kappa$. When the letter $k$ denotes a perfect field of characteristic $p > 0$, we write $W$ for $W(k)$, $K_0$ for $W[1/p]$, and $\sigma$ for the lift of the Frobenius endomorphism $x \mapsto x^p$ on $k$. The letter $n$ always stands for some natural number $\ge 1$. 
    \item By a \textit{variety}, we mean a separated scheme of finite type over a field. Unless otherwise noted, by a point $s$ on a scheme (or stack) $S$ we always mean a field-valued point on $S$ and we denote the field of definition of $s$ by $k(s)$. 
    \item Let $S$ be a scheme and $X$ be a smooth proper algebraic space over $S$ of finite relative dimension. If $\xi$ is a line bundle on $X$, we use $m \xi$ to denote the $m$-th power of $\xi$. We say $\xi$ is primitive if on every geometric fiber of $X$, the restriction of $\xi$ is not a nontrivial power of another line bundle. If $S$ is the spectrum of a field, $\xi^{\dim X}$ denotes the top self-intersection number of $\xi$. 
    \item For a smooth and proper morphism $f : X \to S$ of schemes and natural numbers $i, j, r$, we write $\H^j(X, \Ohm_{X/S}^i)$ for $\IR^j f_* \Ohm^i_{X/S}$ and $\H^r_\dR(X/S)$ for the $r$th relative de Rham cohomology $\IR^r f_* \Ohm^\bullet_{X/S}$. If $k$ is a perfect field of characteristic $p > 0$ and $S$ is an Artinian $k$-algebra, we denote by $\H^r_\cris(X)$ the sheaf $\IR^j f_{\cris *} \shO_{X/W}$ on $\Cris(S/W)$ when $S$ is understood. Moreover, for an object $T$ of $\Cris(S/W)$, we use $\H^r_\cris(X)_T$ to denote the Zariski sheaf on $T$ given by restriction.
    \item Let $R$ be an integral domain with fraction field $F$. A quadratic lattice over $R$ is a finitely generated free $R$-module equipped with a symmetric bilinear pairing. Unless otherwise noted, the pairing will be assumed to be non-degenerate. Let $L$ be a quadratic lattice over $R$. Denote by $\disc(L)$ the discriminant group $L^\vee / L$, which is equipped with a natural $F/R$-valued quadratic form. When $R = \IZ$ or $\what{\IZ}$, we write $L_p$ for $L \tensor_R \IZ_p$ and $L^p$ for $L \tensor_R \what{\IZ}^p$ for any prime $p$. If $x \in L$ is any element, we write $x^2$ for $\< x, x \>$. Let $\lambda_n$ denote the constant $(2n)!/(2^n n!)$. 
\end{itemize}

\paragraph{Acknowledgements}

It is a pleasure to thank Lin Chen and Yuchen Fu for discussions on topology, Chenglong Yu and Zhiwei Zheng for discussions on quadratic forms and hyperk\"ahler geometry, Sasha Petrov for his help in the proof of \ref{Sasha} and Dori Bejleri for discussions on moduli spaces. I also had helpful conversations with Qirui Li, Lucia Mocz, Ananth Shankar, Dan Bragg and Nikon Kurnosov. Many thanks to my advisor Mark Kisin who offered valuable comments on the writing of the paper. Finally, presenting an early version of these results at UC Berkeley and UGA was very helpful and I would like to thank the organizers. 

\section{General Properties of $\HKn$-type Varieties}

\subsection{Beauville-Bogomolov Forms} We will repeatedly use the following simple observation:
\begin{lemma}
\label{lem: simple observation}
Let $R$ be a ring in which $(2n)!$ is not a zero divisor and $V$ be a free $R$-module of finite rank. Suppose $w: V^{\tensor 2n} \to R$ and $q : V^{\tensor 2} \to R$ are symmetric multilinear maps such that 
\begin{align}
\label{wq}
    w(\alpha_1, \cdots, \alpha_{2n}) = \frac{1}{n! 2^n} \sum_{\sigma \in S_{2n}} q(\alpha_{\sigma(1)}, \alpha_{\sigma(2)}) \cdots q(\alpha_{\sigma(2n - 1)}, \alpha_{\sigma(2n)}),
\end{align}
where $S_{2n}$ is the symmetry group of $2n$ elements. In particular, $w(\alpha, \cdots, \alpha) = \lambda_n q(\alpha, \alpha)^n$ for all $\alpha \in V$. 

Then for any element $\xi \in V$ such that $q(\xi, \xi)$ is not a zero divisor, $w(\xi, \cdots, \xi, \alpha) = 0$ if and only if $q(\xi, \alpha) = 0$, and $q$ is completely determined by $w$ and the value $q(\xi, \xi)$ by the following condition: 
\begin{align}
\label{wqperp}
    w(\xi, \cdots, \xi, \alpha, \beta) = \lambda_{n- 1} q(\alpha, \beta) q(\xi, \xi)^{n - 1}, \text{ for all } \alpha, \beta \text{ such that } w(\xi, \cdots, \xi, \alpha) = w(\xi, \cdots, \xi, \beta) = 0.
\end{align}

\end{lemma}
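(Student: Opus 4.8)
The plan is to work entirely with the polarization identity \eqref{wq} and extract the two claims by careful substitution. Throughout I would fix $\xi \in V$ with $q(\xi,\xi)$ not a zero divisor; write $t = q(\xi,\xi)$ for brevity.

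\emph{Step 1: the expansion of $w(\xi,\dots,\xi,\alpha)$ and $w(\xi,\dots,\xi,\alpha,\beta)$.} First I would specialize \eqref{wq} with $2n-1$ of the arguments equal to $\xi$ and the last equal to $\alpha$. Counting how many of the $(2n)!$ permutations pair the slot holding $\alpha$ with a slot holding $\xi$ (all of them, since there is only one non-$\xi$ argument), and grouping the remaining factors, one gets an identity of the shape
\begin{align*}
w(\xi,\dots,\xi,\alpha) = \lambda_{n-1}\, q(\xi,\alpha)\, t^{\,n-1}.
\end{align*}
(The constant works out because $w(\xi,\dots,\xi,\xi) = \lambda_n t^n$ and $\lambda_n = \lambda_{n-1}\cdot\frac{2n-1}{1}$... more precisely one tracks the combinatorics of $S_{2n}$ vs.\ $S_{2n-2}$; this is the routine bookkeeping I would not grind through here, but it is forced by the two normalizations $w(\xi,\dots,\xi)=\lambda_n t^n$ at $\alpha=\xi$.) Similarly, specializing with $2n-2$ copies of $\xi$ and the last two arguments $\alpha,\beta$, the permutations split into two types: those pairing the $\alpha$-slot with the $\beta$-slot, contributing a multiple of $q(\alpha,\beta)\,t^{\,n-1}$; and those pairing each of $\alpha,\beta$ with a $\xi$-slot, contributing a multiple of $q(\xi,\alpha)q(\xi,\beta)\,t^{\,n-2}$. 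Collecting constants gives
\begin{align*}
w(\xi,\dots,\xi,\alpha,\beta) = \lambda_{n-1}\, q(\alpha,\beta)\, t^{\,n-1} + c_n\, q(\xi,\alpha)\,q(\xi,\beta)\, t^{\,n-2}
\end{align*}
for an explicit constant $c_n$; again the value of $c_n$ is pinned down by setting $\alpha=\beta=\xi$ and comparing with $\lambda_n t^n$.

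\emph{Step 2: deduce the two claims.} From the first formula of Step 1, since $t^{\,n-1}$ divides... wait, more carefully: $w(\xi,\dots,\xi,\alpha)=\lambda_{n-1} t^{\,n-1} q(\xi,\alpha)$, and $\lambda_{n-1}$ divides $\lambda_n$ which is a unit times... — the cleanest route is: $(2n)!$ is not a zero divisor, hence neither is $\lambda_{n-1}$ nor any power of $t$ (a product of non-zero-divisors, using that $t$ is not a zero divisor). Therefore $w(\xi,\dots,\xi,\alpha)=0 \iff \lambda_{n-1} t^{\,n-1} q(\xi,\alpha) = 0 \iff q(\xi,\alpha)=0$, which is the first assertion. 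For the second, suppose $\alpha,\beta$ satisfy $w(\xi,\dots,\xi,\alpha) = w(\xi,\dots,\xi,\beta) = 0$; by the first assertion this means $q(\xi,\alpha)=q(\xi,\beta)=0$, so the cross term $c_n q(\xi,\alpha)q(\xi,\beta)t^{\,n-2}$ in the Step 1 formula vanishes, leaving exactly \eqref{wqperp}: $w(\xi,\dots,\xi,\alpha,\beta)=\lambda_{n-1} q(\alpha,\beta) t^{\,n-1}$.

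\emph{Step 3: $q$ is determined by $w$ and $t$.} Given $w$ and the scalar $t=q(\xi,\xi)$, the subset $\xi^{\perp} := \{\alpha : w(\xi,\dots,\xi,\alpha)=0\}$ is determined by $w$ alone, and on it $q$ is recovered from \eqref{wqperp} since $\lambda_{n-1} t^{\,n-1}$ is a non-zero-divisor. To get $q$ on all of $V$: for arbitrary $\alpha \in V$, I would show $\lambda_{n-1} t\cdot \alpha - w(\xi,\dots,\xi,\alpha)\cdot\xi$ lies in $\xi^\perp$ (check by applying $w(\xi,\dots,\xi,-)$ and using $w(\xi,\dots,\xi)=\lambda_{n-1}t^{\,n}$), hence decompose $\lambda_{n-1}t\cdot\alpha$ as a $\xi^\perp$-component plus a multiple of $\xi$ with coefficients visibly determined by $w$ and $t$; bilinearity of $q$ then pins down $q(\alpha,\beta)$ for all $\alpha,\beta$ up to the non-zero-divisor $\lambda_{n-1}^2 t^2$, and cancelling it finishes.

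\emph{Main obstacle.} The only real work is the combinatorial bookkeeping in Step 1 — correctly counting permutation classes in $S_{2n}$ to identify the constants $\lambda_{n-1}$ and $c_n$ — together with the care needed throughout to invoke ``not a zero divisor'' rather than invertibility when cancelling $\lambda_{n-1}$ and powers of $t$; everything after that is formal. I expect no genuine difficulty, since the normalization $w(\alpha,\dots,\alpha)=\lambda_n q(\alpha,\alpha)^n$ already encodes all the constants one needs and can be used to double-check each specialization.
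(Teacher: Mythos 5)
The paper itself gives no proof of this lemma (it is asserted as a ``simple observation''), so the only question is whether your argument stands on its own; your overall strategy --- specialize the polarization identity \eqref{wq}, sort the permutations by which slots get paired, and cancel non-zero-divisors --- is exactly the intended elementary argument, and Steps 1--2 do prove the ``iff'' statement and \eqref{wqperp}. But your constants are off in a way you should fix, because one of them propagates into an actual error in Step 3. Since the only non-$\xi$ argument must be paired with a $\xi$ in every one of the $(2n)!$ permutations, the correct first formula is $w(\xi,\dots,\xi,\alpha)=\lambda_{n}\,q(\xi,\alpha)\,t^{\,n-1}$ (not $\lambda_{n-1}$); indeed your own consistency check at $\alpha=\xi$ must return $\lambda_n t^n$, as in the lemma's normalization, and your stated formula returns $\lambda_{n-1}t^n$. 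For the two-variable specialization, the permutations pairing the $\alpha$-slot with the $\beta$-slot number $2n\,(2n-2)!$, giving coefficient $\tfrac{2n(2n-2)!}{n!2^n}=\lambda_{n-1}$ on $q(\alpha,\beta)t^{\,n-1}$, and the remaining ones give $c_n=\lambda_n-\lambda_{n-1}$ on $q(\xi,\alpha)q(\xi,\beta)t^{\,n-2}$; with these values Step 2 goes through verbatim (the wrong constant in Step 1 was harmless there, since $\lambda_n$, $\lambda_{n-1}$ and powers of $t$ are all non-zero-divisors because they divide, or are products of divisors of, $(2n)!$ and $t$).

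The genuine flaw is the auxiliary vector in Step 3: $\lambda_{n-1}t\cdot\alpha-w(\xi,\dots,\xi,\alpha)\cdot\xi$ is \emph{not} in $\xi^{\perp}$; applying $w(\xi,\dots,\xi,-)$ to it gives $\lambda_{n-1}t\,w(\xi,\dots,\xi,\alpha)-w(\xi,\dots,\xi,\alpha)\,\lambda_n t^{\,n}$, which vanishes only if $t^{\,n-1}$ behaves like a unit scalar. The correct choice is $w(\xi,\dots,\xi,\xi)\cdot\alpha-w(\xi,\dots,\xi,\alpha)\cdot\xi=\lambda_n\bigl(t\,\alpha-q(\xi,\alpha)\,\xi\bigr)$, which manifestly lies in $\xi^{\perp}$ and is built from data determined by $w$. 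With that replacement, your scheme works: $q(\xi,\alpha)$ is pinned down by the first formula (uniqueness, not division, since $\lambda_n t^{\,n-1}$ is a non-zero-divisor), $q$ on $\xi^{\perp}\times\xi^{\perp}$ is pinned down by \eqref{wqperp}, and bilinearity plus cancellation of the non-zero-divisor $\lambda_n^2 t^{2}$ determines $q(\alpha,\beta)$ in general. (For the application in the paper only the statement as phrased --- determination via \eqref{wqperp} together with the value $q(\xi,\xi)$ --- is used, so this last step, once corrected, is if anything slightly more than is needed.)
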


\subsubsection{} \label{sec: known BBFs} For a $\HKn$-type variety $X$ over $\IC$, the \textit{Beauville-Bogomolov form} on $\H^2(X, \IZ)$ is the $\IZ$-valued quadratic form $q$ on $\H^2(X, \IZ)$ such that (\ref{wq}) holds for $w : \H^2(X, \IZ)^{\tensor 2n} \to \IZ$ given by cup product and for any polarization $\xi$ on $X$, $q(c_1(\xi), c_1(\xi)) \in \IZ_{>0}$. Note that $q(c_1(\xi), c_1(\xi))$ is the unique root in $\IZ_{> 0}$ to the equation $\xi^{2n} - \lambda_n x^n = 0$. $\H^2(X, \IZ)$ is isomorphic as a quadratic lattice under $q$ to $\Lambda_n$ where (\cite{Known})
\begin{align}
    \label{eqn: K3 n lattices}
    \Lambda_1 = U^{\oplus 3} \oplus E_8^{\oplus 2} \text{ and } \Lambda_n = U^{\oplus 3} \oplus E_8^{\oplus 2} \oplus \IZ(2 - 2n) \text{ for } n > 1.
\end{align}
Here, $U$ denotes the standard hyperbolic plane, $E_8$ is the unique negative definite self-dual even lattice of rank $8$ and $\IZ(2 - 2n)$ is the quadratic lattice over $\IZ$ generated by a single element $v$ with $\<v, v\> = 2 - 2n$. Clearly $\Lambda_1$ is self-dual and $\Lambda_n^\vee / \Lambda_n \iso \IZ/ (2n - 2) \IZ$ for $n > 1$. 

\begin{definition}
\label{def: excellent good reduction}
Let $k$ be a perfect field of characteristic $p > 0$ and $X$ be a variety over $k$. $X$ is said to be an \textit{excellent reduction} of a $\HKn$-type variety if there exists a totally ramified extension $K$ of $K_0$ with $V := \sO_K$ and a smooth projective family $X_V$ over $V$ such that the generic fiber $X_K$ is of $\HKn$-type and $X_V \tensor k \iso X$, $\dim_K \H^r_\dR(X_K/K) = \dim_k \H^r_\dR(X/k)$ for every $r \ge 0$ and the Hodge-de Rham spectral sequence of $X_V$ degenerates at $E_1$-page.
\end{definition}

Note that an excellent reduction $X$ necessarily has the same Hodge numbers as a complex $\HKn$-type variety. In particular, $h^{i,j} = 0$ for $i + j$ odd. By the universal coefficient theorem $\H^r_\cris(X/W)$ is torsion free for all $r \ge 0$. 

\begin{proposition}
\label{NSfree}
Let $k$ be an algebraically closed field of characteristic $p > 0$ and $X/k$ be an excellent reduction of a $\HKn$-type variety. Then we have: 
\begin{enumerate}[label=\upshape{(\alph*)}]
    \item $c_1 : \NS(X) \tensor \IZ_\ell \to \H^2_\et(X, \IZ_\ell(1))$ is injective with torsion-free cokernel for every $\ell \neq p$. 
    \item $c_1 : \NS(X) \tensor \IZ_p \to \H^2_\cris(X/W)^{F = p}$ is injective with torsion-free cokernel. 
    \item $\NS(X) = \Pic(X)$ and $\NS(X)$ is torsion-free. 
\end{enumerate}
\end{proposition}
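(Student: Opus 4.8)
\emph{Proof proposal.} The approach is the standard one relating $\NS(X)$ to $\H^2$ through the Brauer group, carried out in each cohomology theory ($\ell$-adic for $\ell \neq p$, crystalline via flat/logarithmic de Rham--Witt for $\ell = p$), after first pinning down the numerical shape of $X$. Fix a totally ramified extension $K/K_0$ with ring of integers $V$ and a smooth projective $X_V/V$ witnessing that $X$ is an excellent reduction, so $X_K$ is of $\HKn$-type and $X_V \otimes_V k \cong X$. I would begin by recording three facts. (i) Since $X$ has the Hodge numbers of a complex $\HKn$-type variety, $h^{0,1}(X) = \dim_k \H^1(X, \sO_X) = 0$, so the Picard scheme of $X$ has vanishing tangent space at the identity; hence it is \'etale over $k$, hence (as $k = \bar k$) its identity component is trivial, so $\NS(X) = \Pic(X)$, a finitely generated group by the theorem of the base. (ii) For each $\ell \neq p$, $\H^2_\et(X, \IZ_\ell)$ is torsion free: via an embedding $\bar K \hookrightarrow \IC$ and the comparison isomorphism one has $\H^2_\et(X_{\bar K}, \IZ_\ell) \cong \Lambda_n \otimes \IZ_\ell$ (see \ref{sec: known BBFs}), which is torsion free, and smooth and proper base change for $X_V/V$ identifies this with $\H^2_\et(X, \IZ_\ell)$. (iii) $\H^2_\cris(X/W)$ is torsion free, as noted after \ref{def: excellent good reduction}.

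For (a), I would run the Kummer sequence $0 \to \mu_{\ell^n} \to \IG_m \xrightarrow{\ell^n} \IG_m \to 0$ on the \'etale site. From $\H^1_\et(X, \IG_m) = \Pic(X)$ and $\H^2_\et(X, \IG_m) = \Br(X)$ we get $0 \to \Pic(X)/\ell^n \to \H^2_\et(X, \mu_{\ell^n}) \to \Br(X)[\ell^n] \to 0$; the transition maps on $\{\Pic(X)/\ell^n\}_n$ are surjective, so $\varprojlim$ stays exact and, since $\NS(X) = \Pic(X)$ is finitely generated, yields
\[
0 \longrightarrow \NS(X) \otimes \IZ_\ell \xrightarrow{c_1} \H^2_\et(X, \IZ_\ell(1)) \longrightarrow T_\ell \Br(X) \longrightarrow 0 ,
\]
and the cokernel $T_\ell \Br(X)$, being a Tate module, is torsion free. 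For (b), I would repeat the argument on the fppf site with the flat Kummer sequence $0 \to \mu_{p^n} \to \IG_m \xrightarrow{p^n} \IG_m \to 0$, using $\H^1_\fl(X, \IG_m) = \Pic(X)$ and $\H^2_\fl(X, \IG_m) = \Br(X)$, to obtain $0 \to \NS(X) \otimes \IZ_p \to \varprojlim_n \H^2_\fl(X, \mu_{p^n}) \to T_p \Br(X) \to 0$. It then remains to identify $\varprojlim_n \H^2_\fl(X, \mu_{p^n})$ with $\H^2_\cris(X/W)^{F=p}$ compatibly with the crystalline $c_1$ --- which does land in the $F=p$ part, the class of a line bundle being Frobenius-equivariant of weight $p$. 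For this I would invoke the theory of logarithmic de Rham--Witt cohomology, which (granted that $\H^2_\cris(X/W)$ is torsion free) supplies exactly such an identification; this produces the short exact sequence $0 \to \NS(X) \otimes \IZ_p \to \H^2_\cris(X/W)^{F=p} \to T_p \Br(X) \to 0$, hence (b).

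Part (c) is then immediate: $\NS(X) = \Pic(X)$ was recorded in (i); and for torsion-freeness, (a) shows $\NS(X) \otimes \IZ_\ell$ injects into the torsion-free group $\H^2_\et(X, \IZ_\ell)$ for each $\ell \neq p$, while (b) shows $\NS(X) \otimes \IZ_p$ injects into the torsion-free group $\H^2_\cris(X/W)$, so --- $\NS(X)$ being finitely generated --- it has no torsion at any prime. I expect the only genuinely technical ingredient to be the de Rham--Witt identification $\varprojlim_n \H^2_\fl(X, \mu_{p^n}) \cong \H^2_\cris(X/W)^{F=p}$ used for (b); everything else is formal cohomological bookkeeping. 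For $n = 1$ that identification is classical --- it is exactly what underlies the analogous statement for K3 surfaces in Ogus' work --- and the general case is formally identical.
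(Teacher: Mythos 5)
Parts (a) and (c) of your proposal are essentially the paper's own argument: the Kummer sequence together with torsion-freeness of $\H^2_\et(X,\IZ_\ell)$ (coming from the torsion-freeness of $\H^2$ of a complex $\HKn$-manifold plus comparison and smooth proper base change), and $\NS(X)=\Pic(X)$ from $\mathrm{Lie}(\underline{\Pic}_{X})=\H^1(X,\sO_X)=0$, with torsion-freeness of $\NS(X)$ deduced from (a) and (b). No issues there.

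The gap is in (b). Your entire argument rests on the identification $\varprojlim_n \H^2_\fl(X,\mu_{p^n})\cong \H^2_\cris(X/W)^{F=p}$, compatibly with Chern classes, which you assert is supplied by logarithmic de Rham--Witt theory ``granted that $\H^2_\cris(X/W)$ is torsion free'' and is ``formally identical'' to the K3 case. That is not so: what the general theory gives cheaply is (under hypotheses such as $h^{1,0}=0$) an identification of $\varprojlim_n\H^2_\fl(X,\mu_{p^n})$ with $\H^1(X,W\Omega^1_X)^{F=1}$; passing from there to $\H^2_\cris(X/W)^{F=p}$ requires integral control of the slope spectral sequence, which genuinely fails to degenerate in the cases of most interest here (for supersingular fibers $\H^2(X,W\sO_X)$ is not finitely generated and the differentials into $\H^\ast(W\Omega^1)$ are nonzero --- the ``domino'' phenomenon), and torsion in $\H^1(W\Omega^1)$ and contributions of $\H^2(W\sO_X)$ must be handled. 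Even for K3 surfaces this comparison is a substantive theorem (Illusie's detailed analysis, Nygaard--Ogus), not formal bookkeeping, and I am not aware of an off-the-shelf statement covering an arbitrary smooth proper $X$ with the Hodge numbers of a $\HKn$-variety and torsion-free $\H^2_\cris$. The paper avoids all of this: for (b) it simply invokes Deligne--Illusie (the remark cited as \cite[Rmk~3.5]{Deligne2}), which gives injectivity with torsion-free cokernel of the crystalline $c_1$ directly from exactly the data packaged into ``excellent reduction'' (a lift, torsion-free $\H^2_\cris$, Hodge--de Rham degeneration), working with the Frobenius and Hodge filtration rather than flat cohomology. Note also that you only need the (much weaker) injectivity-with-torsion-free-cokernel statement, not the full flat--crystalline isomorphism; so either cite a precise comparison theorem valid in this generality and check compatibility of the flat and crystalline $c_1$'s under it, or replace this step by the Deligne--Illusie argument as the paper does.
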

\begin{proof}
By \cite{MarkmanGen}, the singular cohomology with $\IZ$-coefficints of a complex $\HKn$-type manifold is torsion-free, so in particular by the Artin comparison isomorphism and the smooth proper base change theorem, $\H^1_\et(X, \IZ_\ell) = 0$ and $\H^2_\et(X, \IZ_\ell)$ are torsion-free. By taking the cohomology of the Kummer sequence, one obtains an exact sequence
 $$ 0 \to \Pic(X) \stackrel{\times \ell^m}{\to} \Pic(X) \to \H^2_\et(X, \mu_{\ell^m}) \to \Br(X)[\ell^m]. $$
This readily implies (a). Part (b) follows from \cite[Rmk~3.5]{Deligne2}. 
In (c), the equality $\Pic(X) = \NS(X)$ follows from the fact that Lie algebra of the Picard scheme of $X$ can be identified with $\H^1(X, \sO_X)$, which is zero. Since $\H^2_\et(X, \IZ_\ell)$ and $\H^2_\cris(X/W)$ are torsion-free, $\NS(X)$ is also torsion-free by (a) and (b). 
\end{proof}

\begin{proposition}
\label{extBBF0}
Let $k$ be a perfect field and $X$ be a variety over $k$. Assume $n > 1$. Suppose either $(i)$ $\mathrm{char\,} k = 0$ and $X$ is a $\HKn$-type variety or $(ii)$ $\mathrm{char\,} k = p \nmid 2(n - 1)$ and $X$ is an excellent reduction of a $\HKn$-type variety. 

Then for any $\ell \neq \mathrm{char\,} k$, there exists a unique $\Gal_k$-invariant pairing $q_{X, \ell} : \H^2_\et(X_{\bar{k}}, \IZ_\ell(1))^{\tensor 2} \to \IZ_\ell$, and in case $(ii)$ there exists a unique perfect pairing $q_X : \H^2_\cris(X/W)^{\tensor 2} \to W(-2)$, such that for $q = q_{X, \ell}$ or $q_X$, $(a)$ equation $(\ref{wq})$ holds for $q$ with $w$ being the cup product, and $(b)$ $q(c_1(\zeta), c_1(\zeta))$ lies in $\IZ$ for every $\zeta \in \NS(X)$ and is positive if $\zeta$ is ample.
\end{proposition}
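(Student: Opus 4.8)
The plan is to lean on Lemma~\ref{lem: simple observation}: a form $q$ satisfying condition (a) is completely determined by the $2n$-fold cup product $w$ together with the single number $q(\xi,\xi)$ for one class $\xi$ with $q(\xi,\xi)$ a non-zero-divisor, and all the cohomology theories involved carry the same $w$ because comparison isomorphisms respect cup products. So the genuine content is (1) producing the correct value on a convenient reference class, (2) checking integrality and positivity on $\NS(X)$, and in the crystalline case (3) getting a \emph{perfect} $W$-valued pairing. For the reference class, pick an ample line bundle $\xi$ on $X$; in case $(ii)$ take $\xi=\eta_k$, the special fibre of a polarization $\eta$ on the lift $X_V$ provided by Definition~\ref{def: excellent good reduction}, so $\eta$ also restricts to an ample class on the geometric generic fibre $X_{\bar K}$. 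By the Lefschetz principle (spread out over a finitely generated subfield of $\bar k$, resp. of $\bar K$, and embed it into $\IC$), $\xi$ and the family witnessing the $\HKn$-type structure are defined over $\IC$, where \ref{sec: known BBFs} supplies the $\IZ$-valued Beauville--Bogomolov form $q_\IC$ on $\H^2(-,\IZ)$, integral on $\NS$ and positive on ample classes. Hence $m:=q_\IC(c_1(\xi_\IC),c_1(\xi_\IC))$ is the unique \emph{positive integer} with $m^n=\xi^{2n}/\lambda_n$, the top self-intersection $\xi^{2n}\in\IZ$ being insensitive to the realization.

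Now define $q_{X,\ell}$ on $\H^2_\et(X_{\bar k},\IZ_\ell(1))$ for $\ell\neq\mathrm{char\,} k$ by transporting $q_\IC\otimes\IZ_\ell$ along the Artin comparison isomorphism, in case $(ii)$ first passing through the smooth proper base change isomorphism $\H^2_\et(X_{\bar k},\IZ_\ell(1))\cong\H^2_\et(X_{\bar K},\IZ_\ell(1))$ (legitimate since $\ell\neq p$, and compatible with cup products and with the cycle class of $\eta$). This is $\IZ_\ell$-valued, satisfies (a), and has $q_{X,\ell}(c_1\xi,c_1\xi)=m$. For the crystalline pairing in case $(ii)$, combine the Berthelot--Ogus isomorphism $\H^2_\cris(X/W)\otimes_W\sO_K\cong\H^2_\dR(X_V/\sO_K)$ with the de Rham realization of the Beauville--Bogomolov form on $\H^2_\dR(X_K/K)$ (constructed by the same transport trick from algebraic de Rham versus singular cohomology over $\IC$, using the de Rham cycle class $c_1^\dR(\eta_K)$ and the value $m$ to descend from $\IC$ to $K$) to obtain a $K_0$-bilinear form on $\H^2_\cris(X/W)[1/p]$ obeying (a) with value $m$ at $c_1\eta_k$; Frobenius-equivariance of the crystalline cup product, fed through Lemma~\ref{lem: simple observation} applied to the Frobenius-conjugated form (same $w$, same value $m$ at the divisor class $c_1\eta_k$), forces $q_X(\varphi x,\varphi y)=p^2\sigma(q_X(x,y))$, so $q_X$ is $W(-2)$-valued; that it is in fact $W$-integral and perfect will follow from an integral $p$-adic comparison together with $p\nmid 2(n-1)=\#\disc(\Lambda_n)$. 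Uniqueness is then immediate: any form satisfying (a) and (b) has $q(c_1\xi,c_1\xi)$ a positive integer, hence equal to $m$ by (a), hence equal to our $q$ by Lemma~\ref{lem: simple observation}.

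It remains to check $\Gal_k$-invariance and condition (b). As $\xi$ is defined over $k$, $c_1(\xi)$ is $\Gal_k$-invariant, and since the cup product is equivariant $g^\ast q_{X,\ell}$ again satisfies (a) with value $m$ at $c_1(\xi)$ for each $g\in\Gal_k$, whence $g^\ast q_{X,\ell}=q_{X,\ell}$ by Lemma~\ref{lem: simple observation}. For (b), fix $\zeta\in\NS(X)=\Pic(X)$ (Proposition~\ref{NSfree}(c); in case $(i)$, $\Pic^0=0$ since $\H^1(X,\sO_X)=0$). From (a) one gets $w(c_1\xi,\dots,c_1\xi,c_1\zeta)=\lambda_n\,q(c_1\zeta,c_1\xi)\,m^{n-1}$, so $q(c_1\zeta,c_1\xi)\in\IQ$; writing $c_1\zeta=a\,c_1\xi+\zeta'$ with $\zeta'$ $q$-orthogonal to $c_1\xi$ and using $(\ref{wqperp})$ gives $q(c_1\zeta,c_1\zeta)=a^2 m+q(\zeta',\zeta')$, a fixed rational number $\rho_\zeta$ expressed through the intersection numbers $\xi^i\cdot\zeta^j$, the integer $m$, and the $\lambda$'s — hence the \emph{same} rational number in every $\ell$-adic and in the crystalline realization. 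Since $\rho_\zeta\in\IZ_\ell$ for every $\ell\neq p$ and $\rho_\zeta=q_X(c_1\zeta,c_1\zeta)\in W^{\sigma=1}=\IZ_p$ (using $\varphi(c_1\zeta)=p\,c_1\zeta$ from Proposition~\ref{NSfree}(b) and the Frobenius-equivariance of $q_X$), we conclude $\rho_\zeta\in\IZ$. For positivity, extend $q|_{\NS(X)}$ to a real quadratic form $q_\IR$ on $\NS(X)_\IR$; by continuity $q_\IR(D,D)^n=D^{2n}/\lambda_n>0$ for every $D$ in the ample cone, which is open, convex, hence connected, so $q_\IR$ has constant sign there, and $q_\IR(c_1\xi,c_1\xi)=m>0$. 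In case $(i)$ all of (b) is read off directly from $q_\IC$ on $\NS(X)\subseteq\NS(X_\IC)$.

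The one step that is not formal is (3). The recipe of Lemma~\ref{lem: simple observation} divides by $\lambda_{n-1}=(2n-3)!!$, which can be divisible by $p$ even when $p\nmid 2(n-1)$ (e.g. $n=4$, $p=5$), so $W$-integrality of $q_X$ cannot simply be extracted from the cup product on $\H^2_\cris(X/W)$; this is where one must invoke an integral comparison with $p$-adic \'etale cohomology, or equivalently the explicit quadratic-lattice identification $\H^2_\cris(X/W)\cong\Lambda_n\otimes_\IZ W$, and where the hypothesis $p\nmid 2(n-1)$ is genuinely used — to make the discriminant prime to $p$ and force $q_X$ to be unimodular over $W$. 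I expect this to be the main obstacle; the rest of the argument is a bookkeeping exercise in comparison isomorphisms and the uniqueness in Lemma~\ref{lem: simple observation}.
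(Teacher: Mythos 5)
Your handling of the $\ell$-adic forms, Galois invariance, integrality on $\NS(X)$ (via $\IQ\cap\what{\IZ}^p\cap W=\IZ$) and positivity is sound and close in spirit to the paper's argument, with only cosmetic differences (the paper gets positivity by a switching trick with $(\ref{wq})$ rather than connectedness of the ample cone, and it never needs the Frobenius identity inside the proof -- that is deferred to Remark~\ref{rmk: fcomp}). But the crystalline half of case $(ii)$, which is the actual content of the proposition, is not proved: you only obtain a $K_0$-valued form on $\H^2_\cris(X/W)[1/p]$ via Berthelot--Ogus and then state that $W$-integrality and perfectness ``will follow from an integral $p$-adic comparison,'' correctly flagging this as the main obstacle without supplying the step. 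Your suggested fallback, the identification $\H^2_\cris(X/W)\cong\Lambda_n\otimes_\IZ W$ as quadratic lattices, is circular here: such an identification presupposes exactly the integral pairing on $\H^2_\cris(X/W)$ whose existence is being asserted.

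The missing argument, as the paper carries it out, is the following. Because $p\nmid 2(n-1)$, the $\IZ_p$-lattice $\H^2_\et(X_{\bar K},\IZ_p(1))$ is self-dual under $q_{X_K,p}$, so this $G_K$-invariant tensor cuts out a \emph{reductive} group scheme over $\IZ_p$ (the orthogonal group of the lattice). One then invokes Kisin's integral comparison for such tensors (\cite[Cor.~1.3.6]{int}), applied through the Breuil--Kisin module $\BK(\H^2_\et(X_{\bar K},\IZ_p))$, together with \cite[Thm~14.6(iii)]{BMS}, which identifies $\BK(\H^2_\et(X_{\bar K},\IZ_p))\otimes_{\fS}W$ with $\H^2_\cris(X/W)$ inside the $B_\cris$-comparison; this transports $q_{X_K,p}$ to a $W$-valued pairing $q_X$ on $\H^2_\cris(X/W)$, and the same corollary shows $q_X\cong q_{X_K,p}\otimes_{\IZ_p}W$ after base change to $\bar k$, whence perfectness. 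Condition (a) then follows because $C_\cris$ respects cup products. Without this (or an equivalent integral $p$-adic input), your construction only yields $q_X$ after inverting $p$, and, as you yourself note, division by $\lambda_{n-1}$ prevents recovering integrality from the cup product alone; so as written the proposal does not establish the proposition in case $(ii)$.
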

\begin{proof}
In case (i), we may assume that $k$ can be embedded into $\IC$ and obtain the desired $q_{X, \ell}$ via the Artin comparison isomorphism. To see the $\Gal_k$-invariance, note first the cup product $\H^2(X_{\bar{k}}, \IZ_\ell(1))^{\tensor 2n} \to \IZ_\ell$ has this property. Moreover, $q_{X, \ell}(c_1(\xi), c_1(\xi))$ is the unique value in $\IZ_{>0}$ such that $\xi^{2n} = \lambda_n q_{X, \ell}(c_1(\xi), c_1(\xi))^n$. In particular, this value is independent of $\ell$. Then apply \ref{lem: simple observation}.

Now suppose we are in case (ii) and let $(X_V, V, K)$ be as in \ref{def: excellent good reduction} for $X$. Let $\xi$ be a polarization on $X$ which extends to $X_V$. For $\ell \neq p$, $q_{X_K, \ell}$ induces the desired $q_{X, \ell}$ via the smooth and proper base change theorem and we have $q_{X, \ell}(c_1(\xi)) = q_{X_K, \ell}(c_1(\xi))$. To construct $q_X$, we use the fact that $\H^2_\cris(X/W)$ can be recovered from $\H^2_\et(X_{\bar{K}}, \IZ_p)$ by integral $p$-adic Hodge theory. Let $\fS$ be the ring $W[\![u]\!]$. Let $\BK(-)$ denote the Breuil-Kisin module functor\footnote{In \cite[(1.2)]{int}, the functor $\BK(-)$ is denoted by $\fM(-)$ and $\fS$ is denoted by $W[\![u]\!]$.} which takes a $\IZ_p$-lattice in a crystalline $G_K := \Gal(\bar{K}/K)$-representation to a $\fS$-module with a Frobenius action. There is a comparison isomorphism $$C_\cris : \H^2_\et(X_{\bar{K}}, \IZ_p) \tensor_{\IZ_p} B_\cris \stackrel{\sim}{\to} \H^2_\cris(X/W) \tensor_{W} B_\cris$$ which restricts to an isomorphism $(\H^2_\et(X_{\bar{K}}, \IZ_p) \tensor B_\cris)^{G_K} \stackrel{\sim}{\to} \H^2_\cris(X/W) \tensor K_0$. $\BK(\H^2_{\et}(X_{\bar{K}}, \IZ_p)) \tensor_\fS W$, which is naturally a $W$-submodule of $(\H^2_\et(X_{\bar{K}}, \IZ_p) \tensor B_\cris)^{G_K}$ (\cite[Thm~1.2.1(1)]{int}), is mapped isomorphically onto $\H^2_\cris(X/W)$ by \cite[Thm~14.6(iii)]{BMS}. By assumption $p \nmid 2(n - 1)$, $\H^2_\et(X_{\bar{K}}, \IZ_p(1))$ is self-dual under $q_{X_{K}, p}$. As a $G_K$-invariant tensor on $\H^2_\et(X_{\bar{K}}, \IZ_p(1))$, $q_{X_{K}, p}$ defines a reductive group over $\IZ_p$, i.e., the orthogonal group $\O(\H^2_\et(X_{\bar{K}}, \IZ_p(1)))$. By \cite[Cor.~1.3.6]{int}, $q_{X_K, p}$ induces via $C_\cris$ a pairing $q_X$ on $\H^2_\cris(X/W)$. To check that the pairing is perfect, we may assume that $k = \bar{k}$, in which case by \cite[Cor.~1.3.6]{int} again $q_X \iso q_{X_K, p} \tensor_{\IZ_p} W$ as quadratic forms. Finally, $q_X$ satisfies (a) because the morphism $C_\cris$ respects cup products (\cite[Prop.~11.6]{IIK}).

Finally, we check (b). It is clear in case (i). For case (ii), we first observe that the restriction of $q$ on $\NS(X)_\IQ$ is completely determined by the value $q(c_1(\xi), c_1(\xi))$ and equation (\ref{wqperp}). In particular, since the top intersection numbers of line bundles are always integers, this restriction is $\IQ$-valued and is independent of the choice of $q = q_{X, \ell}$ or $q_X$. If $\zeta \in \NS(X)$, $q(c_1(\zeta), c_1(\zeta))$ is an element of $\IQ$ but also an element of $\what{\IZ}^p$ and $W$. Therefore, $q(c_1(\zeta), c_1(\zeta)) \in \IZ$. Now suppose $\zeta$ is ample. Apply (\ref{wq}) with $\alpha_1 = \cdots = \alpha_{2n - 1} = c_1(\xi)$ and $\alpha_{2n} = c_1(\zeta)$, we see that 
$$ \lambda_n q(c_1(\xi), c_1(\xi))^{n - 1} q(c_1(\xi), c_1(\zeta)) = \xi^{n - 1} \zeta > 0. $$
Therefore, $q(c_1(\xi), c_1(\zeta)) > 0$. If $n$ is even, by switching the roles of $\zeta$ and $\xi$ in the above equation, we get $q(c_1(\zeta), c_1(\zeta)) > 0$. If $n$ is odd, then we get $q(c_1(\zeta), c_1(\zeta)) > 0$ by replacing $\xi$ with $\zeta$. 
\end{proof}

\begin{definition}
Whenever $X$ is a smooth proper variety to which \ref{extBBF0} is applicable, $q_{X, \ell}$ (resp. $q_X$) is called the $\ell$-adic (resp. crystalline) Beauville-Bogomolov form on $X$. 
\end{definition}

\begin{corollary}
\label{cor: perfect pairing}
Let $k$ be a perfect field of characteristic $p \nmid 2n!$ and let $X/k$ be an excellent reduction of a $\HKn$-type variety. Then the natural cup product pairing $$\H^1(X, T_X) \times \H^1(X, \Ohm_X) \to \H^2(X, \sO_X) \iso k$$ is perfect, and the map $\H^2(X, \sO_X) \to \H^{2n}(X, \sO_X)$ defined by raising each element to the $n$th cup product power is an isomorphism.
\end{corollary}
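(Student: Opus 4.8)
The plan is to deduce both statements from the reduction modulo $p$ of the perfect crystalline Beauville--Bogomolov form supplied by Proposition \ref{extBBF0}. Fix a lift $X_V/V$ as in Definition \ref{def: excellent good reduction} and let $\bar q = q_X \bmod p$, a perfect symmetric pairing on $\H^2_\dR(X/k)$ satisfying \eqref{wq} with $w$ the de Rham cup product into $\H^{4n}_\dR(X/k) \cong k$ (the coefficients $1/(n!\,2^n)$ being units in $k$ because $p \nmid 2n!$). Since $X$ has the Hodge numbers of a complex $\HKn$-type variety and the Hodge--de Rham spectral sequence degenerates, each of $\H^0(X,\Ohm^2_X)=:k\sigma$, $\H^0(X,\Ohm^{2n}_X)=\H^0(X,\omega_X)$, $\H^2(X,\sO_X)$ and $\H^{2n}(X,\sO_X)$ is one-dimensional over $k$, and $\dim_k\H^1(X,T_X)=\dim_k\H^1(X,\Ohm^1_X)$. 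I will also use, after transporting the corresponding statements over $\IC$ along $X_V$ and reducing: that $\Fil^2\H^2_\dR(X/k)=k\sigma$ is $\bar q$-isotropic with $(\Fil^1\H^2_\dR)^{\perp}=\Fil^2\H^2_\dR$; and that $\omega_X\cong\sO_X$ — which holds because $\omega_{X_K/K}$ is trivial ($X_{\bar K}$ being of $\HKn$-type) and $\Pic(X_V)\hookrightarrow\Pic(X_K)$, a generically trivializing section of a line bundle extending to a trivialization since the special fibre is an irreducible divisor cut out by a uniformizer.

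The heart of the argument is a single computation. As $p\neq 2$ and $\bar q$ is nondegenerate with $\sigma$ isotropic, choose $\tau\in\H^2_\dR(X/k)$ with $\bar q(\sigma,\tau)=1$ and $\bar q(\sigma,\sigma)=\bar q(\tau,\tau)=0$. Inserting $n$ copies of $\sigma$ and $n$ copies of $\tau$ into \eqref{wq}, only the $(n!)^2 2^n$ permutations matching each $\sigma$ with a $\tau$ survive, so $\sigma^{\cup n}\cup\tau^{\cup n}=n!\neq 0$ in $\H^{4n}_\dR(X/k)\cong k$. This gives two things at once. First, $\sigma^{\cup n}\neq 0$ lies in $\Fil^{2n}\H^{2n}_\dR(X/k)=\H^0(X,\omega_X)$ and, by multiplicativity of the (degenerate) Hodge--de Rham spectral sequence, equals the image of $\sigma^{\wedge n}$; since $\H^0(X,\omega_X)$ is one-dimensional and $\omega_X\cong\sO_X$, the section $\sigma^{\wedge n}$ is nowhere vanishing, so $\sigma$ is a symplectic form and $v\mapsto\iota_v\sigma$ is an isomorphism $T_X\xrightarrow{\ \sim\ }\Ohm^1_X$. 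Second, since $\sigma^{\cup n}\in\Fil^{2n}$ we have $\sigma^{\cup n}\cup\Fil^1\H^{2n}_\dR\subseteq\Fil^{2n+1}\H^{4n}_\dR=0$, so $\tau^{\cup n}\notin\Fil^1\H^{2n}_\dR$; hence the image $\mu$ of $\tau$ in $\gr^0\H^2_\dR=\H^2(X,\sO_X)$ is nonzero and $\mu^{\cup n}\neq 0$ in $\gr^0\H^{2n}_\dR=\H^{2n}(X,\sO_X)$. As both groups are one-dimensional, the $n$-th cup-power map $\H^2(X,\sO_X)\to\H^{2n}(X,\sO_X)$ is nonzero, hence an isomorphism. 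This is the second assertion.

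For the perfectness assertion, I would push the pairing $\H^1(X,T_X)\times\H^1(X,\Ohm^1_X)\to\H^2(X,\sO_X)$ through the symplectic isomorphism $v\mapsto\iota_v\sigma$ to obtain a symmetric bilinear form on $\gr^1\H^2_\dR(X/k)=\H^1(X,\Ohm^1_X)$, and identify it, up to a unit of $k$ (by the Fujiki-type identities specializing \eqref{wqperp}, whose constants are invertible since $p\nmid 2n!$), with the restriction of $\bar q$ to $\gr^1\H^2_\dR(X/k)$. That restriction is nondegenerate because $(\Fil^1)^{\perp}=\Fil^2\subset\Fil^1$, so $\bar q$ descends to $\Fil^1/\Fil^2$ with radical $\Fil^1\cap(\Fil^1)^{\perp}/\Fil^2=0$; hence the original pairing is perfect.

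The step I expect to be hardest is the nondegeneracy invoked above: a priori the relative symplectic form on $X_V$ could degenerate along the entire special fibre (equivalently, $\sigma^{\cup n}$ could vanish modulo $p$), and it is precisely the integrality of $q_X$ — resting on the integral $p$-adic Hodge theory behind Proposition \ref{extBBF0} — together with the bound $p\nmid 2n!$ that excludes this. A secondary point requiring care is the integral, not merely rational, matching of the transported pairing with $\bar q|_{\gr^1}$ in the last paragraph.
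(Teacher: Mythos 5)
Your proposal is correct and follows essentially the same route as the paper's proof: both reduce everything to the perfectness of $q_X \bmod p$ from Proposition \ref{extBBF0} together with the Fujiki relation \eqref{wq}, via the computations $w(\sigma,\ldots,\sigma,\tau,\ldots,\tau)=n!\,\bar q(\sigma,\tau)^n$ and the analogous $(1,1)$-class formula, your $\sigma,\tau$ playing the role of the paper's generators $\w,\rho$ (you work with the Hodge filtration and graded pieces where the paper asserts a $q$-orthogonal splitting, and you make explicit the nondegeneracy of the $2$-form and the triviality of $\omega_X$, which the paper leaves implicit in the claim that the vertical arrows of its diagram are isomorphisms). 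The step you flag as hardest is in fact harmless and needs no integral comparison over the ramified lift: isotropy of $\sigma$ and the orthogonality $\bar q(\Fil^2,\Fil^1)=0$ follow directly mod $p$ from \eqref{wq} and multiplicativity of the Hodge filtration, since the relevant cup products land in $\Fil^{2n+1}\H^{4n}_\dR(X/k)=0$ while the hafnian expansion of \eqref{wq} equates them with invertible multiples of $\bar q(\sigma,\sigma)$ and $\bar q(\sigma,\alpha)$ for $\alpha\in\Fil^1$.
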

\begin{proof}
Let $\w$ and $\rho$ be generators of $\H^0(\Ohm_X^2)$ and $\H^2(\sO_X)$ respectively, we have the following diagram 
\begin{equation}
\label{diag: compare pairings}
\begin{tikzcd}
    \H^1(\Ohm_X^1) \times \H^1(T_X) \arrow{d}{\mathrm{id} \times \w} \arrow{r}{\cup} & \H^2(\sO_X) \arrow{r}{\rho^{n - 1}} \arrow{d}{\w} & \H^{2n}(\sO_X) \arrow{d}{\w^n} \\
    \H^1(\Ohm_X) \times \H^1(\Ohm_X) \arrow{r}{\cup} & \H^2(\Ohm_X^2) \arrow{r}{\w^{n-1} \rho^{n-1}} & \H^{2n}(\Ohm_X^{2n})
    \end{tikzcd}    
\end{equation}
whose vertical arrows are all isomorphisms. Note that $q_X \tensor_W k$ induces an orthogonal decomposition 
$$ \H^2_\dR(X/k) = \H^0(\Ohm_X) \oplus \H^1(\Ohm_X^1) \oplus \H^2(\sO_X). $$
Let $w$ denote the multilinear symmetric map $\H^2_\dR(X/k)^{\tensor 2n} \to k$ given by cup product. Note that 
\begin{align*}
    w(\overbrace{\w, \cdots, \w}^{n}, \overbrace{\rho, \cdots, \rho}^{n}) &= n! q(\w, \rho)^{n}, \text{ and} \\
    w(\alpha, \beta, \overbrace{\w, \cdots, \w}^{n - 1}, \overbrace{\rho, \cdots, \rho}^{n - 1}) &= (n - 1)! q(\alpha, \beta) q(\w, \rho)^{n - 1}, \text{ for any }\alpha, \beta \in \H^1(\Ohm_X)
\end{align*}
Now we conclude using that $q$ restricts to perfect pairings $\H^0(\Ohm_X^2) \times \H^2(\sO_X)$ and $H^1(\Ohm_X^1)^{\tensor 2}$. 
\end{proof}

\begin{remark}
\label{rmk: fcomp}
Suppose we are in case (ii) of \ref{extBBF0}. Then using the relation between $q_X$ and the cup product, one deduces that $q_X$ has the following relation with the Frobenius action $F$ on $\H^2_\cris(X/W)$: 
\begin{align}
\label{ffcomp}
q_X(F(x), F(y)) = p^2 \sigma(q_X(x, y)), \forall x, y \in \H^2_\cris(X/W).
\end{align}
Together with the Mazur-Ogus inequality \cite[Thm~8.26]{BO}, this implies that $q_X$ endows the F-crystal $\H^2_\cris(X/W)$ the structure of a K3 crystal (\cite[Def.~3.1]{Ogus}). The reader will soon see that this K3 crystal structure plays a fundamental role in the geometry of excellent reductions of $\HKn$-type varieties. 
\end{remark}

\subsubsection{}\label{2.1.7} We will need some basic theory of gluing lattices. See \cite[\S2]{Curt} for a summary. Let $R$ be $\IZ$, $\what{\IZ}$, $\IZ_p$ or $\what{\IZ}^p$ for some prime $p$ and $F := R \tensor_\IZ \IQ$. Let $L$ be a non-degenerate quadratic lattice over $R$. Then there is a natural $F/R$-valued form on $\disc(L)$. The natural map $M \mapsto M/L$ gives a bijective correspondence 
$$ \{ \text{Lattices $M$ over $R$ with } L \subseteq M \subseteq L^\vee \} \leftrightarrow \{ \text{Isotropic subgroups $\bar{M}$ with } 0 \subseteq \bar{M} \subseteq \disc(L) \}.$$
If $R = \IZ$ or $\what{\IZ}$, then there is a canonical decomposition $\disc(L) = \prod_\ell \disc(L_\ell)$, as $\ell$ runs over all primes. 

By a pointed lattice we mean a pair $(\Lambda, \lambda)$ where $\Lambda$ is a quadratic lattice over $R$ and $\lambda \in \Lambda$ is a point. We say $\lambda$ is primitive if whenever $\lambda = u \lambda'$ for some $\lambda' \in \Lambda$, $u$ is a unit in $R$. By $\O(\Lambda, \lambda)$ we mean the subgroup of $\O(\Lambda)$ which fixes $\lambda$. If we define $L := \lambda^\perp$, then $\O(\Lambda, \lambda)$ can also be viewed as a subgroup of $\O(L)$. One easily checks by the theory of extending isometries of lattices ( \cite[6]{Curt}) that the \textit{discriminant kernel} of $L$, i.e., the kernel of the natural map $\O(L) \to \O(\disc(L))$, is contained in $\O(\Lambda, \lambda)$. 

Next, we give some lemmas on the classification of quadratic lattices. 
\begin{lemma}
\label{lem: apply strong approx}
Let $L$ be a quadratic lattice over $R = \what{\IZ}$ or $\IZ$. The discriminant kernel of $L$ contains a congruence subgroup of $\O(L)$. 
\end{lemma}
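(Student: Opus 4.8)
The plan is to exhibit an explicit congruence subgroup contained in the discriminant kernel, namely the principal congruence subgroup of level $N$, where $N$ is the exponent of the finite abelian group $\disc(L) = L^\vee/L$ (taking $N = |\disc(L)|$ works just as well and avoids computing the exponent). Write $\Gamma(N) := \ker\bigl(\O(L) \to \O(L/NL)\bigr)$, i.e. the set of $g \in \O(L)$ with $g(y) - y \in NL$ for all $y \in L$. This is a congruence subgroup of $\O(L)$ in the appropriate sense: a finite-index subgroup when $R = \IZ$, and an open subgroup when $R = \what{\IZ}$ (here one uses that $\disc(L_\ell) = 0$, hence $\O(L_\ell/N L_\ell) = \O(L_\ell)$, for all but the finitely many $\ell$ dividing the discriminant, so $\Gamma(N) = \prod_\ell \ker(\O(L_\ell) \to \O(L_\ell / NL_\ell))$ is open). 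It then suffices to show $\Gamma(N)$ lies in the discriminant kernel.

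The key steps are the following. First I would record the elementary observation that any $g \in \O(L)$ preserves the $F$-bilinear extension of the form, hence preserves $L^\vee = \{z \in L \tensor_R F : \langle z, L \rangle \subseteq R\}$, so $g$ acts on $\disc(L)$; and that $g \in \Gamma(N)$ forces $g^{-1} \in \Gamma(N)$, since $g^{-1} - \id = -g^{-1}(g - \id)$ maps $L$ into $g^{-1}(NL) = NL$. Next comes the duality computation: for $x \in L^\vee$ and $y \in L$,
\[ \langle g(x) - x,\ y \rangle \;=\; \langle x,\ g^{-1}(y) \rangle - \langle x,\ y \rangle \;=\; \langle x,\ g^{-1}(y) - y \rangle \;\in\; \langle L^\vee,\ NL \rangle \;\subseteq\; NR. \]
Since this holds for every $y \in L$, the element $g(x)-x \in L^\vee$ pairs $L$ into $NR$, hence $\tfrac{1}{N}(g(x) - x) \in L^\vee$ by the non-degeneracy (and double duality $L^{\vee\vee} = L$) of the lattice; that is, $g(x) - x \in NL^\vee$. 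By the choice of $N$ we have $NL^\vee \subseteq L$, so $g(x) - x \in L$, meaning $g$ acts trivially on $\disc(L)$. Thus $\Gamma(N)$ is contained in the discriminant kernel, which proves the lemma.

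I do not expect a real obstacle here; the argument is entirely formal. The only points requiring a word of care are (a) justifying that $\Gamma(N)$ genuinely deserves the name "congruence subgroup" in the two cases $R = \IZ$ and $R = \what{\IZ}$, as above, and (b) the bookkeeping that $z \in L^\vee$ with $\langle z, L \rangle \subseteq NR$ indeed forces $z \in NL^\vee$. As an alternative in the case $R = \what{\IZ}$, one can bypass the uniform level $N$ and instead invoke the canonical decomposition $\disc(L) = \prod_\ell \disc(L_\ell)$ from \ref{2.1.7}, reducing to the finitely many primes $\ell \mid \disc(L)$ and running the same computation prime by prime with $N_\ell = \ell^{\val_\ell |\disc(L)|}$.
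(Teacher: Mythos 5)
Your proof is correct and follows essentially the same route as the paper: both exhibit the principal congruence subgroup $\ker(\O(L)\to\O(L/NL))$ for a level $N$ with $NL^\vee\subseteq L$ and check it acts trivially on $\disc(L)$. The only difference is that the paper verifies this by the one-line rescaling argument (write $y\in L^\vee$ as $N^{-1}x$ with $x\in L$, so $g(y)-y=N^{-1}(g(x)-x)\in L$), whereas you detour through the pairing with $L$ and double duality to get $g(x)-x\in NL^\vee$; both are fine.
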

\begin{proof}
We can always find an integer $m$ such that $ L \subseteq L^\vee \subseteq m^{-1} L$. If $g \in \O(L)$ satisfies $g \equiv 1 \mod m$, then for any $x \in L$, $g \cdot (m^{-1} x) - (m^{-1} x) \in L$. In particular, for any $y \in L^\vee$ with image $\bar{y} \in L^\vee/L$, we have $g \cdot \bar{y} = \bar{y}$. Therefore, the discriminant kernel contains the congruence subgroup $\ker(\O(L) \to \O(L/ m L))$.
\end{proof}
\begin{lemma}
\label{strongapprox}
Let $L = U \oplus L'$ be a quadratic lattice over $\IZ$, where $U$ standards for the standard hyperbolic plane. If an open subgroup $\sK \subseteq \O(L \tensor \what{\IZ})$ contains $\O(U \tensor \what{\IZ})$, viewed as a subgroup of $\O(L \tensor \what{\IZ})$ in the obvious way, then the double quotient $\O(L \tensor \IQ) \backslash \O(L \tensor \IA_f) / \sK$ is a singleton. 
\end{lemma}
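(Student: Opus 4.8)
The plan is to pass to the spin group and apply strong approximation, using the spinor norm to control the difference between $\O$- and $\SO$-double cosets and between $\SO$- and $\Spin$-double cosets. First I reduce to $\SO$. Fix a hyperbolic basis $e,f$ of $U$, so $\langle e,e\rangle=\langle f,f\rangle=0$ and $\langle e,f\rangle=1$; the reflection $\tau_{e+f}$ (which sends $e\mapsto -f$, $f\mapsto -e$) is integral of determinant $-1$, so the image of $\O(U\otimes\IZ_\ell)$ under the determinant is $\{\pm1\}$ for every $\ell$. Hence $\O(U\otimes\what\IZ)\subseteq\sK$ already surjects onto $\O(L\otimes\IA_f)/\SO(L\otimes\IA_f)$, so that $\O(L\otimes\IA_f)=\SO(L\otimes\IA_f)\cdot\sK$, and it suffices to show that $\SO(L\otimes\IQ)\backslash\SO(L\otimes\IA_f)/\sK_0$ is a singleton, where $\sK_0:=\sK\cap\SO(L\otimes\IA_f)$ contains $\SO(U\otimes\what\IZ)$. (If $\rank L=2$ then $L=U$, $\SO(U)\cong\mathbb{G}_m$, $\sK_0=\what\IZ^\times$, and the claim is just $\Cl(\IQ)=1$; so from now on assume $\rank L\ge3$.)

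Next comes the $\SO$-computation. Write $G=\SO(L)$ and $\widetilde G=\Spin(L)$ as algebraic groups over $\IQ$, with $\pi:\widetilde G\to G$ the central isogeny with kernel $\mu_2$. For each place $v$ — and, at the good odd primes $\ell$ where $L\otimes\IZ_\ell$ is unimodular, over $\IZ_\ell$ as well — fppf (resp.\ \'etale) cohomology gives an exact sequence $\widetilde G(\IQ_v)\xrightarrow{\pi}G(\IQ_v)\xrightarrow{\theta}\IQ_v^\times/(\IQ_v^\times)^2$, where $\theta$ is the spinor norm; since $L\otimes\IQ_v$ contains $U$ and is isotropic, $\theta$ is surjective. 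A place-by-place check — using $\IZ_\ell^\times/(\IZ_\ell^\times)^2\hookrightarrow\IQ_\ell^\times/(\IQ_\ell^\times)^2$ at the good odd primes and using all of $G(\IQ_\ell)$ at the finitely many remaining primes including $\ell=2$ — shows that the kernel of the adelic spinor norm $\theta:G(\IA_f)\to\IA_f^\times/(\IA_f^\times)^2$ is exactly $\pi(\widetilde G(\IA_f))$. I also need the spinor norm of $\sK_0$ to be large: in the hyperbolic plane $U$ one has $\mathrm{diag}(t,t^{-1})=\tau_{f-te}\circ\tau_{e-f}$ in the basis $e,f$, whose spinor norm is $\langle f-te,f-te\rangle\langle e-f,e-f\rangle=(-2t)(-2)=4t\equiv t$ modulo squares; hence $\theta(\SO(U\otimes\IZ_\ell))$ is the image of $\IZ_\ell^\times$ in $\IQ_\ell^\times/(\IQ_\ell^\times)^2$, and $\theta(\sK_0)\supseteq\theta(\SO(U\otimes\what\IZ))$ is the image of $\what\IZ^\times$ in $\IA_f^\times/(\IA_f^\times)^2$. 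Now given $g\in\SO(L\otimes\IA_f)$, its spinor norm $\theta(g)$ lies in $\IA_f^\times/(\IA_f^\times)^2=\IQ^\times\what\IZ^\times(\IA_f^\times)^2/(\IA_f^\times)^2$ (the identity $\IA_f^\times=\IQ^\times\what\IZ^\times$ is again $\Cl(\IQ)=1$), and since $\theta$ is surjective on $G(\IQ)$, I may choose $\gamma_1\in\SO(L\otimes\IQ)$ and $k_1\in\SO(U\otimes\what\IZ)\subseteq\sK_0$ with $\theta(\gamma_1gk_1)=1$. Then $g':=\gamma_1gk_1\in\ker\theta=\pi(\widetilde G(\IA_f))$; pick $\tilde g\in\widetilde G(\IA_f)$ with $\pi(\tilde g)=g'$. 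Because $\widetilde G=\Spin(L)$ is simply connected and $\widetilde G(\IR)=\Spin(L\otimes\IR)$ is non-compact (as $L\otimes\IR\supseteq U$ has rank $\ge3$, hence signature $(a,b)$ with $a,b\ge1$), strong approximation makes $\widetilde G(\IQ)$ dense in $\widetilde G(\IA_f)$; applied to the open subgroup $\pi^{-1}(\sK_0)$ it yields $\tilde g=\tilde\gamma\,\tilde u$ with $\tilde\gamma\in\widetilde G(\IQ)$ and $\tilde u\in\pi^{-1}(\sK_0)$. Pushing down, $g'=\pi(\tilde\gamma)\pi(\tilde u)\in\SO(L\otimes\IQ)\cdot\sK_0$, whence $g=\gamma_1^{-1}g'k_1^{-1}\in\SO(L\otimes\IQ)\cdot\sK_0$; together with the first paragraph this proves the lemma.

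The substantive inputs — strong approximation for $\Spin$, surjectivity of the spinor norm on an isotropic quadratic space, and the identification $\ker\theta=\pi(\widetilde G(\IA_f))$ — are all standard, so I expect no obstacle of principle. The part demanding actual care rather than citation is the bookkeeping around the spinor norm: confirming that $\theta(\sK)$ contains \emph{all} unit id\`ele classes modulo squares (the $\SO(U\otimes\IZ_\ell)$ computation, which must be checked to remain valid at $\ell=2$, where $U$ is still unimodular but $\IZ_2^\times/(\IZ_2^\times)^2$ is larger), and keeping the exact sequences over $\IZ_\ell$ versus $\IQ_\ell$ straight while computing $\ker\theta$ adelically. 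I would also verify the low-rank cases $\rank L=3,4$ (where $\Spin(L)$ need not be absolutely simple over $\IQ$) only to the extent of confirming that its archimedean points are non-compact, which holds because the signature is $(a,b)$ with $a,b\ge1$ and $a+b\ge3$.
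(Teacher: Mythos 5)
Your argument is correct and follows essentially the same route as the paper: the paper also passes from $\O$ to $\SO$ to $\Spin$ and concludes by strong approximation, delegating the determinant and spinor-norm bookkeeping with the hyperbolic plane (your reflection $\tau_{e+f}$ and $\mathrm{diag}(t,t^{-1})$ computations, plus $\IA_f^\times = \IQ^\times\what{\IZ}^\times$) to an adaptation of Ogus's Lemma~7.7, which is precisely what you have written out in detail.
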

\begin{proof}
Let $\ad : \Spin(L \tensor \IA_f) \to \SO(L \tensor \IA_f)$ be the natural map given by conjugation. Set $\sK' := \sK \cap \SO(L \tensor \what{\IZ})$ and $\wt{\sK} :=  \ad^{-1}(\sK')$. Using the assumption that $\sK$ contains $\O(U \tensor \what{\IZ})$, one can easily adapt the proof of \cite[Lem.~7.7]{Ogus} to show that the following two maps are both surjections: 
$$ \Spin(L \tensor \IQ) \backslash \Spin(L \tensor \IA_f ) / \wt{\sK} \stackrel{\ad}{\to} \SO(L \tensor \IQ) \backslash \SO(L \tensor \IA_f ) / \sK' \stackrel{j}{\to}  \O(L \tensor \IQ) \backslash \O(L \tensor \IA_f ) / \sK.$$ By the strong approximation theorem, the first double quotient is a singleton. Hence the third double quotient is also a singleton.
\end{proof}

\begin{lemma}
\label{pointedHasse}
Let $(\Lambda, \lambda)$ be a pointed lattice over $\IZ$ such that $\Lambda$ contains $U^{\oplus 2}$ as an orthogonal direct summand. Then any other pointed lattice $(\Lambda', \lambda')$ is isomorphic to $(\Lambda, \lambda)$ if and only if $\Lambda$ and $\Lambda'$ have the same signature and $(\Lambda \tensor \what{\IZ}, \lambda \tensor 1) \iso (\Lambda' \tensor \what{\IZ}, \lambda' \tensor 1)$.
\end{lemma}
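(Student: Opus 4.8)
The ``only if'' direction is immediate. For the converse, assume $(\Lambda',\lambda')$ has the same signature as $(\Lambda,\lambda)$ and $(\Lambda\tensor\what{\IZ},\lambda)\iso(\Lambda'\tensor\what{\IZ},\lambda')$; then $\Lambda$ and $\Lambda'$ have the same rank and $\lambda^2=\lambda'^2$, and since the imprimitivity index of a vector is detected locally I would divide both vectors by it and reduce to the case that $\lambda,\lambda'$ are primitive (the case $\lambda=0$ being classical). The plan is then two steps: first match $\lambda$ with $\lambda'$ over $\IQ$, and afterwards upgrade an adelic identification of $\Lambda$ and $\Lambda'$ to a rational one by strong approximation, in the guise of Lemma \ref{strongapprox}.

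For the first step: by Hasse--Minkowski the hypotheses give $\Lambda_\IQ\iso\Lambda'_\IQ$ (isometric over every $\IQ_\ell$ by the $\what{\IZ}$-isometry, and over $\IR$ by the signature and rank), and the isometry class of a nonzero vector in a nondegenerate $\IQ$-form is determined by its norm --- by Witt cancellation after splitting off $\langle\lambda^2\rangle$ when $\lambda^2\neq0$, by completing $\lambda,\lambda'$ to hyperbolic pairs when $\lambda^2=0$ --- so I get an isometry $\phi\colon\Lambda_\IQ\sto\Lambda'_\IQ$ with $\phi(\lambda)=\lambda'$. Replacing $\Lambda$ by $\phi(\Lambda)$, I may assume $\Lambda$ and $\Lambda'$ are lattices in one quadratic space $V$ over $\IQ$ sharing the same primitive vector $v:=\lambda'$, with $(\Lambda\tensor\IZ_\ell,v)\iso(\Lambda'\tensor\IZ_\ell,v)$ for all $\ell$. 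As these two lattices coincide at all but finitely many $\ell$, the local pointed isometries glue to $g\in\O(V\tensor\IA_f)$ with $g(v)=v$ and $g(\Lambda\tensor\what{\IZ})=\Lambda'\tensor\what{\IZ}$, and it remains to manufacture $h\in\O(V)$ with $h(v)=v$ and $h(\Lambda)=\Lambda'$.

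For the second step, suppose first $v^2\neq0$, so the stabilizer of $v$ in $\O(V)$ is $\O(L_\IQ)$ for $L:=v^\perp\cap\Lambda$, and $g$ lies in $\O(L\tensor\IA_f)$. The key lattice-theoretic input is that $L$, being the orthogonal complement of a primitive vector in a lattice containing $U^{\oplus2}$, still has a hyperbolic plane as an orthogonal summand, $L=U\oplus L''$. Set $\sK:=\{k\in\O(L\tensor\IA_f):k(\Lambda\tensor\what{\IZ})=\Lambda\tensor\what{\IZ}\}$. Since $U$ is unimodular, $\O(U)$ acts trivially on $\disc(\IZ v\oplus L)$ and hence fixes every lattice between $\IZ v\oplus L$ and its dual, in particular $\Lambda$, so $\sK\supseteq\O(U\tensor\what{\IZ})$; and $\sK$ is open because by \ref{2.1.7} it contains the discriminant kernel of $L$, which by \ref{lem: apply strong approx} contains a congruence subgroup. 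Lemma \ref{strongapprox} then says $\O(L_\IQ)\backslash\O(L\tensor\IA_f)/\sK$ is a single class, so $g=\gamma\kappa$ with $\gamma\in\O(L_\IQ)$, $\kappa\in\sK$, and $\gamma$ (extended by the identity on $\IQ v$) is the desired $h$. If instead $v^2=0$, the stabilizer of $v$ is an extension of $\O(v^\perp/\IQ v)$ by the unipotent group of Eichler transvections fixing $v$, the quotient $(v^\perp\cap\Lambda)/\IZ v$ again contains a hyperbolic plane, and one finishes using Lemma \ref{strongapprox} for the reductive quotient and strong approximation for unipotent groups for the transvections. I expect the real obstacle to be precisely this passage from $g$ to a rational isometry: the hypothesis $U^{\oplus2}\subseteq\Lambda$ is there exactly so that $v^\perp$ retains a hyperbolic summand and Lemma \ref{strongapprox} becomes applicable, and the isotropic case $\lambda^2=0$ --- which does not occur in the geometric applications, where $\lambda$ is a polarization class of positive square --- is the only genuinely fiddly point.
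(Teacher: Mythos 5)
Your argument follows essentially the same route as the paper's: after matching the pointed quadratic spaces rationally and gluing the local pointed isometries to an adelic one (this is exactly what the paper compresses into the one-line assertion that the pointed lattices in question are classified by the double quotient $\O(L \tensor \IQ) \backslash \O(L \tensor \IA_f) / \O(\Lambda \tensor \what{\IZ}, \lambda \tensor 1)$), everything reduces to showing that this double coset space is a singleton, which both you and the paper obtain from Lemma \ref{strongapprox} once one knows that the relevant compact open subgroup contains $\O(U \tensor \what{\IZ})$; your description of that subgroup and the discriminant-kernel argument for openness agree with \ref{2.1.7} and \ref{lem: apply strong approx}.

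The one substantive step you assert but do not prove is the claim you yourself single out as the key input: that $L = \lambda^\perp$ still contains a hyperbolic plane as an \emph{orthogonal direct summand}. This is not formal, and it is precisely where the hypothesis $U^{\oplus 2} \subseteq \Lambda$ is consumed; the paper supplies the missing argument via James' theorem \cite[Prop.~1.3.2]{Dol}: write $\Lambda = U_1 \oplus U_2 \oplus M$ and $\lambda = u + m$ with $u \in U_1 \oplus U_2$, $m \in M$; an automorphism of $U_1 \oplus U_2$ moves $u$ into $U_1$, after which $U_2 \subseteq \lambda^\perp$, and $U_2$ splits off because it is unimodular. With that argument (or a citation to Eichler's criterion) inserted, your proof of the anisotropic case is complete and coincides with the paper's. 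Your additional discussion of imprimitive and isotropic $\lambda$ goes beyond the paper, whose proof (like your main case) implicitly takes the stabilizer of $\lambda$ to be $\O(L_\IQ)$, i.e.\ assumes $\lambda^2 \neq 0$ as happens in all the geometric applications; your sketch of the isotropic case via Eichler transvections is plausible but remains a sketch.
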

\begin{proof}
Clearly it suffices to show the ``if'' direction. Let $L = \lambda^\perp \subset \Lambda$. The pointed lattices $(\Lambda', \lambda')$ which satisfy the given hypothesis are classified by the double quotient $\O(L \tensor \IQ) \backslash \O(L \tensor \IA_f) / \O(\Lambda \tensor \what{\IZ}, \lambda \tensor 1)$. Note that $\O(\Lambda \tensor \what{\IZ}, \lambda \tensor 1)$ is a compact open subgroup of $\O(L \tensor \IA_f)$. 

Let $U_1, U_2$ be the first and the second copy of $U$ in $U^{\oplus 2}$. Suppose $\Lambda = U_1 \oplus U_2 \oplus M$ and $\lambda = u + m$ for some $u \in U_1 \oplus U_2$ and $m \in M$. By James' theorem \cite[Prop.~1.3.2]{Dol}, we can move $u$ into $U_1$ by applying an automorphism of $U_1 \oplus U_2$. Therefore, $L$ contains a copy $U_3$ of $U$. Since $U$ is self-dual, $U_3$ is a direct summand of both $L$ and $\Lambda$, and $\O(\Lambda \tensor \what{\IZ}, \lambda \tensor 1)$ contains $\O(U \tensor \what{\IZ})$. Now we conclude by \ref{strongapprox}. 
\end{proof}

\begin{lemma}
\label{ZpPrim}
Let $p$ be any prime. Let $\Lambda$ be a quadratic lattice over $\IZ_p$ which contains $U_p^{\oplus 2}$ as an orthogonal direct summand, where $U_p = U \tensor \IZ_p$. If $p = 2$, assume $\Lambda$ is even. Let $\lambda$ and $\lambda'$ be two primitive vectors in $\Lambda$. Then $(\Lambda, \lambda) \iso (\Lambda, \lambda')$ if and only if $\lambda^2 = (\lambda')^2$. 
\end{lemma}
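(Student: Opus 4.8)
The plan is to reduce the classification of primitive vectors in $\Lambda$ (up to $\O(\Lambda)$) to the ``Eichler / Witt-type'' statement that, once $\Lambda$ contains enough hyperbolic planes, the $\O(\Lambda)$-orbit of a primitive vector $\lambda$ is detected by the single invariant $\lambda^2$. The ``only if'' direction is trivial, so I would focus on ``if''. Write $\Lambda = U_1 \oplus U_2 \oplus M$ with the two $U_p$-summands split off. The first step is a normal form for a single primitive vector: given primitive $\lambda \in \Lambda$, I would use the first hyperbolic plane $U_1 = \IZ_p e \oplus \IZ_p f$ (with $e^2 = f^2 = 0$, $\langle e, f\rangle = 1$) to move $\lambda$ into a standard position. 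Concretely, decompose $\lambda = a e + b f + w$ with $w \in U_2 \oplus M$; using Eichler transvections $E_{e,x}$ and $E_{f,x}$ (the isometries $v \mapsto v + \langle v, x\rangle e - \langle v, e\rangle x - \tfrac12 x^2 \langle v, e\rangle e$, which lie in $\O(\Lambda)$ and are well-defined since $e$ is isotropic — here the evenness hypothesis when $p = 2$ guarantees $\tfrac12 x^2 \in \IZ_2$ when $x$ ranges over $\Lambda$), one can successively clear the $U_2 \oplus M$-component and arrange $\lambda = e + c f$ for a suitable $c \in \IZ_p$ with $\lambda^2 = 2c$. (If $\lambda$ were divisible one could not reach $e + cf$; primitivity is exactly what is needed to guarantee one of the ``coordinates'' is a unit after applying a transvection, so that we may normalize it to $1$.) Thus every primitive $\lambda$ is $\O(\Lambda)$-equivalent to $e + \tfrac12\lambda^2 f \in U_1$.

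The second step is then immediate: if $\lambda^2 = (\lambda')^2$, both are equivalent to the same vector $e + \tfrac12\lambda^2 f$, hence to each other. So really the whole content is the normal-form reduction of step one, i.e. an explicit sequence of Eichler transvections. I would carry it out in the order: (1) use $E_{e,\cdot}$ and $E_{f,\cdot}$ with $x$ ranging over $U_2$ to kill the $U_2$-part of $w$ — this works because $U_2$ is a hyperbolic plane, so the pairing $\langle \lambda, -\rangle$ restricted to $U_2$ is either zero (then nothing to do) or contains a unit value, letting a single transvection absorb the $U_2$-component; (2) having moved $\lambda$ into $U_1 \oplus M$, repeat with $x \in M$ to kill the $M$-part, now using that after step (1) the $U_1$-component of $\lambda$ is nonzero and, by primitivity, we may assume (after an automorphism of $U_1$) its $e$-coefficient is a unit, which we rescale to $1$; then $E_{e, x}$ for $x \in M$ subtracts $\langle \lambda, e\rangle x = x$ times a unit from the $M$-part, allowing us to clear it entirely; (3) finally $\lambda = a e + b f$ with, by primitivity and $\langle e,f\rangle$ being a unit, $a$ or $b$ a unit — apply the swap $e \leftrightarrow f$ if needed and an Eichler transvection to reach $a = 1$, giving $\lambda = e + bf$ with $2b = \lambda^2$.

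The main obstacle, and the only place requiring care, is the prime $p = 2$: there the quadratic form is not recovered from the bilinear form, Eichler transvections need $\tfrac12 x^2 \in \IZ_2$ to be integral, and ``primitive + clears to a unit coordinate'' arguments can fail for odd lattices. This is precisely why the hypothesis ``if $p = 2$, assume $\Lambda$ is even'' is imposed: it guarantees $x^2 \in 2\IZ_2$ for all $x \in \Lambda$, so all Eichler transvections $E_{e,x}$, $E_{f,x}$ are well-defined integral isometries, and it makes the normal form $e + \tfrac12\lambda^2 f$ legitimate (with $\tfrac12\lambda^2 \in \IZ_2$). For odd $p$ everything is standard since $2$ is a unit. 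An alternative to the hands-on transvection argument, which I would mention as a cross-check, is to invoke that over $\IZ_p$ with $U_p^{\oplus 2} \subseteq \Lambda$ the spinor norm / real place obstructions that appear in the global James-type statement \cite[Prop.~1.3.2]{Dol} are vacuous, so the orbit is cut out by $\lambda^2$ alone; but the direct computation above is cleaner and self-contained.
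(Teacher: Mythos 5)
Your route --- an explicit Eichler-transvection reduction to the normal form $e+\tfrac12\lambda^2 f$ inside one hyperbolic plane --- is genuinely different from the paper's proof, which disposes of the lemma by quoting a Witt-type transitivity theorem for quadratic forms over the local ring $\IZ_p$ (Baeza, III Thm 4.1) and verifying the representability condition required there by means of a hyperbolic plane orthogonal to $\lambda$; no transvections or normal forms appear. In principle a hands-on transvection proof is a reasonable alternative, but as written your reduction has a genuine gap at the normalization step, and the gap is not cosmetic.

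The problem is the claim that, after clearing the $U_2$-component, ``the $U_1$-component of $\lambda$ is nonzero and, by primitivity, we may assume its $e$-coefficient is a unit'' (and likewise the step-(1) claim that $\langle\lambda,-\rangle$ restricted to $U_2$ is either zero or takes a unit value). Primitivity of $\lambda$ in $\Lambda$ only says that its coordinates generate the unit ideal; all of the unit content may sit in the $M$-part, and the ideal $\langle\lambda,\Lambda\rangle$ may be $p\IZ_p$. An Eichler transvection $E_{e,x}$ changes the $e$-coefficient of $\lambda$ only by $\langle\lambda,x\rangle\in\langle\lambda,\Lambda\rangle$, and, more to the point, $\langle\lambda,\Lambda\rangle$ is an invariant of the pointed lattice $(\Lambda,\lambda)$, whereas your target $e+cf$ satisfies $\langle e+cf,\Lambda\rangle=\IZ_p$. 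So a primitive vector of divisibility $p$ can never be brought to your normal form: for $p$ odd and $\Lambda=U_p^{\oplus2}\oplus\IZ_p w$ with $w^2=2p$, the vectors $\lambda=e+pf$ and $\lambda'=w$ are both primitive with $\lambda^2=(\lambda')^2=2p$, yet $\langle\lambda,\Lambda\rangle=\IZ_p$ while $\langle\lambda',\Lambda\rangle=p\IZ_p$, so no isometry of $\Lambda$ carries one to the other. This shows that the unit-coefficient step cannot be derived from the stated hypotheses alone; some extra input such as self-duality of $\Lambda$ is needed (and is available in the one place the lemma is used, \ref{defPLLem}, where $\Lambda_p$ is unimodular, so that every primitive vector pairs onto the unit ideal). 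Under that assumption your strategy can indeed be completed --- it is essentially the standard proof of Eichler's criterion --- but as it stands the proposal leaves unproved exactly the step where primitivity and the hyperbolic planes are supposed to interact, and the same divisibility invariant also undercuts the ``spinor norm'' cross-check at the end: the relevant local obstruction is divisibility/discriminant data, not a spinor-norm condition.
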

\begin{proof}
Clearly only the ``if'' part needs a proof. Let $L = \lambda^\perp \subset \Lambda$. Let $q$ be the associated quadratic form on $\Lambda$ such that $q(x) = x^2/2$. Since $\IZ_p$ is a local ring, by \cite[III Thm~4.1]{Baeza}, there exists an automorphism of $\Lambda$ which carries $\lambda$ to $\lambda'$ provided that $\{ q(y) : y \in L^\perp \}$ generates $\IZ_p$. This condition can be easily checked since $L$ contains a copy of $U_p$ as an orthgonal direct summand. 
\end{proof}

\begin{corollary}
\label{defPLLem}
Let $k$ be an algebraically closed field. Let $X$ be a smooth proper variety over $k$ and $\xi$ be a primitive polarization on $X$. Assume either $(i)$ $\mathrm{char\,} k = 0$ and $X$ is a $\HKn$-type variety or $(ii)$ $\mathrm{char}\, k = p \nmid 2(n-1)$, the pair $(X, \xi)$ lifts to a projective scheme $X_V$ whose generic fiber is of $\HKn$-type. Then there exists a unique primitively pointed lattice $(\Lambda, \lambda)$ over $\IZ$ up to isomorphism such that $\Lambda \iso \Lambda_n$ and $(\H^2_\et(X, \what{\IZ}^p), c_1(\xi)) \iso (\Lambda^p, \lambda^p)$. 
\end{corollary}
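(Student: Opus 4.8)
The plan is to prove existence and uniqueness separately. \emph{Uniqueness} is pure lattice theory: if $(\Lambda,\lambda)$ and $(\Lambda',\lambda')$ both satisfy the conclusion, then $\Lambda \iso \Lambda' \iso \Lambda_n$, so the two lattices have the same signature, and $\Lambda_n = U^{\oplus 3}\oplus E_8^{\oplus 2}\oplus\cdots$ contains $U^{\oplus 2}$ as an orthogonal direct summand; by \ref{pointedHasse} it then suffices to produce an isomorphism $(\Lambda\tensor\what{\IZ},\lambda)\iso(\Lambda'\tensor\what{\IZ},\lambda')$. Splitting $\what{\IZ} = \IZ_p\times\what{\IZ}^p$, the prime-to-$p$ component is supplied by hypothesis, both pointed lattices restricting to $(\H^2_\et(X,\what{\IZ}^p),c_1(\xi))$. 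Over $\IZ_p$ one has $\Lambda_p\iso\Lambda'_p\iso(\Lambda_n)_p$, which contains $U_p^{\oplus 2}$ as an orthogonal summand; $p$ is odd since $p\nmid 2(n-1)$, so no evenness hypothesis is needed; $\lambda_p,\lambda'_p$ are primitive over $\IZ_p$ (a vector of unit content over $\IZ$ has unit content over $\IZ_p$); and $\lambda_p^2 = \lambda^2 = (\lambda')^2 = (\lambda'_p)^2$ because the self-intersection is computed in the common lattice $\H^2_\et(X,\what{\IZ}^p)$. Hence \ref{ZpPrim} gives $(\Lambda_p,\lambda_p)\iso(\Lambda'_p,\lambda'_p)$, completing the argument.

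For \emph{existence} in case (i), I would descend $X$, $\xi$ and a family witnessing Definition \ref{def}(a) to a subfield $k_0\subseteq k$ finitely generated over $\IQ$, fix an embedding $\bar{k_0}\into\IC$, and put $X_\IC := X_{k_0}\tensor_{k_0}\IC$, a $\HKn$-type variety over $\IC$. Since $\H^2_\et(-,\what{\IZ}^p)$ together with its cup product and Chern-class map is insensitive to extensions of algebraically closed base fields, and the Artin comparison isomorphism identifies $\H^2_\et(X_\IC,\what{\IZ}^p)$ with $\H^2(X_\IC^{\mathrm{an}},\IZ)\tensor\what{\IZ}^p$ compatibly with cup products and with $c_1$, I would take $\Lambda := \H^2(X_\IC^{\mathrm{an}},\IZ)$ equipped with its Beauville--Bogomolov form (\ref{sec: known BBFs}) and $\lambda := c_1(\xi)$. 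Then $\Lambda\iso\Lambda_n$ by \cite{Known}, $\lambda$ is primitive in $\Lambda$ by the Kummer-sequence argument of \ref{NSfree}(a) (which shows $c_1(\xi)$ is non-divisible in $\H^2_\et(X_{\bar{k}},\IZ_\ell(1))$ for every prime $\ell$, hence $c_1(\xi)$ is non-divisible in $\H^2(X_\IC^{\mathrm{an}},\IZ)$), and the comparison maps give $(\Lambda^p,\lambda^p)\iso(\H^2_\et(X,\what{\IZ}^p),c_1(\xi))$.

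For \emph{existence} in case (ii), with the lift $X_V/V=\sO_K$ and the extension $\wt{\xi}$ of $\xi$, I would pass to $X_{\bar{K}}$: as $V$ is a strictly henselian discrete valuation ring with residue field $k$, smooth and proper base change gives for each $\ell\neq p$ a cup-product-compatible isomorphism $\H^2_\et(X,\IZ_\ell)\iso\H^2_\et(X_{\bar{K}},\IZ_\ell)$ carrying $c_1(\xi)$ to $c_1(\xi_{\bar{K}})$, and by the construction in \ref{extBBF0} this identifies the $\ell$-adic Beauville--Bogomolov forms; moreover $\xi_{\bar{K}}$ is a primitive polarization on the $\HKn$-type variety $X_{\bar{K}}$ (otherwise, specializing a nontrivial divisibility $\xi_{\bar{K}}=m\eta$ along $\Pic(X_{\bar{K}})\to\Pic(X)$ would contradict primitivity of $\xi$). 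Applying case (i) to $(X_{\bar{K}},\xi_{\bar{K}})$ then yields the required $(\Lambda,\lambda)$, via $(\Lambda^p,\lambda^p)\iso(\H^2_\et(X_{\bar{K}},\what{\IZ}^p),c_1(\xi_{\bar{K}}))\iso(\H^2_\et(X,\what{\IZ}^p),c_1(\xi))$. The substantive inputs — the canonical Beauville--Bogomolov forms (\ref{extBBF0}, resting on \ref{lem: simple observation}), the torsion-freeness and primitivity statements (\ref{NSfree}), and the rigidity lemmas \ref{pointedHasse} and \ref{ZpPrim} — are all in hand, so the only real difficulty is the bookkeeping: making sure that every comparison isomorphism in play (Artin comparison, base change over $V$, and passage to the $\what{\IZ}^p$-completion) is simultaneously compatible with cup products, with the Chern-class maps, and hence with the Beauville--Bogomolov forms, and that primitivity of the polarization is carried along at each step, so that the lattice produced is genuinely primitively pointed.
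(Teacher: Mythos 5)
Your proposal is correct and follows essentially the same route as the paper's (much terser) proof: existence by passing to $\IC$ and using Betti cohomology in characteristic zero and via the lift in characteristic $p$, and uniqueness by noting that \ref{ZpPrim} pins down $(\Lambda_p,\lambda_p)$ from the value $\lambda_p^2$ (which depends only on $\xi$) and then invoking \ref{pointedHasse}. The only quibble is your remark that ``$p$ is odd since $p\nmid 2(n-1)$,'' which is not available in case (i); it is immaterial, since $\Lambda_n$ is even and so \ref{ZpPrim} applies regardless.
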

The above corollary defines a map $\PL$ from the pairs $(X, \xi)$ which satisfy the hypothesis to the set of isomorphism classes of pointed lattices over $\IZ$. Recall that $\Lambda_n$ was defined in (\ref{eqn: K3 n lattices}).
\begin{proof}
To see the existence, when $\mathrm{char\,} k = 0$, embed $k$ to $\IC$ and use Betti cohomology and when $\mathrm{char\,} k = p$, use the existence of a lift. The point is to show that the isomorphism class of $(\Lambda, \lambda)$ is indenpendent of these choices. Note that $\Lambda_p$ is self-dual, so by \ref{ZpPrim}, the isomorphism class of $(\Lambda_p, \lambda_p)$ is completely determined by the value $\lambda_p^2$, which depends only on $\xi$. Now we conclude by \ref{pointedHasse}. 
\end{proof}

\subsection{Local Deformation Theory}

\subsubsection{} \label{2.2.1} We first consider smooth proper varieties $X$ over an algebraically closed field $k$ of characteristic $p > 2$ which satisfy the following hypotheses: 
\begin{align}
\label{Hnum} 
    h^{0, 2} = h^{0, 2} = 1, h^{i, j} = 0 \textit{ for $i + j$ odd}, K_X \iso \sO_X.
\end{align}
These hypotheses are satisfied, for example, by excellent reductions of $\HKn$-type varieties. Let $\Def(X)$ be the formal deformation functor which sends each Artinian $W$-algebra $R$ to the set of isomorphism classes of flat deformations of $X$ over $R$. Let for a set of line bundles $\xi_1, \cdots, \xi_m$ on $X$, define the deformation functor $\Def(X; \xi_1, \cdots, \xi_m)$ for the tuple $(X; \xi_1, \cdots, \xi_m)$ in a completely analogous way. 
The proof of \cite[Cor.~1.2, Prop. 1.5]{Deligne2} shows that $\Def(X)$ is pro-representable by a formal scheme $\ID:= \mathrm{Spf\,} \sR$ for $\sR \iso W[\![x_1, \cdots, x_{h^{1, 1}}]\!]$ and for any line bundle $\xi$, $\Def(X; \xi)$ is pro-representable by a formal subscheme of $\ID_\xi := \mathrm{Spf\,} \sR/(f_\xi)$ where $f_\xi \in \sR$ is some element dependent on $\xi$. We denote the universal family over $\ID$ by $\uX \to \ID$ and its special fiber $\uX \tensor_W k \to \ID \tensor_W k$ by $\uX_0 \to \ID_0$.

\begin{proposition}
\label{Deligne}
\begin{enumerate}[label=\upshape{(\alph*)}]
    \item The Hodge-de Rham spectral sequence degenerates at $E_1$ page for the family $\uX \to \ID$. For every $i, j, r \in \IZ_{\ge 0}$, the coherent sheaves $\H^j(\uX, \Ohm^i_{\uX/\ID})$ and $\H^r_\dR(\uX/\ID)$ are locally free. Moreover, for any $\sR$-algebra $R$, the natural morphisms $\H^r_\dR(\uX/\ID) \tensor R \to \H^r_\dR(\uX_R/R)$ and $\H^j(\uX, \Ohm^i_{\uX/\ID}) \tensor R \to \H^j(\uX_R, \Ohm^i_{\uX_R/R})$ are isomorphisms.
     \item The Hodge filtration on $\H^2_\dR(\uX/\ID)$ has the following relationship with the Frobenius structure:
    \begin{align*}
        \Fil^1 \H^2_\dR(\uX/\ID) &\subseteq \{ x \in \H^2_\dR(\uX/\ID) : F_{\uX}(x) \in p \H^2_\dR(\uX/\ID) \} \\
        \Fil^2 \H^2_\dR(\uX_0/\ID_0) &= \mathrm{Im}(\{ x \in \H^2_\dR(\uX/\ID) : F_{\uX}(x) \in p^2  \H^2_\dR(\uX/\ID)\} \to \H^2_\dR(\uX_0/\ID_0))
    \end{align*}
    \item Suppose $S_0', S_0$ are Artin $W$-algebras and $S' \in \Cris(S_0'/W), S \in \Cris(S_0/W)$ are objects such that there is a PD morphism $(S_0', S') \to (S_0, S)$. For any morphism $S_0 \to \ID$ and every $r \ge 0$, the canonical base change morphism $j_{S', S} : \H^r_\cris(\uX_{S_0})_{S} \tensor_S S' \to \H^r_\cris(\uX_{S_0} \tensor_{S_0} S_0')_{S'}$
    is an isomorphism. In particular, $\H^r_\cris(\uX_0)$ is a \textit{crystal} of vector bundles over $\ID_0$ for every $r$.
\end{enumerate}
\end{proposition}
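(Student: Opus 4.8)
The plan is to follow Deligne's treatment of K3 surfaces in \cite{Deligne2}, the one extra structural input being the parity vanishing $h^{i,j}=0$ for $i+j$ odd built into (\ref{Hnum}). Since $\Def(X)$ has already been identified with $\ID=\mathrm{Spf}\,\sR$, $\sR\iso W[\![x_1,\dots,x_{h^{1,1}}]\!]$, I would prove everything over the Artinian quotients $\sR_m:=\sR/\fm_{\sR}^{m+1}$ and pass to the limit at the end. For (a): on any fibre $\uX_s/k(s)$ every differential of the Hodge--de Rham spectral sequence changes $i+j$ by one, hence by parity has zero source or zero target, so the sequence degenerates at $E_1$ fibrewise and $\dim_{k(s)}\H^r_\dR(\uX_s/k(s))=\sum_{i+j=r}h^{i,j}(\uX_s)$; in particular $\dim_k\H^r_\dR(X/k)=\sum_{i+j=r}h^{i,j}=:b_r$. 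Now run the standard length bootstrap: $R\Gamma(\uX_{\sR_m},\Ohm^i_{\uX_{\sR_m}/\sR_m})$ is a perfect complex of $\sR_m$-modules whose derived reduction mod $\fm_{\sR_m}$ computes $\H^\bullet(X,\Ohm^i_X)$, so $\sum_j(-1)^j\ell_{\sR_m}\H^j=\ell_{\sR_m}(\sR_m)\cdot\chi(X,\Ohm^i_X)=(-1)^i\ell_{\sR_m}(\sR_m)\sum_j h^{i,j}$ (the odd terms vanishing). Feeding in the bounds $\ell_{\sR_m}(M)\le\ell_{\sR_m}(\sR_m)\dim_k(M\tensor_{\sR_m}k)$ (equality iff $M$ free) and $\dim_k\big(\H^j(\uX_{\sR_m},\Ohm^i)\tensor_{\sR_m}k\big)\le h^{i,j}$ (universal coefficients), and using $h^{i,j}=0$ for $i+j$ odd to kill odd-degree cohomology and put every surviving term into the alternating sum with one fixed sign, forces each $\H^j(\uX_{\sR_m},\Ohm^i_{\uX_{\sR_m}/\sR_m})$ to be free of rank $h^{i,j}$ with base change to $\H^j(X,\Ohm^i_X)$ an isomorphism; the same count applied to $R\Gamma_\dR(\uX_{\sR_m}/\sR_m)$ does $\H^r_\dR$. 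Passing to the limit and using finite presentation gives (a) over $\ID$ and over any $\sR$-algebra, and $E_1$-degeneration for $\uX/\ID$ then follows for rank reasons, $\sum_{i+j=r}\rank\,\H^j(\Ohm^i)=b_r=\rank\,\H^r_\dR$.

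For (c): the map $j_{S',S}$ is an instance of the crystalline base-change theorem (\cite{BO}), which gives $\H^r_\cris(\uX_{S_0})_S\tensor^{\mathbf L}_S S'\sto\H^r_\cris(\uX_{S_0'})_{S'}$ for a PD morphism of PD thickenings; so $j_{S',S}$ is an isomorphism of modules once $\H^r_\cris(\uX_{S_0})_S$ is $S$-flat, and this flatness is supplied by (a): when $S_0\to\ID_0$ lifts to a morphism $S\to\ID$ realising $S$ as a PD thickening of $S_0$, the module $\H^r_\cris(\uX_{S_0})_S$ is the base change of the locally free $\H^r_\dR(\uX/\ID)$, and the general case reduces to this by another application of the derived base change. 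Hence $\H^r_\cris(\uX_0)$ is a crystal of vector bundles over $\ID_0$.

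For (b): choose a lift of Frobenius $\phi\colon\sR\to\sR$ and evaluate the crystal of (c) at the PD thickening $\ID_0\hookrightarrow\ID$ (the ideal $(p)$ carries its canonical divided powers since $p>2$) to obtain the $\phi$-semilinear $F_{\uX}$ on $\H^2_\dR(\uX/\ID)$. The containment $F_{\uX}(\Fil^i\H^2_\dR(\uX/\ID))\subseteq p^i\H^2_\dR(\uX/\ID)$ for $i=1,2$ is Frobenius divisibility along the Hodge filtration (Mazur; see \cite{BO}), reflecting that on $\Ohm^{\ge i}_{\uX/\ID}$ the crystalline Frobenius is divisible by $p^i$; this yields the first assertion of (b) and the inclusion ``$\supseteq$'' in the second. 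For the reverse inclusion in the $\Fil^2$-statement, I would use that by \ref{extBBF0} and Remark \ref{rmk: fcomp} the $F$-crystal $\H^2_\cris(\uX_0)$ over $\ID_0$, equipped with the crystalline Beauville--Bogomolov form, is fibrewise a K3 crystal in the sense of \cite{Ogus}; Ogus's (equivalently Mazur's) description of the Hodge filtration of a K3 crystal $M$ as $\Fil^i=\{x\in M:Fx\in p^iM\}$ then identifies, at every geometric point, the reduction mod $p$ of $\{x:F_{\uX}x\in p^2\H^2_\dR(\uX/\ID)\}$ with $\Fil^2$, and one globalises this using the local freeness and base-change properties from (a) and (c).

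The step I expect to be the main obstacle is precisely this last point of (b): the submodule $\{x:F_{\uX}x\in p^2\H^2_\dR(\uX/\ID)\}$ is far from saturated and its formation does not commute with base change, so the fibrewise Mazur statement cannot be applied naively. The remedy, following \cite{Deligne2}, is to work throughout with objects whose formation \emph{does} commute with base change --- the (locally free) Hodge filtration $\Fil^{\bullet}$ of (a) and the cokernel $\H^2_\dR(\uX/\ID)/F_{\uX}\H^2_\dR(\uX/\ID)$, whose fibrewise structure is dictated by the K3-crystal shape --- and to reconstruct the $\Fil^2$-identity from these. Once this reduction is in place, the case $n>1$ differs from the K3 case only in that the Beauville--Bogomolov lattice carries the extra orthogonal summand $\IZ(2-2n)$, which the K3-crystal formalism of \cite{Ogus} accommodates without change.
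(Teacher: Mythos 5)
Your treatment of (a) and (c) is correct and is essentially the paper's route: for (a) the paper simply cites \cite[Prop.~2.2]{Deligne2}, and your parity-plus-length bootstrap over the Artinian quotients $\sR_m$ is exactly that argument written out; for (c) the paper also extends $S_0 \to \ID$ to $S \to \ID$ (this always exists by formal smoothness of $\ID$ over $W$, so your hedge about ``the general case'' is unnecessary), identifies $\H^r_\cris(\uX_{S_0})_S$ with $\H^r_\dR(\uX_S/S)$, and concludes by the base-change statement in (a).

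The gap is in your proof of the $\Fil^2$-identity in (b). First, the detour through \ref{extBBF0} and \ref{rmk: fcomp} is both out of scope and unnecessary: Proposition \ref{Deligne} is stated for any $X$ satisfying (\ref{Hnum}) with $p>2$, whereas the Beauville--Bogomolov form and the K3-crystal structure are only available for excellent reductions of $\HKn$-type varieties with $p \nmid 2(n-1)$; moreover the family-level K3-crystal structure is Proposition \ref{K3crystal}, which in the paper is proved \emph{using} \ref{Deligne}(b), so it cannot be invoked here, and the fibrewise statement you actually need (Frobenius divisibility determines $\Fil^2\H^2_\dR(X/k)$) is Mazur's theorem and requires no quadratic form at all. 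Second, and more seriously, the step you yourself flag as the obstacle is the entire content of the claim and your remedy does not supply it. The base $\ID_0 = \mathrm{Spf}\,(\sR\tensor_W k)$ has a single closed point, so ``check at every geometric point and globalise'' reduces to the statement for $X/k$ and gives nothing about the submodule $\{x \in \H^2_\dR(\uX/\ID) : F_{\uX}(x) \in p^2\H^2_\dR(\uX/\ID)\}$ over $\sR$; and knowing that $\Fil^\bullet$ and $\mathrm{coker}(F_{\uX})$ commute with base change does not by itself reconstruct the identity, since the left-hand side lives over $\sR/p$ while the divisibility condition is imposed over $\sR$ --- relating the two is precisely the relative form of Mazur's theorem. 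This is what the paper's proof appeals to: the Mazur--Ogus theorem \cite[Thm~8.26]{BOBook} in the form adapted to the universal formal deformation in \cite[Prop.~2.6, Rmk.~2.7]{Deligne2}, where the freeness and base-change properties from (a) (and the divided powers on $(p)$, using $p>2$) are what make the adaptation go through. So you should drop the K3-crystal framing and either reproduce Deligne's relative argument or cite it, as the paper does; as written, the $\Fil^2$-equality is asserted rather than proved.
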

\begin{proof}
(a) is an easy consequence of the assumption that $h^{i, j} = 0$ for $i + j$ odd (cf. \cite[Prop.~2.2]{Deligne2}). (b) is a variant of the Mazur-Ogus inequality \cite[Thm~8.26]{BOBook} (cf. \cite[Prop.~2.6, Rmk.~2.7]{Deligne2}). For (c), choose a morphism $S \to \ID$ which extends $S_0 \to \ID$. Then $j_{S', S}$ is identified with the natural morphism $\H^r_\dR(\uX_{S}) \tensor_S S' \to \H^r_\dR(\uX_S \tensor S')$, which is an isomorphism by (a). 
\end{proof}


\begin{proposition}
\label{loc}
Assume in addition that $p \nmid n(n-1)(n+1)$, $X$ is projective, $h^{1, 1} \ge 1$, and the conclusion of \ref{cor: perfect pairing} holds for $X$. Then exists a horizontal perfect pairing $\IB : \H^2_\dR(\uX/\ID)^{\tensor 2} \to \sR$, which is unique uo to a factor in $W^\times$, such that $\Fil^1 = [\Fil^2]^\perp$.

Let $\bar{\shO}$ be an Artinian $W$-algebra and $\shO \in \Cris(\bar{\shO}/W)$. Let $X_{\bar{\shO}}$ be a deformation of $X$ over $\bar{\shO}$. Let $\sI(X_{\bar{\shO}}, \shO)$ be the set of isotropic direct summand of $\H^2_\cris(X_{\bar{\shO}})_\shO$ which lifts $\Fil^2 \H^2_\dR(X_{\bar{\shO}}/\bar{\shO})$. 
\begin{enumerate}[label=\upshape{(\alph*)}]
    \item  The map $\Psi$ from deformations of $X_{\bar{\shO}}$ to $\shO$ to $\sI(X_{\bar{\shO}}, \shO)$ defined by sending $X_\shO$ to $\Fil^2 \H^2_\dR(X_{\shO}/\shO) \subset \H^2_\dR(X_\shO/\shO) \iso \H^2_\cris(X_{\bar{\shO}})_\shO$ is an isomorphism.
   \item Let $\xi_{\bar{\shO}}$ be a line bundle on $X_{\bar{\shO}}$. Then $\xi_{\bar{\shO}}$ extends to $X_\shO$ if and only if $\Psi(X_\shO)$ is orthogonal to $c_{1, \cris}(\xi_{\bar{\shO}})_\shO$. 
\end{enumerate}
\end{proposition}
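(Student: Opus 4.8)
The plan is to construct $\IB$ by propagating the Beauville--Bogomolov form along the family $\uX/\ID$, to deduce (a) from a Grothendieck--Messing-type comparison of deformation torsors, and to obtain (b) from (a) together with the crystalline deformation theory of line bundles.

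\emph{Construction of $\IB$.} Cup product gives a symmetric form $w\colon\H^2_\dR(\uX/\ID)^{\otimes 2n}\to\H^{4n}_\dR(\uX/\ID)\iso\sR$ which is horizontal for the Gauss--Manin connection. On the closed fibre $\H^2_\cris(X/W)$ carries the crystalline Beauville--Bogomolov form $q_X$ of \ref{extBBF0} (applicable since $p\nmid 2(n-1)$; in our applications $X$ is an excellent reduction), which is perfect and satisfies $(\ref{wq})$. Since $\H^2_\cris(\uX_0)$ is a crystal of vector bundles over $\ID_0$ by \ref{Deligne}(c), evaluating it on the PD thickening $(\sR,(p))$ of $\ID_0$ returns $\H^2_\dR(\uX/\ID)$ with its connection; as a horizontal tensor on a crystal is reconstructed from its value at the closed point, $q_X$ extends uniquely to a horizontal symmetric form $\IB$ on $\H^2_\dR(\uX/\ID)$. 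It is perfect (being so modulo the maximal ideal) and satisfies $(\ref{wq})$ (both $w$ and the $n$-fold symmetrisation of $\IB$ are horizontal and agree at the closed fibre by \ref{extBBF0}(a)); by \ref{lem: simple observation} it is therefore determined by $w$ and $\IB(\xi,\xi)$ for any $\xi$ with $\IB(\xi,\xi)$ a non-zero-divisor, hence unique up to a root of unity, in particular up to $W^\times$. Finally $\Fil^2$ is a rank-one direct summand and $\Fil^1$ a corank-one direct summand with $\Fil^1\perp\Fil^2$ (Griffiths transversality; equivalently, the Beauville--Bogomolov form is a morphism of weight-two Hodge structures on any characteristic-zero fibre), so $(\Fil^2)^\perp$ is a corank-one direct summand containing $\Fil^1$, and $(\Fil^2)^\perp/\Fil^1$ is a torsion coherent $\sR$-module vanishing after $\otimes_\sR k$ for rank reasons; hence $\Fil^1=(\Fil^2)^\perp$.

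\emph{Proof of (a).} Write $I=\ker(\shO\to\bar{\shO})$ as an iterated extension $\shO=\shO_N\to\cdots\to\shO_0=\bar{\shO}$ in which each $\shO_{i+1}\to\shO_i$ is surjective with square-zero kernel $J_i$ of dimension one over $k$, carrying its trivial PD structure; both sides of $\Psi$ are compatible with these steps, and the case $\shO=\bar{\shO}$ is trivial (both are singletons). For the inductive step fix a deformation $X_{\shO_i}$ and put $\shO'=\shO_{i+1}$, $J=J_i$, $X_0=X$. By formal smoothness of $\Def(X)$ (\ref{Deligne}) the deformations of $X_{\bar{\shO}}$ over $\shO'$ lifting $X_{\shO_i}$ form a non-empty torsor under $\H^1(X_0,T_{X_0})\otimes_k J$. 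On the other side, restriction along $\shO'\to\shO_i$ gives $\sI(X_{\bar{\shO}},\shO')\to\sI(X_{\bar{\shO}},\shO_i)$; the fibre over an isotropic line $F$ consists of the isotropic direct-summand lifts $\widetilde{F}\subset\H^2_\cris(X_{\bar{\shO}})_{\shO'}$ of $F$. Lifting $F$ as a direct summand is unobstructed, and writing a generator as $\widetilde{\xi}=\xi+\varepsilon v$ with $\varepsilon$ a generator of $J$, the isotropy condition $\IB(\widetilde{\xi},\widetilde{\xi})=0$ becomes an affine-linear equation $q_X(\xi_0,v_0)=-\tfrac12 c$ on $v_0\in\H^2_\dR(X_0/k)/F_0$ (here $p\neq 2$ is used), the constant $c$ depending on the chosen direct-summand lift; its solution set is a torsor under $\ker\big(q_X(\xi_0,-)\colon\H^2_\dR(X_0/k)/F_0\to k\big)=\Fil^1\H^2_\dR(X_0/k)/\Fil^2\iso\H^1(X_0,\Ohm^1_{X_0})$, using $\Fil^1=(\Fil^2)^\perp$. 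Thus the fibre is a non-empty torsor under $\H^1(X_0,\Ohm^1_{X_0})\otimes_k J$, and $\Psi_{\shO'}$ is equivariant for the Kodaira--Spencer map $\H^1(X_0,T_{X_0})\to\H^1(X_0,\Ohm^1_{X_0})$: the infinitesimal change of $\Fil^2$ under a change of deformation is computed by Kodaira--Spencer through the Gauss--Manin connection, and the $\H^2(X_0,\sO_{X_0})$-component of the period differential is exactly what the isotropy constraint pins down. The Kodaira--Spencer map is an isomorphism (local Torelli: contraction with the nondegenerate $2$-form identifies $T_{X_0}\iso\Ohm^1_{X_0}$, cf.\ the proof of \ref{cor: perfect pairing}); since $\Psi_{\shO_i}$ is bijective by induction, $\Psi_{\shO'}$ is a bijection on each fibre over $\sI(X_{\bar{\shO}},\shO_i)$, hence bijective.

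\emph{Proof of (b) and the main obstacle.} Refining through the same filtration of $I$ into square-zero steps, the deformation theory of the pair $(X_{\bar{\shO}},\xi_{\bar{\shO}})$ shows that $\xi_{\bar{\shO}}$ extends to $X_\shO$ if and only if $c_{1,\cris}(\xi_{\bar{\shO}})_\shO\in\H^2_\cris(X_{\bar{\shO}})_\shO\iso\H^2_\dR(X_\shO/\shO)$ lies in $\Fil^1\H^2_\dR(X_\shO/\shO)$: at each step the obstruction to lifting a line bundle along a square-zero thickening lives in $\H^2(X_0,\sO_{X_0})\otimes J$ and is the component of its de Rham Chern class in $\H^2_\dR/\Fil^1$ of the thickening, by the computation of Deligne (\cite[Prop.~1.5]{Deligne2} and its proof); one telescopes using compatibility of $c_{1,\cris}$ with the crystal structure. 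By the construction of $\IB$, $\Fil^1\H^2_\dR(X_\shO/\shO)=(\Fil^2\H^2_\dR(X_\shO/\shO))^\perp=\Psi(X_\shO)^\perp$, which is the assertion. The two points needing genuine care are: in the construction of $\IB$, that the cup product on the whole family is governed by a single quadratic form — the Fujiki/Beauville--Bogomolov input, which is the reason for the hypotheses on $p$ and on \ref{cor: perfect pairing}; and in (a), verifying that $\Psi_{\shO'}$ really does intertwine the deformation and filtration torsors via Kodaira--Spencer with the isotropy condition correctly accounted for over the non-reduced PD base $\shO$. I expect this last bookkeeping (Griffiths transversality against the isotropy constraint) to be the crux.
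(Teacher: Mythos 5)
There are two genuine gaps, both precisely at the points where the paper instead invokes Langer--Zink \cite{Langer-Zink}. First, your construction of $\IB$ starts from the crystalline Beauville--Bogomolov form $q_X$ of \ref{extBBF0}, but that form is only defined for excellent reductions, i.e.\ it is produced from a characteristic-zero lift via integral $p$-adic Hodge theory. Proposition \ref{loc} is stated (and later used, e.g.\ for arbitrary geometric fibers of $\Hilb^{+}_P$ in \ref{hilb+}--\ref{charpspread}) for varieties only assumed to satisfy the Hodge-number conditions of \ref{2.2.1} together with the conclusion of \ref{cor: perfect pairing}; no lift is available there, so you cannot seed the construction with $q_X$. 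Moreover, even granting $q_X$ on the closed fiber, your existence step is a non sequitur: the fact that a horizontal section of a crystal is determined by its value at the closed point gives \emph{uniqueness} of a horizontal extension, not its \emph{existence}. That a horizontal perfect form restricting to the fiberwise Beauville--Bogomolov forms exists over $\sR$ is exactly the nontrivial content of Langer--Zink's Def.~23 and Lem.~25, which construct $\IB$ intrinsically in characteristic $p$ from the cup product and the hypotheses of \ref{cor: perfect pairing}; the paper's own \ref{K3crystal} only runs your ``two horizontal tensors agreeing at a $W$-point are equal'' argument \emph{after} $\IB$ is known to exist.

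Second, in (a) your argument reduces to the claim that $\Psi$ intertwines the torsor structure of deformations under $\H^1(T_{X_0})\otimes J$ with that of isotropic lifts of $\Fil^2$ under $(\Fil^1/\Fil^2)\otimes J\iso \H^1(\Ohm^1_{X_0})\otimes J$, via Kodaira--Spencer and the crystalline identification $\H^2_\dR(X_{\shO'}/\shO')\iso\H^2_\cris(X_{\shO_i})_{\shO'}$. You flag this equivariance as ``the crux'' and do not prove it; but it \emph{is} the theorem -- it is the Grothendieck--Messing-type comparison established in \cite{Langer-Zink} (Prop.~26, 29 and Cor.~32), and without it neither injectivity nor surjectivity of $\Psi$ follows from your torsor bookkeeping. (Your treatment of (b) -- obstruction to deforming a line bundle lies in $\H^2(\sO)\otimes J$ and is the image of the crystalline Chern class in $\H^2_\dR/\Fil^1$, then use $\Fil^1=[\Fil^2]^\perp$ -- is essentially the paper's citation of \cite[Prop.~1.12]{Ogus} and is fine once $\IB$ and (a) are in place.) So the proposal is a reasonable plan for reproving Langer--Zink's results, but as written it assumes the two statements that constitute the actual proof.
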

\begin{proof}
This is due to Langer-Zink \cite{Langer-Zink}. Any $X$ which satisfies the given condition is in particular a variety of K3 type in their terminology (see Def.~22 and the proof of \ref{rmk: fcomp}). In \textit{loc. cit.}, the form $\IB$ is defined in Def.~23, its properties are given in Lem.~25 (see also (50) in its proof), Prop.~26, 29, and (a) is given as Cor.~32. Given (a), (b) follows from \cite[Prop.~1.12]{Ogus}. 
\end{proof}

We remark that (b) implies that if $\xi$ be any line bundle on $X$ and $m$ be a positive number prime to $p$, then $\Def(X; \xi) \subseteq \Def(X; m \xi)$ is an equality, as $m$ is a unit in any $W$-algebra.

\begin{proposition}
\label{flatloc}
Let $X$ be as in the preceeding proposition.
\begin{enumerate}[label=\upshape{(\alph*)}]
    \item If the Newton polygon of $\H^2_\cris(X/W)$ is not pure of slope $1$ and $\xi_1, \cdots, \xi_m \in \Pic(X)$ span a direct summand, $\Def(X; \xi_1, \cdots, \xi_m)$ is smooth over $W$. 
    \item For any primitive line bundle $\xi$ on $X$, if $c_1(\xi) \not\in \Fil^2 \H^2_\dR(X/k)$, then $\Def(X; \xi)$ is smooth over $W$, or otherwise $q_X(\xi, \xi)$ has $p$-adic valuation $1$. In the latter case, $\Def(X; \xi)$ is isomorphic to 
    \begin{align}
        \label{+p}
        \mathrm{Spf\,} W[\![x_1, \cdots, x_{b - 2}]\!]/ (x_1^2 + \cdots + x_{h^{1, 1}}^2 + p).
    \end{align}
    \item For any primitive line bundle $\xi$ on $X$, $\Def(X; \xi)$ is quasi-healthy regular.
\end{enumerate}
\end{proposition}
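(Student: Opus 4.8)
The plan is to derive all three statements from the crystalline period description of \ref{loc}. Write $H := \H^2_\cris(X/W)$, a free $W$-module of rank $b = h^{1,1}+2$ equipped with the perfect form $q_X$, and put $L_0 := \Fil^2\H^2_\dR(X/k)\subseteq H_k := H\otimes_W k$; since $\Fil^2 = (\Fil^1)^\perp$ by \ref{loc}, $L_0$ is isotropic. As $q_X$ is perfect and $p$ is odd, the quadric $\mathcal{Q} := \{[v]\in\IP(H) : q_X(v,v) = 0\}$ is smooth over $W$, and \ref{loc}(a) identifies $\Def(X)$ with the formal completion of $\mathcal{Q}$ at $[L_0]$, compatibly with $\sR\iso W[\![x_1,\dots,x_{h^{1,1}}]\!]$. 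If $\xi_1,\dots,\xi_m$ span a direct summand of $\NS(X)$, then by \ref{NSfree}(b) their crystalline Chern classes span a direct summand $M\subseteq H$ and reduce into $\Fil^1 H_k = L_0^\perp$, so $[L_0]\in\mathcal{Q}_M := \mathcal{Q}\cap\IP(M^\perp)$, and \ref{loc}(b) identifies $\Def(X;\xi_1,\dots,\xi_m)$ with the completion $\widehat{\mathcal{Q}_M}$ at $[L_0]$; in particular it is $W$-flat and is cut out in $\Def(X)$ by $m$ equations.

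For (a), I would apply the Jacobian criterion to $\mathcal{Q}_M$ at $[L_0]$: since $q_X\otimes k$ is perfect and $M$ is a direct summand, the radical of $q_X\otimes k$ on $M^\perp\otimes k$ is $M^\perp_k\cap M_k$, and as $L_0\subseteq M^\perp_k$ automatically, $\mathcal{Q}_M$ is smooth over $W$ at $[L_0]$ precisely when $L_0\not\subseteq M_k$, in which case $\Def(X;\xi_1,\dots,\xi_m) = \widehat{\mathcal{Q}_M}$ is formally smooth over $W$. It remains to see that $L_0\not\subseteq\NS(X)\otimes k$ in $H_k$ when $X$ is not supersingular. Reducing the K3-crystal structure of $(H,F,q_X)$ (\ref{rmk: fcomp}) modulo $p$, the reduction of any crystalline cycle class lies in $\ker\bar{F}\cap\mathrm{im}\,\bar{F}$, where $\bar{F} := F\bmod p$ on $H_k$: it lies in $\H^2_\cris(X/W)^{F=p}$, hence in $\ker\bar{F}$, and the description of $\Fil^2$ in \ref{Deligne}(b) forces it into $\mathrm{im}\,\bar{F}$ once it spans $\Fil^2$. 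A slope-decomposition computation, using \ref{Deligne}(b) to match the Hodge and Frobenius structures, shows that $\ker\bar{F}\cap\mathrm{im}\,\bar{F}$ meets $\Fil^2 H_k$ only when every slope of $H$ equals $1$; alternatively this can be quoted from \cite{Ogus}. This is the step I expect to be the real obstacle.

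For (b), the case $c_1(\xi)\notin\Fil^2\H^2_\dR(X/k)$ is the case $m=1$, $L_0\not\subseteq M_k$ of the Jacobian criterion, so $\Def(X;\xi)$ is smooth over $W$. Assume $c_1(\xi)\in\Fil^2$, so $L_0 = \langle\bar{c}\rangle$ with $\bar{c} := c_{1,\dR}(\xi)$, whence $q_X(\xi,\xi)\equiv q_X(\bar{c},\bar{c}) = 0\bmod p$; let $c := c_{1,\cris}(\xi)$, which is primitive in $H$ by \ref{NSfree}(b). Perfectness of $q_X\otimes k$ shows the radical of $q_X\otimes k$ on $c^\perp_k = \bar{c}^\perp$ equals $\langle\bar{c}\rangle$, so diagonalizing $q_X|_{c^\perp}$ over $W$ ($p$ odd) gives $q_X|_{c^\perp}\iso\langle u_1,\dots,u_{b-2}\rangle\perp\langle p^e u_{b-1}\rangle$ with $u_i\in W^\times$ and $e = v_p(\disc q_X|_{c^\perp}) = v_p(q_X(\xi,\xi))$, in coordinates $z_1,\dots,z_{b-1}$ on $c^\perp$ for which $[L_0] = [\bar{z}_{b-1}]$ is the vertex of $\mathcal{Q}\cap\IP(c^\perp)$. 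Completing there yields $\Def(X;\xi)\iso\mathrm{Spf\,} W[\![z_1,\dots,z_{b-2}]\!]/(u_1 z_1^2 + \dots + u_{b-2}z_{b-2}^2 + p^e u_{b-1})$. On the other hand, the tangent space of $\Def(X;\xi)$ is $\ker(\H^1(X,T_X)\xrightarrow{\cup c_1(\xi)}\H^2(X,\sO_X))$, and since $c_1(\xi)$ vanishes in $\H^1(X,\Ohm^1_X) = \Fil^1 H_k/\Fil^2 H_k$ (as $\bar{c}\in\Fil^2$) while that cup-product pairing is perfect by \ref{cor: perfect pairing}, this tangent space has dimension $h^{1,1}$. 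Comparing with the Krull dimension $h^{1,1}$ shows $\Def(X;\xi)$ is regular; but the displayed ring is regular only when its defining element is not in $\mathfrak{m}^2$, forcing $e = 1$. Finally, for $e = 1$, dividing by $u_{b-1}$ and applying the formal Morse lemma — valid since $p$ is odd and $W = W(\bar{k})$ has $W^\times = (W^\times)^2$, so $\langle u_1/u_{b-1},\dots,u_{b-2}/u_{b-1}\rangle\iso\langle 1,\dots,1\rangle$ over $W$ — gives $\Def(X;\xi)\iso\mathrm{Spf\,} W[\![z_1,\dots,z_{b-2}]\!]/(z_1^2+\dots+z_{b-2}^2+p)$, with $b-2 = h^{1,1}$.

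For (c): where $\Def(X;\xi)$ is smooth over $W$ it is in particular quasi-healthy, and in the remaining case $W[\![z_1,\dots,z_{h^{1,1}}]\!]/(z_1^2+\dots+z_{h^{1,1}}^2+p)$ is regular with normal special fibre $k[\![z]\!]/(\sum z_i^2)$ (whose singular locus has codimension $h^{1,1}-1\geq 2$), hence quasi-healthy by the results of Vasiu and Zink; so (c) follows formally from (a) and (b). Beyond the supersingularity input needed in (a), the remaining work is linear algebra over quadratic $W$-lattices, the tangent-space/Krull-dimension comparison, and the formal Morse lemma.
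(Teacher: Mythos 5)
Your overall strategy — read everything off the crystalline period description of \ref{loc} and the geometry of the quadric — is the paper's route for part (b), but as written there are two genuine gaps. First, in (a): the claim that the crystalline Chern classes of $\xi_1,\dots,\xi_m$ span a $W$-direct summand of $\H^2_\cris(X/W)$ does not follow from \ref{NSfree}(b); $\IZ_p$-saturation of $\NS(X)\otimes\IZ_p$ in $\H^2_\cris(X/W)^{F=p}$ does not imply that the $W$-span of those Frobenius eigenvectors is a direct summand (there are rank-two $F$-crystals with $H^{F=p}$ saturated as a $\IZ_p$-module while $H^{F=p}\otimes_{\IZ_p}W\subset H$ has index $p$). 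More seriously, the step you yourself call ``the real obstacle'' — that for non-supersingular $X$ the line $\Fil^2$ is not contained in the $k$-span of the cycle classes — is exactly equivalent to the injectivity of $d\log:\NS(X)\otimes k\to \H^1(X,\Ohm^1_X)$, which is precisely what the paper spends Appendix A proving (Prop.~\ref{GK}, via formal Brauer groups and Serre's Witt vector cohomology); it is not a quotable statement in \cite{Ogus}, and your ``slope-decomposition computation'' is not supplied (note also a $\sigma$-twist issue: a $k$-linear combination $m$ of Chern classes satisfies $F(m)=p\,\sigma(m)$, not $F(m)=pm$, so the Mazur--Ogus description of $\Fil^2$ places the \emph{twist} of the line in $\mathrm{im}\,\bar F$, not the line itself). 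The paper's proof of (a) bypasses the quadric altogether: Appendix A plus Ogus's obstruction computation \cite[Cor.~1.14]{Ogus} identify the tangent space with the annihilator of the $d\log$-span, and a dimension count gives smoothness. Relatedly, even the identification of $\Def(X;\xi)$ with the completion of the quadric (local model) is not a direct consequence of \ref{loc}: one must trivialize the crystal compatibly over the deformation ring and check the resulting map to $M^\loc$ is an isomorphism on completions — this is the torsor $\sP$ and the section $s_1$ with $p_2\circ s_1$ \'etale in the paper's proof, adapted from \cite[Prop.~5.21, 6.20]{CSpin}.

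Second, in (b), your deduction that $e:=v_p(q_X(\xi,\xi))=1$ fails: the tangent space you compute via $\ker(\H^1(X,T_X)\to \H^2(X,\sO_X))$ is the tangent space of the functor on $k[\epsilon]$, i.e.\ $\dim_k \fm/(\fm^2+p)$, and for $W[\![z_1,\dots,z_{h^{1,1}}]\!]/(\sum u_iz_i^2+p^eu)$ this equals $h^{1,1}$ for \emph{every} $e\ge 1$, as does the Krull dimension; the comparison therefore cannot detect regularity (which requires knowing whether $p\in\fm^2$) and does not force $e=1$. The statement that the valuation is exactly $1$ when $c_1(\xi)\in\Fil^2$ is part of what must be proven, and in the paper it is extracted from the local model diagram: the period map identifies $\Def(X;\xi)$ with the completion of $M^\loc$ at a singular point, whose complete local ring is computed in \cite{CSpin} to be $W[\![x_1,\dots,x_{r-1}]\!]/(x_1^2+\cdots+x_{r-1}^2+p)$. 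Your part (c), and the Morse-lemma normalization over $W$, are fine once (a) and (b) are actually in place — that is also how the paper concludes, via Vasiu--Zink.
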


\begin{proof}
(a) is a direct generalization of \cite[Thm~4.1]{LM}. Recall that the tangent space of $\Def(X)$ at the origin is canonically identified with $\H^1(T_X)$. Let the span of the images of $\xi_i$'s under the map $d \log : \NS(X) \to \H^1(\Ohm_X^1)$ be $V$. By Prop.~\ref{GK} in the appendix, $\dim V = m$. By \cite[Cor.~1.14]{Ogus}, the tangent space of $\Def(X; \xi_1, \cdots, \xi_m)$ at the origin is precisely the annhilator of $V$ under the cup product pairing $\H^1(\Ohm_X) \times \H^1(T_X) \to \H^2(\sO_X)$. Since this pairing is perfect by assumption, $\Def(X; \xi_1, \cdots, \xi_m) \tensor k$ and its tangent space have the same dimension. Hence $\Def(X; \xi_1, \cdots, \xi_m)$ is formally smooth.   

For K3 surfaces, (b) is stated in \cite[Thm~3.8]{Keerthi}, where (\ref{+p}) is written in a slight different form. As remarked in \textit{loc. cit.}, the first statement follows from an adaption of the argument in \cite[Prop.~5.21]{CSpin}. We claim that, similarly, with \ref{loc}, the second statement follows from a slight adaption of the proof of Prop.~6.20 in \textit{loc. cit}. Set $L$ to be the quadratic lattice over $W$ given by $c_1(\xi)^\perp \subseteq M_0 := \H^2_\cris(X/W)$. Let $M^\loc$ be the $W$-scheme which parametrizes isotropic lines in $L$ (see \cite[(2.10)]{CSpin}). Recall that $\Def(X; \xi)$ is represented by $\mathrm{Spf\,} \sR_\xi$. Let $(\shX, \bxi)$ denote the universal family over $\ID_\xi$. Denote $\H^2_\dR(\uX/\ID)$ by $\shM$. Consider the $\sR_\xi$-scheme $\sP$ which sends every $\sR_\xi$-algebra $R$ to the set 
\begin{equation}
    \label{eqn: define torsor P}
    \{ \text{Isometry } \beta : M_0 \tensor_W R \sto \shM \tensor_\sR \text{ such that }  \beta(c_1(\xi) \tensor 1) = c_{1, \dR}(\bxi_R) \}.
\end{equation}
Let $G$ be the stabilizer of $c_1(\xi)$ in $\O(M_0)$. It follows from \cite[Lem.~2.8]{CSpin} that $G$ is smooth. Then $\sP$ is a $G$-torsor over $\mathrm{Spec\,} \sR_\xi$. 
We have a natural diagram of morphisms of $W$-schemes
$$ \begin{tikzcd}
& \sP \arrow[swap]{dl}{p_1} \arrow{dr}{p_2} & \\ \mathrm{Spec\,} \sR_\xi & &  M^\loc 
\end{tikzcd} $$
where $p_2$ is given by sending an $R$-point $\beta$ to $\beta^{-1}(\Fil^2 \H^2_\dR(\shX_R/R))$.
Now we make two claims
\begin{enumerate}[label=\upshape{(\roman*)}]
    \item $p_2$ is $G$-equivariant and smooth of dimension $\dim G_\IQ$. 
    \item There exists a section $s_1$ of $p_1$ such that $p_2 \circ s_1$ is \'etale. 
\end{enumerate}

For (i), it suffices to show the smoothness, since the other claims are then clear. For each surjection $\shO \to \bar{\shO}$ of Artin $W$-algebras with square-zero ideal, we need to show that the natural map 
$$ \Psi : \sP(\shO) \to \sP(\bar{\shO}) \times_{M^\loc(\bar{\shO})} M^\loc(\shO) $$
is surjective. Suppose we have an element on the RHS, i.e., a morphism $\bar{t} : R_\xi \to \bar{\shO}$, together with an isomorphism $\bar{\beta} : M_0 \tensor \bar{\shO} \sto \H^2_\dR(\shX_{\bar{t}}/\bar{\shO}) = \shM_{\shO}$ sending $c_1(\xi) \tensor 1$ to $c_{1, \dR}(\bxi_t)$, and an isotropic line $\Fil \subset L \tensor \shO$ which lifts $\beta^{-1}(\Fil^2 \H^2_\dR(\shX_{\bar{t}}/ \bar{\shO}))$. Viewing $\shO$ as a PD thickening of $\bar{\shO}$, we have a direct summand $c_{1, \cris}(\bxi_{\bar{t}})_{\shO}$ in $\H^2_\cris(\shX_{\bar{t}})_\shO$. By \cite[Lem.~2.8]{CSpin}, there exists an isomorphism $\beta : M_0 \tensor \shO \sto \H^2_\cris(\shX_{\bar{t}})_\shO$ sending $c_1(\xi) \tensor 1$ to $c_{1, \cris}(\bxi)_\shO$ extending $\bar{\beta}$. Now $\beta(\Fil)$ is an isotropic line in $\H^2_\cris(\shX_{\bar{t}})_\shO$ which is orthogonal to $c_{1, \cris}(\bxi)_\shO$ and lifts $\Fil^2 \H^2_\dR(\shX_{\bar{t}}/\bar{\shO})$. By \ref{loc}, we obtain a morphism $t : R_\xi \to \shO$ lifting $\bar{t}$, such that $\beta$ becomes an element of $\sP(\shO)$ which is the preimage under $\Psi$ we are seeking. 

For (ii), let $T_1$ be the first order neighborhood of the closed point of $\Spec (\sR_\xi \tensor_W k)$. Note that the Gauss-Manin connection gives us a canonical identification 
$$ \beta_0 : M_0 \tensor_W T_1 = \H^1_\dR(X/k) \tensor_k T_1  \sto \shM_{T_1} = \H^2_\dR(\sX_{T_1}/T_1), $$
which sends $c_1(\xi) \tensor 1$ to $c_{1, \dR}(\bxi_{T_1})$. This gives a section $\bar{s}_1 : T_1 \to \sP_{T_1}$ to projection $(p_1)_{T_1}$. It is easy to check, using \ref{loc}, that the composition $p_2 \circ \bar{s}_1$ is an isomorphism onto the first order neighborhood of the image of the closed point. Now, any extension $s_1$ of $\bar{s}_1$ to $\mathrm{Spec\,} \sR_\xi$ will do the job for (ii).

The special point of $\mathrm{Spec\,} \sR_\xi$ is sent to a singular point in $M^\loc$, and the complete local ring of a singular point on $M^\loc$ is isomorphic to 
$W[\![x_1, \cdots, x_{r-1}]\!]/ (x_1^2 + \cdots + x_{r - 1}^2 + p)$ where $r = \mathrm{rank\,} L_W = h^{1, 1} + 1$ (cf. the proof of Prop.~2.16 in \textit{loc. cit.}).

(c) is a consequence of (a) and (b) by Vasiu-Zink's criteria (see \cite[Thm~3.8]{Keerthi}). 
\end{proof}

Before we proceed, we recall Nygaard-Ogus' definition of a K3 crystal \cite[Def.~5.1]{NO}. Note that when the base is a point, the definition is equivalent to \cite[Def.~3.1]{Ogus}.
\begin{proposition}
\label{K3crystal}
Assume $p > 3, n + 1$ and $X$ is an excellent reduction of a $\HKn$-type variety. Then there exists a unique perfect horizontal pairing $\underline{q}_\dR : \H^2_\dR(\uX/\ID) \to \sO_\ID$ which extends the evaluation on $q_X$ on any $W$-point of $\ID$ and satisfies the equation $\cup_{i = 1}^n \alpha = \lambda_n \underline{q}_\dR(\alpha, \alpha)^n$ for any section $\alpha$. 

Moreover, if $\underline{q}_\cris$ is the form on $\H^2_\cris(\uX_0)$ induced by $\underline{q}_\dR$, and $\Phi : F^*_{\ID_0/W} \H^2_\cris(\uX_0) \to \H^2_\cris(\uX_0)$ denotes the morphism given by the F-crystal structure, then tuple $(\H^2_\cris(\uX_0), \Phi, \underline{q}_\cris, \Fil^2 \H^2_\dR(\uX_0/\ID_0))$ defines a K3 crystal on $\ID_0/W$. 
\end{proposition}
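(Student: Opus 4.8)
\emph{Strategy and first remarks.} The plan is to manufacture $\underline{q}_\dR$ from the Langer--Zink pairing of \ref{loc}, to verify the cup-power identity by propagating it from a single fibre along the Gauss--Manin connection, and then to match the resulting package against the Nygaard--Ogus axioms using \ref{Deligne} and \ref{rmk: fcomp}. Note at the outset that $p > n+1$ already forces $p \nmid 2 \cdot n!$ and $p \nmid n(n-1)(n+1)$, so \ref{cor: perfect pairing}, \ref{loc}, \ref{flatloc} and \ref{extBBF0}(ii) are all available for $X$ and, fibrewise, for $\uX \to \ID$; in particular $X$ carries a crystalline Beauville--Bogomolov form $q_X$ which, together with the Frobenius, makes $\H^2_\cris(X/W)$ a K3 crystal over a point (\ref{rmk: fcomp}).

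\emph{Construction and uniqueness of $\underline q_\dR$.} By \ref{loc} there is a horizontal perfect pairing $\IB$ on $\H^2_\dR(\uX/\ID)$, unique up to a unit in $W^\times$, with $\Fil^1 = (\Fil^2)^\perp$. Since $h^{2n,2n}=1$ and the Hodge--de Rham spectral sequence degenerates (\ref{Deligne}(a)), the top sheaf $\H^{4n}_\dR(\uX/\ID)$ is invertible and canonically trivial, so cup product gives a horizontal symmetric $2n$-linear form $w : \H^2_\dR(\uX/\ID)^{\tensor 2n}\to\sO_\ID$. The key point is to produce the unique normalization $\underline q_\dR = u\,\IB$ with $u \in W^\times$ for which (\ref{wq}) holds with this $w$, and to check that $\underline q_\dR$ restricts to $q_X$ at every $W$-point $\iota : \sR \to W$. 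For this, observe that $w(\alpha,\dots,\alpha)$ and $\lambda_n\,\IB(\alpha,\alpha)^n$, regarded as symmetric $2n$-linear forms, are both horizontal for the (quasi-nilpotent) Gauss--Manin connection, and such a section over the complete local ring $\sR$ is determined by its restriction along any $W$-point; so everything is pinned down at one $\iota$, where $\H^2_\dR(\uX/\ID)\tensor_{\sR,\iota}W \iso \H^2_\cris(X/W)$ carries the crystalline cup product together with $q_X$ satisfying (\ref{wq}) by \ref{extBBF0}(ii). Matching $u\,\IB|_\iota$ with $q_X$ uses that both are perfect, $\Fil$-compatible and -- extending (\ref{ffcomp}) -- Frobenius-compatible, together with the positivity condition built into $q_X$ (which also removes the $n$-th-root ambiguity); this is the delicate bookkeeping flagged below. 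Once $\underline q_\dR$ is constructed, the same horizontality argument gives $\underline q_\dR|_{\iota'} = q_X$ for all $W$-points $\iota'$ and propagates (\ref{wqperp}) over $\sR$. Uniqueness in the statement is then clear: any such form is horizontal, perfect (tested at the closed fibre via \ref{extBBF0}) and satisfies $\Fil^1 = (\Fil^2)^\perp$ -- the latter deduced from (\ref{wq})--(\ref{wqperp}) via \ref{lem: simple observation} exactly as in the proof of \ref{cor: perfect pairing} -- hence a $W^\times$-multiple of $\IB$, normalized by the $W$-point condition.

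\emph{The K3-crystal structure.} It remains to verify that $(\H^2_\cris(\uX_0),\Phi,\underline q_\cris,\Fil^2\H^2_\dR(\uX_0/\ID_0))$ satisfies \cite[Def.~5.1]{NO}. First, $\H^2_\cris(\uX_0)$ is a crystal of vector bundles on $\ID_0/W$ with Frobenius structure $\Phi$ by \ref{Deligne}(c). The induced pairing $\underline q_\cris$ is perfect because $\underline q_\dR$ is, and it is Frobenius-compatible -- the relative analogue of (\ref{ffcomp}) -- by the same reasoning as in \ref{rmk: fcomp}, using the compatibility of the relative crystalline cup product with Frobenius together with (\ref{wqperp}); equivalently this can be checked fibre by fibre. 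The Frobenius-divisibility relations linking $\Fil^1,\Fil^2$ to $F_{\uX}$ are exactly \ref{Deligne}(b), the relative Mazur--Ogus inequalities. Finally $\Fil^2\H^2_\dR(\uX_0/\ID_0)$ is an invertible subbundle (as $h^{2,0}=1$) and is isotropic for $\underline q_\cris$, since $\Fil^2 \subseteq \Fil^1 = (\Fil^2)^\perp$ by \ref{loc} and the first part. Collecting these facts yields the K3-crystal structure, and over a point fibre one recovers \cite[Def.~3.1]{Ogus} by \ref{rmk: fcomp}.

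\emph{Main difficulty.} The crux is the construction of $\underline q_\dR$ over the formal base: the cup-power identity (\ref{wq}) is genuinely known only on the second cohomology of complex $\HKn$-manifolds (\ref{sec: known BBFs}) and, through \ref{extBBF0}, at the special fibre, so one must transport it along the crystalline--de Rham comparison and a lift supplied by \ref{def: excellent good reduction}, and then spread it out by horizontality -- all the while controlling the integral structure, the choice of $n$-th root, and the normalization at $W$-points, and checking compatibility with the Langer--Zink form $\IB$, which is what supplies $\Fil^1 = (\Fil^2)^\perp$. Once $\underline q_\dR$ is in hand, the K3-crystal axioms reduce to collating \ref{Deligne}(b),(c), \ref{loc} and \ref{rmk: fcomp}.
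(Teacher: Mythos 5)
Your proposal is correct and follows essentially the same route as the paper: take the Langer--Zink horizontal perfect pairing of \ref{loc}, rescale by a unit so that it matches $q_X$ at a $W$-point, note that both $\alpha \mapsto \cup^n \alpha$ and $\lambda_n \underline{q}_\dR^{\otimes n}$ are horizontal so the identity (\ref{wq}) propagates from that point, and then verify the Nygaard--Ogus axioms via \ref{Deligne}(b),(c) and check the Frobenius compatibility (5.1.2) by horizontality reduction to a $W$-point where \ref{rmk: fcomp} applies. The "delicate bookkeeping" you flag at the $W$-point is exactly what the paper dispatches in one line, using the uniqueness-up-to-$W^\times$ clause of \ref{loc} to identify $\IB$ at that point with $q_X$ after rescaling.
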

\begin{proof}
For the first statement, it is clear by a parallel transport argument that such pairing is unique if it exists. Let $s$ be any $W$-valued point on $\ID$ and let $\IB$ be any perfect horizontal form on $\H^2_\dR(\uX/\ID)$, which exists by \ref{loc} and \ref{cor: perfect pairing}. Up to replacing $\IB$ by a scalar in $W^\times$, we may assume that $\IB|_s$ agrees with the evaluation of $q_X$ on $s$. Now since $\alpha \mapsto \alpha^n$ and $\lambda_n q^{\tensor n}$ both defines horizontal tensors on $\H^2_\dR(\uX/\ID)$ and they agree on a $W$-point, they have to be equal. 

In \cite[Def.~5.1]{NO}, (5.1.1) is given, (5.1.3) is a consequence of the Mazur-Ogus inequality \ref{Deligne}(b) and the fact that $\Fil^1 = [\Fil^2]^\perp$ on $\H^2_\dR(\uX_0/\ID_0)$, so we are left with (5.1.2). Let $\Phi_\sR$ denote the map $\varphi_\sR^* \H^2_\dR(\uX/\ID) \to \H^2_\dR(\uX/\ID)$ given by evaluating $\Phi$. Note that $\Phi_\sR$ is horizontal, so we have two horizontal pairings on $\varphi_\sR^* \H^2_\dR(\uX/\ID)$, one is given by $\varphi_\sR^*(\underline{q}_\dR) : \varphi_\sR^* \H^2_\dR(\uX/\ID) \to \varphi_\sR^*(\sR) \iso \sR$ and the other is given by $\underline{q}_\dR(\Phi_\sR(-), \Phi_\sR(-))$, which we abusively write as $\underline{q}_\dR \circ \Phi_\sR$. The condition (5.1.2) in \textit{loc. cit.} amounts to showing that $\underline{q}_\dR \circ \Phi_\sR = p^2 \varphi_\sR^*(\underline{q}_\dR)$. Since both pairings are horizontal, it suffices to check this at a $W$-valued point on $\ID$, which follows from \ref{rmk: fcomp}. 
\end{proof}

We now phrase (a slightly extended version of) the analogue of \cite[Thm~5.3]{NO} for excellent reductions: 
\begin{theorem}
\label{locTor}
Assume $p > 3, n + 1$. Let $V$ be a finite flat extension of $W$. Set $R' := V/J$ for some $J \subseteq (p)$ and $R := V/(p)$. Let $\mathsf{ERK3}^{[n]}_{R'}$ be the category whose objects are smooth proper schemes $\sY$ over $R'$ such that $\sY \tensor_{R'} k$ is an excellent reduction of a $\HKn$-type variety. Let $\mathsf{DK3}^{[n]}_{R'}$ be the category of tuples $(Y, \bH, \alpha, \Fil_{R'})$ which consists of an excellent reduction $Y/k$ of a $\HKn$-type variety, a K3 crystal $\bH$ over $R$, an isomorphism $\alpha : \bH|_{k} \sto \H^2_\cris(Y/W)$, and an isotropic direct summand $\Fil_{R'} \subseteq \bH_{R'}$ which lifts the one $\Fil \subseteq \bH_R$ given by the K3 crystal structure. Morphisms in both categories are isomorphisms. 

Then the natural functor $\Psi_{R'} : \mathsf{ERK3}^{[n]}_{R'} \to \mathsf{DK3}^{[n]}_{R'}$ which sends $\sY$ to $(\sY \tensor k, \H^2_\cris(\sY_R), \mathrm{id}, \Fil^2 \H^2_\dR(\sY/R'))$ is an equivalence of categories. 
\end{theorem}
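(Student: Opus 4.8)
\textbf{Proof plan for Theorem \ref{locTor}.}

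The plan is to reduce the statement to the main theorem of Nygaard--Ogus \cite[Thm~5.3]{NO}, which is exactly this equivalence when $n=1$ (i.e.\ for ordinary K3 surfaces and K3 crystals), together with the local deformation theory established in \ref{loc}, \ref{flatloc} and \ref{K3crystal}. The essential observation is that although $\mathsf{ERK3}^{[n]}_{R'}$ contains higher-dimensional varieties, the relevant deformation-theoretic and crystalline data live entirely in $\H^2$, which carries a K3 crystal structure by \ref{rmk: fcomp} and \ref{K3crystal}; so the combinatorics of the proof of \cite[Thm~5.3]{NO} go through unchanged once the inputs are in place. Concretely, I would first check that $\Psi_{R'}$ is well-defined: given $\sY \in \mathsf{ERK3}^{[n]}_{R'}$, the sheaf $\H^2_\cris(\sY_R)$ is a crystal of vector bundles over $\Spec R$ (by \ref{Deligne}(c) applied to the deformation $\sY_R$ of $\sY\tensor k$), it carries the perfect horizontal pairing $\underline q_\cris$ of \ref{K3crystal}, and by \ref{K3crystal} the tuple $(\H^2_\cris(\sY_R),\Phi,\underline q_\cris,\Fil^2\H^2_\dR(\sY/R'))$ is a K3 crystal over $R$; together with $\Fil^2\H^2_\dR(\sY/R')\subseteq \H^2_\cris(\sY_R)_{R'}=\H^2_\dR(\sY/R')$, which is isotropic (as $\Fil^2=[\Fil^1]^\perp\subseteq\Fil^1$, using \ref{loc}) and lifts the Hodge filtration mod $p$, this gives an object of $\mathsf{DK3}^{[n]}_{R'}$, and isomorphisms of schemes clearly induce isomorphisms of such tuples.

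Next, I would prove full faithfulness and essential surjectivity by the standard deformation-theoretic dévissage: both categories are fibered over the groupoid of excellent reductions $Y/k$, and one may work over a fixed such $Y$ and a fixed K3 crystal $\bH$ with a fixed trivialization $\alpha$; one then filters $R'$ by a sequence of small (square-zero) extensions starting from $k$. The base case $R'=k$ is the statement that excellent reductions $Y/k$ are classified by their crystalline $\H^2$ as a K3 crystal --- for this I would invoke the crystalline Torelli-type input from \cite{NO}/\cite{Ogus} combined with the Kuga--Satake machinery that the paper sets up later, but in the present local statement it suffices to note that this is the ``point'' version of the theorem and is part of the cited Nygaard--Ogus framework; alternatively, at this stage one may take the base case as given and focus the work on the relative statement over Artinian $R'$. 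For the inductive step over a small extension $\bar{R}'\to R''$ with kernel $I$, on the geometric side deformations of $\sY''$ from $R''$ to $\bar{R}'$ lifting a fixed polarization-free deformation form a torsor under $\H^1(Y,T_Y)\tensor I$, and by \ref{loc}(a) (or rather its filtration-lifting incarnation, the map $\Psi$ there) this torsor is canonically identified, via $\sY\mapsto \Fil^2\H^2_\dR(\sY/\bar R')$, with the torsor of isotropic-direct-summand liftings of $\Fil^2\H^2_\dR(\sY''/R'')$ inside $\bH_{\bar R'}$ --- which is precisely the set of lifts of the corresponding object of $\mathsf{DK3}^{[n]}$. Matching these two torsor structures, and checking compatibility with the crystal isomorphism $\alpha$ (which is rigid along nilpotent thickenings because $\H^2_\cris$ is a crystal, \ref{Deligne}(c)), yields that $\Psi_{R'}$ induces a bijection on isomorphism classes of lifts and on automorphisms at each stage, hence the claimed equivalence by passing up the filtration of $R'$.

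The main obstacle I expect is the careful bookkeeping in the inductive step: one must verify that the torsor identification furnished by \ref{loc}(a) is compatible with \emph{all} of the structure in $\mathsf{DK3}^{[n]}_{R'}$ simultaneously --- the K3 crystal $\bH$ over $R$ (which is fixed, so one needs that $\H^2_\cris(\sY_R)$ with its Frobenius and pairing really is pulled back from the fixed data, using that $\Spec R\to \Spec R'$ factors through $\Spec k$ up to a PD thickening and invoking \ref{Deligne}(c) for the crystal property), the isomorphism $\alpha$, and the isotropic filtration $\Fil_{R'}$ --- and that the horizontal pairing $\underline q_\dR$ of \ref{K3crystal} matches the pairing $\underline q_\cris$ coming from the abstract K3 crystal $\bH$ under the comparison. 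Concretely this is where one must lean on the uniqueness clauses in \ref{loc} (the form $\IB$ is unique up to $W^\times$) and \ref{K3crystal} (the horizontal extension $\underline q_\dR$ is unique) to pin down the pairings and conclude that no extra datum is lost. Once this compatibility is in hand, everything else is a formal consequence of the cited results of Langer--Zink, Ogus and Nygaard--Ogus, and the proof is complete.
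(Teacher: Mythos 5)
Your overall strategy is the paper's: a d\'evissage over square-zero extensions, using \ref{loc}(a) to identify deformations of the scheme with isotropic liftings of $\Fil^2$ in the crystal, \ref{K3crystal} to know that every such deformation again produces a K3 crystal, and the Nygaard--Ogus deformation theory of K3 crystals (the paper invokes \cite[Thm~5.2]{NO}, not the proof of Thm~5.3 directly) to see that the abstract side has the same deformation theory. The paper runs this in two stages --- first building up $R=V/(p)$ from $k$ through square-zero quotients, where the K3-crystal structure itself must be matched and \cite[Thm~5.2]{NO} is the input, and then passing from $R$ to $R'$, where the only new datum is the isotropic lift $\Fil_{R'}$ and \ref{loc}(a) alone suffices. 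You blur these two stages a little (in the first stage the K3 crystal over the intermediate base is not ``fixed'' data pulled back from $\bH$; one needs the statement that K3-crystal structures over a square-zero thickening correspond exactly to filtration lifts, which is precisely what \cite[Thm~5.2]{NO} provides), but the mechanism you describe is the intended one.

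The one genuine misstep is your treatment of the base case. Invoking a crystalline Torelli theorem or Kuga--Satake machinery for $R'=k$ would be both circular and unnecessary: Theorem~\ref{crysTor} is proved \emph{later} using \ref{locTor} (through \ref{lem: induce lifting} and \ref{liftauto}), so it cannot be an input here; and no Torelli statement is needed, because an object of $\mathsf{DK3}^{[n]}$ contains the variety $Y$ \emph{and} the rigidifying isomorphism $\alpha$ as part of its data. Over $k$ the target category is therefore equivalent to the category of excellent reductions themselves (the pair $(\bH,\alpha)$ carries no extra information up to isomorphism), and $\Psi_k$ is an equivalence for trivial reasons --- this is consistent with your own remark that both categories are fibered over excellent reductions $Y/k$, which already makes the base case empty of content. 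Your fallback of ``taking the base case as given'' is thus fine once phrased this way, but as written your primary suggestion would introduce a vicious circle into the paper's logical structure, so this point needs to be corrected before the plan is sound.
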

Note that by restricting the form $\underline{q}_\cris$ in \ref{K3crystal}, $\H^2_\cris(\sY_R)$ is naturally a K3 crystal, so $\Psi_{R'}$ make sense. Note also that we implicitly identified the restriction of $\H^2_\cris(\sY)$ to the special point with $\H^2_\cris(Y/W)$, which is ok by \ref{Deligne}(c). 
\begin{proof}
We first show this for $R' = R$. Let $\bar{\shO}, \shO$ be quotients of $R$ such that $\shO$ is a square zero extension of $\bar{\shO}$. Suppose by induction that $\Psi_{\bar{\shO}}$ is an isomorphism. Then we easily show that $\Psi_{\shO}$ is also an isomorphism using \ref{loc}(a) and \cite[Thm~5.2]{NO}. The point is that the objects in both categories have the same deformation theory, and by \ref{K3crystal}, any deformation of $\sY$ to $\shO$ gives a K3 crystal. Now to treat the case when $R'$ is a further extension of $R$, apply \ref{loc}(a) again.
\end{proof}

\subsection{Spreading out Lemmas} We prove that the property of ``being of $\HKn$-type'' is invariant under specialization and generization in reasonable families.

\begin{proposition}
\label{defcompC}
Let $\kappa$ be any field of characteristic zero and $X$ be a smooth projective variety over $\kappa$. If $\kappa = \bar{\kappa}$ and $\kappa$ can be embedded into $\IC$, then the following statements are equivalent: 
\begin{enumerate}[label=\upshape{(\alph*)}]
    \item $X$ is of $\HKn$-type;
    \item For every embedding $\kappa \subseteq \IC$, the complex manifold $X(\IC)$ is of $\HKn$-type.
    \item For some embedding $\kappa \subseteq \IC$, the complex manifold $X(\IC)$ is of $\HKn$-type.
\end{enumerate}
\end{proposition}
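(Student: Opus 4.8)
The plan is to establish the cycle of implications $(a) \Rightarrow (b) \Rightarrow (c) \Rightarrow (a)$. Here $(b) \Rightarrow (c)$ is immediate from the standing assumption that $\kappa$ embeds into $\IC$. For $(a) \Rightarrow (b)$, fix a smooth projective family $\sX \to S$ of irreducible symplectic varieties over $\kappa$ with $s, s' \in S(\kappa)$, $\sX_s \iso X$, and $\sX_{s'}$ birational to $Y^{[n]}$ for a K3 surface $Y/\kappa$, and fix an embedding $\iota \colon \kappa \into \IC$. Base changing along $\iota$: the base $S_\IC$ is connected (as $S$ is geometrically connected over $\kappa = \bar{\kappa}$), each fibre of $\sX_\IC \to S_\IC$ is smooth projective, hence compact K\"ahler, with $\H^0(\Ohm^2)$ spanned by a nowhere degenerate $2$-form; the fibre over the image of $s'$ is birational to $Y_\IC^{[n]}$, hence deformation equivalent to it, hence diffeomorphic to it and in particular simply connected. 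Since $\sX_\IC \to S_\IC$ is a proper submersion, Ehresmann's theorem exhibits it as a $C^\infty$ fibre bundle over the connected base, so all its fibres are diffeomorphic; in particular $X_\IC = \sX_{s,\IC}$ is a simply connected compact K\"ahler manifold with a holomorphic symplectic form that is deformation equivalent to $Y_\IC^{[n]}$, i.e.\ of $\HKn$-type as a complex manifold. As $\iota$ was arbitrary this gives $(b)$. (When $S_\IC$ is reducible one first joins $s$ to $s'$ by a chain of smooth projective curves inside the irreducible components and uses transitivity of deformation equivalence.)

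The substantive implication is $(c) \Rightarrow (a)$, to be carried out in two steps. \emph{Step 1: an algebraic family over $\IC$.} Fix $\iota$ with $X_\IC := X \otimes_{\kappa, \iota} \IC$ of $\HKn$-type and choose an ample line bundle $H$ on $X_\IC$. The plan is to invoke the existence of quasi-projective coarse moduli spaces of polarized irreducible symplectic manifolds of $\HKn$-type with fixed numerical polarization class (Viehweg), together with Verbitsky's global Torelli theorem and Markman's description of the monodromy group; these combine to show that for a suitable polarization class the moduli space has an irreducible component containing both the point of $(X_\IC, H)$ and the point of $((Y_0)^{[n]}, H_0)$ for some polarized K3 surface $(Y_0, h_0)$, and that a path of polarized deformations inside this component is realized by an algebraic family. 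This produces a smooth connected quasi-projective $\IC$-variety $S_\IC$ and a smooth projective family $\sX_\IC \to S_\IC$ of irreducible symplectic manifolds together with $\IC$-points $t_0, t_1$ such that $\sX_{t_0} \iso X_\IC$ and $\sX_{t_1}$ is birational to $(Y_0)^{[n]}$.

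\emph{Step 2: descent to $\kappa$.} The data of Step 1 --- the family $\sX_\IC \to S_\IC$, the surface $Y_0$, the points $t_0, t_1$, the birational map from $\sX_{t_1}$ to $(Y_0)^{[n]}$, and the isomorphism $\sX_{t_0} \iso X \otimes_\kappa \IC$ --- is defined over a subfield $\kappa' \subseteq \IC$ finitely generated over $\kappa$. Since $\kappa = \bar{\kappa}$, $\kappa'$ is the function field of a geometrically integral $\kappa$-variety $B$, and, shrinking $B$ to a dense open, one spreads the whole picture out to data over $B$: a smooth projective $\sX_B \to S_B$, a morphism $S_B \to B$ with geometrically connected fibres, sections $t_0, t_1 \colon B \to S_B$, a smooth projective family of K3 surfaces $Y_B \to B$, a $B$-birational map from $\sX_{t_1(B)}$ to the relative Hilbert scheme $(Y_B)^{[n]}$, and an isomorphism $\sX_{t_0(B)} \iso X \times_\kappa B$ over $B$. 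Choosing a point $b \in B(\kappa)$ (nonempty as $\kappa = \bar{\kappa}$) and taking fibres over $b$ gives a smooth projective family $\sX_b \to S_b$ over $\kappa$ with $S_b$ connected, $\sX_{t_0(b)} \iso X$, and $\sX_{t_1(b)}$ birational to $(Y_b)^{[n]}$ for a K3 surface $Y_b/\kappa$. It remains to check each fibre of $\sX_b \to S_b$ is an irreducible symplectic variety over $\kappa$: the condition that $\H^0(\Ohm^2)$ be spanned by a nowhere degenerate form is closed on $S_b$ and holds at $t_1(b)$, hence everywhere; and triviality of $\pi_1^{\et}$ of each fibre holds because, after $\otimes_\kappa \IC$, that fibre is a fibre of the complex family $\sX_\IC \to S_\IC$, hence simply connected by Step 1 and Ehresmann, while $\pi_1^{\et}$ of a smooth proper variety in characteristic $0$ is insensitive to extension of the algebraically closed base field.

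The step I expect to be the main obstacle is Step 1 of $(c) \Rightarrow (a)$: upgrading the purely analytic statement that $X_\IC$ is deformation equivalent to a Hilbert scheme to an \emph{algebraic} family of \emph{projective} irreducible symplectic manifolds connecting $X_\IC$ to something birational to an honest $(Y_0)^{[n]}$. This is exactly where one must use the quasi-projectivity of moduli of polarized irreducible symplectic manifolds together with the connectedness input from Verbitsky's Torelli theorem and Markman's monodromy computations, and get the bookkeeping of polarization types right so that the Hilbert-scheme locus lies in the correct component for some admissible polarization degree. By comparison the descent of Step 2 is routine, the only care being to ensure that the open and closed conditions defining ``smooth projective family of irreducible symplectic varieties over a connected base'' survive specialization to the closed fibre of $B$ over $b$.
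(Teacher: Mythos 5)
Your implications (a)$\Rightarrow$(b)$\Rightarrow$(c) and your descent Step 2 are essentially sound and parallel to the paper (the paper handles (a)$\Rightarrow$(b) by citing Huybrechts' theorem that birational hyperk\"ahler manifolds are deformation equivalent, which is also the unstated ingredient behind your ``birational, hence deformation equivalent'' step). The problem is Step 1 of (c)$\Rightarrow$(a), which is the heart of the matter and which you leave as an assertion. Verbitsky's Torelli theorem, Markman's monodromy computation and Viehweg's quasi-projective moduli do \emph{not} by themselves show that the component of the polarized moduli space containing $(X_\IC, H)$ also contains a point whose fiber is birational to some $(Y_0)^{[n]}$. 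Those inputs control \emph{connectedness} of the component with a fixed polarization type, but they do not produce a Hilbert-scheme-birational member with the \emph{same} polarization type as the arbitrary ample class $H$ you chose: the ample cone of an honest $Y^{[n]}$ is a particular chamber, and not every monodromy orbit of a primitive positive class is visibly realized there. What is genuinely needed is a density statement for the Noether--Lefschetz-type locus of points birational to Hilbert schemes inside the (polarized) family or moduli component --- this is exactly the result of Mongardi--Pacienza (Cor.~1.2) that the paper invokes, and without it (or an explicit lattice-theoretic argument realizing the type of $H$ on a birational model of some $Y^{[n]}$) your proof is incomplete at its main step. You correctly flag this as the obstacle, but the tools you name do not close it.

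For comparison, the paper sidesteps the polarization-type bookkeeping entirely: it applies Artin approximation to $\Def(X;\xi)$ to get a connected polarized family over $\kappa$ that universally deforms $(X,\xi)$, then uses the Mongardi--Pacienza density result \emph{inside that very family} (after choosing an embedding $\kappa\subseteq\IC$) to find a complex point birational to $Y_\IC^{[n]}$, and finally spreads out and specializes that datum to a $\kappa$-point, exactly as in your Step 2. If you insert the density result into your Step 1 (applied either to the moduli component or, more simply, to the universal deformation family of $(X_\IC,H)$), your route becomes a workable variant; as written, the key existence claim is unsupported.
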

\begin{proof}
(a) implies (b) by a well known theorem of Huybrechts \cite[Thm~4.6]{HuyBasic}. That (b) implies (c) is trivial. Therefore, it suffices to show that (c) implies (a). Choose a polarization $\xi$ on $X$. By applying Artin approximation theorem to the deformation functor $\Def(X; \xi)$, we can construct a polarized family $(\sX \to \sS, \bxi)$ which universally deforms $(X, \xi)$. Of course, $\sS$ can be taken to be connected.

By a density result of Mongardi and Pacienza \cite[Cor.~1.2]{Density}, we can find a $\IC$-point $s_\IC'$ on $\sS$ such that the fiber $\sX_{s_\IC'}$ admits a birational equivalence $\sX_{s_\IC'} \rat Y_\IC^{[n]}$ for some K3 surface $Y_\IC$ over $\IC$. Let $\kappa' \subseteq \IC$ be a finitely generated extension of $\kappa$ such that the tuple $(s_\IC', Y_\IC, \sX_{s'_\IC} \rat Y_\IC^{[n]})$ descends to a tuple $(\wt{s}', Y_{\kappa'}, \sX_{\wt{s}'} \iso_{\mathrm{bir}} Y^{[n]}_{\kappa'})$ defined over $\kappa'$. Let $U$ be an irreducible smooth variety over $\kappa$ such that the generic point $\eta$ has residue field $\kappa'$. Up to shrinking $U$, we may assume that $\wt{s}' : \mathrm{Spec\,} \kappa' \to \sS \tensor_{\kappa} \kappa'$ spreads to a section $\wt{s}'_U : U \to \sS_U := \sS \times_\kappa U$. Let $\sX''$ be the pullback family $\sX \times_\sS \sS_U$ over $\sS_U$ and let $\sX' := (\wt{s}_U')^* \sX''$ be the pullback family over $U$. Up to shrinking $U$ again, we assume that $Y_{\kappa'}$ spreads to a family $\sY$ over $U$. Let $\sY^{[n]}$ be the relative Hilbert scheme of $n$ points over $U$. 

Note that the generic fiber $\sX'_{\eta}$ of $\sX'$ is just $\sX_{\wt{s}'}$, so it is birational to $\sY^{[n]}_\eta = Y^{[n]}_{\kappa'}$. We claim that for a general $\kappa$-point $u \in U$, the fibers $\sY^{[n]}_u$ and $\sX'_u$ are birational. The argument is standard. Indeed, let $\mathring{\sX}'_{\eta}$ and $\mathring{\sY}_{\eta}^{[n]}$ be open dense subschemes of $\sX'_{\eta}$ and $\sY^{[n]}_\eta$ respectively such that $\sX'_{\eta} \rat \sY^{[n]}_\eta$ restricts to an isomorphism $\mathring{\sX}'_{\eta} \iso \mathring{\sY}_{\eta}^{[n]}$. By the spreading out properties of open immersions and isomorphisms, up to shrinking $U$, $\mathring{\sX}'_{\eta}$ and $\mathring{\sY}_{\eta}^{[n]}$ extend to open subschemes $\mathring{\sX}'$ and $\mathring{\sY}^{[n]}$ of $\sX'$ and $\sY^{[n]}$ such that $\mathring{\sX}' \iso \mathring{\sY}^{[n]}$. Since $\sX'$ and $\sY^{[n]}$ have irreducible fibers, for a general $\kappa$-point $u \in U$, $\mathring{\sX}'_u$ and $\mathring{\sY}^{[n]}_u$ are open dense in  $\sX'_u$ and $\sY^{[n]}_u$. Then $\mathring{\sX}'_u \iso \mathring{\sY}^{[n]}_u$ and we are done.

Let $u$ be a point as above. Note that $\sX'_u$ can be found as a fiber on $\sX''$ by construction. Similarly, $X$ is a fiber on $\sX$, so it is also a fiber on $\sX''$. Now we can view $X$ as a deformation of $\sX'_u$ in the family $\sX'' \to \sS_U$, as $\sS_U$ is connected. 
\end{proof}

The proposition below makes Def.~\ref{def}(a) much more flexible to work with.
\begin{proposition}
\label{defstrengthen}
Let $S$ be a connected variety over a characteristic zero field $\kappa$ and $\sX \to S$ be some smooth proper scheme over $S$. If some geometric fiber is a $\HKn$-type variety, then so is every other geometric fiber.  
\end{proposition}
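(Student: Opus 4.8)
The plan is to reduce, via Proposition~\ref{defcompC}, to a statement about complex manifolds in a smooth proper family over $\IC$, and then to invoke the deformation theory of compact hyperk\"ahler manifolds.

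First I would carry out the standard reductions. By Definition~\ref{def} and routine spreading-out, being of $\HKn$-type is insensitive to extension of the algebraically closed ground field; moreover $\Gal(\bar\kappa/\kappa)$ permutes the connected components of $S_{\bar\kappa}$ transitively (because $S$ is connected) and carries the distinguished fiber to a fiber over any prescribed component. Hence we may assume $\kappa = \bar\kappa$ and $S$ connected. Choosing an embedding into $\IC$ of a finitely generated subfield over which $\sX \to S$ and the distinguished point are defined, and spreading out, it then suffices to show: \emph{if $S$ is a connected variety over $\IC$ and $\sX \to S$ is smooth and proper, then $T := \{\, s \in S(\IC) : \sX_s(\IC) \text{ is a complex manifold of } \HKn\text{-type} \,\}$ equals $S(\IC)$ whenever it is nonempty}. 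Indeed, by \ref{defcompC} each fiber over a point of $T$ is then of $\HKn$-type as a variety, and running the reductions backwards settles the general case.

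Next I would show $T$ is open and closed in the connected complex manifold $S^{\mathrm{an}}$, hence all of $S(\IC)$. Openness is the soft direction: if $s_0 \in T$, then $\sX_{s_0}$ is a compact hyperk\"ahler manifold in the deformation class of $Y^{[n]}$, and by the deformation theory of such manifolds (\cite{HuyBasic}) every small deformation of $\sX_{s_0}$ — in particular every nearby fiber $\sX_s$ — is again compact hyperk\"ahler and in the same deformation class, hence of $\HKn$-type. For closedness, take $s_\infty \in \overline{T}$ and recall that the Hodge numbers of the fibers of $\sX \to S$ are constant (Hodge--de Rham degenerates in smooth proper families over a characteristic-zero base) and that by Ehresmann's theorem all fibers of $\sX^{\mathrm{an}} \to S^{\mathrm{an}}$ are diffeomorphic; in particular $\sX_{s_\infty}$ has $\pi_1 = 1$, $K_{\sX_{s_\infty}} \iso \sO$, $h^{1,0} = 0$, $h^{2,0} = 1$ and is diffeomorphic to a $\HKn$-type manifold. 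Since $\sX \to S$ is algebraic, $\sX_{s_\infty}$ is a proper scheme over $\IC$, hence Moishezon and so satisfies the $\partial\bar\partial$-lemma. From these facts, together with the Beauville--Bogomolov decomposition and the topological rigidity of the $\HKn$-type class (which excludes a nontrivial product factor), one concludes that $\sX_{s_\infty}$ is irreducible symplectic; being then Moishezon and K\"ahler it is projective, and being deformation-equivalent to $\sX_{s_0}$ it is of $\HKn$-type. Thus $s_\infty \in T$, so $T = S(\IC)$, and the general statement follows.

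I expect the main obstacle to be the closedness of $T$ — i.e., showing that a limit of $\HKn$-type fibers in a smooth proper (algebraic) family is again $\HKn$-type. Openness only needs the soft fact that small deformations of compact hyperk\"ahler manifolds remain hyperk\"ahler in the same deformation class, whereas closedness requires controlling the limit fiber: one must rule out degeneration of the holomorphic symplectic form and show the limit stays projective, which is where the complex-analytic input (the $\partial\bar\partial$-property of Moishezon manifolds, the Beauville--Bogomolov decomposition, and the rigidity of the $\HKn$-type deformation class) is genuinely used rather than a soft semicontinuity argument. The bookkeeping in the first reduction — descent, Galois conjugation of the components of $S_{\bar\kappa}$, and the passage between geometric points and $\IC$-points — is routine but must be done with some care.
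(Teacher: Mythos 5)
Your reductions coincide with the paper's: pass to $\kappa=\bar\kappa$ and $S$ irreducible, cut everything down to a finitely generated field, embed it into $\IC$, and handle non-closed $\kappa$ by letting $\Gal(\bar\kappa/\kappa)$ permute the connected components of $S_{\bar\kappa}$; that bookkeeping is fine, and so is your openness step. But the paper does not then run an open--closed analysis of the $\HKn$ locus in $S^{\mathrm{an}}$ at all: once $s$ and $\bar\eta$ are realized as $\IC$-points of the connected variety $S\otimes_\kappa\IC$, the two fibers are deformation-equivalent compact complex manifolds, and the paper concludes in one line from \ref{defcompC}, which trades the algebraic condition ``$\HKn$-type variety'' for the deformation-class condition on the associated complex manifold. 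So your overall route is the same up to the last step, where you set yourself the stronger task of proving that an analytic limit of $\HKn$-type fibers inside the algebraic family is again $\HKn$-type.

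That is where the genuine gap lies. In the closedness step you deduce that $\sX_{s_\infty}$ is irreducible symplectic ``together with the Beauville--Bogomolov decomposition'' and only afterwards say ``being then Moishezon and K\"ahler it is projective''. The Beauville--Bogomolov decomposition is a theorem about compact \emph{K\"ahler} manifolds with trivial canonical class, so invoking it presupposes exactly the K\"ahlerness of $\sX_{s_\infty}$ that is in question; the argument is circular. What your listed inputs really give (Moishezon, hence the $\partial\bar\partial$-lemma; constancy of Hodge numbers; $\pi_1=1$ by Ehresmann; $K\iso\sO$ and non-degeneracy of the limiting $2$-form via the usual canonical-bundle argument) is a simply connected holomorphically symplectic Moishezon manifold, and none of the cited results upgrades this to K\"ahler or projective --- the assertion ``a Moishezon limit of hyperk\"ahler fibers is projective'' is not a formal consequence of semicontinuity and is precisely the hard point you flagged, left unproved. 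Note also that you need projectivity of the far fiber even to apply \ref{defcompC} to it, since that proposition is stated for smooth projective varieties. If you argue as the paper does --- deformation-equivalence of the two fibers over the connected base $S\otimes_\kappa\IC$ plus \ref{defcompC} at both ends --- you avoid committing to this unproved analytic claim; as written, your closedness step does not go through.
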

\begin{proof}
We first treat the case $\kappa = \bar{\kappa}$. We may assume that $S$ is irreducible. Let $s$ be any $\kappa$-point on $S$ and $\bar{\eta}$ be a geometric generic point on $S$. It suffices to show that $\sX_s$ is of $\HKn$-type if and only if $\sX_{\bar{\eta}}$ is also of $\HKn$-type. Since the system $(\sX, S, s, \bar{\eta})$ is defined by finitely many equations, we reduce to the case when $\kappa$ and $k(\bar{\eta})$ have finite transcendence degrees over $\bar{\IQ}$. Therefore, we can choose embeddings $\kappa \subseteq k(\bar{\eta}) \subseteq \IC$. Since $S \tensor_\kappa \IC$ is still connected and $s, \bar{\eta}$ both give rise to $\IC$-points on $S \tensor \IC$, we are done by \ref{defcompC}. 

Now we do not assume that $\kappa$ is algebraically closed and let $\bar{\kappa}$ be some algebraic closure of $\kappa$ and set $\bar{S} := S \times_\kappa \bar{\kappa}$. Let $\{ S_i \}_{i \in I}$ be the connected components of $\bar{S}$ and set $\sX_i := \sX|_{S_i}$. Suppose $s \to S$ is some geometric point such that $\sX_s$ is a $\HKn$-type variety. Clearly $s$ factors through $S_i$ for some $i \in I$. By the first paragraph, every geometric fiber of $\sX_i$ is of $\HKn$-type. 

Finally, consider the action of $\Gal(\bar{\kappa}/\kappa)$ on $I$. Take the fiber $\sX_u$ for some $u \in S_i(\kappa)$. The base change $\sX_u \times_\sigma \bar{\kappa}$ is also of $\HKn$-type and is a fiber on $\sX_{\sigma(i)}$. Therefore, by the first paragraph, every geometric fiber of $\sX_{\sigma(i)}$ is of $\HKn$-type. Since $S$ is connected, $\Gal(\bar{\kappa}/\kappa)$ acts transitively on $I$, so we are done. 
\end{proof}


\begin{notation}
\label{hilb+} Let $P$ be a polynomial in $\IQ[T]$, $N := P(0) - 1$ and $\Hilb_P$ be the Hilbert scheme over $\mathrm{Spec\,} \IZ_{(p)}$ which parametrizes closed subschemes of $\IP^{N}$ with Hilbert polynomial $P$. Let $\sZ$ be the universal family over $\Hilb_P$. Let $\Hilb^+_P$ be the (possibly empty) maximal locally closed subscheme of $\Hilb_P$ such that for every geometric point $s \in \Hilb^+_P$, the fiber $\sZ_s$ satisfies the hypotheses of \ref{loc} with $h^{1, 1} > 1$, and $\H^i(\sZ_s, \sO_{\sZ_s}(1)) = 0$ for all $i > 0$. For every number $m$, let $\Hilb^{+, m}_P$ denote the (possibly empty) open subscheme of $\Hilb^+_P$ such that for every geometric point $s \to \Hilb^{+, m}_P$, $\sO_{\sZ_s}(1)$ is an $m$th power of a primitive polarization on $\sZ_s$. 
\end{notation}

\begin{lemma}
\label{Hilbconnected}
Let $k$ be any algebraically closed field with $\mathrm{char\,} k = p$. Assume $p \nmid m$. The generic fiber of every connected component of $\Hilb^{+, m}_{P, W}$ or $\Hilb^{+, m}_P$ is also connected. 
\end{lemma}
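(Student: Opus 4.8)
The plan is to reduce the statement to two facts about $\Hilb^{+,m}_{P,W}$ (and likewise $\Hilb^{+,m}_P$): that it is \emph{regular}, and that it is \emph{flat} over $W$ (resp.\ $\IZ_{(p)}$). Granting these, any connected component $T$ is integral --- in a scheme all of whose local rings are domains, distinct irreducible components cannot meet, so connected components are irreducible, and regular schemes are reduced --- and flatness makes $p$ a nonzerodivisor on $\shO_T$; hence $T_{K_0} = T \setminus V(p)$ is a nonempty open subscheme of the irreducible scheme $T$, so it is irreducible, in particular connected. The argument for $\Hilb^{+,m}_P$ is identical, with $\IZ_{(p)}$ (equivalently $\IZ_p$ on completions) in place of $W$ and $\IQ$ in place of $K_0$. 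So the whole content lies in the regularity and flatness assertions.

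Both are local and may be checked on complete local rings. Fix a closed point $[Z]$ of $\Hilb^{+,m}_{P,W}$, corresponding to an embedding $Z \hookrightarrow \IP^N$ with $Z$ over a perfect field $k'$; when $\mathrm{char\,}k' = p$, let $\xi$ be the primitive polarization on $Z$ with $\sO_Z(1) \iso m\xi$, and write $\Def(Z;\xi) = \mathrm{Spf\,}\sR_\xi$ (as in the proof of \ref{flatloc}). Because $\H^i(Z, \sO_Z(1)) = 0$ for $i > 0$ by \ref{hilb+}, the standard deformation theory of embedded subschemes identifies the functor of deformations of $Z \subseteq \IP^N$ with that of the polarized pair $(Z, \sO_Z(1))$ up to a formally smooth factor recording the re-embedding by the complete linear system (unobstructed precisely because the higher cohomology of $\sO_Z(1)$ vanishes). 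Since $p \nmid m$, the remark following \ref{loc} gives $\Def(Z;\sO_Z(1)) = \Def(Z;m\xi) = \Def(Z;\xi)$, and by \ref{Deligne}(a) the Hodge numbers are constant in such families, so near $[Z]$ the conditions cutting $\Hilb^{+,m}_{P,W}$ out of $\Hilb_{P,W}$ are open; hence
\[
\widehat{\shO}_{\Hilb^{+,m}_{P,W},\,[Z]} \;\iso\; \sR_\xi \,\widehat{\otimes}_{W(k')}\, W(k')[\![t_1,\dots,t_r]\!]
\]
for some $r \ge 0$. By \ref{flatloc}(b), $\Def(Z;\xi)$ is either formally smooth over $W$ or isomorphic to $\mathrm{Spf\,}W[\![x_1,\dots,x_{h^{1,1}}]\!]/(x_1^2 + \cdots + x_{h^{1,1}}^2 + p)$, and by \ref{flatloc}(c) it is regular; in either case $\sR_\xi$ is a regular local domain, flat over $W$ (in the second case because the defining relation lies in neither $\fm^2$ nor $pW[\![x_\bullet]\!]$). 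Therefore $\widehat{\shO}_{\Hilb^{+,m}_{P,W},[Z]}$ is a $W$-flat regular domain, and hence so is every local ring of $\Hilb^{+,m}_{P,W}$; this gives the desired regularity and flatness. At closed points in characteristic zero one argues the same way, using that the polarized deformation ring of a $\HKn$-type variety over a characteristic zero field is smooth by the local Torelli theorem.

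The only genuinely non-formal step is this deformation-theoretic identification of the complete local rings of the Hilbert scheme: one must combine the vanishing $\H^{>0}(Z, \sO_Z(1)) = 0$ from \ref{hilb+} with the equality $\Def(Z;m\xi) = \Def(Z;\xi)$ to be certain that the embedded deformations see exactly the primitively polarized deformations of $Z$, neither more nor fewer. Once $\Hilb^{+,m}_{P,W}$ is known to be regular and $W$-flat, the remainder --- that regular schemes have pairwise disjoint irreducible components, and that a nonempty open subscheme of an irreducible scheme is irreducible --- is purely topological.
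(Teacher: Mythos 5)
Your proof is correct in outline and its core local computation is exactly the paper's: the vanishing $\H^{>0}(Z,\sO_Z(1))=0$ makes $f_*(m\bxi)$ free, so the formal neighborhood of a special-fibre point of $\Hilb^{+,m}_{P,W}$ maps formally smoothly to $\Def(Z;m\xi)=\Def(Z;\xi)$ (using $p\nmid m$), and then \ref{flatloc}(b) controls that deformation ring. Where you diverge is the global endgame. The paper invokes \cite[Tag~055J]{stacks-project}: it suffices that $\Hilb^{+,m}_{P,W}$ be flat over $W$ with \emph{reduced special fibre}, and both conditions are checked only at points of the special fibre (flatness over $W$ is automatic on the generic fibre, and reducedness of $k[\![x_\bullet]\!]/(x_1^2+\cdots+x_{h^{1,1}}^2+p \bmod p)$ uses $h^{1,1}>1$ and $p>2$). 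You instead reduce to \emph{regularity} plus flatness, so that connected components are integral and their generic fibres are dense opens; this is a perfectly valid alternative, and it has the small advantage of not needing $h^{1,1}>1$ for reducedness, but it costs you something the paper never has to pay for: regularity must also be verified at closed points of the characteristic-zero locus.

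That is the one weak spot. You justify the characteristic-zero case by ``the polarized deformation ring of a $\HKn$-type variety in characteristic zero is smooth by local Torelli,'' but the characteristic-zero geometric fibres of $\Hilb^{+,m}_P$ are, by the definition in \ref{hilb+}, only required to satisfy the cohomological hypotheses of \ref{loc} (trivial canonical bundle, the stated Hodge numbers, the perfect pairing of \ref{cor: perfect pairing}, etc.); they are not known at this point to be of $\HKn$-type, so local Torelli does not literally apply. The gap is fixable: in characteristic zero $\Def(X)$ is smooth by Bogomolov--Tian--Todorov, and $\Def(X;\sO_X(1))$ is cut out by a single equation whose differential $v\mapsto v\cup c_1(\xi)$ is nonzero because the pairing $\H^1(T_X)\times\H^1(\Ohm^1_X)\to\H^2(\sO_X)$ is perfect and $c_1(\xi)\neq 0$ (the same tangent-space computation as in \ref{flatloc}(a)); alternatively cite unobstructedness of polarized deformations of $K$-trivial varieties in characteristic zero. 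With that repair your route goes through and gives the lemma by a genuinely different, slightly more demanding, but self-contained topological argument.
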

\begin{proof}
By \cite[Tag~055J]{stacks-project}, it suffices to show that $\Hilb^{+, m}_{P, W}$ and $\Hilb^{+, m}_P$ are flat over $W$ and $\IZ_{(p)}$ respectively and have reduced special fibers. Since $\Hilb^{+, m}_{P, W} = \Hilb^{+, m}_P \tensor_{\IZ_{(p)}} W$, it suffices to show the statement for $\Hilb^{+, m}_{P, W}$. Let $s$ be a $k$-point on $\Hilb^{+, m}_{P, W}$. Set $X := \sZ_s$ and let $\xi$ be a primitive polarization on $X$ with $m \xi = \sO_{\sZ_s}(1)$. Consider the universal family $(\sX, \bxi)$ over $\Def(X, \xi)$ and denote the structure morphism by $f$. Since $\H^i(X, m \xi) = 0$ for all $i > 0$, $f_* (m \bxi)$ is free. It is easily to see that the natural morphism from the formal neighborhood of $s$ on $\Hilb^{+, m}_{P, W}$ to $\Def(X; m \xi)$ is smooth. Now we conclude by \ref{flatloc}(b). 
\end{proof}

By considering Hilbert schemes, we show that ``being of $\HKn$-type'' is stable under specialization and generization in characteristic $p$: 

\begin{lemma}
\label{charpspread}
Let $S$ be a connected quasi-compact scheme over $\IF_p$ and $(f : \sX \to S, \bxi)$ be a primitively polarized smooth proper scheme. Assume that every geometric fiber of $\sX$ has the same Hodge numbers as a $\HKn$-type variety and satisfies the hypotheses of \ref{loc}. 

For a geometric point $s$ on $S$, we say that $s$ satisfies property $(\ast_{\mathrm{weak}})$ $($resp. $(\ast_{\mathrm{strong}}))$ provided that for some $($resp. any$)$ mixed characteristic discrete valuation ring $R$ with residue field $k(s)$, the generic fiber of some $($resp. any$)$ deformation of $\sX_s$ over $R$ to which $\bxi_s$ extends is of $\HKn$-type. 

Then every geometric point $s$ satisfies $(\ast_{\mathrm{strong}})$ provided some $s$ satisfies $(\ast_{\mathrm{weak}})$. In particular, if one geometric fiber is of $\HKn$-type, so is any other geometric fiber.
\end{lemma}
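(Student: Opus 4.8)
The plan is to reduce the statement to the connectedness results about Hilbert schemes already established in \ref{Hilbconnected}, using the local deformation theory of \ref{flatloc} and \ref{loc} and the characteristic-zero spreading result \ref{defstrengthen}.

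First I would set up the Hilbert scheme parameters. Since $(f:\sX\to S,\bxi)$ is a primitively polarized family of smooth proper schemes all of whose geometric fibers have the Hodge numbers of a $\HKn$-type variety and satisfy the hypotheses of \ref{loc} (in particular $h^{1,1}>1$), I would replace $\bxi$ by a suitable power $m\bxi$ with $p\nmid m$ and $m\bxi$ very ample with vanishing higher cohomology on all fibers, so that $f_*(m\bxi)$ is locally free and, after choosing local trivializations (or passing to an \'etale cover, which does not affect connectedness questions since the relevant statement is about whether two geometric points lie in the same connected component), we get a classifying morphism $S\to \Hilb^{+,m}_{P}$ for an appropriate polynomial $P$. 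The key local input is that this classifying morphism is \emph{smooth}: around each geometric point $s$, the formal neighborhood of the image in $\Hilb^{+,m}_P$ is smooth over $\Def(\sX_s;m\bxi_s)=\Def(\sX_s;\bxi_s)$ (the equality by the remark after \ref{loc}), exactly as in the proof of \ref{Hilbconnected}. Since smooth morphisms are open, the image of $S$ lies in a single connected component $C$ of $\Hilb^{+,m}_P$, and moreover the image meets both the special fiber $C\otimes\IF_p$ and (after noting $C$ is flat over $\IZ_{(p)}$ with reduced, hence connected after \ref{Hilbconnected}, special fiber, and its geometric generic fiber $C_{\overline{\eta}}$ is connected) we can compare fibers of the universal family $\sZ$ over $C$.

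Next I would translate $(\ast_{\mathrm{weak}})$ and $(\ast_{\mathrm{strong}})$ into statements about $\sZ/C$. A deformation of $\sX_s$ over a mixed characteristic DVR $R$ to which $\bxi_s$ (equivalently $m\bxi_s$) extends is classified, via the smoothness just discussed, by an $R$-point of $C$ lifting $s$; its generic fiber is a fiber of $\sZ$ over a point of the generic fiber $C_{K}$. So: $s$ satisfies $(\ast_{\mathrm{weak}})$ iff \emph{some} fiber of $\sZ$ over some characteristic-zero point of $C$ lying over $s$'s component is of $\HKn$-type, and $(\ast_{\mathrm{strong}})$ iff \emph{every} such fiber is. Now I invoke \ref{defstrengthen}: the restriction $\sZ|_{C_{K_0}}$ (base changed to an algebraic closure of the generic fiber, which is connected because $C$ is connected with geometrically connected generic fiber by \ref{Hilbconnected}) is a smooth proper family over a connected characteristic-zero base, so being of $\HKn$-type holds for one geometric fiber iff it holds for all of them. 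Hence over the \emph{entire} characteristic-zero part of the component $C$, either every fiber of $\sZ$ is of $\HKn$-type or none is. Therefore $(\ast_{\mathrm{weak}})$ for one $s$ forces the ``every fiber'' alternative, which gives $(\ast_{\mathrm{strong}})$ for every geometric point $s'$ of $S$, since every $s'$ also maps into $C$. Finally, the last sentence ``if one geometric fiber is of $\HKn$-type, so is any other'' follows: if $\sX_{s_0}$ is of $\HKn$-type, then by Definition \ref{def}(b) it admits a lift over some finite flat $V/W$ whose geometric generic fiber is of $\HKn$-type — i.e.\ $s_0$ satisfies $(\ast_{\mathrm{weak}})$ — so every $s'$ satisfies $(\ast_{\mathrm{strong}})$, and unwinding the definition of $(\ast_{\mathrm{strong}})$ for a lift witnessing good reduction shows $\sX_{s'}$ is of $\HKn$-type.

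The main obstacle I anticipate is bookkeeping around the Hilbert scheme: making precise that the classifying map to $\Hilb^{+,m}_P$ (after replacing $\bxi$ by $m\bxi$ and trivializing $f_*(m\bxi)$, possibly \'etale-locally on $S$) is smooth with the claimed relationship to $\Def(\sX_s;\bxi_s)$, and that connectedness of $S$ genuinely forces the image to land in a single component whose generic and special fibers are connected — all of which ultimately rests on \ref{flatloc}(b), \ref{loc}, and \ref{Hilbconnected}. A secondary point requiring care is matching the ``some DVR / any DVR'' quantifiers in the definitions of $(\ast_{\mathrm{weak}})$ and $(\ast_{\mathrm{strong}})$ with the fact that $\Def(\sX_s;\bxi_s)$ is regular and quasi-healthy by \ref{flatloc}(c), so that all mixed-characteristic DVR lifts of a fixed $s$ do sweep out a connected (indeed geometrically connected after \ref{Hilbconnected}) locus in the generic fiber of $C$; once this is in hand, \ref{defstrengthen} does the real work.
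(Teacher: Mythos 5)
Your proposal is correct and takes essentially the same route as the paper's proof: reduce to a classifying morphism into $\Hilb^{+,m}_P$ whose image lies in a single connected component, identify mixed-characteristic DVR deformations with $R$-points of that component, and conclude via \ref{Hilbconnected} together with \ref{defstrengthen} applied to the universal family over the (connected) generic fiber. (One small phrasing slip: the classifying morphism $S\to\Hilb^{+,m}_P$ is not itself smooth — what is actually used, as in the proof of \ref{Hilbconnected}, is that the formal neighborhood in the Hilbert scheme is smooth over $\Def(\sX_s;m\bxi_s)$, and that cohomology vanishing makes $f_*(m\bxi)$ free so an $R$-deformation yields an $R$-point lifting $s$ — but your argument does not really depend on the misstated claim.)
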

\begin{proof}
We first remark that if $S$ admits an open cover $\{ S_i \}$ such that each $S_i$ is connected and the lemma holds for each $S_i$, then the lemma also holds for $S$.

Since $S$ is quasi-compact, for some $m$ sufficiently large, $m \bxi$ is very ample and for every geometric point $s \to S$, $\H^i(\sX_s, m \bxi_s) = 0$ for $i > 0$. Of course we may assume $p \nmid m$. The semi-continuity theorem ensures that $f_* (m \bxi)$ is locally free. By the preceeding remark, we may assume that $S$ is affine and $f_* (m \xi)$ is free. 

Let $P$ be the Hilbert polynomial of $m \xi_s$, which is independent of $s$. By choosing $\sO_S$-generators of $f_*(m \bxi)$, we obtain a morphism $S \to \Hilb^{+, m}_{P}$ along which $(f : X \to S, m \bxi)$ is obtained by pulling back the universal family $\sZ$ over $\Hilb^{+, m}_{P}$. Since $S$ is connected, this morphism lands in some connected component $T$ of $\Hilb^{+, m}_{P}$. Therefore, we might as well assume that $S = T \tensor \IF_p$. Now let $s$ be a geometric point on $S$ and $R$ be a mixed characteristic discrete valuation ring. Any deformation of $\sX_s$ over $R$ to which $\bxi_s$ extends is given by some $R$-valued point $s_R$ on $T$ which lifts $s$. The conclusion follows from \ref{defstrengthen} and \ref{Hilbconnected}. 
\end{proof}

Note that \ref{defstrengthen} and \ref{charpspread} implies that in Def.~1(c), the choice of geometric fibers is not important. 

\subsection{Parallal Transport and Monodromy Groups} We recap some results on monodromy groups and parallel transport operators that we will later use. For the definitions, we refer the reader to \cite[Def.~1.1]{MarkmanSurvey}. As in \textit{loc. cit.}, for any complex hyperk\"ahler manifold $X$, denote the image of the monodromy group $\Mon(X)$ of $X$ in $\O(\H^2(X, \IZ))$ by $\Mon^2(X)$. Let $\O_+(\H^2(X, \IZ))$ denote the subgroup of $\O(\H^2(X, \IZ))$ which preserves the spin orientation of $X$ (see \S4 in \textit{loc. cit.}). If $X'$ is another hyperk\"ahler manifold and $\psi : \H^2(X, \IZ) \stackrel{\sim}{\to} \H^2(X', \IZ)$ is an isometry, $\psi$ is always orientation-preserving up to a sign, i.e., either $\psi$ or $- \psi$ is orientation-preserving. 

\begin{lemma}
\label{spinorientation}
Let $X, X'$ be hyperk\"ahler manifolds. An isometry $\psi : \H^2(X, \IZ) \to \H^2(X', \IZ)$ is orientation preserving if one of following holds: 
\begin{enumerate}[label=\upshape{(\alph*)}]
    \item $\psi$ preserves the Hodge structures and sends a K\"ahler class to a K\"ahler class. 
    \item $\psi$ is a parallel transport operator. 
\end{enumerate}
\end{lemma}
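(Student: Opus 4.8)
The plan is to argue directly with the description of the spin orientation of a hyperk\"ahler manifold in terms of positive-definite three-planes in $\H^2(X, \IR)$, following \cite[\S4]{MarkmanSurvey}. Recall that the Beauville--Bogomolov form has signature $(3, b_2 - 3)$, that the space $\mathrm{Pos}(X)$ of its positive-definite three-dimensional subspaces is contractible (it is an open ball in $\mathrm{Hom}(W_0, W_0^{\perp}) \cong \IR^{3(b_2-3)}$ for any fixed $W_0 \in \mathrm{Pos}(X)$), and hence that the double cover of $\mathrm{Pos}(X)$ parametrizing orientations of such subspaces has exactly two components; the spin orientation of $X$ is the component containing the oriented three-plane $W_X$ spanned by $\mathrm{Re}\,\sigma_X$, $\mathrm{Im}\,\sigma_X$ and a K\"ahler class $\omega_X$, where $\sigma_X$ generates $\H^{2,0}(X)$. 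The relations $q(\sigma_X) = q(\sigma_X, \omega_X) = 0$, $q(\sigma_X, \bar\sigma_X) > 0$ and $q(\omega_X) > 0$ show that $W_X$ really is a positive three-plane, and connectedness of the K\"ahler cone shows that the resulting component is independent of the choice of $\omega_X$. Since $\psi$ is a real isometry it induces a homeomorphism $\mathrm{Pos}(X) \sto \mathrm{Pos}(X')$, hence a bijection on the two-element sets of orientations, and this bijection is detected on any single oriented positive three-plane.

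For part (a) I would apply this to $W_X$. Because $\psi$ respects the Hodge decompositions, $\psi_\IC(\sigma_X)$ spans $\H^{2,0}(X')$, so $\psi_\IC(\sigma_X) = c\,\sigma_{X'}$ for some $c \in \IC^\times$; writing $c = r e^{i\theta}$ exhibits the pair $(\mathrm{Re}(c\sigma_{X'}), \mathrm{Im}(c\sigma_{X'}))$ as the image of $(\mathrm{Re}\,\sigma_{X'}, \mathrm{Im}\,\sigma_{X'})$ under an orientation-preserving automorphism of the real two-plane they span. Since $\psi(\omega_X)$ is by hypothesis a K\"ahler class on $X'$, the oriented three-plane $\psi(W_X)$ lies, after deforming $c$ to $1$ through $\IC^\times$ and $\psi(\omega_X)$ to any K\"ahler class through the K\"ahler cone, in the component that defines the spin orientation of $X'$. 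Thus $\psi$ preserves the spin orientation.

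For part (b), a parallel transport operator is by definition realized along a path $\gamma \colon [0,1] \to S$ in a smooth family $f \colon \sX \to S$ of hyperk\"ahler manifolds over a connected base with $\sX_{\gamma(0)} \iso X$ and $\sX_{\gamma(1)} \iso X'$. The fibrewise spin orientations assemble into a section of the two-fold cover of the local system $R^2 f_* \IR$ whose fibre over $s$ is the set of orientations of positive three-planes in $\H^2(\sX_s, \IR)$. This section is flat: the two-plane $\langle \mathrm{Re}\,\sigma_s, \mathrm{Im}\,\sigma_s\rangle$ varies continuously with $s$ since the relative Hodge filtration does, and the K\"ahler cone of $\sX_s$ cannot move between the two components of $\{ v \in \langle \mathrm{Re}\,\sigma_s, \mathrm{Im}\,\sigma_s\rangle^{\perp} : q(v) > 0 \}$ as $s$ varies, so the component singled out by the spin orientation is locally constant. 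Hence parallel transport along $\gamma$ carries the spin orientation of $X$ to that of $X'$.

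The step I expect to need the most care is getting the orientation conventions of \cite{MarkmanSurvey} exactly right together with the sign bookkeeping in part (a) --- one must check that replacing $\sigma$ by $c\sigma$ with $c \in \IC^\times$, and one K\"ahler class by another, never flips the induced orientation of $W$ --- and, in part (b), verifying that the K\"ahler cones of nearby fibres stay on one side of the relevant cone, which is really where the hyperk\"ahler geometry enters. Once the spin orientation is realized as an honest locally constant object built out of the Hodge filtration together with the K\"ahler cone, both assertions become formal; both are in fact recorded in \cite[\S4]{MarkmanSurvey}.
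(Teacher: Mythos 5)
Your argument is correct and is essentially the paper's own: part (a) is exactly the observation that the spin orientation is determined by $\mathrm{Re}\,\sigma$, $\mathrm{Im}\,\sigma$ together with the position of any K\"ahler class (using connectedness/convexity of the K\"ahler cone to remove the choices), and part (b) is the locally-constant-section argument that the paper delegates to the citation of Markman's survey. You have simply filled in the details that the paper leaves implicit, so no changes are needed.
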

\begin{proof}
(a) follows from the fact that the orientation of $\H^2(X, \IZ)$ is completely determined by the real and imaginary parts of $\H^{2, 0}$, as well as the position of any K\"ahler class. For (b), see \cite[\S1.1]{MarkmanSurvey}.
\end{proof}
We remark that \cite[Thm~9.8]{MarkmanSurvey} gives a necessary and sufficient condition for $\psi$ to be a parallel transport operator. This description makes crucial use of Hodge structures and it is not clear how to work with it in an arithmetic setting. The group $\Mon^2(X)$ however, admits a purely arithmetic description up to a sign (\cite[Lem.~9.2]{MarkmanSurvey}): 

\begin{theorem}
\emph{(Markman)}
\label{Markman}
Let $X$ be a complex $\HKn$-type manifold. Let $\Lambda := \H^2(X, \IZ)$. $\Mon^2(X)$ is the pre-image of $\{ \pm 1\}$ under the natural morphism $\O_+(\Lambda) \to \O(\disc(\Lambda))$. In particular, if $n - 1 = 0, 1$ or a prime power, then $\Mon^2(X) = \O_+(\Lambda)$. 
\end{theorem}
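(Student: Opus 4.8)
The statement to prove is that for a complex $\HKn$-type manifold $X$ with $\Lambda = \H^2(X,\IZ)$ (equipped with the Beauville--Bogomolov form), the monodromy group $\Mon^2(X) \subseteq \O(\Lambda)$ is exactly the preimage of $\{\pm 1\}$ under $\O_+(\Lambda) \to \O(\disc(\Lambda))$, and that when $n-1 \in \{0,1\}$ or is a prime power this coincides with $\O_+(\Lambda)$.

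Let me sketch how I would organize the argument. This is a theorem of Markman, so the "proof" is really a matter of assembling the pieces from \cite{MarkmanSurvey} (and the original sources) in the right order.

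\emph{Step 1: The containment $\Mon^2(X) \subseteq \O_+(\Lambda)$.} Any monodromy operator arises from parallel transport in a family of hyperk\"ahler manifolds, hence by Lemma \ref{spinorientation}(b) preserves the spin orientation; this gives $\Mon^2(X) \subseteq \O_+(\Lambda)$. Moreover a monodromy operator on $\H^2$ is induced by a diffeomorphism, which acts on the full integral cohomology ring $\H^*(X,\IZ)$ and in particular on $\H^{4n}(X,\IZ)\cong\IZ$ compatibly with cup product; Markman's structural results (the action on the Looijenga--Lunts--Verbitsky algebra, or equivalently on the Mukai-type extension $\widetilde\Lambda$) then show that such an operator acts on $\disc(\Lambda)\cong\IZ/(2n-2)$ by $\pm 1$. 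So $\Mon^2(X)$ lands in the asserted subgroup.

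\emph{Step 2: The reverse containment.} For this one needs to produce enough monodromy. The standard route: realize $X$ as a moduli space $\sM$ of stable sheaves (or twisted sheaves) on a K3 surface $Y$ with a suitable Mukai vector, so that $\H^2(X,\IZ)$ is identified with the orthogonal complement of the Mukai vector $v$ inside the Mukai lattice $\widetilde\H(Y,\IZ) \cong U^{\oplus 4}\oplus E_8^{\oplus 2}$. One then (a) uses the reflections/autoequivalences of $\D^b(Y)$ (spherical twists, tensoring by line bundles, the derived autoequivalence group) and its action on $\widetilde\H(Y,\IZ)$ to generate, inside $\O(\widetilde\H(Y,\IZ))$, the full stabilizer of $v$ up to the sign issue, and (b) invokes Markman's result that all of these autoequivalences, when the moduli space is a fine (or suitably generic) hyperk\"ahler variety, actually induce parallel-transport operators on $\H^2$, hence lie in $\Mon^2$. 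Combined with deformation invariance of $\Mon^2$ for $\HKn$-type manifolds, this shows $\Mon^2(X)$ contains the preimage of $\{\pm1\}$ in $\O_+(\Lambda)$.

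\emph{Step 3: The ``prime power'' refinement.} Here one just computes $\O(\disc(\Lambda))$. For $n=1$, $\Lambda=\Lambda_1$ is unimodular so $\disc(\Lambda)=0$ and the preimage is all of $\O_+(\Lambda)$. For $n>1$, $\disc(\Lambda_n)\cong\IZ/(2n-2)\IZ$ with its $\IQ/\IZ$-valued quadratic form; an isometry of a cyclic group must act by an element of $(\IZ/(2n-2))^\times$, and compatibility with the quadratic form forces this unit $u$ to satisfy $u^2\equiv 1$. When $2n-2$ is $1$, $2$, or a prime power $\ell^k$ (odd prime $\ell$, or $\ell=2$ with small exponent — one has to be slightly careful with the $2$-part), the only such units are $\pm1$, so $\O(\disc(\Lambda))=\{\pm1\}$ and again the preimage is all of $\O_+(\Lambda)$. (The precise arithmetic is the elementary statement that the group of square roots of $1$ modulo $N$ is trivial iff $N\in\{1,2,4,p^k,2p^k\}$-type conditions; in Markman's formulation the relevant case is exactly "$n-1$ is $0$, $1$, or a prime power," which guarantees it.)

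\emph{Main obstacle.} Steps 1 and 3 are essentially formal once the right inputs are cited. The real content, and the hard part, is Step 2: producing enough monodromy operators to fill out the congruence subgroup. This is precisely where Markman's theorem is deep — it rests on the theory of moduli of sheaves on K3 surfaces, the identification of their $\H^2$ with the Mukai lattice, and the fact that derived autoequivalences induce parallel transport operators (which itself uses Verbitsky's Torelli theorem and a careful analysis of which lattice isometries are realized). Since we are entitled to cite \cite{MarkmanSurvey}, in the paper this theorem is simply quoted; but a genuine proof would have to reconstruct this moduli-theoretic machinery, and that is well beyond a short argument. For the purposes of the present paper I would therefore state the theorem with a reference to \cite[Lem.~9.2]{MarkmanSurvey} and \cite[Thm~1.6]{MarkmanSurvey}, and only spell out Step 3 (the elementary computation of $\O(\disc(\Lambda_n))$) if it is needed in the form stated.
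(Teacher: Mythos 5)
The paper gives no proof of this statement at all — it is quoted directly from Markman (\cite[Lem.~9.2]{MarkmanSurvey}), which is exactly what you propose to do in your final paragraph, so your approach matches the paper's. Your supplementary sketch of Markman's argument (orientation and discriminant constraints for the containment, monodromy operators produced from moduli of sheaves on K3 surfaces for the reverse inclusion, and the elementary check that $\O(\disc(\Lambda_n)) = \{\pm 1\}$ when $n-1$ is $0$, $1$, or a prime power) is consistent with the cited source and needs no correction.
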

It will be clear that the above result allows us to control parallel transport in an arithmetic setting.

For future reference we write down a simple observation from Galois theory: 
\begin{lemma}
\label{GalEx}
Let $S$ be a connected scheme and $\sL$ be a finite locally constant sheaf of sets on $S$ whose stalks are isomorphic to a fixed set $A$. Let $F$ be the functor which sends an $S$-scheme $T$ to the set of trivializations $\underline{A}_T \to \sL_T$, where $\underline{A}_T$ stands for the constant local system on $T$ with coefficients in $A$ and $\sL_T$ is the pullback of $\sL$ to $T$. Then $F$ is representable by an \'etale cover $\wt{S}$ of $S$. Moreover, each connected component of $\wt{S}$ is a Galois cover of $S$. 
\end{lemma}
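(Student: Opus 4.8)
The plan is to pass to the \'etale fundamental group and to work entirely on the ``Galois side''. Since $S$ is connected, fix a geometric point $\bar s$ of $S$ and set $\pi := \pi_1(S, \bar s)$. By the Galois theory of the \'etale fundamental group (SGA~1), the category of finite \'etale $S$-schemes is equivalent to the category of finite sets with continuous $\pi$-action; via the espace \'etal\'e construction this identifies with the category of finite locally constant sheaves of sets on $S_{\et}$, the $\pi$-set attached to such a sheaf being its stalk at $\bar s$ with the monodromy action. Thus $\sL$ corresponds to a pair $(A, \rho)$ with $\rho : \pi \to \mathrm{Sym}(A)$ a continuous homomorphism, using the given identification of $\sL_{\bar s}$ with $A$; write $G := \mathrm{Sym}(A)$.

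First I would establish representability. The functor $F$ is the functor of global sections of the internal sheaf $\underline{\mathrm{Isom}}_S(\underline A_S, \sL)$. \'Etale-locally on $S$ the sheaf $\sL$ is constant, so $\underline{\mathrm{Isom}}_S(\underline A_S, \sL)$ is \'etale-locally isomorphic to the constant sheaf $G$, hence is itself a finite locally constant sheaf of sets. Its espace \'etal\'e is therefore a finite \'etale $S$-scheme $\wt S$; and since a finite \'etale cover represents the functor of its sections on all $S$-schemes, one gets $F \simeq \mathrm{Hom}_S(-, \wt S)$, so $\wt S$ is an \'etale cover of $S$ representing $F$. On the $\pi$-side this says: the fiber $F(\bar s)$ is $\mathrm{Bij}(A, A) = G$, transport of structure shows $\pi$ acts on it by $\gamma \cdot \phi = \rho(\gamma) \circ \phi$ (left translation through $\rho(\pi)$, the action on $\underline A_S$ being trivial), and the finite $\pi$-set $(G,\, \gamma\cdot\phi = \rho(\gamma)\phi)$ is the one corresponding to $\wt S$.

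For the final assertion I would analyze this $\pi$-set directly. The $\pi$-action is free (translation inside a group), so its orbits are precisely the left cosets $\rho(\pi)\phi_0$; hence the connected components of $\wt S$ are in bijection with $\rho(\pi)\backslash G$, each of degree $|\rho(\pi)|$ over $S$. For every $\phi_0 \in G$ the stabilizer of $\phi_0$ in $\pi$ is $\{\gamma \in \pi : \rho(\gamma) = \mathrm{id}\} = \ker\rho$, which is independent of $\phi_0$ and is a \emph{normal} subgroup of $\pi$. Therefore each connected component of $\wt S$ corresponds, as a $\pi$-set, to the coset space $\pi/\ker\rho$. A connected finite \'etale cover whose associated transitive $\pi$-set has normal point-stabilizer is exactly a Galois cover (its automorphism group $N_\pi(\ker\rho)/\ker\rho = \pi/\ker\rho$ then acts transitively on the fiber), so every connected component of $\wt S$ is a Galois cover of $S$, with Galois group $\pi/\ker\rho \cong \rho(\pi) \le \mathrm{Sym}(A)$.

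There is no deep obstacle here; the entire content is the dictionary between finite locally constant sheaves of sets, finite \'etale covers, and finite $\pi$-sets. The only points that genuinely need care are (i) the representability step --- identifying $F$ with the section functor of $\underline{\mathrm{Isom}}_S(\underline A_S, \sL)$ and checking that this internal Isom sheaf is again locally constant, hence representable by a finite \'etale cover --- and (ii) the elementary but essential group-theoretic observation that all point-stabilizers of the relevant $\pi$-action coincide with the normal subgroup $\ker\rho$, which is exactly what upgrades ``connected component'' to ``Galois cover''.
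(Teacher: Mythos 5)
Your proof is correct and follows essentially the same route as the paper: both reduce to the monodromy representation $\rho : \pi_1^{\et}(S,\bar{s}) \to \Aut(A)$ and identify each connected component of $\wt{S}$ with the Galois cover attached to the normal subgroup $\ker \rho$, the components being indexed by the cosets of $\rho(\pi)$ acting on the stalk $\mathrm{Isom}(A,\sL_{\bar{s}})$, so your passage through the internal Isom sheaf and the $\pi$-set dictionary is just a more categorical packaging of the paper's explicit construction of $\wt{S}$ as a disjoint union of copies of that Galois cover together with a hands-on verification of the universal property. One cosmetic remark: the $\pi$-action on $\mathrm{Bij}(A,A)$ is not literally free unless $\rho$ is injective (only the induced $\rho(\pi)$-action is), but your subsequent identification of every point-stabilizer with $\ker\rho$, which is normal, is exactly right and is all the argument needs.
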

\begin{proof}
Let $s$ be a geometric point on $S$. We know that $\sL$ corresponds to a representation $\rho : \pi^\et_1(S, s) \to \Aut(A)$. Let $Y$ be the Galois cover of $S$ which corresponds to the normal subgroup $\ker \rho \subseteq \pi^\et_1(S, s)$ (\cite[Tag~03SF]{stacks-project}). Then $G := \mathrm{im\,} \rho$ acts simply transitively on the fiber $Y \times_S s$. Let $\mathrm{Isom}(A, \sL_s)$ be the set of isomorphisms from $A$ to $\sL_s$, which is naturally a (right) torsor under $\Aut(A)$ via pre-composition. Consider the scheme $\wt{S} := \coprod_{j} Y_j$ where $j$ runs through orbits of $\mathrm{Isom}(A, \sL_s)/G$ and each $Y_j = Y$. Let $\sL_j$ be the pullback of $\sL$ to $Y_j$. For each $j \in \mathrm{Isom}(A, \sL_s)/G$, we pick a geometric point $y_j$ on $Y_j$ over $s$ and pick a representative $\sigma_j : A \stackrel{\sim}{\to} \sL_s$. Note that the fiber $\sL_{j, y_j}$ is naturally identified with $\sL_s$, so the additional choice of $\sigma_j$ gives rise to a trivialization $\epsilon(y_j, \sigma_j) : \underline{A}_{Y_j} \to \sL_j$. Note that under $\epsilon(y_j, \sigma_j)$, $g \cdot y_j$ also gives rise to an isomorphism $A \stackrel{\sim}{\to} \sL_{j, g \cdot y_j} = \sL_s$, which is nothing but $g^{-1} \cdot \sigma_j$. 

With the choices of $y_j$'s and $\sigma_j$'s above, we define a natural isomorphism from $F$ to $\wt{S}$. Let $T$ be an $S$-scheme with a trivialization $\epsilon : \underline{A}_T \to \sL_T$. Let $t$ be a geometric point on $T$ and assume without loss of generality $t$ maps to $s$. Then $\epsilon$ induces an isomorphism $\epsilon_t : A \stackrel{\sim}{\to} \sL_{T, t} = \sL_s$. Suppose $\epsilon_t = g \cdot \sigma_j$ for some $g \in G$ and $j$. Note that the existence of $\epsilon$ implies that the natural map $\pi^\et_1(T, t) \to \pi_1^\et(S, s)$ factors through $\ker \rho$, so by Galois theory there exists a unique lift $f : T \to Y_j$ of $T \to S$ which sends $t$ to $g^{-1} \cdot y_j$. The reader easily checks that by the map which sends each $(T \to S, \epsilon)$ to $f$ defines an isomorphism $F \stackrel{\sim}{\to} \wt{S}$. 
\end{proof}

\section{Shimura Varieties and Period Morphisms}
\subsection{A review of spinor Shimura varieties} The main reference for this section is \cite{CSpin}. 

\subsubsection{}
\label{reviewCliff} We give a brief recap of Clifford algebras and set up some notation. 
Let $R$ be an integral domain with $2$ invertible such that $R_\IQ := R \tensor_\IZ \IQ $ is a field. Let $L$ be a lattice over $R$ of rank $m$ with a non-degenerate quadratic form $q$. Let $H$ denote $\Cl(L)$, viewed as a $\Cl(L)$-bimodule. $\Cl(L)$ is naturally decomposed into a part with even degree $\Cl^+(L)$ and a part with odd degree $\Cl^-(L)$, so that $H$ has a natural $\IZ/2\IZ$-grading. There is a natural injection $\Cl(L) \into \End(H)$ given by left multiplication. We remark that this injection embeds $\Cl(L)$ as a direct summand into $\End(H)$: If the left multiplication by $t \in \Cl(L)_\IQ$ preserves the $R$-integral structure on $H$, then $t \in \Cl(L)$ as $t \cdot 1 \in H$. Similarly, the composition $L \into \Cl(L) \into \End(H)$ embeds $L$ as a direct summand into $\End(H)$. Define the group $\CSpin(L)$ by 
$$ \CSpin(L) = \{ v \in \Cl^+(L)^\times : v L v^{-1} \subseteq L \}. $$
If we set $\CSpin(L)(R') := \CSpin(L_{R'})$ for every $R$-algebra $R'$, then $\CSpin(L)$ is endowed with the structure of a group scheme over $R$.

Equip $\End(H_\IQ)$ with a symmetric pairing given by $(\alpha, \beta) \mapsto 2^{-m} \tr(\alpha \circ \beta)$, so that $L_\IQ$ embed into $\End(H_\IQ)$ isometrically. Let $\pi$ denote the orthogonal projection $H_\IQ^{\tensor (2, 2)} \to L_\IQ$. The $R$-module $L$ inside $\End(H_\IQ)$ is recovered by taking the dual of the image of $\End(H)$ under $\pi$ (cf. \cite[Lem.~1.4, Rem.~1.5]{CSpin}). If $L$ is self-dual over $R$, then $\pi$ is in fact defined over $R$. In this case, another way to characterize the group scheme $\CSpin(L)$ is (see also \cite[Lem. 1.4(iii)]{CSpin}):
\begin{lemma}
\label{defCSpin}
Suppose $L$ is self-dual over $R$. The group scheme $\CSpin(L)$ over $R$, as a subscheme of $\GL(H)$, is the stabilizer of the following tensors on $H$: right multiplication of $\Cl(L)$ on $H$, the natural $\IZ/2\IZ$-grading on $H$ and the projection operator $\pi \in H^{\tensor (2, 2)}$.
\end{lemma}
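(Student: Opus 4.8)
The plan is to characterize $\CSpin(L)$ inside $\GL(H)$ as the stabilizer of the three listed tensors. First I would recall the general machinery from \cite{CSpin}: since $L$ is self-dual over $R$, the projection $\pi \in H^{\tensor(2,2)}$ is defined over $R$ (as remarked just above), so all three tensors — right multiplication by $\Cl(L)$, the $\IZ/2\IZ$-grading, and $\pi$ — are genuinely defined over $R$, and it makes sense to speak of the closed subscheme $G \subseteq \GL(H)$ stabilizing them. The strategy is to show that the functor of points of $G$ coincides with the functor $R' \mapsto \CSpin(L_{R'})$.

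The key steps, in order. (1) Show $\CSpin(L) \subseteq G$: an element $v \in \CSpin(L_{R'})$ acts on $H_{R'} = \Cl(L_{R'})$ by left multiplication, which commutes with right multiplication by $\Cl(L)$ and preserves the parity grading since $v \in \Cl^+$; and since $v L_{R'} v^{-1} \subseteq L_{R'}$, conjugation by $v$ preserves $L_{R'} \subset \End(H_{R'})$, hence fixes $\pi$ (the projection onto $L$). This direction is essentially the content of \cite[Lem.~1.4(iii)]{CSpin}. (2) For the reverse inclusion, let $g \in G(R')$. Commuting with right multiplication by all of $\Cl(L_{R'})$, together with $g$ being invertible, forces $g$ to be left multiplication by a unit $v \in \Cl(L_{R'})^\times$ (one recovers $v = g(1)$ and checks $g(x) = v x$ using that $x = 1 \cdot x$ under right multiplication). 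Preservation of the grading forces $v \in \Cl^+(L_{R'})^\times$. Finally, fixing $\pi$ means $g$ preserves $L_{R'} \subseteq \End(H_{R'})$ under the conjugation action of $\GL(H)$ on $\End(H)$ — but conjugation by $g = (\text{left mult. by } v)$ on $\End(H)$ restricted to $L \hookrightarrow \End(H)$ is precisely $\ell \mapsto v \ell v^{-1}$, so $v L_{R'} v^{-1} \subseteq L_{R'}$, i.e. $v \in \CSpin(L_{R'})$. One should double-check the identification of "$g$ fixes $\pi$" with "$g$ preserves $L$": fixing the projector $\pi$ onto a direct summand is equivalent to preserving that summand, which is where self-duality of $L$ is used so that $L$ is an honest $R$-direct summand of $\End(H)$ with a defined-over-$R$ complement.

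The main obstacle, I expect, is step (2): bookkeeping the precise sense in which $\GL(H)$ acts on $H^{\tensor(2,2)}$ and on $\End(H) \subseteq H^{\tensor(1,1)}$, and verifying that "stabilizing $\pi$" translates cleanly into "the conjugation action preserves $L$". This requires being careful that the embedding $\Cl(L) \hookrightarrow \End(H)$ and $L \hookrightarrow \End(H)$ are $R$-direct summands (already noted in \ref{reviewCliff}) and that $\pi$ is the orthogonal projection with respect to the pairing $(\alpha,\beta) \mapsto 2^{-m}\tr(\alpha\circ\beta)$, which is perfect over $R$ when $L$ is self-dual. Once these identifications are in place the argument is formal. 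I would present it as: the inclusion $\CSpin(L) \subseteq G$ is immediate from the definitions and \cite[Lem.~1.4(iii)]{CSpin}; conversely, commuting with right $\Cl(L)$-multiplication pins $g$ down to left multiplication by some $v \in \Cl(L)^\times$, the grading condition puts $v$ in $\Cl^+$, and the $\pi$-condition is exactly $vLv^{-1}\subseteq L$, completing the identification $G = \CSpin(L)$.

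\begin{proof}
Since $L$ is self-dual over $R$, the discussion in \ref{reviewCliff} shows that $\Cl(L)$ and $L$ are $R$-direct summands of $\End(H)$, and the orthogonal projection $\pi : H^{\tensor(2,2)} \to L$ (with respect to the pairing $(\alpha,\beta) \mapsto 2^{-m}\tr(\alpha\circ\beta)$, which is perfect over $R$) is defined over $R$; thus $\pi$ is a well-defined tensor in $H^{\tensor(2,2)}$ and the stabilizer group scheme $G \subseteq \GL(H)$ of the three listed tensors makes sense.

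We show $\CSpin(L) = G$ as subgroup schemes of $\GL(H)$ by comparing functors of points; let $R'$ be any $R$-algebra. If $v \in \CSpin(L_{R'})$, left multiplication $L_v$ by $v$ on $H_{R'} = \Cl(L_{R'})$ commutes with right multiplication by $\Cl(L)$, preserves the $\IZ/2\IZ$-grading because $v \in \Cl^+(L_{R'})^\times$, and preserves $L_{R'} \subseteq \End(H_{R'})$ because conjugation by $v$ sends $L_{R'}$ into itself by definition of $\CSpin$; hence $L_v$ fixes the projector $\pi$, so $v \in G(R')$. This is \cite[Lem.~1.4(iii)]{CSpin}.

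Conversely, let $g \in G(R')$. Since $g$ commutes with right multiplication by every element of $\Cl(L_{R'})$ and $g$ is invertible, writing $v := g(1) \in H_{R'} = \Cl(L_{R'})$ we get $g(x) = g(1 \cdot x) = g(1) \cdot x = vx$ for all $x$, so $g$ is left multiplication by $v$; applying the same to $g^{-1}$ shows $v \in \Cl(L_{R'})^\times$. Because $g$ preserves the $\IZ/2\IZ$-grading and fixes $1 \in \Cl^+$, the element $v$ lies in $\Cl^+(L_{R'})^\times$. Finally, $g$ fixes $\pi$, hence $g$ preserves the direct summand $L_{R'} \subseteq \End(H_{R'})$ under the action of $\GL(H)$ on $\End(H)$, which for $g = L_v$ is conjugation $\ell \mapsto v \ell v^{-1}$; thus $v L_{R'} v^{-1} \subseteq L_{R'}$, i.e.\ $v \in \CSpin(L_{R'})$. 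Therefore $G(R') = \CSpin(L_{R'})$ for all $R'$, and $G = \CSpin(L)$.
\end{proof}
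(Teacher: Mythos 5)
Your proof is correct and is essentially the standard argument behind the cited \cite[Lem.~1.4(iii)]{CSpin}; the paper itself gives no proof beyond that citation, so your route coincides with the intended one (commuting with right multiplication forces $g$ to be left multiplication by a unit $v$, the grading forces $v \in \Cl^+$, and fixing $\pi$ is equivalent to conjugation preserving $L$). The only point worth making fully explicit is the forward implication ``conjugation preserves $L$ $\Rightarrow$ fixes $\pi$'': this uses that conjugation by $g$ is an isometry of the trace pairing on $\End(H)$ and that $vLv^{-1}=L$ (not merely $\subseteq$), so that $L^{\perp}=\ker\pi$ is preserved as well — a detail you flag in your discussion and which completes the equivalence you invoke.
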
 
The group $\CSpin(L)$ is naturally equipped with two representations: a spin representation $\sp : \CSpin(L) \to \GL(H)$ given by left multiplication and an adjoint representation $\ad : \CSpin(L) \to \SO(L)$ given by conjugation. The adjoint representation fits into an exact sequence 
\begin{align}
\label{fundexactseq}
    1 \to \IG_m \to \CSpin(L) \to \SO(L) \to 1.
\end{align}

\subsubsection{}\label{3.1.3} For the rest of \S3.1, let $L$ be an even quadaratic lattice of signature $(2, m-2)$ over $\IZ$.\footnote{This is the opposite of the usual sign convention for orthogonal Shimura varieties.} Let $\Ohm$ be the period domain 
$$ \{ \w \in \IP(L \tensor \IC) : \< \w, \bar{\w} \> < 0, \< \w, \w \> = 0 \} $$
which parametrizes Hodge structures of K3 type on $L$. Let $G$ denote the algebraic group $\CSpin(L_\IQ)$ over $\IQ$ and $G^\ad$ denote its adjoint group $\SO(L_\IQ)$. The pair $(G, \Ohm)$ (resp. $(G^\ad, \Ohm)$) gives a Shimura datum of Hodge type (resp. abelian type) with reflex field $\IQ$. We can equip $H$ with a non-degenerate symplectic form $\psi$ such that the spin representation $\mathrm{sp} : G \to \GL(H_\IQ)$ factors through $\GSp := \GSp(H_\IQ, \psi)$. Let $S^\pm$ be the associated Siegel half spaces. There is an inclusion of Shimura data $i : (G, \Ohm) \into (\GSp, S^\pm)$. Let $\sK_0 := G(\IA_f) \cap \Cl(L \tensor \what{\IZ})^\times$. For every compact open subgroup $\sK \subset \sK_0$, we write $\Sh_\sK(L)$ for the Shimura stack $\Sh_{\sK}(G, \Ohm)$. The inclusion $i$ endows $\Sh_{\sK}(L)$ with a family of abelian schemes, which we denote by $a : \sA \to \Sh_{\sK}(L)$. Let $\sK^\ad$ denote the image of $\sK$ in $G^\ad(\IA_f)$. The subgroup $\sK_0^\ad$ of $G^\ad(\IA_f)$ is the \textit{discriminant kernel}, i.e., the largest subgroup of $\SO(L \tensor \what{\IZ})$ which acts trivially on $L^\vee/L$. We denote the Shimura stack $\Sh_{\sK^\ad}(G^\ad, \Ohm)$ by $\Sh^\ad_\sK(L)$.

The representation $\sp : \CSpin(L) \to \GL(H)$ (resp. $\ad : \CSpin(L) \to \SO(L)$) endows $\Sh_\sK(L)_\IC$ with a $\IZ$-local system $\bH_B$ (resp. $\bL_B$). $\bH_B$ can be identified with the Betti cohomology $\IR^1 a_{\IC*} \underline{\IZ}$. Let $\bH_\ell := \IR^1 a_{\et *} \underline{\IZ_\ell}$ and $\bH_\dR$ be the first relative de Rham cohomology of $\sA$. Over $\Sh_\sK(L)_\IC$, we have canonical isomorphisms $\bH_B \tensor \IZ_\ell \iso \bH_\ell$ and $\bH_\dR \iso \bH_B \tensor_\IZ \shO$, where $\shO$ denotes the structure sheaf. The tensors stabilized by $\CSpin(L)$ naturally spread as global sections of these sheaves. More precisely, $\sA$ is equipped with a $\IZ / 2 \IZ$-grading, a left $\Cl(L)$-action, and global sections $\bpi_B$, $\bpi_\ell$ and $\bpi_\dR$ of $(\bH_B \tensor \IQ)^{\tensor (2, 2)}$, $(\bH_\ell \tensor \IQ_\ell)^{\tensor (2, 2)}$ and $\bH_\dR^{\tensor (2, 2)}$ respectively (cf. \cite[Prop.~3.11]{CSpin}). We call the triple of the $\IZ/2\IZ$-grading, left $\Cl(L)$ action and various realizations of $\pi$ the \textbf{CSpin structure} on the universal abelian scheme $\sA$. Note that $\bL_B$ equals to the \textit{dual} of $\bpi_B(\End (\bH_B))$. We denote the \textit{dual} of the images $\bpi_\ell(\End(\bH_\ell))$ and $\pi_\dR(\End(\bH_\dR))$ by $\bL_B, \bL_\ell$ and $\bL_\dR$. 

\begin{remark}
\label{rmk: orientation tensor}
We will also make use of an \textit{orientation tensor} on $\Sh_\sK(L)$. Let $\delta$ be a generator of $\det(L)$. Note that $\delta$ can alternatively be viewed as an element of $\wedge^m H^{\tensor (1, 1)}$. The action of $G$ on $L$ through the adjoint representation not only preserves the pairing on $L$, but also $\det(L)$ as well. Therefore, just like $\pi$, the tensor $\delta$ also spreads to global sections $\bd_B, \bd_\ell$ and $\bd_\dR$ (cf. \cite[(3.1.3)]{Yang}).  
\end{remark}

Now we consider the integral models of $\Sh_\sK(L)$. Let $p > 2$ be a prime. Note that $\sK_0$ decomposes into $\sK_{0, p} \sK^p_0$, where $\sK_{0,p} \subset G(\IQ_p)$ and $\sK^p_0 \subset G(\IA_f^p)$. We will only consider level structures $\sK$ of the form $\sK_{0, p} \sK^p$, where $\sK^p$ is some compact open subgroup of $\sK_0$. We first assume that $L_p := L \tensor \IZ_p$ is self-dual. In this case, $G$ has a natural smooth model over $\IZ_{(p)}$, $\sK_{0, p} = G(\IZ_p)$ is hyperspecial, and a canonical integral model $\Sh_\sK(L)$ is already constructed in \cite{int}. More precisely, consider the limit 
$$ \shS_p(L) := \varprojlim_{\sK^p} \shS_{\sK_{0,p} \sK^p}(L) $$
where $\sK^p$ runs through all compact open subgroups of $G(\IA_f^p)$. The main theorem of \cite{int} constructs a canonical integral model for $\shS_p(L)$ which carries a natural $G(\IA^p_f)$-action. For each compact open $\sK^p$, set $\shS_{\sK_{0,p} \sK^p}(L)$ to be the stacky quotient $\shS_p(L)/ \sK^p$. 

The abelian scheme $\sA$ has a natural extension $\shA$ over $\shS_\sK$. The \'etale and de Rham cohomology of $\shA$ give us extensions of $\bH_{\ell}$ for every $\ell \neq p$ and $\bH_\dR$. We also have extensions of $\bpi_{\ell}$, $\bpi_\dR$, $\bL_\ell$ and $\bL_\dR$. Denote these extensions by the same letters. The special fiber $\shA \tensor \IF_p$ gives us a crystal $\bH_\cris := \IR^1 a_{\cris *} \shO_{\shA}$. The evaluation of $\bH_\cris$ at the pro-nilpotent PD thickening $\varprojlim_n (\shS_{\sK} \tensor \IZ / p^n \IZ)$ can be identified with $\bH_\dR$, so that $\bH_\dR$ has the structure of a filtered $F$-crystal. The global section $\pi_\dR$ is horizontal, so we obtain a global section $\pi_\cris$ of the $F$-crystal $\bH_\cris$ (cf. \cite[Prop.~4.7, and \S4.14]{CSpin}). Madapusi-Pera has shown that more generally, if $L_p^\vee/L_p$ is \textit{cyclic}, and $p^2 \nmid \disc(L_p)$ (resp. $p^2 \mid \disc(L_p)$), then $\Sh_\sK(L)$ has a \textit{canonical integral model} (resp. \textit{smooth integral model}) $\shS_\sK(L)$ over $\IZ_{(p)}$. For the construction of the corresponding $\shA$, $\bL_\cris$, $\bL_\ell$, etc, we refer the reader to \cite[\S7.10]{CSpin}. 

The integral model $\shS^\ad_\sK(L)$ of $\Sh_\sK^\ad(L)$ is constructed as an \'etale quotient of $\shS_\sK(L)$ (cf. \cite[Thm~7.4]{CSpin}, see also the explanations below \cite[Thm~4.6]{Keerthi}). The sheaves $\bL_B, \bL_\ell, \bL_p, \bL_\cris, \bL_\dR$ on various fibers of $\shS_\sK(L)$ descend to the corresponding fibers of $\shS_\sK^\ad(L)$, and we denote these descents by the same letter ( \cite[\S5.24, and Rmk~7.16]{CSpin}). Note that the variations of $\IZ$-Hodge structures given by $\bL_B$ and the restriction of $\bL_\dR$ on $\Sh^\ad_\sK(L)_\IC$ can alternatively be viewed as given by the canonical representation of $G^\ad$ on $L$ as in \cite[\S3.3]{CSpin}. The global sections $\bd_\ell$ of $\bL_\ell$ over $\Sh_\sK(L)$ extend to global sections over $\shS_\sK(L)$ and descend to $\bL_\ell$'s over $\shS^\ad_\sK(L)$.

\begin{definition}
Let $s$ be a point on $\shS_\sK(L)$. If $s$ is a geometric point and the residue field $k(s)$ has characteristic $p >2$ (resp. $0$), an endomorphism $f \in \End(\shA_s)$ is a called a \textit{special endomorphism} if its cohomological realizations lie in $\bL_{\ell, s}$ for every $\ell \neq p$ and $\bL_{\cris, s}$ (resp. $\bL_{\ell, s}$ for every $\ell$). If $s$ is not a geometric point, then $f \in \End(\shA_s)$ is a special endomorphism if its base change to a geometric point over $s$ is a special endomorphism. 
\end{definition}
We write $\LEnd(\shA_s)$ for the space of special endomorphisms in the above situation. If $f \in \LEnd(\shA_s)$, then $f \circ f$ is a scalar. The map $f \mapsto f \circ f$ endows $\LEnd(\shA_s)$ the structure of a quadratic form over $\IZ$. 

\subsection{Isogeny Classes and the Supersingular Locus}
\label{ssIsog}
We consider the supersingular locus of $\shS_{\sK}(L)$, which we denote by $\shS_{\sK}^\ss(L)$. 

\begin{lemma}
\label{generalk}
Let $k$ be an algebraically closed field of characteristic $p>0$ and $\sA, \sB$ be abelian scheme over $k[\![t]\!]$. Let $\eta$ be the generic point of $k[\![t]\!]$ and $\bar{\eta}$ be a geometric point over $\eta$. If $\sA_{\bar{\eta}}$ and $\sB_{\bar{\eta}}$ are both supersingular, then the cokernel of the specialization map $ \End(\sA_{\bar{\eta}}, \sB_{\bar{\eta}}) \to \End(\sA_k, \sB_k) $ is a finite $p$-group. 
\end{lemma}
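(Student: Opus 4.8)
The plan is to establish the two halves of the statement in turn: that the specialization map has finite cokernel, i.e. is a rational isomorphism, and that its cokernel carries no $\ell$-torsion for $\ell\neq p$.

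First, two reductions. Every homomorphism $\sA_{\bar\eta}\to\sB_{\bar\eta}$ is already defined over some finite extension $K'$ of $K:=k(\!(t)\!)$; if $R'$ denotes the normalization of $k[\![t]\!]$ in $K'$, then $R'$ is again a power series ring over $k$ (its residue field being $k$ since $k=\bar k$), and by properness of abelian schemes each homomorphism $\sA_{K'}\to\sB_{K'}$ extends uniquely over $R'$. Thus $\Hom(\sA_{\bar\eta},\sB_{\bar\eta})=\Hom_{R'}(\sA_{R'},\sB_{R'})$ and the specialization map becomes restriction to the special fibre, $\Hom_{R'}(\sA_{R'},\sB_{R'})\to\Hom(\sA_k,\sB_k)$. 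This is injective: a homomorphism killing the special fibre has scheme-theoretic kernel a closed subscheme of the integral scheme $\sA_{R'}$ that contains both the identity section and the whole special fibre, forcing it to be all of $\sA_{R'}$. Moreover, since $R'$ is strictly henselian, for each $\ell\neq p$ the sheaf $R^{1}(f_{\sA})_{*}\IZ_\ell$ on $\Spec R'$ is lisse by smooth and proper base change, hence constant, which yields canonical specialization isomorphisms $T_\ell\sA_{\bar\eta}\xrightarrow{\sim}T_\ell\sA_k$ and $T_\ell\sB_{\bar\eta}\xrightarrow{\sim}T_\ell\sB_k$ intertwining the action of a homomorphism over $R'$ with that of its reduction.

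For finiteness: by Grothendieck's specialization theorem for Newton polygons, the Newton polygon of $\sA_k$ lies on or above that of $\sA_{\bar\eta}$, and since the supersingular polygon is the maximal one, $\sA_k$ and likewise $\sB_k$ remain supersingular. I then invoke the structure theory of supersingular abelian varieties over an algebraically closed field of characteristic $p$: each is isogenous to a power of a fixed supersingular elliptic curve, whose endomorphism algebra is the rank-$4$ quaternion algebra over $\IQ$ ramified exactly at $p$ and $\infty$. Hence $\dim_\IQ\big(\Hom(\sA_k,\sB_k)\tensor\IQ\big)=4(\dim\sA)(\dim\sB)$, and the identical computation over $\bar\eta$ gives the same value for $\dim_\IQ\big(\Hom(\sA_{\bar\eta},\sB_{\bar\eta})\tensor\IQ\big)$. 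Combined with the injectivity above, the specialization map must therefore be a rational isomorphism, so its cokernel is finite. I expect this rank count, which is where supersingularity is genuinely used, to be the only nonformal ingredient.

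For the prime-to-$p$ part: fix $\ell\neq p$. By the standard fact that $\Hom(A,B)\tensor\IZ_\ell\hookrightarrow\Hom_{\IZ_\ell}(T_\ell A,T_\ell B)$ has torsion-free cokernel, the lattice $\Hom(\sA_k,\sB_k)\tensor\IZ_\ell$ is saturated in $\Hom_{\IZ_\ell}(T_\ell\sA_k,T_\ell\sB_k)$, hence equals $\big(\Hom(\sA_k,\sB_k)\tensor\IQ_\ell\big)\cap\Hom_{\IZ_\ell}(T_\ell\sA_k,T_\ell\sB_k)$ inside $\Hom_{\IQ_\ell}(T_\ell\sA_k\tensor\IQ_\ell,\,T_\ell\sB_k\tensor\IQ_\ell)$, and the analogous identity holds over $\bar\eta$. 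Transporting the identity over $\bar\eta$ through the specialization isomorphisms on Tate modules, and using that the specialization map induces an isomorphism $\Hom(\sA_{\bar\eta},\sB_{\bar\eta})\tensor\IQ_\ell\xrightarrow{\sim}\Hom(\sA_k,\sB_k)\tensor\IQ_\ell$ by the previous step, one finds that the image of $\Hom(\sA_{\bar\eta},\sB_{\bar\eta})\tensor\IZ_\ell$ fills up all of $\Hom(\sA_k,\sB_k)\tensor\IZ_\ell$. Thus the cokernel of the specialization map is $\ell$-torsion-free for every $\ell\neq p$, and being finite it is a finite $p$-group.
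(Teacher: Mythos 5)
Your argument is correct in substance, but it takes a genuinely different route from the paper. The paper's proof is a two-line reduction: it identifies $\Hom(\sA_{\bar\eta},\sB_{\bar\eta})$ with a direct factor of $\NS(\sA_{\bar\eta}\times\sB_{\bar\eta}^\vee)$ via the standard decomposition $\NS(\sA\times\sB^\vee)=\NS(\sA)\times\NS(\sB^\vee)\times\Hom(\sA,\sB)$, and then quotes Morrow's variational crystalline result on deforming line bundles (applied to $\sA\times\sB^\vee$, $\sA$, $\sB$), which directly yields that the cokernel of specialization is killed by a power of $p$. You instead give the classical ``well known'' argument the paper alludes to: Grothendieck's Newton polygon specialization to see that $\sA_k,\sB_k$ stay supersingular, Oort's structure theorem to compute that $\Hom\otimes\IQ$ has rank $4(\dim\sA)(\dim\sB)$ on both fibres (this is the only place supersingularity enters for you), injectivity of specialization to get finiteness of the cokernel, and saturation of $\Hom\otimes\IZ_\ell$ inside $\Hom_{\IZ_\ell}(T_\ell,T_\ell)$ to rule out $\ell$-torsion for $\ell\neq p$. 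Your route is self-contained modulo standard facts and makes transparent where supersingularity is used; the paper's route is shorter and leans on machinery (Morrow's theorem) that is in any case part of its toolkit.

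One justification in your write-up is wrong as stated and needs repair, though the repair is immediate from tools you already set up. You argue injectivity of $\Hom_{R'}(\sA_{R'},\sB_{R'})\to\Hom(\sA_k,\sB_k)$ by saying that the kernel of a homomorphism vanishing on the special fibre is a closed subscheme containing the identity section and the whole special fibre, ``forcing it to be all of $\sA_{R'}$.'' That is not forced: the union of the special fibre and the identity section is itself such a closed subscheme. The correct argument is the one your Tate-module setup hands you: if $f_k=0$ then $T_\ell f_{\bar\eta}=0$ under the specialization isomorphisms, and since $\Hom(\sA_{\bar\eta},\sB_{\bar\eta})\to\Hom_{\IZ_\ell}(T_\ell\sA_{\bar\eta},T_\ell\sB_{\bar\eta})$ is injective, $f_{\bar\eta}=0$, hence $f=0$ by density of the generic fibre. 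Two smaller points: the extension of homomorphisms from $K'$ to $R'$ is by the N\'eron property of abelian schemes over a discrete valuation ring (equivalently Weil's extension theorem), not by properness alone; and one should note that $\Hom(\sA_{\bar\eta},\sB_{\bar\eta})$ is finitely generated so that a single finite extension $K'$ suffices. With these repairs the proof is complete.
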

\begin{proof}
This is well known. We just remark that now we can give a quick argument by taking advantage of Morrow's general result on the deformation of line bundles. Recall that there is a natural decomposition
\begin{equation}
\label{NSEnd}
    \NS(\sA_{\bar{\eta}} \times \sB_{\bar{\eta}}^\vee) = \NS(\sA_{\bar{\eta}}) \times \NS(\sB^\vee_{\bar{\eta}}) \times \Hom(\sA_{\bar{\eta}}, \sB_{\bar{\eta}}).
\end{equation} 
Now apply \cite[Thm~3.10, Ex.~3.12]{Morrow} to $A \times B^\vee, A, B$. 
\end{proof}

\begin{definition}
\label{CSpin-isog} 
Let $k$ be a perfect field with an algebraic closure $\bar{k}$ and $s, s'$ be $k$-points on $\shS_{\sK}(L)$. Let $f : \shA_s \to \shA_{s'}$ be a quasi-isogeny over $k$ which respects the $\IZ/2\IZ$-grading and $\Cl(L)$-action on $\shA_s, \shA_{s'}$. If $\mathrm{char\,} k = 0$, we say $f$ is a \textit{CSpin-isogeny} if it sends $\bpi_{\ell, s \tensor \bar{k}}$ to $\bpi_{\ell, s' \tensor \bar{k}}$ for every prime $\ell$. If $\mathrm{char\,} k = p$, we say $f$ is a \textit{CSpin-isogeny} if it sends $\pi_{\ell, s \tensor \bar{k}}$ to $\bpi_{\ell, s' \tensor \bar{k}}$ for every prime $\ell \neq p$ and $\bpi_{\cris, s}$ to $\bpi_{\cris, s'}$. 
\end{definition}

\begin{proposition}
\label{ssdefLEnd}
Let $k$ be an algebraically closed field of characteristic $p > 2$. For any $k$-point $s$ on $\shS_\sK^\ss(L)$, the quadratic form $(\LEnd(\shA_s), f \mapsto f \circ f)$ has the following properties: 
\begin{enumerate}[label=\upshape{(\alph*)}]
    \item $\LEnd(\shA_s) \tensor \IR$ is negative definite.
    \item The natural maps $\LEnd(\shA_s) \tensor \IZ_\ell \stackrel{\sim}{\to} \bL_{\ell, s}$ and $\LEnd(\shA_s) \tensor \IZ_p \to \bL_{\cris}^{F = 1}$ are isomorphisms.
\end{enumerate}
\end{proposition}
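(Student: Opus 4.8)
The plan is to deduce both parts from the structure of the supersingular (basic) locus together with the positivity of the Rosati form, using a saturation argument to pass from rational to integral statements. Throughout I use that, since $s$ lies in the supersingular locus, $\shA_s$ is a supersingular abelian variety: the $F$-isocrystal $\bH_{\cris,s}\tensor\IQ_p$ is isoclinic of slope $1/2$, $\shA_s$ is isogenous over $k$ to a power of a supersingular elliptic curve, and equivalently the sub-$F$-isocrystal $\bL_{\cris,s}\tensor\IQ_p\subseteq\End(\bH_{\cris,s})\tensor\IQ_p$ is isoclinic of slope $0$ (this is built into the definition of $\shS^\ss_\sK(L)$, or follows at once from the shape of the Shimura cocharacter acting on $L$).

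For (a) I would equip $\shA_s$ with the polarization supplied by the Kuga--Satake construction and let $\dagger$ denote the associated Rosati involution. The key input, which is part of the CSpin formalism of \S\ref{reviewCliff} and \cite{CSpin}, is that special endomorphisms are anti-fixed by $\dagger$: after $\tensor\IQ$, $\LEnd(\shA_s)$ lands in the copy of $L_\IQ$ inside $\End(H_\IQ)$, on which $\dagger$ acts by $-1$, so $f^\dagger=-f$ for every $f\in\LEnd(\shA_s)$. Writing $f\circ f=Q(f)\cdot\id$, one then gets $\tr(f^\dagger\circ f)=-\tr(f\circ f)=-2\dim(\shA_s)\,Q(f)$, which is strictly positive for $f\neq 0$ by positivity of the Rosati trace form; hence $Q(f)<0$ and the quadratic form $f\mapsto f\circ f$ is negative definite, giving (a). I would also remark that one cannot reach (a) just by lifting $s$ to characteristic $0$: a characteristic-$0$ fibre always has transcendental lattice of rank $\ge 2$, hence only $\le m-2$ independent special endomorphisms, so the supersingular hypothesis is genuinely needed here.

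For (b), first the maps $\LEnd(\shA_s)\tensor\IZ_\ell\to\bL_{\ell,s}$ ($\ell\neq p$) and $\LEnd(\shA_s)\tensor\IZ_p\to\bL_{\cris,s}^{F=1}$ are injective by the definition of a special endomorphism, and I would check they are \emph{saturated}: if $f\in\End(\shA_s)$ with $\ell f$ (resp. $pf$) in $\LEnd(\shA_s)$, then $f$ kills $\shA_s[\ell]$ (resp., via the Dieudonn\'e realization, $\shA_s[p]$), so $f=\ell g$ (resp. $pg$) with $g\in\End(\shA_s)$, and a comparison of realizations — using that $\bL_\ell$, $\bL_\cris$ are saturated inside $\End(\bH_\ell)$, $\End(\bH_\cris)$, and that $\End(\shA_s)\tensor\IZ_\ell$, $\End(\shA_s)\tensor\IZ_p$ are saturated in $\End(\bH_{\ell,s})$, $\End(\bH_{\cris,s})$ by the same torsion argument — forces $g\in\LEnd(\shA_s)$. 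It then remains to prove surjectivity after inverting the residue characteristic, i.e. that $\rank_\IZ\LEnd(\shA_s)=m:=\rank L$ and that its realizations span $\bL_{\ell,s}\tensor\IQ_\ell$ and $\bL_{\cris,s}^{F=1}\tensor\IQ_p$. For this I would invoke the theory of isogeny classes of mod-$p$ points of Shimura varieties of Hodge type (Kisin, and in the CSpin setting \cite{CSpin}): because $s$ is basic, $\LEnd(\shA_s)\tensor\IQ$ is a nondegenerate quadratic space over $\IQ$ that is isometric to $L_{\IQ_\ell}$ at every $\ell\neq p$ via the $\ell$-adic realization (in particular of dimension $m$) and whose $p$-adic realization is $\bL_{\cris,s}^{F=1}\tensor\IQ_p$, of dimension $m$ precisely because basicness makes $\bL_{\cris,s}\tensor\IQ_p$ isoclinic of slope $0$ with $F$-fixed part of full rank. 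A saturated sublattice of full rank is the whole lattice, so both maps are isomorphisms.

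The hard part is this last step — the rational surjectivity, equivalently $\rank_\IZ\LEnd(\shA_s)=m$ at a basic point — which is the one genuinely structural input; everything else (saturation, Rosati positivity) is formal by comparison. An alternative to invoking isogeny-class theory for the $\ell\neq p$ surjectivity is a degeneration over $k[\![t]\!]$ to a superspecial point, where the statement is classical, combined with Lemma \ref{generalk} and the deformation theory of line bundles to control how $\LEnd$ changes; I would keep this as a fallback. Finally, a minor caveat: if $L_p$ is not self-dual but only $\disc(L_p)$ is cyclic, the objects $\bL_\ell$, $\bL_\cris$ and their integral structures must be understood as in \cite[\S7.10]{CSpin}; in the situations needed later in the paper, where $p\nmid 2(n-1)$ and so $(\Lambda_n)_p$ is self-dual, this subtlety does not arise.
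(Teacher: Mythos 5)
Your part (b) is essentially the paper's own argument: injectivity by definition, integrality by a saturation argument (the paper packages this as a cube of fiber squares inside $\End \bH_{\ell,s}$, resp.\ its crystalline analogue, which is the same content as your ``saturated sublattice of full rank'' step), and rational surjectivity at basic points imported from the isogeny-class/Tate-type results, for which the paper cites \cite[Cor.~6.11]{Keerthi}. The one thing your main line glosses over is that those structural results are proved for $k=\bar{\IF}_p$, whereas the proposition is over an arbitrary algebraically closed $k$; the paper bridges this with \ref{generalk} together with a specialization-along-DVR argument (\cite[Tag~054F]{stacks-project}), which is much closer to what you relegate to a ``fallback'' than to your main argument, so you should make that reduction an explicit step rather than an alternative.

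Your part (a) takes a genuinely different route from the paper, and it is attractive because it works directly in characteristic $p$: the paper instead lifts each individual $f\in\LEnd(\shA_s)$ to characteristic zero via \cite[Prop.~7.18]{CSpin} and reads off $f\circ f<0$ from the signature $(2,m-2)$ of $\bL_B$, the $(1,-1)\oplus(-1,1)$ part exhausting the positive $2$-plane so that $(0,0)$-classes are negative. Two caveats about your version. First, the key input $f^\dagger=-f$ is not established in \S\ref{reviewCliff}, and it is sign-convention sensitive: whether the Rosati involution of the Kuga--Satake polarization fixes or negates $L_\IQ\subset\End(H_\IQ)$ depends on whether the symplectic form is built from the reversal $y\mapsto y^{*}$ (then left multiplication $l_v$ is self-adjoint and one gets a \emph{positive} definite form, the correct statement in the opposite signature convention used in much of the GSpin literature) or from Clifford conjugation (then $l_v^\dagger=-l_v$, which is the statement consistent with this paper's $(2,m-2)$ convention). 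So the fact you need is true here, but it requires a short trace computation identifying which form the construction actually uses; your appeal to ``the CSpin formalism'' does not supply it as stated. Second, your side remark that (a) cannot be reached by lifting to characteristic zero (because a characteristic-zero fibre carries at most $m-2$ independent special endomorphisms) misreads the lifting strategy: one lifts one special endomorphism at a time, and negative definiteness uses no supersingularity at all --- the paper's proof of (a), like yours, is valid at every point of $\shS_\sK(L)$; supersingularity enters only in (b).
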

\begin{proof}
(a) Let $f \in \LEnd(\shA_s)$. By \cite[Prop.\,7.18]{CSpin}, there exists a characteristic zero field $F$ which embeds into $\IC$ and an $F$-valued point $s_F$ which specializes to $s$ such that $f$ lifts to an element $f_\IC \in \LEnd(\shA_{s_\IC})$, where $s_\IC := s_F \tensor \IC$. Recall that $\bL_{\dR, s_\IC}^{1, -1}$ is generated by an element $\w$ such that $\< \w, \w \> = 0$ and $\< \w, \bar{\w} \> > 0$, we see that $\bL_{\dR, s_\IC}^{1, -1} \oplus \bL_{\dR, s_\IC}^{-1, 1}$ descends to a $2$-dimensional positive definite $\IR$-vector subspace of $\bL_{B, s_\IC} \tensor \IR$. Since $\bL_{B, s_\IC}$ has signature $(2, m-2)$ and the class of $f_\IC$ lies in $\bL_{\dR, s_\IC}^{(0, 0)}$, we see that $f \circ f < 0$.  \\\\
(b) If $k = \bar{\IF}_p$, then by \cite[Cor.\,6.11]{Keerthi} $\LEnd(\shA_s) \tensor \IQ_\ell \to \bL_{\ell, s} \tensor \IQ_\ell$ is an isomorphism. To check that it holds for a general $k$, apply \ref{generalk} and a specialization along DVR argument ( \cite[Tag~054F]{stacks-project}).
Consider the following diagram:
\begin{center}
    \begin{tikzcd}[row sep=small, column sep=small]
 & \LEnd(\shA_s) \tensor  \IZ_\ell \arrow[dl] \arrow[rr] \arrow[dd] & & \bL_{\ell, s} \arrow[dl] \arrow[dd] \\ 
  \End(\shA_s) \tensor \IZ_\ell \arrow[rr, crossing over] \arrow[dd] & & \End H^1_\et(\shA_s, \IZ_\ell)  \\ 
 & \LEnd(\shA_s) \tensor \IQ_\ell \arrow[dl] \arrow[rr] & & \bL_{\ell, s} \tensor \IQ_\ell \arrow[dl] \\ 
 \End(\shA_s) \tensor \IQ_\ell \arrow[rr] & & \End H^1_\et(\shA_s, \IQ_\ell)  \arrow[uu, crossing over, leftarrow]\\
\end{tikzcd}
\end{center}
All vertical diagrams except the one at the back are fiber diagrams, so the one at the back is also a fiber diagram. The argument for crystalline cohomology is entirely analogous.
\end{proof}

\begin{proposition}
\label{allisog}
Let $k$ be an algebraically closed field of characteristic $p$. Assume that $L_p$ is self-dual. Then for any two $k$-points $s, s'$ of $\shS_{p}^{\ss}(L)$, there exists a CSpin-isogeny $\psi : \shA_s \to \shA_{s'}$.
\end{proposition}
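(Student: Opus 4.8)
The plan is to reduce to the case $k = \bar{\IF}_p$ and then invoke the theory of isogeny classes of mod $p$ points of Shimura varieties of Hodge type; this applies here since $L_p$ is self-dual, so $\sK_{0,p} = G(\IZ_p)$ is hyperspecial and $\shS_p(L)$ is the integral canonical model. Note that $\shS_p(L)$ and the statement do not involve $n$ at all, so the argument is identical to (and the $n=1$ instance may simply be quoted from) the K3 literature.

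\textbf{Reduction to $\bar{\IF}_p$.} Let $x,x'$ be the points of $\shS_p(L)$ underlying $s,s'$. I would choose specializations of $x$ and $x'$ to closed points of the finite-type quotients $\shS_{\sK_{0,p}\sK^p}(L)$, whose residue fields are finite over $\IF_p$, and, after base change, obtain $\bar{\IF}_p$-points $s_0,s_0'$ with specializations $s \rightsquigarrow s_0$ and $s' \rightsquigarrow s_0'$. Granting the proposition over $\bar{\IF}_p$, fix a CSpin-isogeny $f_0 : \shA_{s_0} \to \shA_{s_0'}$. By a specialization-along-a-DVR argument exactly as in the proof of \ref{ssdefLEnd}, using Lemma \ref{generalk} for a discrete valuation ring realizing both specializations simultaneously, a suitable $p$-power multiple of $f_0$ lifts to a quasi-isogeny $f : \shA_s \to \shA_{s'}$. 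Scaling by a power of $p$ does not affect the CSpin condition, because the CSpin structure is cut out by tensors on which quasi-isogenies act through conjugation; and $f$ is again a CSpin-isogeny, since its $\ell$-adic realizations for $\ell \neq p$ are identified with those of $f_0$ by smooth and proper base change, while its crystalline realization is identified with that of $f_0$ by the base change isomorphism for crystalline cohomology along the special fibre. So it suffices to treat $k = \bar{\IF}_p$.

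\textbf{The case $k = \bar{\IF}_p$.} Here the content is that $\shS_p^\ss(L)(\bar{\IF}_p)$ is a single CSpin-isogeny class. The input is that the Kuga--Satake abelian varieties $\shA_s$ at supersingular points, together with their CSpin structures, have ``the same'' local realizations: for $\ell \neq p$ the pair $(\bH_{\ell,s},\bpi_{\ell,s})$ is isomorphic to the fixed $(H\otimes\IZ_\ell,\pi)$ by construction of $\shS_p(L)$, while the $F$-isocrystal $\bH_{\cris,s}\otimes\IQ$ equipped with $\bpi_\cris$ corresponds to the basic element of $B(G_{\IQ_p},\mu)$, the unique $\sigma$-conjugacy class with supersingular Newton point, so any two such are conjugate under $\CSpin(L)(\breve{\IQ}_p)$. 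By Kisin's construction of isogeny classes in the special fibre of $\shS_p(L)$, each supersingular $s$ lies in an isogeny class consisting precisely of those $s'$ admitting a CSpin-quasi-isogeny $\shA_s \to \shA_{s'}$, and this class surjects onto the product of the relevant affine Deligne--Lusztig set with the prime-to-$p$ coset space; the affine Deligne--Lusztig set is the one attached to the basic element and is nonempty (it contains $s$), so the isogeny class exhausts the whole supersingular locus. As an alternative to invoking this machinery directly, one may lift $s$ and a fixed base point to CM points in characteristic zero by \cite[Prop.~7.18]{CSpin} and compare there, where the statement is classical; the $n=1$ case appears in \cite{Keerthi}.

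\textbf{Main obstacle.} The difficulty is not the bare existence of an abelian-variety isogeny $\shA_s \to \shA_{s'}$ — supersingular abelian varieties of equal dimension are always isogenous — but producing one compatible with the entire CSpin structure (the $\IZ/2\IZ$-grading, the $\Cl(L)$-action, and all realizations of $\pi$). That is exactly what the isogeny-class machinery supplies; the one point genuinely outside the present excerpt is the nonemptiness and ``maximality'' of the basic Newton stratum as an isogeny class, which rests on the uniqueness of the basic element in $B(G_{\IQ_p},\mu)$ together with the nonemptiness of $\shS_p^\ss(L)$.
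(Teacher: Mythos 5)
Your reduction of the general case to $k=\bar{\IF}_p$ coincides with the paper's: the paper likewise combines Lemma \ref{generalk} with a specialization-along-a-DVR argument as in the proof of \ref{ssdefLEnd}, and your remarks about preservation of the CSpin tensors and harmlessness of $p$-power scaling are at the same level of detail as the paper itself.

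The gap is in the core case $k=\bar{\IF}_p$. You claim the CSpin-isogeny class of one supersingular point exhausts the supersingular locus because the basic element $b\in B(G_{\IQ_p},\mu)$ is unique and the attached affine Deligne--Lusztig set is nonempty. That inference does not go through: Kisin's theory realizes each isogeny class as the image of (a quotient of) $X_\mu(b)\times G(\IA_f^p)/\sK^p$, but a priori the basic Newton stratum can be a union of several isogeny classes, all attached to the same basic $b$; uniqueness of $b$ and nonemptiness of $X_\mu(b)$ place no bound on the number of isogeny classes lying over it. What rules out more than one is a genuinely global input: by Kisin's description of the fibers of the map from isogeny classes to Kottwitz triples (\cite[Prop.~4.4.9]{Modp}), the fiber over the basic triple is a torsor under a subgroup of $\ker\bigl(H^1(\IQ,\CSpin(L'_\IQ))\to\prod_v H^1(\IQ_v,\CSpin(L'_\IQ))\bigr)$, where $L'_\IQ=\LEnd(\shA_s)_\IQ$, and this kernel vanishes by Hilbert's theorem 90 together with the Hasse principle for $\SO(L'_\IQ)$. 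This is precisely the content the paper imports from \cite[(7.2.1), Prop.~3.3.3]{HP} (the supersingular locus is the basic locus and its $\bar{\IF}_p$-points form a single isogeny class) and then explains in the remark following the proposition; so your closing sentence misidentifies the essential ingredient, which is this Hasse-principle/vanishing-of-$\Sha$ statement (equivalently, uniformization of the basic locus), not the uniqueness of the basic class. The alternative you sketch, lifting both points to CM points via \cite[Prop.~7.18]{CSpin} and ``comparing there,'' is not a proof as stated: producing a CSpin-isogeny between the two CM lifts is the same global problem, now in characteristic zero.
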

\begin{proof}
We first treat the case $k = \bar{\IF}_p$. This is already observed in \cite{HP}: The supersingular locus $\shS^\ss_p(L)$ coincides with the basic locus, and the $\bar{\IF}_p$-points of the basic locus consists of a single isogeny class (see (7.2.1) and Prop.~3.3.3 in \textit{loc. cit.}). The case for general $k$ follows again from \ref{generalk} and a specialization along DVR argument.  
\end{proof}

\begin{remark}
The computation which is subsumed in \cite[Prop.~3.3.3]{HP} is essentially the \textit{Hasse principle of quadratic forms}. Let $s \in \shS^\ss_p(L)(\bar{\IF}_p)$ and $L'_\IQ$ be the $\IQ$-lattice $\LEnd(\shA_s)_\IQ$. According to Kisin's description of the fibers of the map $\fk$ which sends isogeny classes on $\shS_p(L)(\bar{\IF}_p)$ to equivalence classes of Kottwitz triples (see  \cite[Prop.~4.4.9]{Modp}), the fiber of $\fk$ over the image of the isogeny class of $s$ is a torsor of a subgroup of $\ker (H^1(\IQ, \CSpin(L_\IQ')) \to \prod_{v} H^1(\IQ_v, \CSpin(L_\IQ')))$, where $v$ runs through all places of $\IQ$. There is commutative diagram 
\begin{center}
    \begin{tikzcd}
     H^1(\IQ, \CSpin(L'_\IQ)) \arrow{r}{} \arrow{d}{} & \prod_v H^1(\IQ_v, \CSpin(L'_\IQ)) \arrow{d}{} \\
     H^1(\IQ, \SO(L'_\IQ)) \arrow{r}{} & \prod_v H^1(\IQ_v, \SO(L'_\IQ)).
    \end{tikzcd}
\end{center}
Both vertical arrows are injective by Hilbert's theorem 90. The bottom arrow is injective by the Hasse principle. Hence the kernel of the top arrow is trivial. 
\end{remark}

\begin{lemma}
\label{relpos}
Assume that $L_p$ is self-dual and $k$ is an algebraically closed field of characteristic $p$. Let $s, s' \in \shS^\ss_{\sK_0}(L)(k)$ be any two points for which there exists a CSpin-isogeny $\psi : \shA_s \to \shA_{s'}$. If the induced map $\bL_{\cris, s'} \tensor K_0 \stackrel{\sim}{\to} \bL_{\cris, s} \tensor K_0$ restricts to a map $\bL_{\cris, s'} \stackrel{\sim}{\to} \bL_{\cris, s}$, then $p^h \psi$ is a prime-to-$p$ isogeny for some $h$. 
\end{lemma}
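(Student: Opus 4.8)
The plan is to translate the statement into a question about the even Clifford algebra and then settle it using the exact sequence $1 \to \IG_m \to \CSpin(L_p) \xra{\ad} \SO(L_p) \to 1$. Write $M_s = \H^1_\cris(\shA_s/W)$ and $M_{s'} = \H^1_\cris(\shA_{s'}/W)$, and let $\psi_\cris \colon M_{s'} \tensor K_0 \sto M_s \tensor K_0$ be the isomorphism of isocrystals induced by $\psi$; this is the crystalline realization appearing in the statement, and it suffices to exhibit $h \in \IZ$ with $p^h \psi_\cris(M_{s'}) = M_s$, for this says exactly that $p^h\psi$ is an isomorphism on $p$-divisible groups, hence (after multiplying by a suitable integer prime to $p$) a prime-to-$p$ isogeny. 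First I would record two structural inputs that use $L_p$ self-dual and $p > 2$: (i) $\Cl(L_p)$ is a maximal $\IZ_p$-order and $\Cl^+(L_p)$ is Azumaya over its centre, so that $M_s$, a $\Cl(L_p)\tensor W$-module free over $W$ for the $\Cl(L)$-action on $\shA_s$, is free of rank one over $\Cl(L_p)\tensor W$; (ii) $\CSpin(L_p)$ is smooth and $\Spec W$ is strictly Henselian local, so the CSpin structure on $M_s$ is trivial over $W$: there is an isomorphism $\iota_s \colon M_s \sto \Cl(L_p)\tensor W$ carrying the $\IZ/2\IZ$-grading and the $\Cl(L_p)\tensor W$-action on $M_s$ to the standard ones and carrying $\bL_{\cris,s} \subset \End_W(M_s)$ onto the image of $L_p \tensor W$ under left multiplication; likewise for $s'$.

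Having fixed such $\iota_s, \iota_{s'}$, I would transport $\psi_\cris$ to an automorphism of $\Cl(L_p)\tensor K_0$. Since $\psi$ respects the $\IZ/2\IZ$-grading and the $\Cl(L)$-action (Definition~\ref{CSpin-isog}), this automorphism is graded and commutes with right multiplication by $\Cl(L_p)$, hence equals left multiplication by a unique $v \in (\Cl^+(L_p)\tensor K_0)^\times$; consequently $p^h\psi_\cris(M_{s'}) = (p^h v)\cdot M_s$ for all $h$. Because $\psi$ is a CSpin-isogeny it preserves $\bpi$ after inverting $p$ (so $\ad(v)$ lies in $\SO(L_p)(K_0)$), and the hypothesis that $\psi_\cris$ restricts to an isomorphism $\bL_{\cris,s'} \sto \bL_{\cris,s}$ becomes, under $\iota_s,\iota_{s'}$, the assertion $v\,(L_p\tensor W)\,v^{-1} = L_p\tensor W$ (if one only obtains an inclusion, equality follows since $\ad(v)$ is an isometry of the self-dual lattice $L_p\tensor W$). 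In other words $\ad(v) \in \SO(L_p)(W)$.

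Now I would invoke the exact sequence of smooth group schemes over $\IZ_p$ displayed above, base changed to $W$. The morphism $\CSpin(L_p)\to\SO(L_p)$ is a $\IG_m$-torsor and $\Pic(\Spec W) = 0$, so $\CSpin(L_p)(W)\to\SO(L_p)(W)$ is surjective; pick $u \in \CSpin(L_p)(W)$ with $\ad(u) = \ad(v)$. Then $vu^{-1}$ lies in $\ker(\ad)(K_0) = K_0^\times$; writing $vu^{-1} = p^{-h}c_0$ with $c_0 \in W^\times$ gives $p^h v = c_0 u \in (\Cl^+(L_p)\tensor W)^\times$, which acts invertibly on $\Cl(L_p)\tensor W$ by left multiplication. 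Hence $p^h\psi_\cris(M_{s'}) = (p^h v)\cdot M_s = M_s$, which finishes the proof.

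The step I expect to be the real obstacle is the first one: making precise that the crystalline realization of $\psi$ is multiplication by a single element of the even Clifford algebra, and that $\bL_{\cris}$ is exactly the image of $L_p\tensor W$ under the ``other'' Clifford multiplication, integrally and not merely after inverting $p$. This is where self-duality of $L_p$ is indispensable — it makes $\Cl(L_p)$ a maximal order (so $M_s$ is free of rank one and its graded $\Cl$-linear endomorphisms can be identified with $\Cl^+(L_p)\tensor W$) and $\CSpin(L_p)$ smooth (so the CSpin structure trivializes over the strictly Henselian base $\Spec W$). One must also keep straight which of the two commuting Clifford multiplications on $\Cl(L_p)$ carries the spin/monodromy information cut out by $\bpi$ — and hence the lattice $\bL_{\cris}$ — and which is the endomorphism action of $\shA$ that $\psi$ respects; once this dictionary is in place, the remaining argument is the short group-scheme computation above.
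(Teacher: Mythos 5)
Your argument is correct, and it reaches the conclusion by a route that is parallel to, but not the same as, the paper's. The paper identifies $\bH_{\cris,s}\tensor K_0$ with $\bH_{\cris,s'}\tensor K_0$ via $\psi$ and invokes the Cartan decomposition for the unramified group $G=\CSpin(\bL_{\cris,s})$: the relative position of the two $W$-lattices is recorded by a cocharacter $\mu$ up to the Weyl group, the integrality hypothesis on $\bL_\cris$ forces $\ad\circ\mu$ to be trivial, and the central extension $1\to\IG_m\to\CSpin\to\SO\to 1$ then forces $\mu$ to factor through the central $\IG_m$, which is exactly the power of $p$. You instead trivialize each fiber as the rank-one free module $\Cl(L_p)\tensor W$ with its standard tensors, write $\psi_\cris$ as left multiplication by $v\in\CSpin(L_p)(K_0)$, read the hypothesis as $\ad(v)\in\SO(L_p)(W)$, and lift along $\CSpin(L_p)(W)\to\SO(L_p)(W)$ (surjective since $H^1(\Spec W,\IG_m)=\Pic(W)=0$) to conclude $v$ is an integral point times an element of $K_0^\times$; so both proofs ultimately run on the same exact sequence, but yours replaces the Cartan/cocharacter bookkeeping by an explicit Clifford-module computation and a Hilbert-90-type lifting, which is arguably more concrete. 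The one step you rightly flag as the crux --- that $\H^1_\cris(\shA_s/W)$ with its grading, $\Cl(L_p)$-action and $\bpi_{\cris,s}$ is isomorphic over $W$ to the standard $\Cl(L_p)\tensor W$, with $\bL_{\cris,s}$ going to $L_p\tensor W$ --- is not something smoothness plus strict henselianness alone gives you (you must know the isomorphism scheme is a torsor, i.e.\ that such trivializations exist fppf-locally); this is supplied by Kisin's theory of the integral model as packaged in \cite{CSpin}, and you should cite it rather than rederive it. Note that the paper's proof quietly needs the same input in order to place $\bH_{\cris,s'}$ in the $G(K_0)$-orbit of $\bH_{\cris,s}$ before the Cartan decomposition can be applied, so on this point the two arguments are on equal footing.
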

\begin{proof}
Identify $\bH_{\cris, s} \tensor K_0$ with $\bH_{\cris, s'} \tensor K_0$ through $\psi$. In this proof we temporarily use $G$ to denote $\CSpin(\bL_{\cris, s})$ and $G^\ad$ to denote $\SO(L_{\cris, s})$. Let $T$ be a maximal torus in $G$ and $\Ohm_{G}$ be the Weyl group of $T$. By Cartan's decomposition, the relative position between the $W$-lattices $\bH_{\cris, s}$ and $\bH_{\cris, s'}$ is determined by a cocharacter $\mu : \IG_m \to G_W$ up to the action of $\Ohm_G$ such that
$$ \bH_{\cris, s} = g \cdot \bH_{\cris, s'} \text{ for some } g \in G(W) \mu(p) G(W). $$
Let $\Ohm_{G^\ad}$ be the Weyl group of the image of $T$. Now the condition that $\psi$ induces a $W$-integral isometry $\bL_{\cris, s} \stackrel{\sim}{\to} \bL_{\cris, s'}$ implies that the composition $\ad \circ \mu$ is the trivial cocharacter up to the action by $\Ohm_{G^\ad}$. By the exact sequence (\ref{fundexactseq}), up to the action of $\Ohm_G$, $\mu$ factors through $\IG_m \subset G$. Therefore, $p^h \psi$ induces an isomorphism $H^1_\cris(\shA_{s}/W) \to H^1_\cris(\shA_{s'}/W)$ which commutes with the CSpin structures for some $h$. In particular, $p^h \psi$ is a prime-to-$p$ CSpin-isogeny $\shA_{s'} \to \shA_{s}$. 
\end{proof}

\subsection{Period Morphisms} 
\label{sec: Period Morphism}
We briefly recap the set-up for the period morphisms, which was used in \cite{Andre}, \cite{Rizov}, \cite{Keerthi}, \cite{Maulik}, \cite{Charles} and \cite{Charles2} in various degrees of generality. The use of period morphisms for mod $p$ reductions of hyperk\"ahler varieties first appeared in \cite{Charles}. Here we go through the construction in greater detail because there is a minor mistake in early literature (\cite[Rmk~5.12]{Taelman2}) concerning artificial orientations and for our purposes we need to keep track of connected components of the domains of our period morphisms. 

\begin{definition}
\label{defHKnfam}
Let $p > n + 1$ be a prime and $S$ be a $\IZ_{(p)}$-scheme.
\begin{enumerate}[label=\upshape{(\alph*)}]
    \item A $\HKn$-scheme over $S$ is a smooth proper morphism of schemes $f : \sX \to S$ such that for every geometric point $s \to S$, the fiber $\sX_s$ is a $\HKn$-type variety. A $\HKn$-space over $S$ is a smooth proper morphism of algebraic spaces $f : \sX \to S$ such that for some \'etale cover $S' \to S$, the pullback $\sX_{S'}$ is a $\HKn$-scheme. 
    \item A polarization (resp. primitive polarization) on a $\HKn$-space $f : \sX \to S$ is a global section $\bxi \in \underline{\Pic}_{\sX/S}(S)$ such that for every geometric point $s \to S$, the fiber $\bxi_s$ is a polarization (primitive polarization) on $\sX_s$. Here $\underline{\Pic}_{\sX/S}$ stands for the relative Picard functor. $\bxi$ is said to be of degree $d$ for a number $d$ if for every geometric point $s \to S$, $\bxi_s^{2n} = d$. 
    \end{enumerate}
\end{definition}

\begin{notation}
\label{not: define sheaves}
Let $S$ be a $\IZ_{(p)}$-scheme and $(f : \sX \to S, \bxi)$ be a polarized $\HKn$-space over $S$. Let $\bH_B^2$ be the second relative Betti cohomology of $\sX|_{S_\IC}$. For every prime $\ell$, let $\bH^2_\ell$ be the relative second \'etale cohomology of $\sX|_{S \tensor_\IZ \IZ[1/\ell]}$ with coefficients in $\underline{\IZ}_\ell$. Let $\bH_{\dR}^2$ be the vector bundle on $S$ given by the second relative de Rham cohomology of $\sX$, which comes with a natural filtration $\Fil^\bullet$.  We put together the relative $\ell$-adic cohomology sheaves $\bH_\ell^2$ to form $\bH_{\what{\IZ}}^2 := \prod_{\ell} \bH_\ell^2$ and $\bH_{\what{\IZ}^p}^2 := \prod_{\ell \neq p} \bH_\ell^2$. Over $S \tensor \IF_p$, let $\bH^2_{\cris}$ denote the crystal of vector bundles on $\Cris(S \tensor \IF_p/\IZ_p)$ given by the second crystalline cohomology of $\sX \tensor \IF_p$. For $* = B, \ell, \dR, \cris$, denote by $\bP^2_*$ the primitive part of $\bH^2_*$.
\end{notation}

In the above notation, we remark that we can put a natural Beauville-Bogomolov form on $\bH_{\what{\IZ}^p}^2(1)$ by first setting the value for $c_1(\bxi_{S})$ and then putting a pairing on the primitive part, using (\ref{wqperp}) with $w$ being the cup product, so that on each fiber over a point, the form agrees with that defined by \ref{extBBF0}. We make the following definitions, which are direct generalizations of \cite[Def.~1.1.5, 3.2.1]{Rizov1} and \cite[\S3.10]{Keerthi}\footnote{The definition of level structures in \cite[\S3.10]{Keerthi} should be slightly modified to take into account of the orientations in order for $H^0(S, I^p/\sK^{p, \ad})$ to be finite. This is also important for the construction of the period morphism (cf. \cite[Rmk~3.8]{Yang}).}:
\begin{definition}
\label{def: orientation and level structures}
Suppose that there is a pointed lattice $(\Lambda, \lambda)$ over $\IZ$ such that for every geometric point $s \to S$, $(\H^2_\et(\sX_s, \what{\IZ}^p), c_1(\bxi_s)) \iso (\Lambda \tensor \what{\IZ}^p, \lambda)$. Set $L^p := (\lambda^p)^\perp$. A (prime-to-$p$) \textbf{artificial orientation} is an isometric trivialization 
$$ \epsilon^p : \underline{\det(L^p)}_S \stackrel{\sim}{\to} \det(\bP^2_{\what{\IZ}^p}). $$
Let $\sK^p \subseteq \CSpin(L^p)$ be a compact open subgroup and $\sK^{p, \ad}$ be its image in $\SO(L^p).$ A \textbf{$\sK^p$-level structure} (with respect to $\epsilon^p$) is a global section $[\eta^p]$ in $\H^0(S, I^p/\sK^p)$, where $I^p$ is the (pro)-\'etale sheaf over $S$ defined by 
$$ I^p(S') = \{ \text{Isometries } \eta^p : \underline{\Lambda}^p_{S'} \stackrel{\sim}{\to} \bH_{\what{\IZ}^p, S'}^2(1) \text{ such that } \eta^p(\lambda^p) = c_1(\bxi_S) \text{ and } \eta \text{ preserves } \epsilon^p_{S'} \} $$
for every \'etale morphism $S' \to S$. 
\end{definition}

Note that $\sK^{p, \ad}$ acts trivially on $\disc(L^p)$, so every element in $\sK^{p, \ad}$ can be extended (necessarily uniquely) to an element of $\O(\Lambda^p, \lambda^p)$ (cf. \ref{2.1.7}).

\subsubsection{}\label{period+} Let $R$ be either $\IZ_{(p)}$ or $W(k)$ for some perfect field $k$ in characteristic $p$. Let $E$ be the fraction field of $R$ and choose an embedding $E \into \IC$. Let $S$ be an $R$-scheme and $(f : \sX \to S, \bxi)$ be a primitively polarized $\HKn$-space over $S$. We make the following assumptions on $S$: (a) $\sX$ is everywhere a universal deformation, i.e., for every geometric point $s \to S \tensor \IF_p$, the restriction of $\sX$ to the formal neighborhood of $s$ in $S \tensor_R W(k(s))$ can be identified with the universal family over $\Def(\sX_s, \bxi_s)$. (b) The hypothesis of \ref{def: orientation and level structures} is satisfied with $(\Lambda, \lambda)$ and set $L := \lambda^\perp$. Let $\wt{S}$ be the double cover of $S$ such that a morphism $T \to \wt{S}$ corresponds to a morphism $T \to S$ together with a trivialization $\epsilon_{2, T} : \underline{\det(L_2)}_T \stackrel{\sim}{\to} \det(\bP^2_{2, T})$. Let $\wt{S}^\#$ be the finite \'etale cover of $\wt{S}$ such that a morphism $T \to \wt{S}^\#$ corresponds to a morphism $T \to \wt{S}$ and a trivialization $\Delta_T : \underline{\disc(L^p)}_T \to \disc(\bP^2_{\what{\IZ}^p, T})$.

Recall the universal property of $\Sh^\ad_{\sK}(L)(\IC)$:
\begin{proposition}
\label{prop: universal property of Shimura}
For any complex analytic stack $T$, a morphism $T \to \Sh^\ad_{\sK}(L)(\IC)$ corresponds to tuple $((\bU_\IQ, \Fil^\bullet \bU_\IQ \tensor_\IQ \shO_T), \eta_T, \epsilon_T)$ where 
\begin{itemize}
    \item $(\bU_\IQ, \Fil^\bullet \bU_\IQ \tensor_\IQ \shO_T)$ is a polarized variation of $\IQ$-Hodge structures with Hodge numbers $h^{-1, 1} = h^{1, -1} = 1$ and $h^{0, 0} = \mathrm{rank\,} L - 2$ such that every point $t \in T$, $\bU_{\IQ, t} \iso L_\IQ$ as quadratic forms;
    \item $\epsilon_T$ is an isometry $\underline{\det(L_\IQ)} \sto \det(\bU_\IQ)$;
    \item $[\eta_T]$ is a global section of $I/ \sK^\ad$, where $I$ is the local system over $T$ locally given by the set of isometries from $\underline{L \tensor \IA_f}$ to $\bU_\IQ \tensor \IA_f$ which are compatible with $\epsilon_T$.
\end{itemize}
\end{proposition}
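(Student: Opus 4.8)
This is the standard moduli interpretation, at the level of complex points, of the abelian-type Shimura variety attached to $(G^\ad,\Ohm)=(\SO(L_\IQ),\Ohm)$, rephrased so as to keep track of the orientation tensor of Remark \ref{rmk: orientation tensor}; the plan is to build the correspondence in both directions from the explicit uniformization and then match the two constructions. The input is
\[
\Sh^\ad_\sK(L)(\IC) \;=\; G^\ad(\IQ)\,\big\backslash\,\big(\Ohm \times G^\ad(\IA_f)/\sK^\ad\big)
\]
together with the canonical package recalled in \S\ref{3.1.3}: the polarized variation of $\IQ$-Hodge structures $\bL_\IQ := \bL_B \tensor \IQ$ obtained from the defining representation of $G^\ad$ on $L_\IQ$ and the varying period $\w\in\Ohm$ (its fibres are of K3 type with Hodge numbers $h^{-1,1}=h^{1,-1}=1$, $h^{0,0}=\mathrm{rank}\, L-2$, and are fibrewise isometric to $L_\IQ$), the level section $[\eta]$ of $I/\sK^\ad$ built from the $G^\ad(\IA_f)$-action, and the global isometric trivialization $\bd\colon\underline{\det L_\IQ}\sto\det\bL_\IQ$ (Remark \ref{rmk: orientation tensor}). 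First I would observe that pulling this package back along any morphism $T\to\Sh^\ad_\sK(L)(\IC)$ produces a tuple $((\bU_\IQ,\Fil^\bullet\bU_\IQ\tensor\shO_T),\eta_T,\epsilon_T)$ of exactly the asserted shape; it then remains to produce an inverse.

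To invert, I would work \'etale-locally on $T$ and descend, reducing to the case that $T$ is a connected analytic space, and fix a point $t$ with an isometry $\bU_{\IQ,t}\iso L_\IQ$. The trivialization $\epsilon_T$ forces the monodromy representation $\rho\colon\pi_1(T,t)\to\O(\bU_{\IQ,t})(\IQ)=\O(L_\IQ)(\IQ)$ to preserve $\det L_\IQ$, hence to land in $\SO(L_\IQ)(\IQ)=G^\ad(\IQ)$; conversely a $\rho$ valued in $\SO(L_\IQ)$ recovers $\epsilon_T$ up to a global sign. Because $(\bU_\IQ,\Fil^\bullet)$ is a variation of Hodge structure with the above Hodge numbers and pointwise form, its Hodge filtration at any point is a period $\w\in\Ohm$, and passing to the universal cover $\wt{T}$ yields a holomorphic $\rho$-equivariant map $\wt{T}\to\Ohm$. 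The level section $[\eta_T]\in\H^0(T,I/\sK^\ad)$, trivialized over $\wt{T}$ compatibly with $\epsilon_T$, gives a map $\wt{T}\to G^\ad(\IA_f)/\sK^\ad$ equivariant for left translation by $\rho$. Combining them gives a $\pi_1(T,t)$-equivariant holomorphic map $\wt{T}\to\Ohm\times G^\ad(\IA_f)/\sK^\ad$, which descends to the desired morphism $T\to\Sh^\ad_\sK(L)(\IC)$; I would then check on double cosets that this is independent of the auxiliary choices and inverts the pullback.

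The delicate step — and precisely where the ``minor mistake in early literature'' mentioned above originates — will be the orientation bookkeeping: I would verify (i) that the $\sK^\ad$-equivalence used in the level datum coincides with the discriminant-kernel level structure in the uniformization, using that $\sK^\ad$ acts trivially on $\disc(L)$ and hence extends uniquely to $\O(\Lambda,\lambda)$ (cf.\ \ref{2.1.7}), and (ii) that $\epsilon_T$ is matched with the spreading-out $\bd_B$ of $\delta\in\det L$ rather than its negative, so that no spurious component or $\IZ/2\IZ$-gerbe is introduced. The remainder is a mechanical unwinding of the definition of a Shimura stack and of the analytic constructions of \S\ref{3.1.3}; alternatively the whole statement may be deduced from Deligne's description of canonical models of abelian-type Shimura varieties together with \cite[\S3.3]{CSpin}.
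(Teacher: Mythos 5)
Your sketch is correct and is essentially the justification the paper implicitly relies on: the paper states this proposition without proof, as a recollection of the standard complex uniformization $G^\ad(\IQ)\backslash\big(\Ohm \times G^\ad(\IA_f)/\sK^\ad\big)$ together with the tautological package of \S3.1 ($\bL_B$, the level section, and the orientation section $\bd_B$), only remarking that at level $\sK_0$ the description agrees with \cite[Prop.~4.3]{Keerthi}. Your unwinding via the universal cover, with the orientation bookkeeping forcing the monodromy and the adelic framings into $\SO$ rather than $\O$, fills in exactly the standard details the paper leaves to the reader.
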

We remark that $\bU_\IQ$ is equipped with a natural $\IZ$-structure $\bU$, which is given by $\bU = \bU_\IQ \cap \eta_T(L \tensor \what{\IZ})$. Since $L$ contains a copy of the standard hyperbolic plane, by \ref{strongapprox} $\bU_t \iso L$ as quadratic $\IZ$-lattices. One easily checks that, when $\sK = \sK_0$, our description of the universal property agrees with that of \cite[Prop.~4.3]{Keerthi}.

Using the above universal property, we easily construct a natural morphism $\rho : \wt{S}^\#_\IC \to \Sh^\ad_{\sK_0}(L)_\IC$. By construction, there is an isometry $\alpha_{B}: \rho_{\IC}^* \bL_{B}(-1) \stackrel{\sim}{\to} \bP^2_{B, \IC}$ of $\IZ$-local systems and $\alpha_{\dR, \IC} : \rho_{\IC}^* \bL_{\dR}(-1) \to \bP^2_{\dR, \IC}$ of filtered vector bundles over $\wt{S}^\#_\IC$. 

\begin{proposition}
The morphism $\rho_{\IC}$ descends to a morphism $\rho_{E} : \wt{S}^\#_E \to \Sh^\ad_{\sK_0}(L)_E$.
\end{proposition}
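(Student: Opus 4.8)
The plan is to follow the route used for the K3 period morphism in \cite{Rizov1} and \cite{Keerthi}: first show that $\rho_\IC$ is algebraic, and then descend it to $E$ by a Galois-conjugation argument, the descent being controlled by the moduli interpretation of the Hodge-type Shimura variety $\Sh_{\sK}(L)$ through Kuga--Satake abelian schemes. The only features that differ from the $n=1$ case are that one works throughout with the primitive part $\bP^2_\bullet$ in place of the full $\H^2$ of a K3 surface, and that the artificial orientations (the trivializations defining $\wt S$, $\wt S^\#$, and the tensors $\bd_\bullet$) must be carried along at every stage. Concretely, I would first pass to a neat sublevel so that the target becomes a quasi-projective variety, and invoke Borel's extension theorem on the algebraicity of holomorphic maps into arithmetic quotients of Hermitian symmetric domains to conclude that $\rho_\IC$ is a morphism of algebraic stacks over $\IC$; here one uses that $\wt S^\#_\IC$ is of finite type over $\IC$, being finite \'etale over the finite-type $\IC$-scheme $S_\IC$.

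Next I would set up the descent. The source $\wt S^\#$ is defined over $R$, so $\wt S^\#_E := \wt S^\# \tensor_R E$ makes sense, and $\Sh^\ad_{\sK_0}(L)$ has a canonical model over its reflex field $\IQ \subseteq E$ whose base change to $E$ is the target. For each $\sigma \in \Aut(\IC/E)$ the conjugate $\rho_\IC^\sigma$ is then again a morphism $\wt S^\#_\IC \to \Sh^\ad_{\sK_0}(L)_\IC$, and since all objects involved are of finite type over $\IC$ and arise by base change from $E$, a standard descent-of-morphisms argument reduces the proposition to proving $\rho_\IC^\sigma = \rho_\IC$ for every $\sigma$.

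For the latter I would lift to the Hodge-type level: fix a neat $\sK \subseteq \sK_0$, so that $\Sh_{\sK}(L)$ is a scheme carrying the universal polarized Kuga--Satake abelian scheme $\shA$ with its CSpin and $\sK$-level structures, and so that over $\IQ$ it is the moduli space of such tuples (via its realization as a substack of a Siegel modular variety cut out by the CSpin tensors, using their absolute Hodgeness, following \cite{CSpin} and \cite{int}). Passing to the finite \'etale cover $T \to \wt S^\#_\IC$ that records in addition a $\sK$-level structure on $\bP^2_{\what\IZ}$ together with the orientation at $p$, the morphism $\rho_\IC$ lifts to $\tilde\rho : T \to \Sh_{\sK}(L)_\IC$, and $\tilde\rho^*\shA$ is a polarized abelian scheme over $T$ whose $\ell$-adic cohomology --- with all its CSpin and level data and its orientation --- is reconstructed functorially, by the Clifford-algebra construction, from $\bP^2_{\ell,T}(-1)$, $\bd_\ell$, the level structure $[\eta^p]$ and the orientation $\epsilon^p$. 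Applying $\sigma$: the local systems $\bP^2_\ell$, the tensors $\bd_\ell$, and the data $[\eta^p]$, $\epsilon^p$ all descend to $E$, while $\shA$ with its CSpin structure on $\Sh_{\sK}(L)$ descends to $\IQ$; hence $(\tilde\rho^*\shA)^\sigma$ and $\tilde\rho^*\shA$ have canonically isomorphic $\ell$-adic cohomology, compatibly with polarizations, CSpin and level structures. By the moduli interpretation of $\Sh_{\sK}(L)$ this forces $(\tilde\rho^*\shA)^\sigma \cong \tilde\rho^*\shA$ as tuples, i.e. $\tilde\rho^\sigma = \tilde\rho$, and descending along $T \to \wt S^\#_\IC$ gives $\rho_\IC^\sigma = \rho_\IC$.

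The delicate point is this last step, and in particular making the moduli interpretation precise enough that the \emph{orientation} data is faithfully recorded: without this one obtains $\rho_\IC^\sigma = \rho_\IC$ only up to a residual symmetry, and $\sigma$ could permute the connected components of $\wt S^\#$ or of $\Sh^\ad_{\sK_0}(L)$ --- this is exactly the subtlety flagged in the introduction around \cite[Rmk~5.12]{Taelman2}, and it is why one carries $\wt S$, $\wt S^\#$ and the tensors $\bd_\bullet$ rather than just $\bP^2_\bullet$. One also needs Deligne's theorem that the Kuga--Satake/CSpin tensors are absolutely Hodge, so that the analytic lift $\tilde\rho$ lands in the $\IQ$-substack cut out by those tensors and not merely in its complex points. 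Everything else --- Borel's theorem, the formal descent of finite-type morphisms, and the Clifford-algebra reconstruction of the $\ell$-adic cohomology --- goes through essentially as in the K3 case.
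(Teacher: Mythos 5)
There is a genuine gap, and it sits exactly at the step you flag as ``delicate.'' Your plan is to deduce $\tilde\rho^\sigma=\tilde\rho$ by claiming that the Kuga--Satake abelian scheme, with its CSpin and level data, ``is reconstructed functorially, by the Clifford-algebra construction, from $\bP^2_{\ell,T}(-1)$, $\bd_\ell$, $[\eta^p]$ and $\epsilon^p$,'' and then comparing $\ell$-adic realizations. This fails twice. First, the spin representation is not recoverable from the adjoint one: the kernel $\IG_m$ of $\CSpin(L)\to\SO(L)$ means $\bH_\ell$ is \emph{not} the Clifford algebra of $\bP^2_\ell(-1)$ in any functorial way (this is precisely why the period map targets $\Sh^\ad_{\sK_0}(L)$ and why a lift $\wt t$ of $t=\rho(s)$ involves a choice), so there is no canonical identification of the $\ell$-adic data of $(\tilde\rho^*\shA)^\sigma$ and $\tilde\rho^*\shA$ to begin with. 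Second, and more fundamentally, even a tensor- and level-compatible isomorphism of $\ell$-adic realizations of two abelian schemes over a complex base does not produce an isomorphism of the abelian schemes: over $\IC$ there is no Galois action to exploit, and the moduli interpretation of $\Sh_\sK(L)$ inside a Siegel variety requires an isomorphism of the polarized abelian schemes themselves (equivalently of their Betti/Hodge realizations with tensors). So the conclusion ``$(\tilde\rho^*\shA)^\sigma\cong\tilde\rho^*\shA$ as tuples, i.e.\ $\tilde\rho^\sigma=\tilde\rho$'' does not follow from what you have established.

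The missing ingredient is the one the paper's proof is actually about: one must know that the Kuga--Satake correspondence itself, i.e.\ the isomorphism $\P^2(\sX_s,\IZ(1))\iso\bpi_{B,\wt t}\bigl(\End(\H^1(\shA_{\wt t},\IZ))\bigr)$ induced by $\alpha_B$, is an \emph{absolute Hodge} cycle. Deligne's theorem, which you do invoke, only gives absolute Hodgeness of tensors such as $\bpi_{B,\wt t}$ living on the abelian variety; it says nothing about a correspondence involving the non-abelian variety $\sX_s$. For $\HKn$-type fibers this is supplied by Andr\'e's theorem that the Kuga--Satake correspondence for hyperk\"ahler varieties is motivated (\cite[Prop.~6.2.1]{Andre}); with that in hand, conjugating by $\sigma\in\Aut(\IC/E)$ carries the correspondence to a Hodge-cycle relation between $\P^2(\sX_{s^\sigma},\IZ(1))$ and $\End(\H^1(\shA_{\wt t}^\sigma,\IZ))$, which, via the modular description of $\Sh^\ad_{\sK_0}(L)(\IC)$ in terms of Hodge structures (\ref{prop: universal property of Shimura}), identifies $\rho_\IC(s^\sigma)$ with $\rho_\IC(s)^\sigma$ and yields the descent, exactly as in Madapusi Pera's argument for K3 surfaces. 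Your handling of the orientation data and the reduction of descent to $\Aut(\IC/E)$-equivariance is fine, but without Andr\'e's input (or an equivalent proof that the correspondence is absolute Hodge) the equivariance cannot be established by $\ell$-adic bookkeeping alone.
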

\begin{proof}
This follows from the proof of  \cite[Cor.~5.4]{Keerthi} (cf. \cite[Prop.~10]{Charles}), with addition input from \cite{Andre}: The main point is to show that for any $s \in \wt{S}^\#_\IC$, and $\wt{t} \in \Sh_{\sK_0}(L)_\IC$ lifting $t := \rho(s)$, the isomorphism 
$$ \P^2(\sX_s, \IZ(1)) \iso \bpi_{B, \wt{t}}(\End(\H^1(\shA_{\wt{t}}, \IZ))) $$
induced by $\alpha_B$ is given by an \textit{absolute Hodge} cycle. Note that we already know that $\bpi_{B, \wt{t}}$ is absolute Hodge, because it is a Hodge tensor on an abelian variety. Therefore, both sides of the above isomorphism can be viewed as objects in the category of $\IZ$-motives over $\IC$ in Madapusi-Pera's terminology (\cite[\S2.2]{Keerthi}). Up to some slight adjustments, Andre's result \cite[Prop.~6.2.1]{Andre} tells us that the above isomorphism is actually given by a motivated cycle, which is in particular absolute Hodge. Now the proof of \cite[Cor.~5.4]{Keerthi} proceeds without change: We use the modular interpretation of $\Sh^\ad_{\sK_0}(L)_\IC$ to check that $\rho_\IC$ is $\Aut(\IC/E)$-equivariant. Therefore, $\rho_\IC$ descends to $\rho_E$. 
\end{proof}

\begin{corollary}
\label{prop: Extend epsilon}
There is a unique extension of $\epsilon_2$ to a trivialization 
$$ \epsilon : \underline{\det(L)}_{\wt{S}^\#_E} \sto \det(\bP^2_{\what{\IZ}, \wt{S}^\#_E})  $$
which is characterized by the following property: For every $\IC$-point $s$, there exists an isomorphism $\det(L) \to \det(\P^2(\sX_s, \IZ))$ which gives $\epsilon|_{s}$ when tensored with $\what{\IZ}$. Moreover, the prime-to-$p$ part of $\epsilon$ extends uniquely to an artificial orientation $\epsilon^p$ on $\sX|_{\wt{S}^\#}$.
\end{corollary}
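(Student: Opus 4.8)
The plan is to build $\epsilon$ first over $\wt{S}^\#_\IC$, descend it to $\wt{S}^\#_E$ by the same mechanism used to descend $\rho_\IC$ to $\rho_E$, and then spread the prime-to-$p$ part across the special fibre of $\wt{S}^\#$. The one genuinely delicate point is that an isometric trivialization of a rank-one $\what{\IZ}$-lattice is only rigid up to an independent sign at each prime, so $\epsilon_2$ alone does not pin down $\epsilon$; it is the integrality-at-$\IC$-points clause that does, and the reason such a choice can be made coherently is exactly that $\wt{S}$ was introduced to kill the sign monodromy. Concretely: the monodromy of the rank-one $\IZ$-local system $\det\bP^2_B$ factors through $\{\pm1\}\subset\IZ_2^\times$, so the triviality of $\det\bP^2_2=\det\bP^2_B\tensor\IZ_2$ over $\wt{S}^\#_\IC$ (which is built into $\wt{S}$) forces $\det\bP^2_B$ to be constant on each connected component $C$ of $\wt{S}^\#_\IC$. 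Since $\H^2(\sX_s,\IZ)\iso\Lambda_n$ carries $c_1(\bxi_s)$ to $\lambda$, so that $\P^2(\sX_s,\IZ)\iso L$ as lattices (cf.\ \ref{defPLLem}), the constant sheaf $\det\bP^2_B|_C$ has precisely two horizontal generators $\pm g_C$, each of square $\disc(L)$, and exactly one of them, say $g_C$, satisfies $g_C\tensor\IZ_2=\epsilon_2|_C$. Let $\epsilon$ be the isometric trivialization over $\wt{S}^\#_\IC$ determined on each $C$ by $\epsilon(\delta)=g_C\tensor\what{\IZ}$. By construction its $2$-adic part is $\epsilon_2$, and for each $\IC$-point $s\in C$ the isomorphism $\det(L)\to\det(\P^2(\sX_s,\IZ))$ sending $\delta$ to $g_C|_s$ tensors up to $\epsilon|_s$, so $\epsilon$ has the stated property.

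For descent and uniqueness, observe that $\det\bP^2_{\what{\IZ}}$, the section $\epsilon_2$, and the integral lattices $\P^2(\sX_s,\IZ)$ (through their \'etale realizations) are all defined over $E$; hence for any $\sigma\in\mathrm{Aut}(\IC/E)$ the trivialization $\sigma^*\epsilon$ again extends $\epsilon_2$ and has the characterizing property. Now if $\epsilon'$ is \emph{any} isometric trivialization over $\wt{S}^\#_\IC$ extending $\epsilon_2$ and satisfying the property, then at a $\IC$-point $s\in C$ we have $\epsilon'|_s=h\tensor\what{\IZ}$ for an isometry $h:\det(L)\sto\det(\P^2(\sX_s,\IZ))$, necessarily with $h(\delta)=\pm g_C|_s$, and the constraint on the $2$-adic part forces the sign to be $+$; since $\det\bP^2_{\what{\IZ}}$ is separated and $C$ connected, $\epsilon'=\epsilon$. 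Applied to $\epsilon'=\sigma^*\epsilon$ this yields $\mathrm{Aut}(\IC/E)$-invariance, so $\epsilon$ descends to $\wt{S}^\#_E$ exactly as the period morphism does; alternatively one may simply transport the orientation tensors $\bd_\ell$ of \ref{rmk: orientation tensor} along $\rho_E$ and the accompanying comparison isomorphisms to define $\epsilon$ over $\wt{S}^\#_E$ outright. The same computation shows that the extension of $\epsilon_2$ with the stated property is unique over $\wt{S}^\#_E$, by faithfully flat descent along $\wt{S}^\#_\IC\to\wt{S}^\#_E$.

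Finally set $\epsilon^p$ to be the prime-to-$p$ part of $\epsilon$, an isometric trivialization of $\det\bP^2_{\what{\IZ}^p}$ over $\wt{S}^\#_E$. As every $\ell\neq p$ is invertible on $\wt{S}^\#$, the sheaf $\det\bP^2_{\what{\IZ}^p}$ is lisse of rank one on all of $\wt{S}^\#$, with monodromy again factoring through $\{\pm1\}$. The hypotheses of \ref{period+} together with \ref{flatloc}(c) make $\wt{S}^\#$ regular and flat over $R$ with $\wt{S}^\#_E$ dense in it, so $\pi_1$ of each component of $\wt{S}^\#_E$ surjects onto $\pi_1$ of the corresponding component of $\wt{S}^\#$; the existence of $\epsilon^p$ over $\wt{S}^\#_E$ therefore shows that $\det\bP^2_{\what{\IZ}^p}$ is already constant on $\wt{S}^\#$. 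Hence its global sections over $\wt{S}^\#$ coincide with those over $\wt{S}^\#_E$, $\epsilon^p$ extends uniquely, and the extension is still an isometric trivialization because the relevant element of the stalk is literally unchanged. This is the desired artificial orientation on $\sX|_{\wt{S}^\#}$. I expect the passage from the $2$-adic datum $\epsilon_2$ to the integral one over $\IC$ — and its stability under $\mathrm{Aut}(\IC/E)$ — to be the only substantive step; the remaining passage to characteristic $p$ is a soft $\pi_1$-surjectivity statement.
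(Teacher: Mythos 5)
Your construction over $\wt{S}^\#_\IC$ is fine (the triviality of $\det(\bP^2_2)$ kills the $\{\pm1\}$-monodromy of $\det(\bP^2_B)$, and the $2$-adic datum picks out one of the two horizontal generators on each component), and your last step — extending the prime-to-$p$ part over $\wt{S}^\#$ via surjectivity of $\pi_1$ of the dense generic fibre of a normal connected component — is essentially the paper's "since $\wt{S}^\#$ is normal" argument. The gap is in the descent from $\IC$ to $E$, which is in fact the whole content of the statement. You assert that for $\sigma\in\mathrm{Aut}(\IC/E)$ the pullback $\sigma^*\epsilon$ again satisfies the characterizing property "since everything is defined over $E$". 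Unwinding what $\sigma^*\epsilon$ is at a point $s$: its value is $\epsilon|_{\sigma(s)}$ transported through the conjugation isomorphism $\det\P^2_\et(\sX_{\sigma(s)},\what{\IZ})\sto\det\P^2_\et(\sX_s,\what{\IZ})$. For $\sigma^*\epsilon$ to have the characterizing property you need this conjugation isomorphism to carry the integral Betti line $\det\P^2(\sX_{\sigma(s)},\IZ)\tensor 1$ onto $\det\P^2(\sX_s,\IZ)\tensor 1$, i.e.\ to differ from an integral isomorphism by a single sign that is the \emph{same at every prime}. Being an isometry only gives a unit $u\in\prod_\ell\{\pm1\}$, whose components could a priori differ from prime to prime; ruling this out is not formal and does not follow from $E$-rationality of the sheaves. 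Without it, your uniqueness-over-$\IC$ argument cannot force $\sigma^*\epsilon=\epsilon$, and the descent collapses. This sign coherence across primes under conjugation is exactly the subtlety behind the "minor mistake in early literature" on artificial orientations that the paper flags (Taelman's remark) at the start of \S3.3.

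The paper supplies the missing input differently: it first uses Deligne's big monodromy argument to produce, over $\wt{S}^\#_E$ (not just over $\IC$), a unique isometry $\alpha_{\what{\IZ}}:\rho_E^*\bL_{\what{\IZ}}\sto\bP^2_{\what{\IZ}}|_{\wt{S}^\#_E}(1)$ whose restriction to $\IC$-fibres is compatible with the Betti-level isometry $\alpha_B$, and then defines $\epsilon$ by transporting the orientation sections $\bd_{\what{\IZ}}$ (which live over the Shimura variety, hence over $E$) through $\alpha_{\what{\IZ}}$. Galois-compatibility is then automatic because $\alpha_{\what{\IZ}}$ is defined over $E$, and the integrality at $\IC$-points comes from the compatibility with $\alpha_B$. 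You mention this route as a throwaway "alternative", but it is the actual proof, and the existence of the $E$-rational comparison $\alpha_{\what{\IZ}}$ compatible with $\alpha_B$ is precisely the nontrivial ingredient your argument is missing. To repair your write-up, either invoke that big-monodromy statement (or an equivalent independence-of-$\ell$ result for the determinant of primitive cohomology under conjugation) to justify that $\sigma^*\epsilon$ has the characterizing property, or simply carry out the transport of $\bd_{\what{\IZ}}$ along $\alpha_{\what{\IZ}}$ as the paper does.
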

\begin{proof}
It is not hard to see that such $\epsilon$ is unique provided that it exists. Deligne's big monodromy argument in \cite{Deligne1} allows us to show that there is a unique isometry $\alpha_{\what{\IZ}} : \rho_E^* \bL_{\what{\IZ}} \stackrel{\sim}{\to} \bP^2_{\what{\IZ}}|_{\wt{S}^\#_E}(1)$ whose restriction to the $\IC$-fibers is compatible with $\alpha_B$ via Artin's comparison theorem (see also \cite[Prop.~5.6(1)]{Keerthi}). Recall that, for a chosen geneator $\delta$ of $\det(L)$, there are global sections $\bd_{\what{\IZ}}$ of $\det(\bL_{\what{\IZ}})$ over $\Sh^\ad_{\sK_0}(L)$. Via the isomorphism $\alpha_{\ell}$, we transport $\delta$ to a global section of $\bP^2_{\what{\IZ}, \wt{S}^\#_E}(1)$. Since $\wt{S}^\#$ is normal, we can further extend the prime-to-$p$ part of $\epsilon$ (necessarily uniquely) to an artificial orientation on $\wt{S}^\#$. 
\end{proof}

\subsubsection{} \label{sec: period++} Let $\sK^p$ be a compact open subgroup of $\sK_0^p = \CSpin(L^p)$. Note that the image $\sK^{p, \ad} \subset G^\ad(\IA_f)$ of $\sK^p$ stabilizes $L^p$ and $\disc(L^p)$. Let $\wt{S}_{\sK^p}$ denote the finite \'etale cover of $\wt{S}^\#$ which parametrizes $\sK^{p}$-level structures on $\sX|_{\wt{S}^\#}$ compatible with $\epsilon^p$ and $\Delta$. Set $\sK := \sK_{0, p} \sK^p$. The morphism $\rho_E : \wt{S}_{E}^\# \to \Sh^\ad_{\sK_0}(L)_E$ can be promoted to $\rho_{\sK^p, E} : \wt{S}_{\sK^p, E} \to \Sh^\ad_\sK(L)_E$. The extension property of $\shS_p(L)$ allows us to extend $\rho_{\sK, E}$ to $\rho_\sK : \wt{S}_{\sK^p} \to \shS_\sK(L)_R$. In turn, by taking quotients, we obtain a morphism $\rho : \wt{S}^\# \to \shS^\ad_{\sK_0}(L)_R$ of Deligne-Mumford stacks. 

\begin{theorem}
\label{compMotive}
\begin{enumerate}[label=\upshape{(\alph*)}]
    \item $\rho$ is \'etale. 
    \item For every prime number $\ell$, there exists an isometry $\alpha_\ell : \rho^* \bL_\ell(-1) \stackrel{\sim}{\to} \bP^2_\ell$ over $\wt{S}^\#_E$ such that over $\wt{S}_\IC^\#$, $\alpha_\ell$ and $\alpha_B$ are compatible via the Artin comparison isomorphism; if $\ell \neq p$, the isomorphism extends uniquely over $\wt{S}^\#$.
    \item $\alpha_{\dR, \IC} : \rho^* \bL_\dR|_{\wt{S}^\#_\IC}(-1) \stackrel{\sim}{\to} \bP^2_\dR |_{\wt{S}^\#_\IC}$ over $\wt{S}^\#_\IC$ descends to $\alpha_{\dR,E} : \rho^* \bL_{\dR}|_{\wt{S}^\#_E}(-1) \stackrel{\sim}{\to} \bP_\dR^2|_{\wt{S}^\#_E}$ over $\wt{S}^\#_{E}$.
    \item $\alpha_{\dR, E}$ can be extended to $\alpha_{\dR} : \rho^* \bL_{\dR}(-1) \stackrel{\sim}{\to} \bP^2_\dR$, the isomorphism of crystals $\alpha_\cris : \rho^* \bL_{\cris} (-1) \stackrel{\sim}{\to} \bP_\cris^2$ induced by which preserves the Frobenius action. 
    \item For every geometric point $t$ on $\wt{S}^\#(k)$ and a point $s$ on $\shS_{\sK_0}(L)$ lifting $\rho(t)$, there exists a natural isomorphism
\begin{equation}
    \LEnd(\shA_{s}) \stackrel{\sim}{\to} \< \bxi_{t} \>^\perp \subset \NS(\shX_t)
\end{equation}
which is compatible with the isomorphisms in $(b)$ and $(d)$.
\end{enumerate}
\end{theorem}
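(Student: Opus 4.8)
The plan is to prove all five assertions along the lines of the K3 case in \cite{CSpin} and \cite{Keerthi}, using the deformation-theoretic results of \S2.2 — chiefly \ref{loc}, \ref{K3crystal}, and \ref{locTor} — in place of the K3-specific inputs used there. For (a) I would observe that $\rho$ is a finite-type separated morphism of Deligne--Mumford stacks, so it suffices to show it is formally \'etale, i.e.\ induces an isomorphism on complete local rings at every geometric point. At a point of characteristic $0$ this is the local Torelli theorem: the deformation functor of $(\shX,\bxi)$ and that of its image on $\Sh^\ad_{\sK_0}(L)$ are both canonically identified, through the Hodge/Kuga--Satake dictionary, with the functor of lifts of the isotropic line $\Fil^2\subset\bP^2_\dR$ orthogonal to $c_1(\bxi)$. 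At a point of characteristic $p$ the same identification goes through, with \ref{loc}(a) replacing the analytic local Torelli theorem and with \ref{locTor} matching the deformation theory of $\shX_s$ with that of the associated filtered K3 crystal, which by the construction of \cite{CSpin} is exactly the deformation theory of $\rho(s)$; here one uses the standing hypothesis in \ref{period+} that $\sX$ is everywhere a universal deformation. A dimension count via (\ref{eqn: K3 n lattices}) confirms that both sides have equal relative dimension over $R$.

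For (b) and (c): over $\wt{S}^\#_\IC$ the isometry $\alpha_B$ is already available from \ref{period+} via the universal property \ref{prop: universal property of Shimura}, and I would set $\alpha_\ell=\alpha_B\otimes\IZ_\ell$ and take $\alpha_{\dR,\IC}$ to be its de Rham avatar. To descend these to $E$ I would repeat the argument of the proposition showing that $\rho_\IC$ descends to $\rho_E$: on each $\IC$-fibre the isomorphism induced by $\alpha_B$ between the primitive cohomology of $\shX_s$ and the corresponding CSpin-cut-out summand of $\End\H^1(\shA_{\wt{t}},\IZ)$ is a motivated cycle in the sense of Andr\'e, hence absolute Hodge, and since $\bL_*$, $\bP^2_*$, and $\rho_E$ are all defined over $E$ this forces $\alpha_\ell$ and $\alpha_{\dR,\IC}$ to be $\Aut(\IC/E)$-equivariant, so they descend to $\wt{S}^\#_E$. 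For $\ell\neq p$ I would then extend the descended $\alpha_\ell$ over all of $\wt{S}^\#$: both $\rho^*\bL_\ell(-1)$ and $\bP^2_\ell$ are lisse on the normal scheme $\wt{S}^\#$, whose generic fibre $\wt{S}^\#_E$ is dense, so $\pi_1(\wt{S}^\#_E)\twoheadrightarrow\pi_1(\wt{S}^\#)$ and an isomorphism over $\wt{S}^\#_E$ is automatically one over $\wt{S}^\#$; uniqueness is clear.

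For (d) one cannot extend $\alpha_{\dR,E}$ across the characteristic-$p$ fibre by Hartogs, since its complement has codimension one, so I would argue $p$-adically. Over the $p$-adic completion at a point of characteristic $p$, both $\rho^*\bL_\dR$ (pulled back from $\shS_\sK(L)_R$) and $\bP^2_\dR$ (by \ref{Deligne} and \ref{K3crystal}) carry filtered $F$-crystal structures, and both $\shA$ and $\shX$ admit crystalline comparison isomorphisms — the classical one for $\shA$, and for $\shX$ the one furnished by integral $p$-adic Hodge theory exactly as in the proof of \ref{extBBF0} — compatible with cup products and with the already-extended $\alpha_\ell$, $\ell\neq p$. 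Using that $\wt{S}^\#$ is quasi-healthy regular (cf.\ \ref{flatloc}(c)) and comparing the two sides, $\alpha_{\dR,E}$ will extend to an isomorphism $\alpha_\dR\colon\rho^*\bL_\dR(-1)\sto\bP^2_\dR$ of filtered vector bundles with integrable connection over $\wt{S}^\#$ whose induced morphism of crystals $\alpha_\cris$ over $\wt{S}^\#\otimes\IF_p$ intertwines the Frobenius structures; the $F$-crystal structure on $\bP^2_\cris$ here is the K3 crystal structure of \ref{rmk: fcomp} and \ref{K3crystal}. This reproduces the K3 argument of \cite[\S4]{CSpin} and \cite[\S5]{Keerthi} once \ref{K3crystal} and \ref{locTor} are in place.

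For (e), given $t\in\wt{S}^\#(k)$ and $s$ lifting $\rho(t)$: when $\mathrm{char}\,k=0$ the special endomorphisms of $\shA_s$ are exactly the Hodge classes in $\bL_{B,s}$ (Hodge theory on $\shA_s$, since $\bL_B\subset\End\bH_B$ is cut out by the CSpin tensors), $\alpha_B$ carries these isometrically onto the Hodge classes in $\bP^2_{B,t}$, and as $\H^1(\shX_t,\sO_{\shX_t})=0$ the Lefschetz $(1,1)$-theorem together with \ref{NSfree} identifies the latter with $c_1\bigl(\NS(\shX_t)\bigr)$; realizations of special endomorphisms being orthogonal to $c_1(\bxi_t)$, the image lies in $\langle\bxi_t\rangle^\perp$, and \ref{lem: simple observation} shows the bijection is an isometry for $f\mapsto f\circ f$ versus the Beauville--Bogomolov form. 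When $\mathrm{char}\,k=p$ I would reduce to the preceding case: a special endomorphism of $\shA_s$ deforms, along the sublocus of $\shS_\sK(L)$ on which it extends and using that $\sX$ is a universal deformation, to a characteristic $0$ point, producing a line bundle whose specialization (by properness of the relative Picard functor) carries the prescribed class; conversely a line bundle on $\shX_t$ gives Tate classes in $\bP^2_{\ell,t}$ and $\bP^2_{\cris,t}$ which $\alpha_\ell^{-1}$ and $\alpha_\cris^{-1}$ transport into $\bL_{\ell,s}$ and $\bL_{\cris,s}$, and one must check that this class is the realization of a special endomorphism — at supersingular $s$ this is \ref{ssdefLEnd}(b), and in general it follows from the lifting argument of \cite{Keerthi} and \cite{Maulik} together with the characteristic $0$ case. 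I expect the main obstacles to be (d) — propagating the de Rham/crystalline comparison across the codimension-one characteristic-$p$ fibre and verifying Frobenius-compatibility, which is where the K3 crystal structure of \ref{K3crystal} and the integral $p$-adic Hodge theory behind \ref{extBBF0} do the real work — and the surjectivity half of (e) in characteristic $p$, which relies on Tate-type control of $\NS(\shX_t)$; compatibility of the identification in (e) with (b) and (d) is then immediate from the construction.
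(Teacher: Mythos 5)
Your proposal is correct and follows essentially the same route as the paper: the paper itself dismisses (a)--(d) as straightforward adaptations of \cite[\S5]{Keerthi} (which is what you spell out), and its proof of (e) is exactly your lifting argument — special endomorphisms lifted to characteristic zero via \cite[Prop.~7.18]{CSpin} and the \'etaleness of $\rho$, Lefschetz $(1,1)$ plus specialization for the forward map, with the converse handled by lifting pairs of line bundles via \ref{flatloc}(a) in the non-supersingular case and by \ref{ssdefLEnd}(b) in the supersingular case. The only difference is presentational: you give more detail on (a)--(d) than the paper does, and your converse in (e) for the finite-height case is stated a bit loosely where the paper pins it down with \ref{flatloc}(a).
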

\begin{proof}
These follow from straightforward adaptions of the proofs of results in \cite[\S5]{Keerthi}. We just explain how to adapt the proof of \cite[Thm~5.17(4)]{Keerthi} to get (e). If $\mathrm{char\,} k(t) = 0$, this follows from Hodge theory, so we just treat the case when $\mathrm{char\,} k(t) = p$. As in \textit{loc. cit.}, one can construct a map $\LEnd(\shA_s) \to [\< \bxi_t \>^\perp \subset \Pic(\shX_t)]$ such that the following diagrams commutes:
\begin{center}
    \begin{tikzcd}
    \LEnd(\shA_s) \arrow{r}{} \arrow{d}{} & \< \bxi_t \>^\perp  \arrow{d}{} \\
    \bL_\cris^{F = 1} \arrow{r}{\alpha_{\cris, t}} & \P^2_\cris(\sX_t/W)^{F = p}
    \end{tikzcd}
    \begin{tikzcd}
    \LEnd(\shA_s) \arrow{r}{} \arrow{d}{} & \< \bxi_t \>^\perp  \arrow{d}{} \\
    \bL_\ell \arrow{r}{\alpha_{\ell, t}} & \P^2_\et(\sX_t, \IZ_\ell(1))
    \end{tikzcd}
\end{center}
We now sketch the construction of the map  $\LEnd(\shA_s) \to \< \bxi_t \>^\perp$. Let $f \in \LEnd(\shA_s)$. By \cite[Prop.~7.18]{CSpin}, for some characteristic zero field $F$ there exists an $F$-valued point $s_F$ which specializes to $s$ such that $f$ lifts to an element $f_{F} \in \LEnd(\shA_{s_{F}})$. Since $\rho$ is \'etale, $s_F$ induces a lift $t_{F}$ of $t$, which corresponds to a deformation $\sX_{t_{F}}$ of $\sX_t$. Embed $F$ into $\IC$. Now $f_{F}$ induces a Hodge class on $\sX_{t_{F}}$. By Lefschetz (1, 1) theorem, up to replacing $F$ by a finite extension, there exists a line bundle $\zeta_F$ on $\sX_{t_F}$ realizing this class. Specialize $\zeta_{F}$ to a line bundle $\zeta \in \Pic(\sX_t)$. The map $f \mapsto \zeta$ is well-defined and has the desired properties because $\zeta$ is uniquely determined by its Chern classes (cf. \ref{NSfree}). 

If $\sX_s$ is not supersingular, then we can run the argument backwards and construct a map $\< \bxi_t \>^\perp \to \LEnd(\shA_s)$ because we can lift every pair of line bundles on $\shX_s$ to characteristic zero by \ref{flatloc}(a). If $\sX_s$ is supersingular, then we conclude by \ref{ssdefLEnd}(b) that the map  $\LEnd(\sA_s) \to \< \xi_t \>^\perp$ has to be surjective.  
\end{proof}

\section{Proofs of Theorems}
 
\subsection{Invariance of Deformation Types}
Let $k$ be an algebraically closed field of characteristic $p > n + 1$ and let $X$ be a $\HKn$-type variety over $k$. Let $\xi$ be a primitive polarization on $X$ such that for some finite flat extension $V$ of $W$, $(X, \xi)$ lifts to $(X_V, \xi_V)$ over $V$ whose generic fiber is of $\HKn$-type. By applying Artin's approximation theorem to $\Def(X; \xi)$, we obtain a family $(\sX \to S, \bxi)$ which universally deforms $(X, \xi)$ and satisfies the hypotheses of \ref{period+}. By \ref{flatloc} and \cite[Tag~055J]{stacks-project}, up to shrinking $S$, we may assume that $S$ is connected, flat over $W$, quasi-healthy and has connected generic fiber.

Using the notations in \ref{sec: period++}, we have a period morphism $\rho : \wt{S}^\# \to \shS^\ad_{\sK_0}(L)_W$. Let $b$ be the $k$-point on $S$ whose fiber is $(X, \xi)$ and set $S_0 = S \tensor_W k$. 
\begin{proposition}
\label{tateImprove}
If $X$ is supersingular, then $c_1 : \NS(X) \tensor \IZ_\ell \to \H^2_\et(X, \IZ_\ell)$ and $c_1 : \NS(X) \tensor \IZ_p \to \H^2_\cris(X/W)^{F = p}$ are isomorphisms. 
\end{proposition}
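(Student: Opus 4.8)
The plan is to reduce the statement to the isogeny theory of the supersingular locus on the spinor Shimura variety via the period morphism, and then feed in Proposition~\ref{ssdefLEnd} together with Theorem~\ref{compMotive}(e). First I would recall that since $\rho : \wt{S}^\# \to \shS^\ad_{\sK_0}(L)_W$ is \'etale (Theorem~\ref{compMotive}(a)) and $b$ is supersingular, the image $\rho(\tilde b)$ of a lift $\tilde b$ of $b$ to $\wt S^\#$ lies in the supersingular locus $\shS^\ad_{\sK_0}(L)^\ss$; choose a point $s$ on $\shS_{\sK_0}(L)$ lifting $\rho(\tilde b)$, so that $\shA_s$ is a supersingular abelian variety with CSpin structure. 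Theorem~\ref{compMotive}(e) gives a natural isomorphism $\LEnd(\shA_s) \xrightarrow{\sim} \langle \bxi_b \rangle^\perp \subseteq \NS(X)$ compatible with the \'etale and crystalline realizations. Since $\bxi_b$ is primitive and $c_1(\bxi_b)$ pairs nontrivially, $\NS(X) = \IZ\cdot c_1(\bxi_b) \oplus \langle \bxi_b\rangle^\perp$ rationally (indeed this uses Proposition~\ref{NSfree}(c) that $\NS(X)$ is torsion-free, and the fact that $c_1(\bxi_b)^2 \neq 0$), so it suffices to check the two $c_1$ maps are isomorphisms after restricting to $\langle \bxi_b\rangle^\perp$ and after restricting to the line spanned by $c_1(\bxi_b)$. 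The class $c_1(\bxi_b)$ spans a rank-one direct summand of $\H^2_\et(X,\IZ_\ell(1))$ and of $\H^2_\cris(X/W)^{F=p}$ already by Proposition~\ref{NSfree}, and on each of these it maps onto a saturated rank-one submodule, so that part is immediate; the content is in the orthogonal complement.

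For $\langle \bxi_b\rangle^\perp$: by Proposition~\ref{ssdefLEnd}(b), the natural maps $\LEnd(\shA_s)\tensor\IZ_\ell \xrightarrow{\sim} \bL_{\ell,s}$ and $\LEnd(\shA_s)\tensor\IZ_p \xrightarrow{\sim} \bL_{\cris,s}^{F=1}$ are isomorphisms. Transporting through $\alpha_\ell$ and $\alpha_\cris$ (Theorem~\ref{compMotive}(b),(d)) identifies $\bL_{\ell,s}$ with $\P^2_\et(X,\IZ_\ell(1))$ and $\bL_{\cris,s}^{F=1}$ with $\P^2_\cris(X/W)^{F=p}$, i.e. with the primitive parts of $\H^2_\et(X,\IZ_\ell(1))$ and $\H^2_\cris(X/W)^{F=p}$. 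Composing, $c_1 : \langle \bxi_b\rangle^\perp \tensor \IZ_\ell \to \P^2_\et(X,\IZ_\ell(1))$ and $c_1 : \langle \bxi_b\rangle^\perp \tensor \IZ_p \to \P^2_\cris(X/W)^{F=p}$ are exactly these isomorphisms (the compatibility clause in Theorem~\ref{compMotive}(e) is what guarantees the triangle of maps commutes, so that the cohomological realization of the $\LEnd$-to-$\NS$ identification is literally $c_1$). Adding back the $c_1(\bxi_b)$ summand gives that $c_1 : \NS(X)\tensor\IZ_\ell \to \H^2_\et(X,\IZ_\ell(1))$ and $c_1 : \NS(X)\tensor\IZ_p \to \H^2_\cris(X/W)^{F=p}$ are isomorphisms. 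Twisting back by $\IZ_\ell(1)$ versus $\IZ_\ell$ is cosmetic.

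The main obstacle is bookkeeping rather than any deep new input: one has to be careful that the decomposition $\NS(X)_\IZ = \IZ c_1(\bxi_b) \oplus \langle\bxi_b\rangle^\perp$ is integrally a direct summand on both sides (here primitivity of $\bxi_b$ and self-duality of $\Lambda_p$, as used in Corollary~\ref{defPLLem}, together with Proposition~\ref{NSfree}(a),(b), give that $c_1(\bxi_b)$ generates a saturated submodule of $\H^2_\et$ and $\H^2_\cris^{F=p}$, and its orthogonal complement there is the primitive part), and that $\LEnd(\shA_s)$ maps onto the \emph{full} $\langle\bxi_b\rangle^\perp \subset \NS(X)$ and not merely a finite-index subgroup --- this surjectivity is precisely the last sentence of the proof of Theorem~\ref{compMotive}(e), which in the supersingular case invokes Proposition~\ref{ssdefLEnd}(b). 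Once those saturation statements are in place, the proposition follows by comparing ranks and using that an injection of free modules of the same rank with torsion-free cokernel (Proposition~\ref{NSfree}) which is an isomorphism after $\tensor\IQ$ is an isomorphism. I would also note that supersingularity is used twice: to land in $\shS^\ad_{\sK_0}(L)^\ss$ so that $\shA_s$ is supersingular, and inside Proposition~\ref{ssdefLEnd}(b) itself.
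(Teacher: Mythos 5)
Your proposal is correct and follows essentially the same route as the paper: the rational isomorphism comes from Theorem~\ref{compMotive}(e) combined with Proposition~\ref{ssdefLEnd}(b) (via the period morphism and the identification of $\LEnd(\shA_s)$ with $\langle \bxi_b\rangle^\perp$), and the integral statement then follows from Proposition~\ref{NSfree}, since an injection with torsion-free cokernel that is rationally surjective is an isomorphism. Your extra bookkeeping with the decomposition $\IZ c_1(\bxi_b)\oplus\langle\bxi_b\rangle^\perp$ is just an explicit spelling-out of what the paper leaves implicit, not a different argument.
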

\begin{proof}
For the first statement, note that \ref{compMotive}(e) and \ref{ssdefLEnd} imply that both morphisms are isomorphisms rationally. To see that the integral statements hold, apply \ref{NSfree}. 
\end{proof}

\begin{lemma}
\label{ssdim}
If $X$ is supersingular, then the supersingular locus $S_0^\ss$ on $S$ is of dimension at most $10$.
\end{lemma}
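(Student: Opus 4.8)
The plan is to reduce the bound to a dimension estimate for the supersingular (equivalently, basic) locus of the spinor Shimura variety, via the period morphism $\rho : \wt{S}^\# \to \shS^\ad_{\sK_0}(L)_W$ set up above. Since $\rho$ is \'etale by \ref{compMotive}(a) and $\wt{S}^\# \to S$ is finite \'etale and surjective, both properties persist after reducing mod $p$: writing $\wt{S}^\#_0 := \wt{S}^\# \tensor_W k$ and $S_0^\ss$ for the (closed) supersingular locus of $\sX|_{S_0}$, the morphism $\wt{S}^\#_0 \to S_0$ is finite \'etale surjective and $\rho_0 : \wt{S}^\#_0 \to \shS^\ad_{\sK_0}(L)_{\bar{\IF}_p}$ is \'etale. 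The locus $S_0^\ss$ pulls back to the locus $T \subseteq \wt{S}^\#_0$ over which the pulled-back family is supersingular, and $\dim T = \dim S_0^\ss$; so it suffices to bound $\dim T$.

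The first genuine step is to check that $\rho_0$ carries $T$ into the basic locus $\shS^\ad_{\sK_0}(L)^\ss_{\bar{\IF}_p}$. If $\sX_t$ is supersingular, the Newton polygon of $\H^2_\cris(\sX_t/W)$ is a straight line, and by the symmetry (\ref{ffcomp}) of the crystalline Beauville--Bogomolov form it is the line of slope $1$; hence $\H^2_\cris(\sX_t/W)\tensor K_0$ is isoclinic, and so is its orthogonal summand $\P^2_\cris(\sX_t/W)\tensor K_0$. Transporting along the Frobenius-equivariant isomorphism $\alpha_\cris : \rho^*\bL_\cris(-1) \iso \bP^2_\cris$ of \ref{compMotive}(d), I conclude that $\bL_\cris\tensor K_0$ is isoclinic at $\rho_0(t)$. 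The $\CSpin$-structure then forces the Newton cocharacter of the Kuga--Satake abelian variety $\shA$ at $\rho_0(t)$ to factor through $\CSpin(\bL_{\cris,\rho_0(t)})$, and its image in $\SO(\bL_{\cris,\rho_0(t)})$ — which is the Newton cocharacter of the isocrystal $\bL_\cris$ — is trivial because $\bL_\cris$ is isoclinic and $\SO$ preserves a form; so by the exact sequence (\ref{fundexactseq}) the cocharacter factors through the central $\IG_m$ and acts by scalars on the spin representation $\H^1_\cris(\shA/W)$. Thus $\shA$ is supersingular at $\rho_0(t)$, i.e. $\rho_0(t)$ lies in the basic locus. (The converse implication holds as well, so that $T = \rho_0^{-1}(\shS^\ad_{\sK_0}(L)^\ss_{\bar{\IF}_p})$, but only the stated inclusion is needed.)

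Since $\rho_0$ is \'etale, its restriction $\rho_0^{-1}(\shS^\ad_{\sK_0}(L)^\ss_{\bar{\IF}_p}) \to \shS^\ad_{\sK_0}(L)^\ss_{\bar{\IF}_p}$ is again \'etale, hence dimension-preserving, and therefore $\dim T \le \dim \shS^\ad_{\sK_0}(L)^\ss_{\bar{\IF}_p}$. I would then finish by invoking Howard--Pappas' analysis of the basic locus of spinor Shimura varieties \cite{HP}: $\shS^\ad_{\sK_0}(L)$ has dimension $\mathrm{rank}\,L - 2$, which is $19$ if $n = 1$ and $20$ if $n > 1$, and its basic locus has dimension roughly half of that, in any case $\le 10$. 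For $n = 1$ this recovers Artin's classical bound of $9$ on the supersingular locus of polarized K3 surfaces.

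I expect the main obstacle to be the bookkeeping in the middle step: keeping the several integral models straight, matching $\bL_\cris$ with $\bP^2_\cris$ through $\alpha_\cris$, and verifying the chain ``supersingular fibre $\Rightarrow$ isoclinic $\bL_\cris$ $\Rightarrow$ supersingular Kuga--Satake abelian variety'' at the level of \emph{integral} crystalline cohomology (the rational statement being the standard description of the basic locus). A secondary wrinkle is that when $L_p$ is not self-dual the period morphism lands in a non-hyperspecial integral model, so one should confirm the dimension bound for the basic locus is available there as well. Once those points are settled, the dimension count itself is a black box.
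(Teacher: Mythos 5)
Your reduction to the basic locus of the spinor Shimura variety is exactly how the paper handles the case when the degree of the polarization is prime to $p$: there $L_p$ is self-dual, $\rho$ is \'etale, and the bound follows from the dimension formula of \cite{HP}. The problem is the case you set aside as a ``secondary wrinkle,'' which is in fact the main difficulty and is not covered by your argument. When $p \mid q_X(\xi)$ the lattice $L = c_1(\xi)^\perp$ is not self-dual at $p$, the level at $p$ is no longer hyperspecial, and the integral model is Madapusi-Pera's (possibly non-smooth) model; Howard--Pappas' Theorem C and their Rapoport--Zink/Bruhat--Tits analysis of the basic locus are proved only for the self-dual case, so there is no dimension bound for the basic locus available to quote, and ``confirming'' it would amount to a substantial new computation rather than bookkeeping. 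Since the lemma is applied in \ref{liftpair} precisely without any assumption that $p \nmid \xi^{2n}$, your proof as written establishes only the special case.

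The paper avoids this by switching, in the general case, to a geometric argument in the style of Ogus that makes no use of the Shimura variety at all: after reducing to the smooth locus of $S_0^\ss$, the restriction of $\bH^2_\cris$ to the formal completion $\what{S}^\ss$ is a supersingular K3 crystal (\ref{K3crystal}), Ogus' crystalline proof of Artin's theorem shows $\Pic$ is constant over $\what{S}^\ss$, and obstruction theory (\cite[Cor.~1.14]{Ogus}) together with the perfect pairing of \ref{cor: perfect pairing} bounds the tangent space $T^\ss$ by the codimension of $c_{1,\dR}(\Pic(X_\eta))$ in $\Fil^1\H^2_\dR(X/k)$. The discriminant bound $\sigma \le 11$ for rank-$23$ supersingular K3 crystals (\cite[Prop.~3.13]{Ogus}), combined with \ref{tateImprove}, caps that codimension by $11 - 1 = 10$. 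So the two routes agree on the prime-to-$p$ case, but for the lemma in the stated generality you need either this tangent-space argument or a genuinely new dimension formula for basic loci at non-hyperspecial level; the latter is not in the literature you cite.
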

\begin{proof}
If $\xi$ is of degree prime to $p$, then $L$ is self-dual at $p$, and the result follows directly from the \'etaleness of $\rho$ and the dimension formula \cite[Thm~C]{HP}.

To treat the general case, we give a geometric argument which is inspired by the proof of \cite[Thm~5.6]{Ogus}. It suffices to bound the dimension of the smooth locus of $S_0^\ss$, so we may move $b$ (and hence deform $(X, \xi)$) a little and assume that $S^\ss_0$ is smooth. Let $\what{S}^\ss$ be the completion of $S^\ss$ at $b$. By \ref{K3crystal}, the restriciton of $\bH^2_\cris$ to $\what{S}^\ss$ has the structure of a K3 crystal. In fact, it is a supersingular K3 crystal over $\what{S}^\ss$ in the sense of \cite[Prop.~5.1 (i)bis or (ii)]{Ogus}. Let $\eta$ be the generic point of $\what{S}$ and let $\bar{\eta}$ be a geometric point over $\eta$. Ogus' crystalline proof \cite[Prop.~5.5]{Ogus} of Artin's main theorem in \cite{Artin} applies without change in our situation and gives that $\Pic(\sX_{\what{S}^\ss}) = \Pic(\sX_\eta) = \Pic(\sX_{\bar{\eta}})$. Via the specialization map $\Pic(\sX_{\what{S}}) \to \Pic(\sX)$, we may view these as subspaces of $\Pic(X)$. 

Now since $(\sX \to S, \bxi)$ universally deforms $(X, \xi)$, there is an injective morphism from the tangent space $T_b S$ to the tangent space of $\Def(X)$, which can be identified with $\H^1(X, T_X)$. Let $T^\ss$ be the image of $T_b S^\ss$ in $\H^1(X, T_X)$. Let $d \log : \Pic(X) \to \H^1(X, \Ohm_X^1)$ be the natural morphism. By obstruction theory \cite[Cor.~1.14]{Ogus}, $T^\ss$ annhilates $d \log (\Pic(X_{\eta}))$ under the cup product pairing 
$$ \H^1(X, T_X) \times \H^1(X, \Ohm_X^1) \to \H^2(X, \sO_X) \iso k. $$
Since the above pairing is perfect \ref{cor: perfect pairing}, we have 
\begin{align}
\label{inequality}
    \dim \what{S}^\ss = \dim T^\ss \le \mathrm{codim}_{d \log (\Pic(X_{\eta}))} \H^1(X, \Ohm_X^1) \le \mathrm{codim}_{c_{1, \dR}(\Pic(X_\eta))} \Fil^1 \H^2_\dR(X/k).
\end{align}
Note that  
\begin{align}
    \label{discbound}
    \dim \frac{\H^2_\dR(X/k)}{c_{1, \dR}(\Pic(X_\eta))} \le \mathrm{length} \frac{\H^2_\cris(X/W)}{c_{1, \cris}(\Pic(X_\eta)) \tensor W}.
\end{align}

By the preceeding lemma, $\Pic(X_{\bar{\eta}}) \tensor \IZ_p = \H^2_\cris(\sX_{\bar{\eta}}/W(\bar{\eta}))^{F = p}$. Since $\H^2_\cris(\sX_{\bar{\eta}}/W(\bar{\eta}))$ is a supersingular K3 crystal of rank $23$, $\disc(\Pic(X_{\bar{\eta}})) = (\IZ/p \IZ)^{2 \sigma}$ for some $\sigma \le 11$ by \cite[Prop.~3.13]{Ogus}. Since $c_{1, \cris} \tensor W : \Pic(X_\eta) \tensor W \to \H^2_\cris(X/W)$ is an isometric embedding of quadratic lattices over $W$ and $\H^2_\cris(X/W)$ is self-dual, the length of the cokernel is at most $11$. Then we conclude by (\ref{inequality}) that $\dim \what{S} \le 10$, as $\Fil^1 \H^2_\dR(X/k)$ is of codimension one in $\H^2_\dR(X/k)$.  \end{proof}

\begin{proposition}
\label{liftpair} 
Let $(\zeta_0, \zeta_1, \cdots, \zeta_{9})$ be a tuple of line bundles on $X$ which spans a direct summand of $\Pic(X)$ with $\zeta_0 = \xi$. Then there exists a finite flat extension $W'$ of $W$, and a lifting $X_{W'}$ of $X$ to $W'$ such that the tuple $(\zeta_0, \zeta_1, \cdots, \zeta_{9})$ deforms to $X_{W'}$. 
\end{proposition}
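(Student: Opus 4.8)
The plan is to pin down the dimension of $\Def(X;\zeta_0,\dots,\zeta_9)$ from two sides, deduce that it has an irreducible component flat over $W$, and then extract a point of that component over a finite flat extension and algebraize. Throughout write $b_2$ for $\mathrm{rank}\,\H^2$, so $b_2=22$ if $n=1$ and $b_2=23$ if $n>1$, and $h^{1,1}=b_2-2$. First, by \ref{flatloc}(b), $\Def(X;\xi)$ is either smooth over $W$ or of the form $\mathrm{Spf}\,W[\![x_1,\dots,x_{h^{1,1}}]\!]/(x_1^2+\cdots+x_{h^{1,1}}^2+p)$; in either case $\dim\Def(X;\xi)=h^{1,1}=b_2-2$. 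By \ref{loc}(b), each of the (horizontal, hence globally defined) crystalline Chern classes $c_{1,\cris}(\zeta_i)$ for $i=1,\dots,9$ imposes on $\Def(X;\xi)$ the single equation ``$\Fil^2\H^2_\dR$ is orthogonal to $c_{1,\cris}(\zeta_i)$'', and $\Def(X;\zeta_0,\dots,\zeta_9)$ is exactly the closed formal subscheme of $\Def(X;\xi)$ cut out by these nine equations. Hence $\dim\Def(X;\zeta_0,\dots,\zeta_9)\ge (b_2-2)-9=b_2-11$.

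For the other bound, the tangent space of the special fibre $\Def(X;\zeta_0,\dots,\zeta_9)\tensor k$ at its (unique, as the ring is complete local) closed point $b$ is, by obstruction theory \cite[Cor.~1.14]{Ogus}, the annihilator of the span of $d\log\zeta_0,\dots,d\log\zeta_9$ inside $\H^1(X,\Omega^1_X)$ for the cup product pairing $\H^1(T_X)\times\H^1(\Omega^1_X)\to\H^2(\sO_X)$. This pairing is perfect by \ref{cor: perfect pairing} (here we use $p>n+1$), and $d\log$ is injective on the rank $10$ direct summand spanned by the $\zeta_i$ (by Prop.~\ref{GK}, as in the proof of \ref{flatloc}(a)), so the span is $10$-dimensional and its annihilator has dimension $h^{1,1}-10=b_2-12$. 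Therefore $\dim\bigl(\Def(X;\zeta_0,\dots,\zeta_9)\tensor k\bigr)\le b_2-12$. Combining with the previous paragraph and the trivial inequality $\dim A\le \dim(A/pA)+1$, we get $b_2-11\le\dim\Def(X;\zeta_0,\dots,\zeta_9)\le(b_2-12)+1=b_2-11$, so all these inequalities are equalities; in particular, writing $\sO$ for the structure ring, $\dim(\sO/p\sO)=\dim\sO-1$.

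Consequently $p$ is not nilpotent in $\sO$: if it were, then $\sqrt{p\sO}=\sqrt{0}$ and $\dim(\sO/p\sO)=\dim\sO$, a contradiction. So some minimal prime $\mathfrak p$ of $\sO$ does not contain $p$, and $\sO/\mathfrak p$ is a complete Noetherian local domain of characteristic $0$, flat over $W$, with residue field $k$ and $\dim\ge 1$. From this a standard argument produces a local homomorphism to a finite flat extension of $W$: choose a prime $\mathfrak q\supseteq\mathfrak p$ of $\sO$ of coheight one with $p\notin\mathfrak q$ (possible by prime avoidance since no minimal prime of $\sO/pN$... more precisely since $\dim\sO/\mathfrak p\ge1$ and the minimal primes over $(p)$ in $\sO/\mathfrak p$ have coheight $\dim\sO/\mathfrak p-1$); then the normalization of the one-dimensional complete local domain $\sO/\mathfrak q$ is a complete discrete valuation ring, module-finite over $W$, with residue field $k$, hence equal to $\mathcal{O}_{K'}$ for a finite (totally ramified) extension $K'/K_0$. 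The induced local homomorphism $\Def(X;\zeta_0,\dots,\zeta_9)=\sO\to W':=\mathcal{O}_{K'}$ is a formal lifting of $X$ over $W'$ to which all of $\zeta_0,\dots,\zeta_9$ extend, and since it carries the ample class $\xi=\zeta_0$, Grothendieck's formal existence theorem algebraizes it to the desired projective scheme $X_{W'}$.

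\noindent I expect the only delicate points to be in the middle paragraphs: first, that $\Def(X;\zeta_0,\dots,\zeta_9)$ really is cut out in $\Def(X;\xi)$ by the nine orthogonality equations of \ref{loc}(b) (so that the lower bound $b_2-11$ is legitimate), which should follow from \ref{loc}(b) together with the observation that the de Rham Chern classes of the $\zeta_i$ extend to horizontal sections over $\Def(X;\xi)$; and second, the precise value $b_2-12$ of the special-fibre tangent dimension, which rests on \ref{cor: perfect pairing} and on the injectivity of $d\log$ on a rank $10$ direct summand. (It is exactly here that ``$10$'' enters: the count works because $b_2>20$.) Once the two-sided bound $\dim=b_2-11$ is established, the rest is soft commutative algebra and algebraization. \QED
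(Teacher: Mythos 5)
Your reduction to commutative algebra at the end (extracting a finite flat $W'$ and algebraizing via the ample $\zeta_0$) is fine, and your lower bound $\dim \ge b_2-11$ rests on the same one-equation-per-line-bundle principle that the paper itself uses. But the upper bound is where the argument breaks, and it breaks exactly in the only case that matters. Your claim that the special-fibre tangent space has dimension $h^{1,1}-10$ needs $\dim_k V=10$ for $V=\mathrm{span}(d\log\zeta_0,\dots,d\log\zeta_9)\subset \H^1(X,\Ohm^1_X)$, and you justify this by Prop.~\ref{GK}. However, Prop.~\ref{GK} requires the formal Brauer group to have \emph{finite height}, i.e.\ it applies precisely when the Newton polygon of $\H^2_\cris$ is not pure of slope $1$ --- which is why the paper only invokes it inside \ref{flatloc}(a). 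In the supersingular case (the only case left after applying \ref{flatloc}(a), as you note) the injectivity of $d\log$ on $\NS(X)\tensor k$ is not just unavailable but false: by \ref{tateImprove} one has $\rank \NS(X)=b_2>h^{1,1}$, so $d\log\tensor k$ has a kernel of dimension at least $2$ (in fact at least the Artin-type invariant $\sigma\ge 1$ coming from $\mathrm{coker}(\NS(X)\tensor W\to \H^2_\cris(X/W))$). Nothing in the hypothesis prevents the chosen rank-$10$ summand from meeting this kernel mod $p$, in which case $\dim V<10$, the tangent space bound degrades to $h^{1,1}-\dim V\ge b_2-11$, and the two-sided squeeze no longer forces $\dim(\sO/p\sO)<\dim\sO$; so you cannot conclude that $p$ is non-nilpotent on $\Def(X;\zeta_0,\dots,\zeta_9)$.

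This is precisely the difficulty the paper's proof is designed to circumvent, and it does so globally rather than locally: after reducing to the supersingular case, it embeds the situation into $\Hilb^{+,m}_P$ and the $10$-fold fibre product of relative Picard schemes, and uses Lemma~\ref{ssdim} (the supersingular locus of the universal deformation has dimension $\le 10$, proved via \ref{tateImprove}, Ogus's bound on supersingular K3 crystals, and the perfectness of the pairing) against the $\ge 11$-dimensional equicharacteristic locus where all ten bundles deform. This produces a $k[\![x]\!]$-point whose geometric generic fibre is \emph{not} supersingular and still carries all the $\zeta_j$; only then does it apply \ref{flatloc}(a) (where GK is legitimate) to lift to $W(\bar\eta)$, and finish with the same systems-of-parameters argument you use. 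If you want to keep a local flavour, you would have to replace the GK input by a crystalline control of $\ker(d\log)$ in the supersingular case (as in the proof of \ref{ssdim}), or first deform away from the supersingular locus as the paper does; as written, the middle step of your squeeze is a genuine gap.
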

\begin{proof}
This is a direct generalization of \cite[Prop.~1.5]{Charles2}, which was built on the proof of \cite[Prop.~A.1]{LO1} but stated a slightly stronger result.\footnote{That is, \cite[Prop.~1.5]{Charles2} implies that the dvr $R$ in \cite[Prop.~A.1]{LO1} can be taken to be a \textit{finite} flat extension of $W$.} The proof below is essentially an adaptation of the arguments in \cite[App.~A]{LO1} with an explanation of how the minor strengthening is done. By \ref{flatloc}(a) it suffices to treat the case when $X$ is supersingular. Choose $m \gg 0$ which is prime to $p$ such that $m \xi$ is very ample and $\H^i(X, m\xi) = 0$ for all $i > 0$. Let $P$ be the Hilbert polynomial of $m \xi$ and consider $\Hilb^{+, m}_P$, as defined in \ref{hilb+}. Let us denote $\Hilb^{+, m}_P$ simply by $U$ and the restriction of the universal family over it by $\sY$. By choosing global sections of $m\xi$ we find a point $s \in U$ such that $\sY_s \iso X$. 

Now consider the product $\shP := \Pic_{\sY/U} \times_U \cdots \times_U \Pic_{\sY/U}$ of $10$ copies of the relative Picard scheme $\Pic_{\sY/U}$ over $U$. The tuple $(\xi = \zeta_0, \cdots, \zeta_{9})$ corresponds to a lift $t \in \shP$ of $s \in U$. Note that as long as we show that some subscheme of $\shP$ has an irreducible component passing through $t$ which is of finite type and flat over $W$, we can find a $W'$ point $t_{W'}$ on $\shP$ extending $t$ for some finite flat extension $W'$ of $W$ by a standard systems of parameters argument (see for example the first paragraph of \cite[p.~63]{Deligne2}). Once such that $t_{W'}$ is found, we set $X_{W'} = \sY|_{t_{W'}}$. Note that since $W'$ is strictly Henselian, $\Pic_{X_{W'}/W'} (W') \iso \Pic(X_{W'})$ (see \cite[p.~203]{BLR}), so that $X_{W'}$ indeed carries lifts of $\xi$ and $\zeta_j$'s. 

Now it suffices to show that there exists a characteristic zero point on $\shP$ which specializes to $t$. Since $\Def(X; \xi, \zeta_1, \cdots, \zeta_{9})$ has dimension at least $11$, and the formal neighborhood of $t$ in $U$ is obviously smooth over $\Def(X; \xi)$, by \ref{ssdim} we can find a point $t' \in U(k[\![x]\!])$ which extends $t$ such that if $\bar{\eta}$ is a geometric generic point of $\mathrm{Spec\,} k[\![t]\!]$, the fiber $\sY_{t' \tensor \bar{\eta}}$ is not supersingular and carries lifts of all $\zeta_j$'s. Then we apply \ref{flatloc}(a) to lift $(\sY_{t' \tensor \bar{\eta}}; \xi, \zeta_1, \cdots, \zeta_9)$ further over $W(\bar{\eta})$, which gives rise to some $W(\bar{\eta})$-valued point on $\shP$ lifting $t'$. The image of the generic point of this $W(\bar{\eta})$-valued point will do the job. 
\end{proof}

\begin{corollary}
\label{transmit+} 
Let $A$ be any mixed characteristic complete DVR with residue field $k$. Suppose $X_A$ is a lifting of $X$ over $A$ such that some primitive polarization $\zeta$ on $X$ lifts to $X_A$, then the generic fiber of $X_A$ is also of $\HKn$-type. 
\end{corollary}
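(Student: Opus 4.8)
The statement I want to prove is that if $X_A$ is any lifting of $X$ over a mixed characteristic complete DVR $A$ with residue field $k$ such that \emph{some} primitive polarization $\zeta$ on $X$ lifts to $X_A$, then the generic fiber of $X_A$ is of $\HKn$-type. The key point is that we already have, from the definition of $\HKn$-type varieties and the hypothesis of the subsection, \emph{one} such lifting $(X_V,\xi_V)$ over a finite flat extension $V$ of $W$ whose generic fiber is known to be of $\HKn$-type, together with a universal deformation family $(\sX \to S,\bxi)$ of $(X,\xi)$ which is connected, flat over $W$, quasi-healthy with connected generic fiber, and admits the period morphism $\rho$. So the heart of the matter is to transport ``being of $\HKn$-type'' from one lift (with polarization $\xi$) to another lift (with a possibly different polarization $\zeta$). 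I would split this into two reductions.

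\textbf{Step 1: reduce to the universal deformation of $(X,\xi)$.} First replace $\zeta$ by a primitive polarization: by the remark after \ref{loc}, $\Def(X;\zeta)=\Def(X;m\zeta)$ for $m$ prime to $p$, so without loss of generality $\zeta$ is primitive, and we may also assume $\zeta = \xi$ is actually the chosen primitive polarization $\xi$ if we are willing to first pass through a universal deformation of the pair $(X;\xi,\zeta)$. Concretely: consider $\Def(X;\xi,\zeta)$. By \ref{flatloc}(a), if $X$ is non-supersingular this is smooth over $W$; in general \ref{liftpair} (applied to any tuple $(\xi,\zeta,\zeta_2,\dots,\zeta_9)$ spanning a direct summand of $\Pic(X)$ containing $\xi,\zeta$ — pad with further line bundles if needed) produces a lift of the whole tuple over a finite flat extension $W'$ of $W$, hence a lift of $(X;\xi,\zeta)$. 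Applying Artin approximation to $\Def(X;\xi,\zeta)$ gives a connected family $(\sX'\to S',\bxi',\bzeta')$ universally deforming $(X;\xi,\zeta)$, and by \ref{flatloc}(c) plus \cite[Tag~055J]{stacks-project} we may shrink so that $S'$ is connected, flat over $W$, quasi-healthy, with connected generic fiber. The natural map $S'\to S$ (forgetting $\bzeta'$) is a closed immersion of formal/algebraic deformation spaces, and $(X_V,\xi_V)$ pulls back (after possibly enlarging $V$) to a $V$-point of $S$; we need its generic fiber, which is $\HKn$-type, to be realized as a fiber on $\sX'$. This is the one genuinely delicate point, addressed in Step 2.

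\textbf{Step 2: connectedness / spreading-out to identify deformation types.} The generic fiber $X_K$ of $X_V$ is of $\HKn$-type and carries a lift of $\xi$; since it has a lift of $\xi$ but need not a priori have a lift of $\zeta$ over $V$, I cannot directly place it on $S'$. Instead I argue as in the proof of \ref{defcompC}/\ref{charpspread}: the generic fiber of $S'$ is a connected scheme over the fraction field $E'$ of $W'$ with $\operatorname{char} E'=0$; every geometric fiber over it is a smooth proper variety, and I claim they are all of $\HKn$-type. For this, note $S'_{E'} \to S_{E'}$ is a closed immersion, $S_{E'}$ is connected, and $\sX|_{S_{E'}}$ has a fiber (namely $X_{K}$ base-changed) of $\HKn$-type; hence by \ref{defstrengthen} \emph{every} geometric fiber of $\sX|_{S_{E'}}$ is of $\HKn$-type, in particular every geometric fiber of $\sX'|_{S'_{E'}}$. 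Now a general argument via Hilbert schemes (\ref{Hilbconnected}, \ref{charpspread}) shows ``being of $\HKn$-type'' propagates to mixed-characteristic deformations: the formal neighborhood of $b\in S'$ sits over $\Def(X;\xi,\zeta)\cong$ (by \ref{flatloc}) a quasi-healthy regular scheme, and $S'$ is connected, flat over $W$, quasi-healthy with connected special and generic fibers; by \cite[Tag~055J]{stacks-project} and \ref{flatloc}, the special fiber of any connected component of $S'$ is connected and the generic fiber is connected, so every geometric fiber of $\sX'\to S'$ — in all characteristics — is of $\HKn$-type. In particular the geometric generic fiber of the distinguished $W'$-point of $S'$ whose closed fiber is $(X;\xi,\zeta)$ is $\HKn$-type. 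Finally, $X_A$ with its lift $\zeta_A$ of $\zeta$ is classified by an $A$-point $b_A$ of $S'$ lifting $b$ (using $\Def(X;\xi,\zeta)$ pro-represents after including the auxiliary $\xi$; if one only wants the weaker hypothesis with $\zeta$ alone, use $\Def(X;\zeta)$ in place of $\Def(X;\xi,\zeta)$ throughout Steps 1–2, which is entirely parallel). Pulling back $\sX'\to S'$ along $b_A$ recovers $X_A$, and its generic fiber is a geometric fiber of $\sX'\to S'$ over a characteristic-zero point of the connected base, hence of $\HKn$-type. The expected main obstacle is Step 2: making precise that the \emph{connectedness} of $S'$ over $W'$ (flatness + quasi-healthy regularity + connected special and generic fibers, via \cite[Tag~055J]{stacks-project}) together with \ref{defstrengthen} really does force the geometric generic fiber of the relevant $W'$-point to be $\HKn$-type, i.e. correctly chaining the characteristic-zero spreading-out statement \ref{defstrengthen} with the mixed-characteristic propagation mechanism of \ref{charpspread}.
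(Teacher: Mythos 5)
Your overall strategy coincides with the paper's: its proof of this corollary is exactly "combine \ref{liftpair} with \ref{charpspread}", and your Steps 1--2 amount to a hand-made version of \ref{charpspread} (connectedness of the characteristic-zero fibre of the deformation space via \ref{flatloc} and \cite[Tag~055J]{stacks-project}, then \ref{defstrengthen}), seeded by the joint lift produced by \ref{liftpair}. However, one step as written would fail: the claim that "$X_A$ with its lift of $\zeta$ is classified by an $A$-point $b_A$ of $S'$", where $S'$ universally deforms $(X;\xi,\zeta)$. Nothing in the hypothesis forces $\xi$ to lift to $X_A$, so $X_A$ need not lie on $S'$ at all --- this is precisely why the corollary is stated for $\zeta$ alone. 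Your parenthetical fallback (replace $\Def(X;\xi,\zeta)$ by $\Def(X;\zeta)$) is the right fix, but it is not "entirely parallel": on the $\zeta$-deformation space there is no closed immersion into the $\xi$-space $S$, so the mechanism you used in Step 2 to certify a characteristic-zero $\HKn$-type fibre (transport along $S_{E'}$) no longer applies verbatim. What does work, and what you already have in hand, is: the lift $X_{W'}$ from \ref{liftpair} carries both $\xi$ and $\zeta$; because it carries $\xi$ it defines a point of $S$, all of whose characteristic-zero geometric fibres are of $\HKn$-type (\ref{defstrengthen} plus connectedness of the generic fibre of $S$, seeded by $X_V$); and because it carries $\zeta$ it defines a $W'$-point of the $\zeta$-space with $\HKn$-type generic fibre. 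That seed is what makes the connectedness argument on the $\zeta$-space go through. Also, the assertion that every geometric fibre of $\sX'\to S'$ "in all characteristics" is of $\HKn$-type is an over-claim you neither need nor have justified at that stage.

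Two smaller points. "Pulling back $\sX'\to S'$ along $b_A$ recovers $X_A$" is not automatic: a priori you only identify the reductions modulo powers of the maximal ideal of $A$, and you need the lifted ample class $\zeta_A$ together with Grothendieck existence to promote this to an isomorphism over $A$; this is exactly what the Hilbert-scheme formulation in the proof of \ref{charpspread} (embedding by $m\bxi$) takes care of. Finally, when invoking \ref{liftpair} you should note that a ten-tuple with $\zeta_0=\xi$ spanning a direct summand containing $\zeta$ exists because in the relevant (supersingular) case $\mathrm{rank}\,\Pic(X)=23$ by \ref{tateImprove}, while in the non-supersingular case one uses \ref{flatloc}(a) with a basis of the saturation of $\IZ\xi+\IZ\zeta$ instead. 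The streamlined argument, which is the paper's, is then: \ref{liftpair} shows $(X,\zeta)$ satisfies $(\ast_{\mathrm{weak}})$, and \ref{charpspread} applied with $S=\Spec k$ and polarization $\zeta$ gives $(\ast_{\mathrm{strong}})$, which covers $X_A$ over an arbitrary mixed characteristic DVR.
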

\begin{proof}
This follows directly from the above proposition and \ref{charpspread}.
\end{proof}

Note that Theorem~\ref{transmit} is now proved: (a) is given by \ref{flatloc} and (b) is slightly generalized by the above. 

\subsection{Global Moduli Theory}

\begin{notation}
Let $n, d$ be fixed positive numbers and $p$ be a prime. For any field $\kappa$ over $\IZ_{(p)}$, let $\sP_{n, d}(\kappa)$ be the set of isomorphism classes of primitively polarized $\HKn$-type varieties $(X, \xi)$ over $\kappa$ such that $\xi^{2n} = d$. If $m$ is a positive number, let $\sQ_{n, d, m}(\kappa)$ denote the set of possible Hilbert polynomials of $m \xi$, for an element $(X, \xi) \in \sP_{n, d}(\kappa)$. For any tuple $(k, \tau, (Y, \zeta))$, where $k$ is an algebraically closed field of characteristic $p$, $\tau$ is an isomorphism $\bar{K}_0 \iso \IC$, $(Y, \zeta)$ is a element of $\sP_{n, d}(\IC)$, denote by $\sS(Y, \zeta, k, \tau)$ the subset of $\sP_{n, d}(k)$ consisting of elements $(X, \xi)$ over $k$ with the following property: For some finite flat extension $V$ over $W = W(k)$ and embedding $V \subset \bar{K}_0 \stackrel{\tau}{\iso} \IC$, $(X, \xi)$ lifts to $(X_V, \xi_V)$ over $V$ such that there exists a parallel transport operator $\H^2(X_{V}(\IC), \IZ) \sto \H^2(Y, \IZ)$ which sends $c_1(\xi_V(\IC))$ to $c_1(\zeta)$. 
\end{notation}

Our first goal is to prove some boundedness results:

\begin{theorem}
\label{bounded}
Let $n, d$ be fixed positive numbers and $p > n + 1$ be a fixed prime. Let $\kappa$ be any algebraically closed field over $\IZ_{(p)}$. There exist numbers $m, N$ which depend only on $n, d, p$ such that for any $(X, \xi) \in \sP_{n, d}(\kappa)$, $m \xi$ is very ample, $\H^i(X, m \xi) = 0$ for all $i > 0$, and $\H^0(X, m \xi) \le N$. Moreover, the set $\sQ_{n,d, m}(\kappa)$ is finite and independent of $\kappa$. 
\end{theorem}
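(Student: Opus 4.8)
The strategy is to reduce the boundedness of primitively polarized $\HKn$-type varieties of degree $d$ to the boundedness of a family of polarized abelian varieties of bounded dimension, via the Kuga--Satake construction, and then invoke the polynomiality/constructibility of Hilbert functions in bounded flat families. First I would fix $(X,\xi)\in\sP_{n,d}(\kappa)$. Using \ref{defPLLem} (or rather its characteristic-$0$ analogue together with \ref{charpspread} when $\mathrm{char}\,\kappa=p$), the pair $(X,\xi)$ determines a pointed lattice $(\Lambda,\lambda)$ with $\Lambda\iso\Lambda_n$ and $\lambda^2$ determined by $d$ via the equation $\xi^{2n}=\lambda_n q(\xi,\xi)^n$; by \ref{pointedHasse} the possibilities for $(\Lambda,\lambda)$ are governed only by the local data at primes dividing $2(n-1)$ and by $\lambda^2$, hence there are only finitely many such $(\Lambda,\lambda)$ as $(X,\xi)$ ranges over $\sP_{n,d}(\kappa)$. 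Fix one such $(\Lambda,\lambda)$ and set $L:=\lambda^\perp$, a lattice of signature $(2, m-2)$ with $m=\operatorname{rank}\Lambda_n$.

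Next I would transport the problem to the orthogonal Shimura variety $\shS_{\sK}^{\ad}(L)$ (for a suitable level $\sK$, e.g.\ $\sK=\sK_0$ after possibly passing to a finite cover to rigidify): using the period morphism machinery of \S3.3, after adding an artificial orientation and a level structure (which changes nothing for boundedness), $(X,\xi)$ gives a point of $\shS^{\ad}_{\sK}(L)$ over $\kappa$, and the de Rham/\'etale/crystalline realizations of $\P^2$ are identified with $\bP^2_*$ pulled back from there. Then pull back the universal Kuga--Satake abelian scheme $\shA$ over $\shS_{\sK}(L)$. Since $\shS_{\sK}(L)$ is of finite type over $\IZ_{(p)}$, the family $\shA$ together with a suitable polarization (coming from the symplectic form $\psi$ on $H$) is a \emph{bounded} family of polarized abelian varieties of fixed dimension $2^{m-2}$. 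Boundedness of polarized abelian varieties of bounded dimension and bounded polarization degree is classical (theory of $\theta$-embeddings, or Mumford's GIT / the existence of the moduli space $\mathcal A_{g,\delta}$ of finite type over $\IZ$). Running the period morphism backwards, one recovers $X$ from $\shA_s$ together with the CSpin-tensors, so $X$ is embedded in a fixed-dimensional projective space of bounded degree: this yields an $m$ with $m\xi$ very ample, $\H^i(X,m\xi)=0$ for $i>0$, and $h^0(X,m\xi)\le N$, with $m,N$ depending only on $n,d,p$. Concretely, having fixed such an $m$, the collection of all $(X,m\xi)$ defines a subset of a single Hilbert scheme $\Hilb_P$ over $\IZ_{(p)}$ for finitely many $P$, each of finite type; since $\H^i(\sZ_s,\sO_{\sZ_s}(1))=0$ for $i>0$ on the relevant locus (this is $\Hilb^+_P$ of \ref{hilb+}), the Hilbert polynomial of $m\xi$ is computed by the Euler characteristic, which is locally constant in the flat family, hence takes only finitely many values; and these values depend only on the numerical invariants (Hodge numbers, BBF degree), which are the same over $\IC$ and over $\bar{\IF}_p$ by definition of $\HKn$-type and \ref{charpspread}, so $\sQ_{n,d,m}(\kappa)$ is finite and independent of $\kappa$.

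To get the \emph{uniform} $m$ and $N$ across all fields $\kappa$, and not just finiteness for each fixed $\kappa$, I would argue as follows: for varying $\kappa$ of characteristic $0$, spreading out reduces everything to finitely generated extensions of $\IQ$, and then the Shimura variety $\shS_{\sK}(L)$ over $\IZ_{(p)}$ already contains all the relevant points, so the bound obtained from the finite type of $\shS_{\sK}(L)$ and of $\mathcal A_{g,\delta}$ is uniform; for $\kappa$ of characteristic $p$, use that $(X,\xi)$ lifts (by definition of $\HKn$-type, part (b)) to characteristic $0$ together with $\xi$ by \ref{transmit}(a), so $m\xi$ being very ample with vanishing higher cohomology in characteristic $0$ implies the same on the special fiber by semicontinuity (cohomology can only jump up under specialization, but here it is already $0$, and very ampleness is an open condition after choosing a suitable $m$ that works for the lifted family, which lies in a bounded family). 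Alternatively, and more cleanly, one may observe that since all of $\sP_{n,d}$ over all fields is parametrized by $\kappa$-points of the finite-type $\IZ_{(p)}$-stack $\Mod_{n,d}$ (Thm~\ref{moduli})—or rather, by the finite type locus inside $\Hilb$ carved out using the Shimura-variety input—a single $m$ works for the universal family over this stack by the relative version of Serre vanishing / very ampleness over a Noetherian base.

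\textbf{Main obstacle.}
The crux is the \emph{boundedness} input itself: showing that $\HKn$-type varieties of fixed polarization degree form a bounded family. Unlike the K3 case, where Saint-Donat's direct projective-geometry argument applies, here one genuinely needs the Kuga--Satake reduction, so the real work is in checking that the period morphism $\rho$ of \S3.3 is defined and \'etale in the generality needed (this is \ref{compMotive}(a), already granted), that its image lands in a finite-type Shimura variety, and—most delicately—that the abelian scheme $\shA$ carries a polarization of \emph{bounded} degree so that one may cite finiteness of $\mathcal A_{g,\delta}$. One must also be careful that the period morphism does not a priori surject onto $\shS^{\ad}_{\sK}(L)$, so one cannot literally say ``$X$ is a point of a finite-type stack'' without first constructing the target (cf.\ the remark in the sketch about keeping track of connected components and artificial orientations); handling this correctly, in a way uniform over $\kappa$, is where the argument requires care. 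Everything downstream—the passage to Hilbert schemes, the constructibility of the Hilbert polynomial, the independence of $\kappa$—is then routine given the results already established in \S2 and \S3.
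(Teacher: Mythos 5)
Your proposal has two genuine gaps, and they sit exactly where the real difficulty of the theorem lies. First, your reduction of the characteristic-$p$ case to characteristic $0$ by ``semicontinuity'' runs in the wrong direction: after lifting $(X,\xi)$ to $(X_V,\xi_V)$, the vanishing of $\H^i(X_K,m\xi_K)$ and the very ampleness of $m\xi_K$ on the \emph{generic} fiber give no control over the special fiber. Cohomology can only jump up under specialization, and relative very ampleness is an open condition on the base, so both properties propagate from the special fiber to the generic one, not conversely. Your ``cleaner'' alternative is circular: the finite-typeness of $\Mod_{n,d}$ in Thm~\ref{moduli} is itself deduced from Thm~\ref{bounded} (the numbers $m,N$ and the set $\sQ_{n,d,m}$ are used to build $H_{m,d}$ inside Hilbert schemes). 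Second, ``running the period morphism backwards'' is not available: $\shS^{\ad}_{\sK}(L)$ carries no universal family of $\HKn$-type varieties, and knowing that the target (or the Kuga--Satake abelian scheme over it) is of finite type says nothing about how many pairwise non-isomorphic $(X,\xi)$ over $\bar{\IF}_p$ can land on a single period point. Bounding the fibers of the period map is the actual crux, and it is missing from your sketch; the polarization degree on $\shA$ and surjectivity questions you flag as the main obstacle are not where the difficulty is.

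For comparison, the paper argues as follows. Over $\IC$ the bounds $m,N$ come directly from the Koll\'ar--Matsusaka theorem, with no Shimura theory. In characteristic $p$ the key input is Prop.~\ref{prop: bounded}: for each complex $(Y,\zeta)$, all of $\sS(Y,\zeta,k,\tau)$ occurs among the fibers of a single finite-type family over $\bar{\IF}_p$. This is proved by mapping the local universal deformation of each $(X,\xi)$ (equipped with an artificial orientation and a $\sK^p$-level structure) \'etale-ly to $\shS^{\ad}_{\sK}(L)_W$, and then showing that two $k$-points with the same image have isomorphic fibers: one lifts both to characteristic zero over a common point, uses the level structure together with Markman's description of $\Mon^2$ (Thm~\ref{Markman}) to see that the resulting integral isometry is a parallel transport operator, applies Verbitsky's global Torelli theorem over $\IC$, and specializes the isomorphism back to $\bar{\IF}_p$ by Matsusaka--Mumford. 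Quasi-compactness of the image of the period map then cuts the collection down to a bounded family, from which uniform $m,N$ (and the finiteness of $\sQ_{n,d,m}$) follow over a noetherian base. So while your high-level slogan (``reduce to abelian varieties via Kuga--Satake'') matches the paper's philosophy, the Torelli-plus-specialization step that controls the fibers of the period map, and the replacement of your semicontinuity transfer by Prop.~\ref{prop: bounded}, are indispensable and absent from the proposal.
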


Before starting the proof we first prove a connectedness lemma.  
\begin{lemma}
\label{Sasha}
Let $R$ be a complete DVR with a characteristic zero fraction field $K_0$ and an algebraically closed residue field $\kappa$. Let $T = \mathrm{Spec\,} B$ be a connected flat affine scheme of finite type over $R$. If the special fiber $T \tensor_R \kappa$ is nonempty and reduced, then the generic fiber of $T$ is geometrically connected.
\end{lemma}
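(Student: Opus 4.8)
The plan is to reduce the statement to a standard fact about when a scheme over a complete DVR has geometrically connected generic fiber, namely the criterion involving the number of connected components of the geometric fibers being constant, and to extract this constancy from the hypothesis that the special fiber is reduced.

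First I would replace $T$ by its normalization, or rather argue directly with the ring of global functions. Let $B'$ be the integral closure of $B$ in the ring of functions on its generic fiber, and consider the Stein-type factorization: set $A := \mathrm{H}^0(T, \mathcal{O}_T) = B$, and look at the finite $R$-algebra $C$ that is the integral closure of $R$ in $B$. Since $T$ is flat over $R$ and of finite type, $C$ is a finite flat $R$-algebra (torsion-free over a DVR, hence flat, and finite because $T$ is of finite type and $R$ is excellent/Japanese). The connected components of $T$ correspond to the idempotents of $C$, and since $T$ is connected, $C$ has no nontrivial idempotents; as $C$ is finite flat over the complete DVR $R$, it is a product of complete local rings, so $C$ is itself a complete local ring, finite flat over $R$. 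The key point is then: $C \otimes_R \kappa$ injects into $\mathrm{H}^0(T \otimes_R \kappa, \mathcal{O})$ (by flatness and base change for $\mathrm{H}^0$, which holds here because $C = \mathrm{H}^0$ is already finite over $R$), and by hypothesis $T\otimes_R\kappa$ is nonempty and reduced.

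Second, I claim $C \otimes_R \kappa$ is reduced, hence $C$ is formally smooth over $R$ in the relevant sense, and in fact $C$ is a DVR: indeed $C$ is a complete local domain? Not quite — I do not yet know $C$ is a domain, but $C\otimes_R\kappa$ being a subring of a reduced ring is reduced, and a finite flat local $R$-algebra whose special fiber is reduced and which is local must have special fiber a field (a reduced Artinian local ring is a field). So $C\otimes_R\kappa$ is a field, which forces $C$ to be a DVR, finite flat over $R$, with $C/\mathfrak{m}_C$ a finite extension of $\kappa$; but $\kappa$ is algebraically closed, so $C/\mathfrak m_C = \kappa$ and $C$ is totally ramified over $R$. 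In particular $C \otimes_R K_0$ is a field (a finite field extension of $K_0$). Now the generic fiber $T\otimes_R K_0$ has $\mathrm{H}^0 = C\otimes_R K_0$ a field, so it is connected; and since that field equals $C\otimes_R K_0$ which is the integral closure of the field $K_0$ inside it — wait, I need geometric connectedness, so I should check $C\otimes_R K_0 \otimes_{K_0} \overline{K_0}$ has no nontrivial idempotents. Because $\kappa$ is algebraically closed and $C$ is totally ramified (residue field $\kappa$), the extension $C\otimes_R K_0/K_0$ is what one expects to be "geometrically connected" — more carefully, I would argue that $\mathrm{Spec}(C\otimes_R\overline{K_0})$ is connected by observing that $C$, being a totally ramified extension of the strictly Henselian $R$, has the property that $C\otimes_R \overline{R}$ is local (where $\overline{R}$ is the integral closure of $R$ in $\overline{K_0}$, a valuation ring with algebraically closed residue field), hence connected, and then pass to generic fibers.

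The main obstacle I anticipate is the last step: controlling the geometric connected components of the generic fiber, i.e.\ going from "$T\otimes_R K_0$ is connected" to "geometrically connected." The clean way is to use that a finite extension $K$ of $K_0$ with $\mathcal{O}_K$ totally ramified over $R$ (residue field $\kappa = \overline{\kappa}$) satisfies $K\otimes_{K_0}\overline{K_0} \cong \overline{K_0}$ as a $\overline{K_0}$-algebra if and only if $K = K_0$ — which need not hold! So instead the real content must be that $T\otimes_R\kappa$ reduced forces $C = R$ itself, i.e.\ the ramification is trivial. That follows because $C$ finite flat over $R$ with $C\otimes_R\kappa$ a separable (here: reduced) $\kappa$-algebra and $\kappa$ algebraically closed means $C/R$ is étale at the closed point, hence $C$ is étale over $R$, hence (being local with the same residue field, $\kappa = \overline\kappa$) $C = R$. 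Then $\mathrm{H}^0(T\otimes_R K_0,\mathcal{O}) = R\otimes_R K_0 = K_0$, and $\mathrm{H}^0(T\otimes_R\overline{K_0},\mathcal{O}) = \overline{K_0}$ by flat base change, so $T\otimes_R\overline{K_0}$ is connected, as desired. I would present the argument in this cleaner order, with the crucial input being: \emph{a finite flat algebra over a strictly Henselian DVR whose special fiber is reduced is étale, hence trivial}; the hypotheses "nonempty" and "reduced special fiber" are exactly what make $C\otimes_R\kappa$ a nonzero reduced Artinian $\overline\kappa$-algebra that is local (by connectedness of $T$), hence $\overline\kappa$ itself.
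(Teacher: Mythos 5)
Your strategy --- reduce everything to the finite $R$-algebra $C$ of elements of $B$ integral over $R$, and use the reduced special fibre to force $C$ to be \'etale, hence equal to $R$ --- is a genuinely different route from the paper's, but as written it has real gaps, starting with the very first claim. (i) The finiteness of $C$ over $R$ is false at this level of generality, because the hypotheses make only the special fibre reduced, not $B$ itself: take $B=R[x,y]/\bigl(y(\pi x-1),\,y^2\bigr)$ with $\pi$ a uniformizer. As an $R$-module $B\cong R[x]\oplus K_0$, so $B$ is flat and of finite type; $B_{\mathrm{red}}=R[x]$, so $T$ is connected; the special fibre is $\kappa[x]$, reduced and nonempty; yet every element of $K_0\,y$ is nilpotent, hence integral over $R$, so the integral closure of $R$ in $B$ is not a finite $R$-module. (This particular problem is repairable --- nilpotents of $B$ lie in $\pi B$, so replacing $B$ by $B_{\mathrm{red}}$ changes neither the special fibre nor any connectedness statement --- but you must do it, and the finiteness argument then needs the reduction to minimal primes.) (ii) The assertion that $C\otimes_R\kappa$ injects into $\mathrm{H}^0(T\otimes_R\kappa,\mathcal{O})$ ``by flatness and base change for $\mathrm{H}^0$'' is not a proof: $T$ is affine, not proper, so $\mathrm{H}^0(T,\mathcal{O}_T)=B\neq C$ and no coherence or base-change theorem applies. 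What is actually needed is that $c\in C\cap\pi B$ implies $c\in\pi C$, i.e.\ that $c/\pi\in B$ is again integral over $R$; this is exactly where the reducedness of the special fibre has to do its work, and you give no argument for it.

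Even granting $C=R$, your concluding step is incorrect as written: $\mathrm{H}^0(T\otimes_R K_0,\mathcal{O})=B\otimes_R K_0$, not $C\otimes_R K_0$ (already for $T=\mathbb{A}^1_R$), so flat base change does not yield $\mathrm{H}^0(T\otimes_R\overline{K_0},\mathcal{O})=\overline{K_0}$. What $C=R$ does give --- since idempotents satisfy $t^2-t$ and the integral closure of $K_0$ in $B\otimes_R K_0$ is $C\otimes_R K_0$ --- is connectedness of the generic fibre; but geometric connectedness requires controlling idempotents of $B\otimes_R K$ for finite extensions $K/K_0$, i.e.\ knowing that the integral closure of $K$ in $B\otimes_R K$ is $(C\otimes_R K_0)\otimes_{K_0}K$, a Galois-descent statement you never establish. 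This is the obstacle you yourself flagged, and your fix (triviality of the ramification of $C$) does not address components that appear only after a field extension. That is precisely the point the paper's proof handles: it base-changes to the ring of integers $V$ of a finite Galois extension (same special fibre, as $\kappa=\bar{\kappa}$), invokes the Stacks project lemma for flat, finite-type schemes with reduced special fibre over a DVR to reduce geometric connectedness to connectedness of $T\otimes_R V$, and then uses a $\kappa$-point of the special fibre to produce a Galois-invariant maximal ideal, forcing any idempotent of $B\otimes_R V$ to be Galois-invariant, hence to lie in $B$, contradicting connectedness of $T$. Your approach could probably be completed along these lines, but as it stands the finiteness of $C$, the reducedness of $C\otimes_R\kappa$, and the passage from the generic fibre to the geometric generic fibre are all unproved, and the latter two are the actual content of the lemma.
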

\begin{proof}
It suffices to show that for any finite Galois extension $K$ of $K_0$, $T \tensor_R K$ is connected. Let $V$ be the integral closure of $R$ in $K$. Since $K$ is separable over $K_0$, $V$ is also a DVR. By \cite[Tag~055J]{stacks-project}, it suffices to show that for any such $V$, $T' = T \tensor_R V$ is connected. The Galois group $\Gal(K/K_0)$ acts on the $W$-algebra $B' := B \tensor_R V$ through its action on $V$. Since $\kappa$ is algebraically closed, we can canonically identify $T \tensor_R \kappa$ and $T' \tensor_V \kappa$. Let $b \in T(\kappa)$ be a $\kappa$-point. It corresponds to a maximal ideal $\fm$ in $B$, or a $\Gal(K/K_0)$-invariant maximal ideal $\fm'$ of $B'$. 

By way of contradiction, suppose that $T'$ is not connected. Then there exists a nontrivial idempotent $e \in B'$ which corresponds to the connected component containing $b$, i.e., $\mathrm{ann}(e) \subseteq \fm'$ and $\mathrm{Spec\,} B'/ \mathrm{ann}(e)$ is connected. Let $\sigma \in \Gal(K/K_0)$ be any element. Then $\mathrm{Spec\,} B / \mathrm{ann}(\sigma(e))$ is also a connected component of $T'$, which is either the same as the one defined by $e$ or is disjoint from it. In the former case, $e = \sigma(e)$. In the latter, $e \sigma(e) = 0$ and $\mathrm{ann}(e) + \mathrm{ann}(\sigma(e)) = B'$.

Note that the latter case cannot happen: Since $\fm'$ is invariant under the $\Gal(K/K_0)$-action, $\mathrm{ann}(e) + \mathrm{ann}(\sigma(e)) \subseteq \fm'$, which contradicts $\mathrm{ann}(e) + \mathrm{ann}(\sigma(e)) = B'$. Therefore, $e = \sigma(e)$ for every $\sigma \in \Gal(K/K_0)$. However, if this is true for every $\sigma$, then $e \in B$, which contradicts the connectedness of $B$. \end{proof}

\begin{remark}
We will apply the above lemma to $R = W$. The hypotheses are added to rule out schemes like $W[x]/(x^2 - p)$ (nonreduced special fiber), $K_0[x]/(x^2 - p)$ (empty special fiber), and $\IZ_p[x]/(x^2 - a)$ for some $a \in \IZ_p$ which is not a quadratic residue (residue field not algebraically closed).
\end{remark}

The key step for \ref{bounded} is the following proposition: 
\begin{proposition} 
\label{prop: bounded}
Assume that $p > n + 1$. Set $k = \bar{\IF}_p$ and choose an isomorphism $\tau : \bar{K}_0 \iso \IC$. For each $(Y, \zeta) \in \sP_{n, d}(\IC)$, there exists a variety $T$ of finite type over $k$ and primitively polarized $\HKn$-scheme $(\sY, \bzeta)$ over $T$ such that for any $(X, \xi) \in \sS(Y, \zeta, k, \tau)$, there exists an $k$-point $t \in T$ with $(\sY_t, \bzeta_t) \iso (X, \xi)$.
\end{proposition}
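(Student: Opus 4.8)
The goal is to produce, for a fixed complex primitively polarized $\HKn$-type variety $(Y,\zeta)$ of degree $d$, a single family of degree-$d$ primitively polarized $\HKn$-varieties over a finite-type $k$-scheme $T$ whose $k$-points exhaust the ``$(Y,\zeta)$-flavoured'' elements of $\sP_{n,d}(k)$, i.e.\ those in $\sS(Y,\zeta,k,\tau)$. The strategy is exactly the one sketched in the introduction: reduce boundedness of $\HKn$-type varieties to boundedness of their Kuga--Satake abelian varieties, which is classical. First I would fix an auxiliary large auxiliary polarization: by \ref{liftpair} (applied with $\zeta_0=\xi$ and $9$ further classes spanning a rank-$10$ direct summand of $\Pic(X)$, which we may arrange to be prime-to-$p$-degreed after enlarging, using the remark after \ref{loc}) every $(X,\xi)\in\sS(Y,\zeta,k,\tau)$ lifts, together with such a tuple, to a finite flat extension $W'/W$; the condition of lying in $\sS$ guarantees the generic fiber is of $\HKn$-type and the parallel transport operator matches up the Beauville--Bogomolov lattice with that of $Y$. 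In particular the pointed lattice $\PL(X,\xi)$ takes a value in a \emph{fixed} finite set (it only depends on $\xi^{2n}=d$ via \ref{defPLLem}), so after a finite decomposition of the problem we may fix a pointed lattice $(\Lambda,\lambda)\iso(\Lambda_n,\lambda)$ with $L:=\lambda^\perp$.

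Next I would pass to the spinor Shimura variety. Using the period machinery of \S3.3 and the local deformation theory of \S2.2, a lift $(X_{W'},\xi_{W'})$ with a $\sK^p$-level structure and artificial orientation gives a $W'$-point of some $\shS_\sK(L)_R$, hence a $k$-point $s$ of $\shS_\sK^{\ss}(L)$ (supersingularity of $X$ corresponds to supersingularity of the Kuga--Satake abelian variety $\shA_s$), together with the data of a line bundle tuple matching special endomorphisms via \ref{compMotive}(e). Now the key boundedness input is on the Shimura-variety side: $\shS_\sK(L)$ is of finite type over $\IZ_{(p)}$ (Madapusi-Pera), so its $k$-points, hence the possible Kuga--Satake abelian varieties up to CSpin-isomorphism, form a bounded family; moreover by \ref{allisog} (when $L_p$ is self-dual) or its variant, the supersingular ones all lie in a single CSpin-isogeny class, and by \ref{relpos} the relative positions are controlled. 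Combining this with \ref{compMotive} and descent along $\rho$, the varieties $\shX_t$ arising as fibers over $\wt S^{\#}$ already form a family over a finite-type $k$-scheme; alternatively, one argues directly that the subset $\sS(Y,\zeta,k,\tau)$ of $\sP_{n,d}(k)$ is the image of the $k$-points of a quasi-compact substack of $\shS_\sK^\ad(L)\tensor k$ under $\rho$ and the Hilbert-scheme construction of \ref{hilb+}. Concretely, I would let $T$ be (a connected component of, or a finite union of) $\Hilb^{+,m}_P\tensor k$ cut out by the condition of admitting such a period point, with $\sY$ the restriction of the universal family; $m,P$ are uniform by \ref{bounded}'s statement, whose proof this proposition feeds into, so to avoid circularity I would instead take $m$ large enough relative to the (finitely many) Kuga--Satake abelian varieties in play, using that there are finitely many of them up to isomorphism.

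The main obstacle is the \emph{non-supersingular} case together with making the family honestly parametrize \emph{exactly} the right set, rather than something a priori larger. For the supersingular part, finiteness of the isogeny class plus \ref{compMotive} and \ref{ssdefLEnd}(b) give everything cleanly; for the non-supersingular part one instead lifts pairs of line bundles to characteristic zero via \ref{flatloc}(a) and matches with complex $\HKn$-varieties deformation-equivalent to $Y$ — here the parallel transport hypothesis in the definition of $\sS$ is what pins down the component of the (complex) moduli space and hence, after spreading out as in \ref{defcompC} and \ref{charpspread}, the relevant component of $\Hilb^{+,m}_P$. I would handle this by showing that ``$(X,\xi)\in\sS(Y,\zeta,k,\tau)$'' is equivalent to $(X,\xi)$ lifting to a $W'$-point of a \emph{fixed} connected component of $\wt S^{\#}_{\sK^p}$ (the one through which $Y$'s period point passes, after choosing the embedding $\tau$), using \ref{Markman}, \ref{spinorientation}, and \ref{GalEx} to track connected components and orientations; then $T$ can be taken as the reduction of that component's image in the appropriate Hilbert scheme. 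The remaining verifications — finite type, that every $k$-point of $T$ gives an $\HKn$-scheme (via \ref{charpspread}), and surjectivity onto $\sS(Y,\zeta,k,\tau)$ — are then routine applications of \ref{liftpair}, \ref{transmit+}, and \ref{Hilbconnected}.
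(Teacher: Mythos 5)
There is a genuine gap, and it sits exactly where the finiteness has to come from. Your parameter space is $\Hilb^{+,m}_P\tensor k$ with a uniform $m$ and finitely many Hilbert polynomials $P$ valid for all $(X,\xi)\in\sS(Y,\zeta,k,\tau)$ — but such uniform bounds are precisely the content of Theorem \ref{bounded}, whose proof rests on Proposition \ref{prop: bounded}; the circularity you flag is not repaired by your substitute, because the claim that "there are finitely many Kuga--Satake abelian varieties in play up to isomorphism" is false: $\sS(Y,\zeta,k,\tau)$ contains non-supersingular members, whose Kuga--Satake abelian varieties sweep out the $\bar{\IF}_p$-points of a positive-dimensional Shimura variety (the single-isogeny-class statement \ref{allisog} and the control \ref{relpos} apply only on the supersingular locus, and even there do not give finitely many isomorphism classes). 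Moreover, even granting boundedness of the abelian varieties, it does not by itself bound the very-ampleness constant of the $X$'s: you would first need to know that the $X$'s lying over a given period point are isomorphic, which is a Torelli-type statement, not an input you have at this stage. (Also, invoking \ref{liftpair} to produce lifts is unnecessary: the definition of $\sS(Y,\zeta,k,\tau)$ already supplies the lift $(X_V,\xi_V)$ and the parallel transport operator.)

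The paper avoids Hilbert schemes entirely here. For each $(X,\xi)\in\sS(Y,\zeta,k,\tau)$ it Artin-approximates $\Def(X;\xi)$ to get a connected finite-type family $S$ containing $(X,\xi)$, uses the given parallel transport operator to pin down a component of the level cover, and maps $S$ by an \'etale period morphism to the fixed finite-level scheme $\shS^\ad_\sK(L)_W$; since each image is open and the target is quasi-compact, finitely many of these $S$'s already cover the union of all images, and their disjoint union is the sought $T$. The step that makes this sufficient — and which is absent from your plan — is the argument that two $k$-points with the same period image have isomorphic polarized fibers: one lifts both to a common characteristic-zero point (flatness of $\shS^\ad_\sK(L)_W$ plus \'etaleness of $\rho$), reads off from the explicit description of $\rho$ on $\IC$-points that the resulting comparison isometry lies in $\O(\Lambda,\lambda)\cap\sK^{\ad,p}$ hence in $\Mon^2$ by \ref{spinorientation} and \ref{Markman}, applies Verbitsky's global Torelli theorem \ref{Verbitsky} to produce an isomorphism over $\IC$, and then specializes it to $k$ by Matsusaka--Mumford \cite[Thm~2]{MM}. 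You cite Markman, orientations and \ref{GalEx} for component-tracking, but never invoke Verbitsky or Matsusaka--Mumford; without that step, knowing that the period point of $(X,\xi)$ lands in the image of one of your finitely many families does not let you conclude that $(X,\xi)$ occurs as a fiber, so the surjectivity onto $\sS(Y,\zeta,k,\tau)$ is not established. Likewise, "lifting to a fixed connected component of $\wt S^{\#}_{\sK^p}$" is not meaningful across different $(X,\xi)$ (each $\wt S^{\#}_{\sK^p}$ is a cover of the local deformation space of one member), and replacing it by the global moduli stack would again be circular, since Theorems \ref{moduli} and \ref{fineMod} depend on \ref{bounded}.
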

\begin{proof}
We prove the proposition in three steps.\\ \textbf{Step 1:} Set $(\Lambda, \lambda) = (\H^2(Y, \IZ), c_1(\zeta))$ and $L := \lambda^\perp \subset \Lambda$. We use the notations and constructions in \ref{3.1.3} for $L$. Choose a compact open subgroup $\sK \subset \CSpin(L \tensor \IA_f) = G(\IA_f)$ of the form $\sK_{0, p} \sK^p$ for some $\sK^p \subset G(\IA_f^p)$ such that $\sK \subset \sK_0$ and $\shS^\ad_\sK(L)$ is a scheme. Recall that the image $\sK^{\ad} \subset G^\ad(\IA_f)$ preserves the lattice $L \tensor \what{\IZ}$ and acts trivially on $\disc(L)$. Therefore, $\sK$ acts on $(\Lambda \tensor \what{\IZ}, \lambda \tensor 1)$ (see \ref{2.1.7}). 

Apply Artin's approximation theorem to $\Def(X; \xi)$ to obtain a primitively polarized $\HKn$-scheme $(\sX \to S, \bxi)$ as in \ref{period+}. Recall that since $S$ is everywehre a universal deformation, by \ref{flatloc}(b), $S$ is flat over $W$ and the special fiber of $S$ is reduced. We may moreover assume that $S$ is affine and connected. By assumption, there exists in $\bar{K}_0$ a finite flat extension $V$ of $W$, and a deformation $(X_V, \xi_V)$ of $(X, \xi)$ over $V$, such that there exists a parallel transport operator $\eta^\dagger : (\Lambda, \lambda) \sto (\H^2(X_V(\IC), \IZ), c_1(\xi_V(\IC))$. Let $b$ (resp. $b_V$) be the $k$-point (resp. $V$-point) on $S$ whose fiber is $(X, \xi)$ (resp. $(X_V, \xi_V)$). Set $b_\IC := b_V \tensor \IC$, so that $(X_V \tensor_\tau \IC, \xi_V \tensor_\tau \IC) = (\sX_{b_\IC}, \bxi_{b_\IC})$. 

Recall that in \ref{period+} we constructed \'etale covers $\wt{S}^\#$ and $\wt{S}_{\sK^p}$ of $S$. In particular, the restriction of $\sX$ to $\wt{S}^\#$ or $\wt{S}_{\sK^p}$ comes equipped with a canonical aritificial orientation $\epsilon^p$ given by \ref{prop: Extend epsilon}. Moreover, by the modular interpretation of $\wt{S}_{\sK^p}$ relative to $S$, $\eta^\dagger$ induces a unique lift of $b_\IC$ to $\wt{S}_{\sK^p}$. We replace $S$ by the connected component of $\wt{S}_{\sK^p}$ containing this lift and replace $b, b_V, b_\IC$ and $\sX$ accordingly. Now for the family $\sX$ over $S$, we have a trivialization $\epsilon_2 : \underline{\det(L \tensor \IQ_2)}_S \to \det(\bP^2_{2})$, which extends canonically to an artificial orientation $\epsilon^p : \underline{\det(L^p)}_S \to \det(\bP^2_{\what{\IZ}^p})$. Moreover, we have a compatible $\sK^p$-level structure $[\eta^p]$. As in \ref{period+}, we obtain a period morphism $\rho : S \to \shS^\ad_\sK(L)$.  \\\\
\textbf{Step 2:} We briefly recall the description of $\rho$ on $\IC$-points. For each point $s \in S \tensor_\tau \IC$, we choose an \'etale path $\gamma_{\et, s}$ connecting $s$ and $b_\IC$, which induces an isomorphism $\gamma_\et^* : (\H^2_\et(\sX_s, \what{\IZ}^p), c_1(\bxi_s)) \sto (\H^2_\et(\sX_{b_\IC}, \what{\IZ}^p), c_1(\bxi_{b_\IC}))$. Choose an isomorphism $\mathring{\eta}_s : (\H^2(\sX_s, \IZ), c_1(\bxi_s)) \sto (\H^2_\et(\sX_{b_\IC}, \IZ), c_1(\bxi_{b_\IC}))$ such that $\mathring{\eta} \tensor \IZ_2$ sends $\epsilon_{2}|_{s}$ to $\epsilon_2|_{b_\IC}$. Set $\eta_{p, s} := \mathring{\eta} \tensor \IZ_p$ and $\eta_s := (\eta_{p, s} \oplus \gamma_{\et, s}^*) \circ (\eta^\dagger \tensor \what{\IZ})$. Then under the universal property \ref{prop: universal property of Shimura}, $\rho(s)$ is the $\IC$-point on $\Sh_{\sK}^\ad(L)_\IC$ defined by the tuple 
\begin{equation}
\label{eqn: period tuple}
    (\P^2(\sX_{s}, \IQ), \epsilon_2|_{s}, \eta_s \sK^{\ad}).
\end{equation}
Note that $(\gamma_{\et, s}^* \circ (\eta^\dagger \tensor \what{\IZ}^p)) \sK^p = [\eta^p]|_s$, and the orbit $\eta_{p, s} \sK_p^\ad$ is independent of the choice of $\mathring{\eta}_s$. Therefore, the orbit $\eta \sK^{\ad}$ is independent of the choices of $\gamma_\et$ and $\mathring{\eta}$. In particular, by \ref{Sasha}, $S_\IC$ is connected, so we may choose $\gamma_\et$ to be one that is induced by a topological path $[0,1] \to S_\IC$ connecting $s$ and $b_\IC$. \\\\
\textbf{Step 3:} For each $\sT = (X, \xi) \in \sS(Y, \zeta, k, \tau)$, take $\sX, S, b, b_V, b_\IC, \eta^\dagger, \rho, \epsilon^p, \eta^p$ by performing the constructions in Step 1. Let $\shT := \coprod_{\sT} S$, $\shX := \coprod_{\sT} \sX$, and $\rho_\shT : \shT \to \shS^\ad_\sK(L)_W$. By abuse of notation, denote the polarization on $\shX$ still by $\bxi$. Now suppose we have $k$-points $t_1, t_2$ on $\shT$ and a point $t \in \shS^\ad_\sK(L)_W$ such that $t = \rho_\shT(t_1) = \rho_\shT(t_2)$. Since $\shS^\ad_\sK(L)_W$ is flat over $W$, we may always find a finite extension $F$ of $K_0$ such that $t$ generizes to an $F$-point $\wt{t}$ via an $\sO_F$-point. Since $\rho_\shT$ is \'etale, we can find the corresponding $F$-point $\wt{t}_i$ on $\shT$ which specializes to $t_i$ such that $\rho_\shT(\wt{t}_i) = \wt{t}$. Set $s_i = \wt{t}_i(\IC)$. Since $\rho_\shT(s_1) = \rho_\shT(s_2)$, by our description in Step 2, there exist isomorphisms of pointed $\IZ$-lattices 
$$ \psi_i, \gamma_i : (\H^2(\shX_{s_i}, \IZ), c_1(\bxi_{s_i})) \sto (\Lambda, \lambda) $$
such that $\psi_1^{-1} \circ \psi_2$ preserves the Hodge structures, $\gamma_i$'s are parallel transport operators, and $(\psi_2 \circ \psi^{-1}_1) \tensor \what{\IZ}^p$ and $(\gamma_2 \circ \gamma_1^{-1}) \tensor \what{\IZ}^p$ lie in the same $\sK^{\ad, p}$-orbit. Therefore, we have $(\psi_2 \circ \psi^{-1}_1) \circ (\gamma_2 \circ \gamma_1^{-1})^{-1}$ lies in $\O(\Lambda, \lambda) \cap \sK^{\ad, p}$, and hence in $\Mon^2(Y)$ by \ref{spinorientation} and \ref{Markman}. This shows that $\psi_2 \circ \psi_1$ is a parallel transport operator. Verbitsky's theorem tells us that $\psi_2 \circ \psi_1$ is induced by an isomorphism $(\shX_{s_1}, \bxi_{s_1}) \iso (\shX_{s_2}, \bxi_{s_2})$. By \cite[Thm~2]{MM}, we obtain an isomorphism $(\shX_{t_1}, \bxi_{t_1}) \iso (\shX_{t_2}, \bxi_{t_2})$. 

Finally, note that $\mathrm{im}(\rho_\shT)$ is an open subscheme of $\shS^\ad_\sK(L)_W$ and is in particular quasi-compact. Therefore, there exists a finite subset $B$ of $\sS(Y, \zeta, k, \tau)$ such that setting $T := \coprod_{\sT \in B} S$, we have $\mathrm{im}(\rho_\shT) = \mathrm{im}(\rho_{\shT}|_{T})$. This implies that $T$ and $(\sY, \bzeta) := (\shX, \bxi)|_T$ satisfy the sought after properties. 
\end{proof}

\noindent \textit{Proof of \ref{bounded}}. By our spreading out lemmas \ref{defstrengthen} and \ref{charpspread}, it suffices to show that the bounds $m, N$ exist for $\kappa = \IC$ or $\bar{\IF}_p$. The $\kappa = \IC$ case follows from a theorem of K\'ollar and Matsusaka \cite{KM} (see also \cite[Thm~3.3]{Charles2}). The $\kappa = \bar{\IF}_p$ case follows from the $\kappa = \IC$ case, \ref{charpspread}, and \ref{prop: bounded}. The statement about the possible Hilbert polynomials is clear. \qed

\subsubsection{} Next, we study automorphisms of $\HKn$-type varieties.

\begin{proposition}
\label{liftauto}
Assume $p > n +1, 3$ and $k = \bar{\IF}_p$. Let $X/k$ be an excellent reduction of a $\HKn$-type variety such that $\H^2_\cris(X/W)$ is not supersingular. For any automorphism $f \in \Aut(X)$, there exists a finite flat extension $V$ of $W$, a deformation $X_V$ of $X$ over $V$ such that $f$ lifts to an automorphism of $X_V$ and the specialization map $\Pic(X_V) \to \Pic(X)$ is an isomorphism. In particular, $X_V$ is algebraizable.
\end{proposition}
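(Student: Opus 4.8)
The plan is to produce the required lift as the Nygaard--Ogus \emph{canonical lift} of $X$, which is cut out by the slope filtration on the K3 crystal $\H^2_\cris(X/W)$, and then to observe that such a lift is automatically $f$-equivariant and Picard preserving. Set $\Lambda := \NS(X) = \Pic(X)$; this is torsion free by \ref{NSfree}(c), and since $X$ is projective we fix an ample class $\xi \in \Lambda$. By \ref{rmk: fcomp} the $F$-crystal $\H^2_\cris(X/W)$, equipped with $q_X$ and $\Fil^2\H^2_\dR(X/k)$, is a K3 crystal, and because $X$ is not supersingular its slope filtration $0 \subsetneq H_{<1} \subsetneq H_{\le 1} \subsetneq \H^2_\cris(X/W)$ is nontrivial. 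As in \cite{NO} (compare \cite[\S3]{Ogus}) this slope filtration singles out an isotropic $W$-line $\widetilde{\Fil}^2 \subseteq \H^2_\cris(X/W)$ which reduces to $\Fil^2\H^2_\dR(X/k)$: concretely $\widetilde{\Fil}^2$ lies in the slopes-$>1$ part, which is totally isotropic and, by \eqref{ffcomp}, orthogonal to the slope-$1$ part. Since $c_1(\zeta) \in \H^2_\cris(X/W)^{F=p}$ lies in the slope-$1$ part for every $\zeta \in \Lambda$ (\ref{NSfree}(b)), we get $\widetilde{\Fil}^2 \perp c_1(\Lambda)$, i.e. $c_1(\Lambda)\otimes W \subseteq (\widetilde{\Fil}^2)^{\perp}$.

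Now I would invoke the local crystalline Torelli theorem \ref{locTor} with $R' = W$: the isotropic direct-summand line $\widetilde{\Fil}^2$ corresponds under the equivalence $\Psi_W \colon \mathsf{ERK3}^{[n]}_{W} \sto \mathsf{DK3}^{[n]}_{W}$ to a smooth proper lift $X_W$ of $X$ over $W$ with $\Fil^2\H^2_\dR(X_W/W) = \widetilde{\Fil}^2$. By obstruction theory for line bundles (\ref{loc}(b), together with \ref{flatloc}(a) applied to a basis of $\Lambda$, which is legitimate as $X$ is not supersingular), a line bundle $\zeta$ on $X$ extends to $X_W$ precisely when $c_{1,\cris}(\zeta) \in \Fil^1\H^2_\dR(X_W/W) = (\widetilde{\Fil}^2)^{\perp}$; by the previous paragraph this holds for all $\zeta \in \Lambda$. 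In particular $\xi$ lifts to $\xi_W \in \Pic(X_W)$, which is relatively ample (ampleness is open on the base and $\xi_W|_X = \xi$ is ample), so $X_W$ is algebraizable by Grothendieck's existence theorem. Next, $f$ lifts: the slope filtration, hence $\widetilde{\Fil}^2$, is functorial in the K3 crystal, so $f^* \in \O(\H^2_\cris(X/W))$ fixes $\widetilde{\Fil}^2$; thus $f$ defines an automorphism of the tuple $(X, \H^2_\cris(X/W), \mathrm{id}, \widetilde{\Fil}^2)$ in $\mathsf{DK3}^{[n]}_{W}$, which via $\Psi_W^{-1}$ is an automorphism $f_W \in \Aut(X_W)$ restricting to $f$ on the special fibre. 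Finally $\Pic(X_W) \to \Pic(X)$ is injective because $\H^1(X_W, \sO_{X_W}) = 0$ (the Hodge numbers of $X$, hence of $X_W$, agree with those of a complex $\HKn$-type variety), so $\sO_X$ has only the trivial lift; and it is surjective because all of $\Lambda = \Pic(X)$ extends to $X_W$. Hence $\Pic(X_W) \to \Pic(X)$ is an isomorphism, and we may take $V = W$.

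The technical heart of the argument is the construction and formal properties of $\widetilde{\Fil}^2$ in the K3-crystal setting: one must check that the slope filtration of $\H^2_\cris(X/W)$ really does produce an isotropic direct-summand line which (i) reduces to the Hodge line $\Fil^2\H^2_\dR(X/k)$, (ii) is orthogonal to $c_1(\Lambda)$, and (iii) is functorial for isomorphisms of excellent reductions. For K3 surfaces this is exactly the Nygaard--Ogus canonical lifting construction; in the $\HKn$-type case I expect it to go through essentially verbatim once one knows, as we do by \ref{rmk: fcomp}, that $\H^2_\cris(X/W)$ is a K3 crystal, the only inputs being the Frobenius compatibility \eqref{ffcomp} and the Mazur--Ogus inequality \ref{Deligne}(b). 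That verification, together with the bookkeeping needed to pass \ref{locTor} and \ref{loc} from Artinian bases to the complete base $W$, is where the real work lies; the passage from the canonical lift to the statement (algebraization, equivariance, Picard bijectivity) is then formal.
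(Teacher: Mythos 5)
There is a genuine gap, and it is exactly at the step you yourself flag as the "technical heart": the claimed canonical isotropic $W$-line $\widetilde{\Fil}^2$ inside the slope-$>1$ part does not exist in general. Write $h$ for the height of the associated one-dimensional formal group $G$ (so $H_{<1} \iso \ID(G^*)$ and $H_{>1} \iso \ID(G)(-1)$). The summand $H_{>1}$ is a line only when $h = 1$, i.e.\ when $X$ is ordinary. For $2 \le h < \infty$ it has rank $h$, and what the $F$-crystal structure canonically singles out is not a line but the corank-one submodule $\{x \in H_{>1} : F(x) \in p^2 H\} = V_G\ID(G)(-1)$, whose image mod $p$ is the Hodge line (Mazur--Ogus, \ref{Deligne}(b)). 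Choosing a $W$-line inside it lifting the Hodge line involves a genuine choice, and there is no reason an $f^*$-stable choice exists over $W$: the restriction of $f^*$ to $H_{>1}$ is an automorphism $\alpha$ of $\ID(G)$, i.e.\ a unit in the maximal order of the division algebra of invariant $1/h$, and if the eigenvalues of $\alpha$ do not lie in $K_0$ (e.g.\ $\alpha$ of order $p$, possible once $(p-1)\mid h$), then $\alpha$ has no eigenline over $W$ at all. So your functoriality argument ("the slope filtration, hence $\widetilde{\Fil}^2$, is functorial") proves nothing, because $\widetilde{\Fil}^2$ was never constructed. Note also that you end up proving the stronger statement "$V = W$ always works," whereas the proposition deliberately allows a finite flat, possibly ramified, extension $V$ --- that discrepancy is the symptom of this gap.

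The paper's proof handles exactly this point differently: it encodes deformations of the K3 crystal and of the filtration in terms of deformations of the formal group $G$ (a deformation $G_R$ over $R = V/(p)$ gives the deformed crystal $\bH$, and a further deformation $G_V$ over $V$ gives the isotropic line $\Fil_V$), and then uses Lubin--Tate theory over $k = \bar{\IF}_p$ to find a deformation $G_V$ over some finite flat --- in general ramified --- $V$ to which the automorphism of $G$ induced by $f$ lifts; feeding the resulting pair $(\bH, \Fil_V)$ into \ref{locTor} produces the $f$-equivariant lift. The parts of your argument that do survive are the peripheral ones: the orthogonality of $H_{=1}$ (where $c_1(\Pic(X))$ lives, by \ref{NSfree}(b)) to the formal-group part, which is how one sees that the resulting lift carries all of $\Pic(X)$, and the algebraization via a lifted ample class. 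In the ordinary case $h = 1$ your argument is essentially the Deligne--Nygaard canonical lift and is fine; for general finite height it does not go through, and no purely crystalline "canonical lift over $W$" can replace the Lubin--Tate step.
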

\begin{proof}
Given \ref{locTor}, the above poposition follows from the proofs of \cite[Prop.~4.5, Lem.~4.9]{Yang}. We sketch the main steps so that the reader can fill in the details by looking at the proofs in \textit{loc. cit.}

First, note that $H := \H^2_\cris(X/W)$ admits a slope decomposition $H_{< 1} \oplus H_{= 1} \oplus H_{> 1}$. By Dieudonn\'e theory, the F-crystal $H_{< 1}$ is isomorphic to $\ID(G^*)$ for some one-dimensional formal Lie group $G$ of finite height over $k$. Fix this isomorphism. The pairing $H_{< 1} \oplus H_{> 1} \to W(-2)$ determines an isomorphism $H_{> 1} \iso \ID(G)(-1)$, under which the pairing is a Tate twist of the canonical pairing $\ID(G^*) \times \ID(G) \to W(-1)$. Let $V$ be any finite flat extension of $W$ and set $R := V/(p)$. A deformation $G_R$ of $G$ over $R$ determines a deformation $\bH$ of the K3 crystal $\H^2_\cris(X/W)$ over $R$, and a further deformation $G_V$ of $G_R$ over $V$ determines an isotropic line $\Fil_V \subset \bH_V$ which lifts the line $\Fil \subset \bH_R$ given by the K3 crystal structure.  

Now, by \ref{locTor}, to lift the automorphism $f \in \Aut(X)$, it suffices to find a pair $(\bH, \Fil_V)$, where $\bH$ is a deformation of the K3 crystal $\H^2_\cris(X/W)$ over $R$, and $\Fil_V \subset \bH_V$ is an isotropic line lifting $\Fil^2 \H^2_\dR(X/k) = [\Fil^1 \ID(G)_k](-1)$. We reduce the problem to finding a deformation $G_V$ of $V$ over some $V$ which also lifts the automorphism of $G$ induced by $f$. This we can always do by Lubin-Tate theory, for which we use the assumption that $k = \bar{\IF}_p$. Using the arguments in \cite[Prop.~4.5]{Yang}, one checks that the resulting lifting $X_V$ of $X$ carries liftings of the entire $\Pic(X)$. 
\end{proof}

\begin{remark}
If $X$ is a K3 surface, then by \cite[Thm~3.12]{NO} and its proof, there is a natural isomorphism $\ID(\what{\mathrm{Br}}_X^*) \iso H_{< 1}$. Moreover, $H_{=1}$ is interpreted as $\ID(\H^2_\fl(X, \mu_{p^\infty})^*)$. In the above proof though all that we needed is an abstract deformation result about K3 crystals and these geometric interpretations are unnecessary. It is very likely that these geometric interpretations hold for general $X$'s as above. However, we point out that one place where arguments in \cite{NO} do not directly generalize is that one cannot argue that $\H^2_\fl(X, \mu_{p^\infty})$ is $p$-divisible by flat duality (see the proof of \cite[Lem.~3.1]{NO}). 
\end{remark}

\begin{proposition}
\label{killauto}
Assume $n \ge 2$ and $p > n + 1$. Let $k$ be an algebraically closed field of characteristic $p$. Let $X$ be a $\HKn$-type variety over $k$. 
\begin{enumerate}[label=\upshape{(\alph*)}]
    \item The natural map $\Aut(X) \to \O(\H^2_\cris(X/W))$ is injective. 
    \item The natural map $\Aut(X) \to \O(\H^2_\et(X, \IZ_\ell))$ is injective for every prime $\ell \neq p$. 
    \item Suppose $f$ is an automorphism of $X$ which fixes a polarization. If for some number $m$ with $m \ge 3$ and $p \nmid m$, $f$ acts trivially on $\P^2_\et(X, \what{\IZ}^p)$ modulo $m$, then $f$ is the identity. 
\end{enumerate}
\end{proposition}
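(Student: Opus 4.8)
The plan is to handle (a), (b), (c) together, reducing (c) to (b), reducing (b) to an (a)-type statement, and splitting into the non-supersingular and supersingular cases. As a standing remark, the symplectic form gives an isomorphism $T_X \iso \Ohm_X^1$, so $\H^0(X, T_X) = \H^0(X, \Ohm_X^1) = 0$; hence $\Aut(X)$ is unramified over $k$ and $\Aut(X, \zeta)$ is finite for any ample $\zeta$. By \ref{NSfree} the Chern class maps $c_1\colon \NS(X)\tensor\IZ_\ell \to \H^2_\et(X,\IZ_\ell)$ and $c_1\colon \NS(X)\tensor\IZ_p \to \H^2_\cris(X/W)^{F=p}$ are injective, so any $f$ in the kernel of the map in (a) or (b) fixes $\NS(X)$ pointwise, hence fixes a polarization; thus these kernels are finite. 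For (c): if $f$ fixes a polarization $\xi$ and acts trivially on $\P^2_\et(X,\what{\IZ}^p)$ modulo $m$, then for each prime $\ell \mid m$ (automatically $\ell \ne p$) the finite-order operator $f$ on $\P^2_\et(X,\IZ_\ell)$ is congruent to the identity modulo $\ell^{v_\ell(m)}\ge 3$; since a level-$\ge 3$ principal congruence subgroup of $\GL$ over $\IZ_\ell$ is torsion-free, $f$ acts trivially on $\P^2_\et(X,\IZ_\ell)$, and also on $c_1(\xi)$, hence on $\H^2_\et(X,\IZ_\ell)$, and (b) gives $f=\id$. So it remains to prove (a) and (b).

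For the non-supersingular case, let $f$ be in the kernel of the map in (a) (resp. in (b) for some $\ell\ne p$); then $f$ fixes $\NS(X)$, hence an ample $\xi$. By \ref{liftauto} there is a finite flat extension $V/W$ and a deformation $X_V$ of $X$ to which $f$ lifts, with $\Pic(X_V)\to\Pic(X)$ an isomorphism; in particular $\xi$ lifts, so by \ref{transmit+} the geometric generic fibre is of $\HKn$-type. Since $f$ is trivial on $\H^2_\cris(X/W)$ (resp. on $\H^2_\et(X,\IZ_\ell)$), functoriality of the crystalline--de Rham comparison (resp. of \'etale cohomology together with smooth proper base change) shows the lift $f_V$ is trivial on $\H^2_\dR(X_V/V)$ (resp. on $\H^2_\et$ of the geometric generic fibre); choosing an embedding $V\into\IC$, the induced automorphism $f_\IC$ of the complex $\HKn$-type manifold $X_V(\IC)$ is then trivial on $\H^2(X_V(\IC),\IZ)$. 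By the faithfulness of the action of $\Aut$ on $\H^2$ for complex $\HKn$-type manifolds (see, e.g., \cite{MarkmanSurvey}), $f_\IC=\id$; descending back down, $f_V=\id$ and $f=\id$.

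For the supersingular case, \ref{tateImprove} upgrades the two Chern class maps above to isomorphisms $\NS(X)\tensor\IZ_\ell\iso\H^2_\et(X,\IZ_\ell)$ and $\NS(X)\tensor\IZ_p\iso\H^2_\cris(X/W)^{F=p}$, and the latter is a free $\IZ_p$-module of rank equal to the second Betti number, hence spans $\H^2_\cris(X/W)\tensor K_0$. Therefore the kernels of all three maps in (a) and (b) coincide with $K:=\{\,f\in\Aut(X): f|_{\NS(X)}=\id\,\}$, so (a) and (b) become equivalent and it suffices to show $K=1$. The group $K$ is finite; for $f\in K$ of order prime to $p$ I would argue as follows. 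Since $f$ is trivial on $\H^2_\cris(X/W)$ it is trivial on $\H^2_\dR(X/k)$, hence fixes the symplectic form and acts trivially on the tangent space $\H^1(X,T_X)\iso\H^1(X,\Ohm_X^1)$ of $\Def(X)$ (using \ref{cor: perfect pairing} and \ref{Deligne}); having order prime to $p$, $f$ then acts trivially on all of $\Def(X)$ by a linearisation of the action. Using \ref{liftpair} to produce a characteristic-zero point of $\Def(X)$ carrying lifts of $\xi$ and of enough line bundles, $f$ extends to an automorphism of the corresponding lift which, by horizontality of the Gauss--Manin connection, is trivial on $\H^2_\dR$; the complex-case argument of the previous paragraph then forces this lift of $f$ to be the identity, so $f=\id$. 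Hence $K$ is a $p$-group.

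The hard part — the main obstacle — is to exclude a nontrivial $f\in K$ of order $p$, i.e. a wild automorphism of a supersingular $X$ acting trivially on $\H^2_\cris(X/W)$: here the linearisation/lifting mechanism breaks down, since $f$ need not act trivially on $\Def(X)$ and cannot be lifted along with $X$. I expect this to be handled exactly as in the K3 case (cf. \cite{Ogus}, \cite{Rizov1}), by analysing the fixed scheme $X^f$: the Lefschetz trace formula $\sum_i(-1)^i\Trace(f^*\mid\H^i_\et(X,\IQ_\ell))=\chi((X^f)_{\mathrm{red}})$ is valid for automorphisms of any finite order, and the triviality of $f$ on $\NS(X)=\H^2_\et(X,\IZ_\ell)(1)$, on $\H^0$, $\H^{4n}$, and — using that $X$ is supersingular, so that the higher crystalline cohomology is slope-pure with Frobenius-fixed part spanned by algebraic cycles — on all of $\H^\ast_\et(X,\IQ_\ell)$, should together with the fact that $f$ is symplectic contradict $X^f\subsetneq X$. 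Once $K=1$ is established, (a) and (b) follow in all cases, and (c) follows from (b) as above.
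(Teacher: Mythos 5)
Your reductions of (c) to (b) and of the supersingular case of (b) to (a) match the paper, and your non-supersingular lifting argument is in the spirit of the paper's proof of (b). But there is a genuine gap at exactly the point you flag as "the hard part": the kernel of (a)--(b) in the supersingular case (in particular a possible order-$p$ automorphism trivial on cohomology) is left as an expectation, and the route you sketch would not go through. A Lefschetz fixed-point argument needs control of $f^*$ on \emph{all} of $\H^\ast_\et(X,\IQ_\ell)$, but your hypothesis only controls $\H^2$; for $n\ge 2$ the cohomology of a $\HKn$-type variety is not generated by $\H^2$ (already $b_4 > \dim \mathrm{Sym}^2\H^2$ for $\mathrm{K3}^{[2]}$), and the claim that the higher slope-pure crystalline cohomology of a supersingular $X$ is spanned by algebraic classes on which $f$ acts trivially amounts to a higher-codimension Tate-type statement that is not available (the paper only has the divisorial case, \ref{tateImprove}). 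The paper needs none of this: since $f$ acts trivially on $\H^2_\cris(X/W)$, it \emph{automatically} preserves the Hodge filtration induced by any lift $X_W$ over $W$ itself, and such a lift carrying a primitive polarization exists even in the supersingular case by \ref{flatloc} together with \ref{tateImprove} (one finds a class $\zeta$ with $p\nmid q_X(\zeta)$, so $\Def(X;\zeta)$ is smooth over $W$). The local Torelli theorem \ref{loc}(a) then lifts $f$ to $X_W$, the lift is trivial on $\H^2_\dR(X_W/W)$, and cohomological rigidity of complex $\HKn$-type manifolds after an embedding $W\into\IC$ gives $f=\mathrm{id}$. This works uniformly, with no case division by supersingularity or by the order of $f$, so the obstacle you identify is an artifact of your approach rather than of the problem.

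Two smaller points. First, \ref{liftauto} is stated and proved (via Lubin--Tate) only for $k=\bar{\IF}_p$, whereas you invoke it over an arbitrary algebraically closed $k$; the paper inserts a spreading-out reduction to $\bar{\IF}_p$ for (b), where "trivial on $\H^2_\et(X,\IZ_\ell)$" specializes harmlessly, but for (a) the analogous reduction of "trivial on $\H^2_\cris(X/W)$" is not routine, which is another reason the paper's proof of (a) avoids \ref{liftauto} altogether. Second, in (c) your assertion that $\ell^{v_\ell(m)}\ge 3$ for \emph{every} prime $\ell\mid m$ is false (e.g.\ $m=6$, $\ell=2$); the argument survives because $m\ge 3$ forces \emph{some} prime power factor of $m$ to be at least $3$, and one prime $\ell\neq p$ with a torsion-free principal congruence subgroup of $\GL(\P^2_\et(X,\IZ_\ell))$ suffices, after which your appeal to (b) is the same endgame as the paper's argument via the Minkowski-type lemma cited from Mumford.
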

\begin{proof}
For (a), one can apply the same arguments in the proof of \cite[Cor.~2.5]{Ogus}: One can always choose a lift $X_W$ of $X$ over $W$ which carries the lift of a primitive polarization, by \ref{flatloc} and \ref{tateImprove}. If $f \in \Aut(X)$ acts trivially on $\H^2_\cris(X/W)$, it automatically preserves the filtration on $\H^2_\cris(X/W)$ induced by $X_W$. By the local Torelli theorem, $f$ lifts to $X_W$ and the lift of $f$ acts trivially on $\H^2_\dR(X_W/W)$. By Theorem~\ref{transmit}, for any embedding $W \into \IC$, $X_W \tensor \IC$ is of $\HKn$-type, but we know that hyperk\"ahler manifolds of $\HKn$-type are cohomologically rigid, so that the natural map $\Aut(X_\IC) \to \O(\H^2(X_\IC, \IZ))$ is injective.

(b) If $X$ is supersingular, then $\H^2_\cris(X/W)[1/p]$ and $\H^2_\et(X, \what{\IZ}^p)$ are both spanned by algebraic classes. Since automorphisms have to preserve $\Pic(X)$, (b) reduces to (a). If $X$ is not supersingular, then we first use a standard spreading out argument to reduce (b) to the case $k = \bar{\IF}_p$. Let $X_V$ be the lift of $X$ given by applying \ref{liftauto} to $f$. For any embedding $V \into \IC$, the lift of $f$ acts trivially on $\H^2_\et(X_V \tensor \IC, \IZ_\ell)$, so it acts trivially on $\H^2(X_V \tensor \IC, \IZ)$ and $\H^2_\dR(X_V/V) \tensor K$. By the Berthelot-Ogus isomorphism $\H^2_\cris(X/W) \tensor K \iso \H^2_\dR(X_V/V) \tensor K$, $f$ must act trivially on $\H^2_\cris(X/W)$, so we reduce to (a). 

(c) By a general result of Matsusaka \cite[Thm~6(iv)]{Mat}, the automorphism group $\Aut(X, \xi)$ is finite, for any polarization $\xi$ on $X$. Therefore, $f$ has finite order. Next, let $\P^2(X)$ be either $\P^2(X)= \P^2_\et(X, \IZ_\ell)$ for some $\ell \neq p$ or $\P^2_\cris(X/W)[1/p]$. We claim that the characteristic polynomial $\det(tI - f^*|_{\P^2(X)})$ has coefficients in $\IZ$ and is independent of the cohomology theory chosen for $\P^2(X)$. Again, in the supersingular case, this follows from \ref{tateImprove}. In the finite height case, we first reduce to $k = \bar{\IF}_p$ using standard spreading out argument, apply (b) and use that fact that automorphisms of complex manifolds preserve the Betti cohomology with integral coefficients. Now let $\alpha$ be an eigenvalue for $f^* |_{\P^2(X)}$. One checks that $\alpha$ is of the form $1 + mc$ for some algebraic integer $c$. By the lemma in \cite[208]{Mumford}, $\alpha = 1$ when $m \ge 3$. Since $f$ has finite order, $f^*|_{\P^2(X)}$ has to be the identity. Now (c) follows from (a) or (b). 
\end{proof}

\begin{remark}
\label{rmkkillauto}
Rizov proved the above results for K3 surfaces (\cite[Prop.~3.4.2, Cor.~3.4.5]{Rizov1}) by taking advantage of the fact that a K3 surface automatically admits a Chow-K\"unneth decomposition (see \cite[Lem.~3.4.4]{Rizov1}). Rizov's proof would have generalized directly were the Chow-K\"unneth decomposition known for $\HKn$-type varieties in general. Unfortunately, currently we only know this when $X$ is a Hilbert scheme of $n$-points on a K3 surface (\cite[Thm~1]{Vial}). We also remark that even when $n = 1$ (c) is not true if $X$ is supersingular (\cite[Thm~1.4]{EK}). Of course, in the supersingular case, our proof cannot proceed, but it is interesting that in \textit{loc. cit.} counterexamples are provided by considering complex dynamics. Finally, we caution the reader that the injectivity of $\Aut(X_\IC) \to \O(\H^2(X_\IC, \IZ))$ is not some general fact about hyperk\"ahler manifolds: It is not true for manifolds of generalized Kummer type. 
\end{remark}

Now consider the moduli functor $\Mod_{n, d}$ which sends each scheme $S$ over $\IZ_{(p)}$ to the groupoid of primitively polarized $\HKn$-spaces of degree $d$ over $S$. 
\begin{proposition}
$\Mod_{n, d}$ is reprsentable by a Deligne-Mumford stack of finite type over $\IZ_{(p)}$. 
\end{proposition}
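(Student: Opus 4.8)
The plan is to exhibit $\Mod_{n,d}$ as a quotient stack, following the strategy used by Rizov \cite{Rizov1} for $n=1$. The two substantive inputs are the boundedness Theorem~\ref{bounded} (already at our disposal) and the fact that ``being of $\HKn$-type'' cuts out an open and closed subscheme of the relevant Hilbert-scheme locus; I expect this last point, rather than any piece of formal stack theory, to be the main obstacle, and it is precisely what the spreading-out lemmas of \S2.3 together with Theorem~\ref{transmit} are designed to supply.

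Concretely, by Theorem~\ref{bounded} we may fix an integer $m$ with $p\nmid m$ and a finite set of polynomials $P_1,\dots,P_r$ so that for every primitively polarized $\HKn$-type variety $(X,\xi)$ of degree $d$ over an algebraically closed field over $\IZ_{(p)}$, $m\xi$ is very ample, $\H^i(X,m\xi)=0$ for $i>0$, and the Hilbert polynomial of the resulting embedding is one of the $P_i$. For each $i$, inside the scheme $\Hilb^{+,m}_{P_i}$ of \ref{hilb+} let $\sH_i$ be the locus of those $[\sZ\subset\IP^{P_i(0)-1}]$ for which the fibres of $\sZ$ are of $\HKn$-type and the tautological hyperplane-section system is complete and nondegenerate. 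The completeness and nondegeneracy conditions are open, so the content is that the $\HKn$-type locus is open and closed in $\Hilb^{+,m}_{P_i}$. Over schemes of characteristic $p$ this is exactly \ref{charpspread}, and over schemes of characteristic $0$ it is \ref{defstrengthen}; for a connected component $C$ of $\Hilb^{+,m}_{P_i,W}$ meeting both fibres over $\Spec W$, \ref{Hilbconnected} makes the generic fibre of $C$ connected, and one propagates the property between the special and generic fibres using \ref{defstrengthen}, Theorem~\ref{transmit}(b), and Definition~\ref{def}(b) (a smooth proper family over a discrete valuation ring with $\HKn$-type generic fibre carrying an extension of a primitive polarization, and matching Hodge numbers, has $\HKn$-type special fibre); the statement over $\IZ_{(p)}$ then descends along the faithfully flat map $\IZ_{(p)}\to W$. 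Hence each $\sH_i$ is a quasi-projective scheme of finite type over $\IZ_{(p)}$.

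For each $i$ let $\mathbb{G}_i:=\PGL_{P_i(0),\IZ_{(p)}}$, a smooth affine group scheme of finite type, acting on $\sH_i$ by change of homogeneous coordinates; this action visibly preserves $\sH_i$. Since the Hilbert polynomial of $m\bxi$ is a locally constant invariant of a primitively polarized $\HKn$-space, $\Mod_{n,d}$ decomposes as a finite disjoint union $\coprod_i\Mod_{n,d}^{(i)}$, and the standard argument identifies $\Mod_{n,d}^{(i)}$ with $[\sH_i/\mathbb{G}_i]$: for a primitively polarized $\HKn$-space $(f:\sX\to S,\bxi)$ with $m\bxi$ fibrewise of Hilbert polynomial $P_i$, the sheaf $f_*(m\bxi)$ is locally free of rank $P_i(0)$ with formation commuting with base change (as the higher cohomology of $m\bxi$ vanishes on fibres), an \'etale-local trivialization of $f_*(m\bxi)$ makes $\sX$ a projective $S$-scheme embedded in $\IP^{P_i(0)-1}_S$ and thus gives an $S$-point of $\sH_i$, and the ambiguity of the trivialization is exactly the $\mathbb{G}_i$-action. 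As $\sH_i$ is a scheme of finite type over $\IZ_{(p)}$ and $\mathbb{G}_i$ is smooth affine, each $[\sH_i/\mathbb{G}_i]$ is an algebraic stack of finite type over $\IZ_{(p)}$ with representable affine (hence quasi-compact and separated) diagonal, admitting the smooth surjective atlas $\sH_i\to[\sH_i/\mathbb{G}_i]$.

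Finally, $[\sH_i/\mathbb{G}_i]$ is Deligne--Mumford precisely when its diagonal is unramified, equivalently when $\underline{\Aut}(X,\xi)$ is unramified over $k(X)$ for every geometric point $(X,\xi)$. The tangent space of $\underline{\Aut}(X,\xi)$ at the identity lies in $\Lie\,\underline{\Aut}(X)=\H^0(X,T_X)$, so it suffices to see $\H^0(X,T_X)=0$. In characteristic $0$ this is classical for $\HKn$-type varieties; in characteristic $p$, $X$ is an excellent reduction, so $\omega_X\cong\sO_X$ and $X$ has the Hodge numbers of a complex $\HKn$-type variety, whence by Serre duality $\H^0(X,T_X)^\vee\cong\H^{2n}(X,\Omega^1_X\otimes\omega_X)\cong\H^{2n}(X,\Omega^1_X)$, which vanishes since the Hodge number $h^{1,2n}$ is $0$ ($1+2n$ being odd). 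Thus $\underline{\Aut}(X,\xi)$ is unramified, and it is finite by Matsusaka's theorem \cite[Thm~6(iv)]{Mat} (cf.\ \ref{killauto}), hence finite \'etale. Therefore $\Mod_{n,d}\cong\coprod_i[\sH_i/\mathbb{G}_i]$ is a Deligne--Mumford stack of finite type over $\IZ_{(p)}$; as noted, the only step needing real work is the openness-and-closedness of the $\HKn$-type locus on the Hilbert scheme, and everything else is formal.
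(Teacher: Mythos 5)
Your proof is correct and follows essentially the same route as the paper: boundedness via Theorem~\ref{bounded}, the spreading-out lemmas \ref{defstrengthen}, \ref{Hilbconnected}, \ref{charpspread} to show that the $\HKn$-type locus is a union of connected components of the relevant Hilbert schemes, and the absence of infinitesimal automorphisms (no global vector fields) to obtain Deligne--Mumford-ness. The differences are only presentational: the paper takes the union of admissible components as a smooth surjective cover and cites Rizov for the representable, separated, unramified $\mathrm{Isom}$ functor, whereas you package the same data as a $\PGL$-quotient stack and make the vanishing of $\H^0(X, T_X)$ explicit via Serre duality and the Hodge numbers.
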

\begin{proof}
The $n = 1$ case, i.e., the case for K3 surfaces, has already been treated by Rizov (\cite[Thm~4.3.3]{Rizov1}). Given our preparations, there is no obstruction to generalizing the proofs in \textit{loc. cit.} We sketch the main steps for the adaptation. First, let $m$, $N$ and $\sQ_{n, d, m}$ be as in \ref{bounded}. Again we assume $p \nmid m$. Consider the disjoint union $\coprod_{P \in \sQ_{n, d, m}} \Hilb^{+, m}_P$ and let the universal family over it be denoted by $\shZ$. We call a connected component $\sC$ of this union admissible if $\sC$ contains a geometric point $s$ such that $\shZ_s$ is of $\HKn$-type. One checks by \ref{defstrengthen}, \ref{Hilbconnected}, and \ref{charpspread} that $\sC$ is admissible if and only if for every geometric point $s$, $\shZ_s$ is of $\HKn$-type. In fact, by the density result of Mongardi and Pazienza \cite[Cor.~1.2]{Density}, to check that $\sC$ is admissible, it suffices to check that it contains a $\IC$-point $s$ such that $\shZ_s$ is birational to the Hilbert scheme of $n$ points on some K3 surface. Let $H_{m, d}$ be the union admissible components. Then we obtain a natural morphism $H_{m, d} \to \Mod_{n, d}$, which can easily be checked to be smooth and surjective. 

Next, one shows that given a $\IZ_{(p)}$-scheme $S$ and two objects $\sX, \sY$ of $\Mod_{n, d}$ over $S$, the functor which parametrizes isomorphisms $\mathrm{Isom}_S(\sX, \sY)$ is representable by a separated unramified scheme over $S$. The proof \cite[Lem.~4.3.8]{Rizov1} proceeds without change. The key is that a $\HKn$-type varieties has no global vector fields, and hence no nontrivial infinitesimal automorphisms (cf. \cite[Thm~3.3.1]{Rizov1}). Now we may conclude because $\Mod_{n, d}$ has a representable, separated and unramified diagonal and a smooth cover by $H_{m, d}$. \end{proof}

\subsubsection{}\label{Lambdadecomp} Let $\Lambda$ denote the $\HKn$-lattice $\Lambda_n$. It is easy to see that there is decomposition 
$$ \Mod_{n, d} = \coprod_{\{(\Lambda, \lambda)\}/\iso} \Mod_{n, d}^{(\Lambda, \lambda)} $$
where $\lambda$ runs through primitive vectors of $\Lambda$ with $\lambda_n (\< \lambda, \lambda \>)^n = d$ and $\Mod_{n, d}^{(\Lambda, \lambda)}$ is a union of connected components of $\Mod_{n, d}$ such that for every geometric point on $\Mod_{n, d}^{(\Lambda, \lambda)}$, the map $\PL$ (see \ref{defPLLem}) takes the fiber to $(\Lambda, \lambda)$. Since $\Mod_{n,d}$ is of finte type, for only finitely many isomorphism classes of $(\Lambda, \lambda)$, $\Mod_{n, d}^{(\Lambda, \lambda)} \neq \emptyset$. 

We restrict our attention to a single $\Mod_{n, d}^{(\Lambda, \lambda)}$. Let the universal family over it be denoted by $(\shX, \bxi)$. As in \ref{period+}, let $\bH^2_*$ and $\bP^2_*$ ($* = B, \ell, \dR, \cris$) be the sheaves given by the cohomology of the universal family. Let $L = \lambda^\perp \subset \Lambda$. Construct a finite \'etale cover $\wt{\Mod}_{n, d}^{(\Lambda, \lambda), \#}$ of $\Mod_{n, d}^{(\Lambda, \lambda)}$, so that there are universal trivializations $\epsilon : \underline{\det(L_2)} \stackrel{\sim}{\to} \det(\bP^2_2)$ and $\Delta : \underline{\disc(L)} \stackrel{\sim}{\to} \disc(\bP^2_{\what{\IZ}^p})$. As in \ref{period+}, we can construct a period morphism $\wt{\Mod}_{n, d, \IQ}^{(\Lambda, \lambda), \#} \to \Sh^\ad_{K_0}(L)$, using which we can extend $\epsilon$ uniquely to an artificial orientation $\epsilon^p$ over $\Mod_{n, d}^{(\Lambda, \lambda)}$ which is characterized by the property that for any $s \in \wt{\Mod}_{n, d, \IQ}^{(\Lambda, \lambda)}(\IC)$, $\epsilon^p|_s$ is obtained by tensoring a (necessarily unique) isometry $\det(L) \to \det(\P^2(\shX_s, \IZ))$ with $\what{\IZ}^p$. For any compact open subgroup $\sK^p$ of $\sK_0^p$, we can now speak of the finite \'etale cover $\wt{\Mod}_{n, d, \sK^p}^{(\Lambda, \lambda)}$ of $\wt{\Mod}_{n, d}^{(\Lambda, \lambda)}$ which parametrizes $\sK^p$-level structures compatible with $\epsilon^p$ and $\Delta$. 

\begin{theorem}
\label{fineMod}
For $\sK^p$ small enough, $\wt{\Mod}_{n, d, \sK^p}^{(\Lambda, \lambda)}$ is representable by a quasi-projective scheme over $\IZ_{(p)}$. 
\end{theorem}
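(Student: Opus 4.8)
The plan is to reduce representability to the corresponding fact for orthogonal Shimura varieties, using the period morphism together with the standard device of killing automorphisms by level structures. First I would fix $(\Lambda,\lambda)$ with $\Lambda\iso\Lambda_n$ and $L=\lambda^\perp$, and recall that $\Mod_{n,d}^{(\Lambda,\lambda)}$ is a Deligne--Mumford stack of finite type over $\IZ_{(p)}$, so it suffices to find a $\sK^p$ for which $\wt{\Mod}_{n,d,\sK^p}^{(\Lambda,\lambda)}$ is an algebraic space with trivial (generic) automorphism groups; then it will be a scheme, and quasi-projectivity will follow from finite-typeness plus an ampleness argument. The key input is \ref{killauto}(c): if $\sK^p$ is contained in the principal congruence subgroup $\ker(\CSpin(L^p)\to\CSpin(L^p/m))$ for some $m\ge 3$ prime to $p$, then an automorphism of a fiber $(\shX_s,\bxi_s)$ that preserves a $\sK^p$-level structure acts trivially on $\P^2_\et(\shX_s,\what\IZ^p)$ modulo $m$, hence is the identity. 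Over characteristic-zero points this is classical (hyperk\"ahler rigidity, \ref{killauto}(b)), and \ref{killauto} covers all residue characteristics under $p>n+1$. Thus for such $\sK^p$ the forgetful map $\wt{\Mod}_{n,d,\sK^p}^{(\Lambda,\lambda)}\to\Mod_{n,d}^{(\Lambda,\lambda)}$ is finite \'etale and the source has no nontrivial automorphisms, so it is an algebraic space.

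Next I would upgrade ``algebraic space'' to ``scheme.'' Having constructed in \ref{period+} (applied now with $S=\wt{\Mod}_{n,d,\sK^p}^{(\Lambda,\lambda)}$, or rather an \'etale cover thereof by local deformation spaces) the period morphism $\rho_{\sK^p}:\wt{\Mod}_{n,d,\sK^p}^{(\Lambda,\lambda)}\to\shS^\ad_{\sK}(L)$ with $\sK=\sK_{0,p}\sK^p$, I would invoke \ref{compMotive}(a): $\rho_{\sK^p}$ is \'etale. Choosing $\sK^p$ small enough that $\shS^\ad_\sK(L)$ is itself a quasi-projective scheme (this is available by Madapusi-Pera's construction of the integral canonical model, since for $\sK^p$ in a deep enough congruence subgroup the Shimura variety has no stacky structure and is quasi-projective over $\IZ_{(p)}$ by an ampleness argument à la \cite[5.3]{CSpin}), an algebraic space that is \'etale over a scheme and has separated diagonal is itself a scheme; the diagonal of $\wt{\Mod}_{n,d,\sK^p}^{(\Lambda,\lambda)}$ is separated and unramified because $\Mod_{n,d}$ has this property (proved above) and finite \'etale covers inherit it.

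Finally, quasi-projectivity: $\wt{\Mod}_{n,d,\sK^p}^{(\Lambda,\lambda)}$ is a separated scheme of finite type over $\IZ_{(p)}$, and it carries the relatively very ample line bundle $f_*(m\bxi)^{\otimes?}$ — more precisely, the determinant of the Hodge bundle, or the pushforward of a high power of the universal polarization, gives a natural ample line bundle; alternatively, since $\rho_{\sK^p}$ is quasi-finite and separated onto a quasi-projective scheme, Zariski's main theorem plus the existence of an ample bundle on the target via the automorphic line bundle pulled back along $\rho_{\sK^p}$ gives quasi-projectivity directly. I would run the argument of \cite[Thm~4.3.3]{Rizov1} verbatim here. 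The main obstacle I anticipate is purely bookkeeping: making sure that the artificial orientations $\epsilon^p$ and the discriminant trivializations $\Delta$ (needed so that $\sK^{p,\ad}$ genuinely acts on $(\Lambda^p,\lambda^p)$, cf.\ \ref{2.1.7}) are threaded consistently through the cover $\wt{\Mod}_{n,d,\sK^p}^{(\Lambda,\lambda)}$ so that the period morphism is defined \emph{integrally} and is \'etale, which is exactly what \ref{compMotive} and \ref{prop: Extend epsilon} are set up to provide; once that is in place the representability is formal.
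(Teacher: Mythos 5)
Your proposal follows the paper's proof essentially verbatim: for $\sK^p$ small enough, \ref{killauto}(c) kills all automorphisms so $\wt{\Mod}_{n, d, \sK^p}^{(\Lambda, \lambda)}$ is an algebraic space of finite type, and the \'etale period morphism to the quasi-projective $\shS^\ad_\sK(L)$ together with Zariski's main theorem (the paper cites \cite[Prop.~16.5]{LMB}, embedding the space as an open sub-algebraic space of a finite $\shS^\ad_\sK(L)$-scheme) gives the quasi-projective scheme structure. One caveat: your intermediate claim that an algebraic space \'etale over a scheme with separated diagonal is a scheme is not correct as stated (the diagonal of an algebraic space is automatically a monomorphism, hence separated; what one needs is that the morphism itself be quasi-finite and separated), but your second route via ZMT onto the quasi-projective target is exactly the paper's argument and renders that detour unnecessary.
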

\begin{proof}
Let $S$ be any $\IZ_{(p)}$-scheme. Let $(\sX \to S, \xi)$ be any polarized $\HKn$-space over a $\IZ_{(p)}$-scheme $S$. Since $\Aut_S(\sX, \xi)$ is representable and unramified over $S$, if $f \in \Aut_S(\sX, \xi)$ acts trivially on each geometric fiber, $f$ must be trivial. Therefore, when $\sK^p$ is small enough, \ref{killauto}(c) ensures that objects in $\wt{\Mod}_{n, d, \sK^p}^{(\Lambda, \lambda)}(S)$ have no nontrivial automorphisms, so that $\wt{\Mod}_{n, d, \sK^p}^{(\Lambda, \lambda)}$ is representable by an algebraic space of finite type. 

When $\sK^p$ is small enough, $\shS^\ad_\sK(L)$ is a quasi-projective scheme, where $\sK := \sK_{0,p} \sK^p$. Recall that there is an \'etale morphism $\rho : \wt{\Mod}_{n, d, \sK^p}^{(\Lambda, \lambda)} \to \shS^\ad_\sK(L)$. As $\wt{\Mod}_{n, d, \sK^p}^{(\Lambda, \lambda)}$ is an algebraic space, $\rho$ is representable. Reference \cite[Prop.~16.5]{LMB} tells us that $\wt{\Mod}_{n, d, \sK^p}^{(\Lambda, \lambda)}$ embeds as an open sub-algebraic space into a finite $\shS^\ad_\sK(L)$-scheme, so $\wt{\Mod}_{n, d, \sK^p}^{(\Lambda, \lambda)}$ is a quasi-projective scheme. 
\end{proof}

\subsection{Crystalline Torelli}
\label{4.3}

Let $k$ be an algebraically closed field of characteristic $p > n + 1$.

\begin{definition}
\label{defetPT}
Let $X, X'$ be two $\HKn$-type varieties over $k$. An isomorphism $\psi^p : \H^2_\et(X, \what{\IZ}^p) \stackrel{\sim}{\to} \H^2_\et(X', \what{\IZ}^p)$ is said to be an \'etale parallel transport operator if there exists a connected scheme $S$ over $W$ with $k$-points $b, b'$ and a $\HKn$-space $\sX \to S$ such that under some isomorphism $\sX_b \iso X$, $\sX_{b'} \iso X'$, $\psi^p$ is induced by an \'etale path on $S$ from $b$ to $b'$. If $\sX \to S$ can be taken to be (primitively) polarizable, then we say in addition that $\psi^p$ is (primitively) polarizable. 
\end{definition}

By an \'etale path we mean an isomorphism of fiber functors from the category of finite \'etale covers to the category of sets defined by $b$ and $b'$. For a reference, see \cite[Tag~03VD]{stacks-project}.

\subsubsection{} Let $X$ and $X'$ be two supersingular $\HKn$-type varieties over $k$ and $\psi : \NS(X)_\IQ \stackrel{\sim}{\to} \NS(X')_\IQ$ be a rational isometry. By \ref{tateImprove}, $\psi$ induces via the Chern class maps isomorphisms $\psi_\et: \H^2_\et(X, \IA_f^p) \stackrel{\sim}{\to} \H^2_\et(X', \IA^p_f)$ and $\psi_\cris: \H^2_\cris(X/W)\tensor K_0 \stackrel{\sim}{\to} \H^2_\cris(X'/W) \tensor K_0$. We remark that these maps can alternatively be viewed as induced by a correspondence $Z_\psi$.
\begin{lemma}
\label{lem: induced by an isog}
$\psi$ is induced by a correspondence $Z_\psi : X' \rightsquigarrow X$.
\end{lemma}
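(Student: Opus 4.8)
The plan is to transport $\psi$ through the Kuga--Satake construction of \S3, exploiting the fact that the supersingularity of $X$ and $X'$ makes both their second cohomology (by \ref{tateImprove}) and their Kuga--Satake abelian varieties entirely governed by algebraic data. First I would choose a primitive polarization $\xi$ on $X$ and, using that $\NS(X')_\IQ$ is by \ref{tateImprove} the full quadratic space $\H^2(X')_\IQ$, a primitive polarization $\xi'$ on $X'$ with $\xi'^{\perp}$ isometric over $\IQ$ to $(\psi\,\xi)^{\perp}\subset\NS(X')_\IQ$. Applying Artin approximation to $\Def(X;\xi)$ and $\Def(X';\xi')$ together with the period morphisms of \S\ref{sec: period++}, I obtain connected $W$-schemes $S,S'$ carrying universal polarized deformations of $(X,\xi),(X',\xi')$, lifts $\tilde b\in\wt S^{\#}$, $\tilde b'\in\wt S'^{\#}$ of the special points, and period morphisms landing in one and the same spinor Shimura variety $\shS^{\ad}_{\sK}(L)$ (here $L$ depends only on the common genus of $\xi^{\perp}$ and $\xi'^{\perp}$). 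Choosing lifts $s,s'\in\shS_{\sK}(L)$ of $\rho(\tilde b),\rho(\tilde b')$, the abelian schemes $\shA_s,\shA_{s'}$ are supersingular, since their crystalline $\H^1$ is built from the supersingular $F$-isocrystal $\bL_{\cris}\cong\P^2_{\cris}(X/W)(1)$ via the spin representation; hence $s,s'\in\shS^{\ss}_{\sK}(L)$.

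By \ref{allisog} there is then a CSpin-isogeny $f:\shA_s\to\shA_{s'}$. Being a homomorphism of abelian varieties, $f$ is an algebraic correspondence, and conjugation by $f$ carries $\LEnd(\shA_s)$ isometrically onto $\LEnd(\shA_{s'})$. Composing with the Kuga--Satake correspondences $X\rightsquigarrow\shA_s$ and $\shA_{s'}\rightsquigarrow X'$ --- the cycles realizing the identifications $\LEnd(\shA_s)\xrightarrow{\sim}\langle\xi\rangle^{\perp}\subset\NS(X)$ and its analogue for $X'$ of \ref{compMotive}(e), compatibly with the $\ell$-adic, de Rham and crystalline comparisons $\alpha_\ell,\alpha_{\dR},\alpha_{\cris}$ of \ref{compMotive}(b)--(d) --- yields a correspondence $Z_1:X'\rightsquigarrow X$ which induces on the primitive parts of $\H^2$ the isometry determined by $f$ and sends $c_1(\xi')$ to a nonzero multiple of $c_1(\xi)$. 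Since by \ref{tateImprove} the primitive part and the span of the polarization class together rationally exhaust $\NS_\IQ$, adjusting $Z_1$ by the evident rank-one correction supported on $\xi,\xi'$ produces a correspondence inducing an \emph{isomorphism} of quadratic spaces $\psi_1:\NS(X)_\IQ\xrightarrow{\sim}\NS(X')_\IQ$.

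It then remains to realize $\sigma:=\psi_1^{-1}\circ\psi\in\O(\NS(X)_\IQ)$ by a self-correspondence of $X$, after which $Z_\psi$ is obtained by composing $Z_1$ with it. By the Cartan--Dieudonn\'e theorem it suffices to realize $-[\Delta_X]$ (which induces $-\mathrm{id}$) and the reflections $s_D$ in anisotropic $D\in\NS(X)_\IQ$; each $s_D$ is produced exactly as $Z_1$ was, by running the construction above for the two polarized structures $(X,\xi)$ and $(X,s_D(\xi))$ on the \emph{same} variety and correcting by a CSpin-self-isogeny of $\shA_s$ realizing an element of $\SO(\langle\xi\rangle^{\perp}_\IQ)$, again available from \ref{allisog} and \ref{relpos}. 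The step I expect to be the main obstacle, and the one that genuinely uses the supersingular hypothesis, is the algebraicity --- as opposed to the mere absolute Hodge / motivated nature --- of the Kuga--Satake correspondences $X\rightsquigarrow\shA_s$ invoked above: this rests on \ref{tateImprove}, which makes all of $\H^2(X)$ algebraic, together with the Tate conjecture for the supersingular abelian variety $\shA_s$, so that the comparison isomorphisms of \ref{compMotive} are cut out by honest cycles on $\shA_s\times X$.
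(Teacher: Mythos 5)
Your route has a genuine gap at exactly the step you flag as the main obstacle: the algebraicity of the Kuga--Satake correspondence $X\rightsquigarrow\shA_s$. Theorem \ref{compMotive}(e) gives an isomorphism of quadratic lattices $\LEnd(\shA_s)\simeq\langle\xi\rangle^\perp$ compatible with the $\ell$-adic, de Rham and crystalline comparisons, but it does not produce a cycle on $X\times\shA_s\times\shA_s$ inducing those comparisons, and neither \ref{tateImprove} nor the Tate conjecture for supersingular abelian varieties supplies one by itself: what they give is only that $\H^2(X)$ and $\End(\H^1(\shA_s))$ are rationally spanned by divisor classes, resp.\ endomorphism classes. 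To turn that spanning statement into an actual correspondence you would have to write the comparison map as a sum of terms of the form (algebraic functional on $\H^2(X)$)$\,\otimes\,$(algebraic class), where the functional dual to a divisor class $e$ is realized by cup product with a rational multiple of $e^{2n-1}$ via the Fujiki-type relations (\ref{wq}), (\ref{wqperp}). But once you see this, you can apply it directly to $\psi$ and bypass the abelian varieties entirely, which is what the paper does: choose a $q$-orthogonal basis $e_1,\dots,e_b$ of $\NS(X)_\IQ$, observe that $q(e_i,-)$ is cup product with a rational multiple of a power of $e_i$, and take $Z_\psi$ to be a rational linear combination of the cycles $e_i^{\,\bullet}\times\psi(e_i)$ on $X\times X'$; since by \ref{tateImprove} supersingularity makes $\NS(X)_\IQ$ span the $\ell$-adic and crystalline $\H^2$ after extension of scalars, this single correspondence induces both $\psi_\et$ and $\psi_\cris$. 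So the machinery you invoke does not close the gap, and the honest completion of your argument collapses to this elementary one.

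If you nevertheless insisted on the Kuga--Satake detour, there are secondary problems to repair as well: \ref{allisog} and \ref{relpos} require $L_p$ to be self-dual, so you must arrange polarizations with $p\nmid\xi^{2n}$ (as the paper does in the proof of \ref{AHLift}, using \cite[Prop.~3.13]{Ogus}), which an arbitrary choice of $\xi$ does not guarantee; and placing $(X,\xi)$ and $(X',\xi')$ on the same Shimura variety $\shS^\ad_\sK(L)$ requires an integral isomorphism of pointed lattices $\PL(X,\xi)\cong\PL(X',\xi')$ (cf.\ \ref{defPLLem}), not merely a rational isometry of the orthogonal complements, which is all that $\psi$ hands you a priori.
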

\begin{proof}
Indeed, let $e_1, \cdots, e_b$ (resp. $e_1', \cdots, e_b'$) be a basis of $\NS(X)_\IQ$ (resp. $\NS(X')_\IQ$). Assume that $e_i$ is orthogonal to $e_j$ for $i \neq j$ under the Beauville-Bogomolov form. Then the dual $e_i^* \in \NS(X)_\IQ^*$ of $e_i$ can be realized by intersecting with a rational multiple of $e_i^{n - 1}$. The correspondence we are seeking is given by a rational linear combination of $(e_i)^{n - 1} \cdot e_i'$'s (cf. \cite[Lem.~5.1]{Yang}). 
\end{proof}

\begin{lemma}
\label{lem: induce lifting}
Suppose that $\psi_\cris$ restricts to an integral isomorphism $\H^2_\cris(X/W) \stackrel{\sim}{\to} \H^2_\cris(X'/W)$, which we still denote by $\psi_\cris$. Let $V$ be a finite flat extension of $W$ and $X_V$ be a formal lift of $X$ over $V$. Assume $V = W$ or $p \ge 5$. Then $X_V$ induces a formal lift $X'_V$ of $X'$ over $V$ with the following properties: 
\begin{enumerate}[label=\upshape{(\alph*)}]
    \item For $R:= V/(p)$, $X_R := X_V \tensor R$ and $X'_R := X'_V \tensor R$, $\psi_\cris$ lifts to an isomorphism $\Psi_\cris : \H^2_\cris(X_R) \stackrel{\sim}{\to} \H^2_\cris(X'_R)$ of K3 crystals over $R$. Moreover, $(\Psi)_V$ respects the Hodge filtrations of $X_V$ and $X'_V$ via the crystalline-de Rham comparison isomorphism. 
    \item If $\xi, \xi'$ are line bundles on $X, X'$ such that $\psi(\xi) = \xi'$ and $\xi$ lifts to $X_V$, then $\xi'$ lifts to $X'_V$.
    \item Assume $p \ge 5$. Let $K$ be $V[1/p]$ and denote by $X_K$, $X_K'$ the rigid analytic generic fibers of $X_V$ and $X'_V$. The isomorphism $\psi_p : \H^2_\et(X_{\bar{K}}, \IQ_p) \sto \H^2_\et(X'_{\bar{K}}, \IQ_p)$ induced by $\psi_\cris \tensor K$ via the $p$-adic comparison isomorphisms\footnote{We use the $p$-adic comparison isomorphism given by \cite[Thm~1.1(i)]{BMS}.} restricts to an isomorphism of the underlying $\IZ_p$-lattices. 
\end{enumerate}
\end{lemma}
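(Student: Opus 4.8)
\textbf{Proof strategy for Lemma~\ref{lem: induce lifting}.} The plan is to exploit the local Torelli theorem \ref{locTor} to transport the formal lift $X_V$ across the crystalline isometry $\psi_\cris$, and then use integral $p$-adic Hodge theory (the Breuil--Kisin module formalism already invoked in the proof of \ref{extBBF0}) to upgrade the resulting rational comparison isomorphism on $p$-adic \'etale cohomology to an integral one. First I would set up part (a): restricting $\psi_\cris$ gives an isometry of K3 crystals $\H^2_\cris(X/W)\sto\H^2_\cris(X'/W)$, and since by \ref{Deligne}(c) the evaluation of $\H^2_\cris(X_R)$ at the PD-thickening $R$ recovers $\H^2_\dR(X_V/V)\tensor R$, the Hodge filtration $\Fil^2\H^2_\dR(X_V/V)$ transports under $\psi_\cris$ to an isotropic direct summand of $\H^2_\cris(X'_R)_{V}$ lifting the canonical one over $R$. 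Feeding the tuple $(X', \H^2_\cris(X'/W), \psi_\cris|_k, \text{transported }\Fil^2)$ into the equivalence $\Psi_{V}:\mathsf{ERK3}^{[n]}_{V}\to\mathsf{DK3}^{[n]}_{V}$ of \ref{locTor} produces the desired $X'_V$ together with an isomorphism $\Psi_\cris$ of K3 crystals over $R$ whose base change to $V$ respects the Hodge filtrations by construction. (The hypothesis $V=W$ or $p\ge 5$ is exactly what is needed so that $\mathrm{char\,}k=p>3,n+1$ and \ref{locTor} applies; note $p>n+1\ge 3$ already, so the extra hypothesis is only there to rule out the wild ramification issues in the non-trivial $V$ case — I would simply cite \ref{locTor} as black box.)

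\textbf{Part (b)} is then immediate from \ref{loc}(b): a line bundle $\xi$ on $X$ lifts to $X_V$ iff $\Fil^2\H^2_\dR(X_V/V)$ is orthogonal to $c_{1,\cris}(\xi)$, and since $\psi_\cris$ is an isometry sending $c_{1,\cris}(\xi)$ to $c_{1,\cris}(\xi')$ (it is induced by a correspondence compatible with cycle classes, cf. \ref{lem: induced by an isog} and \ref{compMotive}) and sends $\Fil^2$ of $X_V$ to $\Fil^2$ of $X'_V$ by part (a), the orthogonality is preserved; apply \ref{loc}(b) again in the other direction.

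\textbf{Part (c)} is the heart of the matter and the step I expect to be the main obstacle. The map $\psi_\cris\tensor K$ induces $\psi_p:\H^2_\et(X_{\bar K},\IQ_p)\sto\H^2_\et(X'_{\bar K},\IQ_p)$ via the crystalline comparison $C_\cris$ of \cite[Thm~1.1(i)]{BMS}, and one must show it carries the $\IZ_p$-lattice $\H^2_\et(X_{\bar K},\IZ_p)$ isomorphically onto $\H^2_\et(X'_{\bar K},\IZ_p)$. The strategy, mirroring the lattice-recovery argument in \ref{extBBF0}(ii), is: the $\IZ_p$-lattice in $\H^2_\et(X_{\bar K},\IQ_p)$ is determined by the Breuil--Kisin module $\BK(\H^2_\et(X_{\bar K},\IZ_p))$, which by \cite[Thm~14.6(iii)]{BMS} (combined with \cite[Thm~1.2.1(1)]{int}) is identified, after $\tensor_{\fS}W$, with the crystalline lattice $\H^2_\cris(X/W)\subset \H^2_\cris(X/W)\tensor K_0$ sitting compatibly inside $(\H^2_\et(X_{\bar K},\IZ_p)\tensor B_\cris)^{G_K}$. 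Since $\psi_\cris$ carries the integral crystal $\H^2_\cris(X/W)$ isomorphically onto $\H^2_\cris(X'/W)$ and, by part (a), is compatible with the Hodge filtrations (equivalently, with the $\fS$-module structure of the Breuil--Kisin modules — here one uses that the Breuil--Kisin module is reconstructible from the filtered $F$-crystal together with the choice of uniformizer, as in \cite{int}, or more robustly that $\psi_\cris$ underlies a morphism of $p$-divisible-group-type objects via the Kuga--Satake construction of \ref{extBBF0} as a $G_K$-equivariant tensor), the induced map on Breuil--Kisin modules is an isomorphism of $\fS$-modules, hence $\psi_p$ preserves the integral \'etale lattices. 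The delicate point is verifying that $\psi_\cris$, which a priori is only an isometry of $W$-lattices with a filtration compatibility, actually promotes to a morphism in the category of Breuil--Kisin modules (not merely after inverting $p$); I would handle this exactly as in \ref{extBBF0}, by realizing $\psi_p$ as the restriction of a morphism of Kuga--Satake abelian varieties — or, failing a clean abelian-variety input here, by appealing to \cite[Cor.~1.3.6]{int} to see that any filtered-isogeny of K3 crystals over $W$ that preserves the Hodge filtration on the generic fiber lift and preserves the quadratic form (a reductive-group tensor) extends to an isomorphism of the associated $\IZ_p$-étale lattices. The remaining compatibilities with cup product are as in \cite[Prop.~11.6]{IIK}.
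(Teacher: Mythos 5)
Your part (a) is in substance the paper's argument (for $V=W$ the local Torelli theorem, for ramified $V$ the equivalence \ref{locTor}), though the tuple you feed into \ref{locTor} is slightly garbled: the K3 crystal in the datum must be a crystal over $R=V/(p)$, namely $\H^2_\cris(X_R)$ coming from the \emph{given} lift $X_V$, with the identification over $k$ twisted by $\psi_\cris$ and $\Fil_V:=\Fil^2\H^2_\dR(X_V/V)$; nothing is ``transported'' by $\psi_\cris$, and $\H^2_\cris(X'_R)$ cannot appear before $X'_V$ is constructed. The genuine problems are in (b) and (c).

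For (b): \ref{loc}(b) is a statement about a single PD thickening of an Artinian $W$-algebra, so there is no one-step criterion ``$\xi'$ lifts to $X'_V$ iff $c_{1,\cris}(\xi')\perp\Fil^2$'' to quote when $V$ is ramified (for $e\ge p$ the surjection $V\to k$ carries no PD structure at all, and even rationally $c_{1,\cris}(\xi')$ and $\Fil^2\H^2_\dR(X'_V/V)$ only meet via the Berthelot--Ogus isomorphism). The actual proof lifts $\xi'$ inductively through $R\iso k[t]/(t^e)$ along the square-zero extensions $R_i\to R_{i+1}$ and only then from $R$ to $V$ along the PD ideal $(p)$. At each step the criterion requires $c_{1,\cris}(\xi_i')_{R_{i+1}}\in\Fil^1$, and to transfer this from the known statement for $\xi_i$ one must prove that $\Psi_\cris|_{R_i}$ matches the crystalline Chern classes of the \emph{intermediate lifts}, i.e.\ $\Psi(c_{1,\cris}(\xi_i)_{R_{i+1}})=c_{1,\cris}(\xi_i')_{R_{i+1}}$. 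This is not formal, since a priori $\Psi_\cris$ only lifts $\psi_\cris$; the paper proves it with a Berthelot--Ogus argument (the crystalline Chern class of a lift is determined by the special fibre, \cite[Cor.~3.6]{BO}, plus torsion-freeness). Your proposal skips this entirely.

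For (c): the step you flag as delicate is indeed where your argument fails. Integrality of $\psi_\cris$ on the $W$-lattices together with compatibility with Frobenius and with the Hodge filtration after $\tensor K$ does \emph{not} imply that the induced map of Breuil--Kisin modules is integral over $\fS$: for ramified $V$ the lattice $\BK(\H^2_\et(X_{\bar{K}},\IZ_p))$ is not determined by the filtered F-crystal, and reduction along $u\mapsto 0$ loses exactly the integral information you need (this is why the $V=W$ case is handled separately in the paper). Neither fallback closes the gap: \cite[Cor.~1.3.6]{int} produces crystalline realizations of group-theoretic tensors, not an integral extension of an isometry between two different crystals; and the Kuga--Satake route would be circular, since in \ref{AHLift} the $p$-integral statement for ramified $V$ is obtained precisely by citing part (c) of this lemma (the lifted isogeny there is only $p^mf$, via \cite[Thm~3.15]{BO}). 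What the paper actually uses --- and what you have from (a) but never use in (c) --- is the crystal-level isomorphism $\Psi_\cris$ over all of $R$: promoting $\psi_\cris\tensor K$ to Breuil's ring $S$ and evaluating the crystals there, $\Psi_\cris$ gives an isomorphism of the strongly divisible lattices $\H^2_\cris(X_R)_S\sto\H^2_\cris(X'_R)_S$, and by \cite[Thm~5.4]{BLiu} these recover the Galois-stable $\IZ_p$-lattices, whence the integrality of $\psi_p$.
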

\begin{proof}
If $V = W$, which matters only when $n = 1$, then (a) follows from the locally Torelli theorem for K3's and (b) follows directly from \cite[Prop.~1.12]{Ogus}. Now we only consider the case when $p \ge 5$ but $V$ can be arbitrarily ramified. Then (a) is a direct consequence of \ref{locTor}.

(b) Note that $R \iso k[t]/t^e$, where $e$ is the ramification degree of $V$. For each $i \le e$, set $R_i = R/ (t^i)$. Let $\xi_R \in \Pic(X_R)$ be a lifting of $\xi$ and $(X_i, \xi_i)$ be the restriction of $(X_R, \xi_R)$ over $R_i$. Similarly, set $X'_i := X'_R \tensor R_i$. Suppose by induction that $\xi'$ has been lifted to $\xi_i' \in \Pic(X'_i)$ for some $i < e$. By \cite[Prop.~1.12]{Ogus}, showing that $\xi_i'$ deforms to $X_{i + 1}'$ amounts to showing that $c_{1, \cris}(\xi_i')_{R_{i + 1}} \in \Fil^1 \H^2_\dR(X_{i + 1}'/ R_{i + 1})$, where we identify $\H^2_\cris(X'_{i})_{R_{i + 1}}$ with $\H^2_\dR(X'_{i + 1}/ R_{i + 1})$. We already know that the corresponding condition is satisified by $\xi_i$ and $X_{i + 1}$, so it suffices to show that $c_{1, \cris}(\xi_i')_{R_{i + 1}} = (\Psi|_{R_{i}})(c_{1, \cris}(\xi_i)_{R_{i + 1}})$. 

Choose a finite flat extension $V$ of $W$ with ramification degree $i + 1$ and set $K:= V[1/p]$. Let $\pi$ be a uniformizer of $V$ and fix an isomorphism $V/(p) \iso R_{i + 1}$ which sends $\pi$ to $t$. Note that this isomorphism descends to $V/(\pi^i) \iso R_i$ and the ideal $(\pi^i) \subset V$ has a natural PD structure (see for example \cite[Lem.~3.9]{BO}). We get a Berthelot-Ogus isomorphism $\H^2_\cris(X/W) \tensor_W K \iso \H^2_\cris(X_i)_V \tensor_V K$, which sends $c_{1, \cris}(\xi) \tensor 1$ to $c_{1, \cris}(\xi_i)_{V} \tensor 1$ by \cite[Cor.~3.6]{BO}. There are similar isomorphisms for $X'_i$ which fit into 
a diagram
\begin{center}
    \begin{tikzcd}
    \H^2_\cris(X)_W \tensor K \arrow{r}{\sim} \arrow{d} & \H^2_\cris(X_i)_V \tensor_V K \arrow{d} \\
    \H^2_\cris(X')_W \tensor K  \arrow{r}{\sim} & \H^2_\cris(X'_i)_V \tensor_V K 
    \end{tikzcd}
\end{center}
where the vertical arrows are isomorphisms induced by $\Psi$. Since $\H^2_\cris(X_i')_V$ is $p$-torsion free, we know that $\Psi|_{R_i}$ sends $c_{1, \cris}(\xi_i)_{V}$ to $c_{1, \cris}(\xi_i')_{V}$. We get what we need to taking mod $p$ reductions. 

(c) We first explain how $\psi_p$ is constructed. Let $\Rep^\cris_K$ denote the category of crystalline $\Gal_K$-representations in $\IQ_p$-coefficients and $\MF^\varphi_K$ denote the category of filtered $(\varphi, N)$-modules over $K$ with $N = 0$. Fontained defined a functor $D_\cris : \Rep^\cris_K \to \MF^\varphi_K$ given by $Q \mapsto (Q \tensor B_\cris)^{\Gal_K}$ and on the essential image of $D_\cris$ we have a quasi-inverse $V_\cris$ defined by $D \mapsto \Fil^0 (D \tensor B_\cris)^{\varphi = 1}$, such that $V_\cris(D_\cris(Q))$ is canonically identified with $Q$ for $Q \in \Rep^\cris_K$. Now, the F-isocrystal $\H^2_\cris(X/W)[1/p]$, equipped with the Hodge filtration on $\H^2_\dR(X_K/K) \iso \H^2_\cris(X/W) \tensor K$, defines an object of $\MF^\varphi_K$. Under the $p$-adic comparison isomorphisms, we obtain $\psi_p$ by applying $V_\cris$ to $\psi_\cris$. 

Let $S$ denote Breuil's $S$-ring (see \cite[1200]{BLiu}). $\H^2_\cris(X/W)[1/p]$ can be functorially promoted to a $(\varphi, N)$-module $\shD$ over $S[1/p]$ (see \cite[1215]{BLiu}). By Thm~5.4 of \textit{loc. cit.}, $\H^2_\cris(X_R)_S$ defines a strongly divisible $S$-lattice in $\shD$,\footnote{Technically, $\H^2_\cris(X_R)_S$ denotes the restriction of the crystal $\H^2_\cris(X_R)$ to $S$, but it is indeed the same thing as $\H^2_\cris(X_R/S)$. Detail oriented readers can check this using \cite[Tag~07MJ]{stacks-project}.} from which one can functorially recover the $\IZ_p$-lattice $\H^2_\et(X_{\bar{K}}, \IZ_p)$. Now, if we define $\shD'$ for $X'_K$ in the say way, we can promote $\psi_\cris \tensor K$ to an isomorphism $\shD \sto \shD'$. By the existence of $\Psi_\cris$, this isomorphism sends $\H^2_\cris(X_R)_S$ isomorphically onto $\H^2_\cris(X'_R)_S$. Therefore, $\psi_p$ restricts to an isomorphism of the underlying $\IZ_p$-lattices. 
\end{proof}

\begin{lemma}
\label{AHLift}
Let $X, X', \psi_\cris, X_V, X_V'$ be as in {\upshape{\ref{lem: induce lifting}}}. Moreover, assume that
$X_V$ and $X_V'$ carry liftings $\xi_V, \xi'_V$ of some polarizations $\xi$ and $\xi'$ on $X$ and $X'$ respectively such that $p \nmid \xi^{2n}  = (\xi')^{2n}$ and $\psi(\xi) = \xi'$. Let $K$ denote $V[1/p]$ and $\psi_{\dR, K}$ denote the isomorphism $\H^2_\dR(X_K/K) \sto \H^2_\dR(X'_K/K)$ induced by $\psi_\cris$ via the Berthelot-Ogus isomorphism. 

Then given any embedding $K \into \IC$, there exists an isomorphism $\psi_{B} : \H^2(X_K(\IC), \IZ_{(p)}) \sto \H^2(X'_K(\IC), \IZ_{(p)})$ such that $\psi_B \tensor \IC = \psi_{\dR, K}(\IC)$ and $\psi_B \tensor \IA^p_f$ agrees with $\psi_{\et}$ via the Artin comparison and smooth proper base change isomorphisms. 
\end{lemma}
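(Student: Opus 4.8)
The plan is to produce the isometry $\psi_B$ by constructing a Kuga--Satake abelian variety with $\CSpin$-structure for $X$, transporting it to $X'$ via $\psi$, and then using integral $p$-adic Hodge theory to upgrade the rational statement to a $\IZ_{(p)}$-integral one.

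First I would arrange things so that the period machinery of \S3 applies. By \ref{defPLLem} we get a pointed lattice $(\Lambda,\lambda)$ with $\Lambda\iso\Lambda_n$ and $(\H^2_\et(X,\what\IZ^p),c_1(\xi))\iso(\Lambda^p,\lambda^p)$; since $p\nmid \xi^{2n}$, the lattice $L:=\lambda^\perp$ is self-dual at $p$, so the spinor Shimura variety $\shS_{\sK}(L)$ has a canonical smooth integral model over $\IZ_{(p)}$ with a hyperspecial level at $p$. Applying Artin approximation to $\Def(X_V;\xi_V)$ and then to $\Def(X'_V;\xi'_V)$ produces primitively polarized $\HKn$-families over connected flat $W$-schemes, and \ref{period+}, \ref{compMotive} give period morphisms $\rho,\rho'$ to $\shS^\ad_\sK(L)_W$ together with the isometries $\alpha_\ell,\alpha_\dR,\alpha_\cris$ relating $\bP^2_\ast$ of the universal families to the pullbacks of $\bL_\ast$. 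Pulling the Kuga--Satake abelian schemes back along $\rho,\rho'$ gives Kuga--Satake abelian varieties $A,A'$ (with $\CSpin$-structures) attached to the mod $p$ points $\rho(b),\rho'(b')$.

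Next I would use the isogeny theory of mod $p$ points: by \ref{allisog}, since $X,X'$ are supersingular, $\rho(b)$ and $\rho'(b')$ lie in the supersingular (= basic) locus, so there is a $\CSpin$-isogeny $\wt\psi:A\to A'$. The rational isometry $\psi:\NS(X)_\IQ\sto\NS(X')_\IQ$ together with \ref{compMotive}(e) and \ref{ssdefLEnd}(b) identifies $\NS(X)_\IQ$ with $\LEnd(A)_\IQ\oplus\IQ c_1(\xi)$ and similarly for $X'$, so $\psi$ is exactly the map on special endomorphisms induced by (a suitable rational multiple of) $\wt\psi$ that carries $c_1(\xi)$ to $c_1(\xi')$; this is the $\psi_\cris$, $\psi_\et$ of the statement up to the correspondence $Z_\psi$ of \ref{lem: induced by an isog}. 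Since $\psi_\cris$ is assumed integral, \ref{relpos} lets me replace $\wt\psi$ by $p^h\wt\psi$, a prime-to-$p$ $\CSpin$-isogeny, so that after this adjustment $\wt\psi$ induces integral isomorphisms on $\H^1_\cris$ and on $\H^1_\et(-,\what\IZ^p)$ commuting with the $\CSpin$-structures. Now \ref{lem: induce lifting} (applicable since $V=W$ or $p\ge 5$, and $p\nmid\xi^{2n}$ makes $L_p$ self-dual as required by \ref{locTor}) furnishes the lift $X'_V$ of $X'$ and the crystalline isomorphism $\Psi_\cris$ lifting $\psi_\cris$ that respects Hodge filtrations, together with the $\IZ_p$-integrality of $\psi_p$ in part (c).

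Finally, fix $K\hookrightarrow\IC$. On the de Rham side, $\psi_{\dR,K}(\IC)$ is a Hodge isometry $\H^2(X_K(\IC),\IC)\sto\H^2(X'_K(\IC),\IC)$ because $\Psi_\cris$ (hence $\psi_{\dR,K}$) respects the Hodge filtrations and the Berthelot--Ogus comparison is compatible with the Betti--de Rham comparison; in particular it carries $\H^{2,0}$ to $\H^{2,0}$ and sends the ample class $c_1(\xi_V(\IC))$ to $c_1(\xi'_V(\IC))$. By \ref{compMotive} and the $\Aut(\IC/E)$-equivariance used in its proof, the Kuga--Satake isogeny $\wt\psi$ realizes $\psi_{\dR,K}(\IC)$ on $\H^2(-,\IQ)$ via an absolute Hodge (in fact motivated, by Andr\'e) correspondence, and the $\what\IZ^p$-integrality coming from the prime-to-$p$ isogeny $p^h\wt\psi$ shows $\psi_{\dR,K}(\IC)\tensor\IA_f^p$ carries $\H^2(X_K(\IC),\what\IZ^p)$ onto $\H^2(X'_K(\IC),\what\IZ^p)$, matching $\psi_\et$ under Artin comparison and smooth proper base change. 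Combined with part (c) of \ref{lem: induce lifting}, which gives the $\IZ_p$-integrality of the $p$-component, the isometry $\psi_{\dR,K}(\IC)$ is integral at every finite place, hence restricts to an isomorphism $\psi_B:\H^2(X_K(\IC),\IZ_{(p)})\sto\H^2(X'_K(\IC),\IZ_{(p)})$ with the two asserted compatibilities. The main obstacle is the last integrality bookkeeping at $p$: one must be careful that the Breuil--Kisin / strongly divisible lattice comparison of \ref{lem: induce lifting}(c) is genuinely compatible with the Betti structure after the embedding $K\hookrightarrow\IC$ (i.e. that the $\IZ_p$-lattice cut out by $\H^2_\cris(X_R)_S$ matches $\H^2(X_K(\IC),\IZ)\tensor\IZ_p$ under the chain of comparison isomorphisms), which is where the hypothesis $p\ge 5$ and the torsion-freeness of $\H^2_\cris$ (from the excellent reduction property) are really used.
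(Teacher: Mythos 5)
Your overall strategy (Kuga--Satake abelian varieties, the supersingular isogeny class, \ref{relpos} to make the isogeny prime-to-$p$, and \ref{lem: induce lifting}(c) for the $p$-integrality) is the same as the paper's, but two essential steps are missing or wrong. First, you assert that $\psi$ is ``exactly the map on special endomorphisms induced by (a suitable rational multiple of) $\wt\psi$''. The CSpin-isogeny produced by \ref{allisog} is an arbitrary one; its conjugation action on $\LEnd(\shA_s)_\IQ$ differs from $\psi^\perp$ by an arbitrary isometry, and a rational scalar multiple acts trivially by conjugation, so it cannot correct this discrepancy. The paper fixes this by composing $f$ with a CSpin self-isogeny of $\shA_s$: the group of such self-isogenies is $\CSpin(\LEnd(\shA_s)_\IQ)$, which surjects onto $\SO(\LEnd(\shA_s)_\IQ)$ but not onto $\O$, so one must first know that $(\psi^\perp\tensor\IQ)^{-1}\circ f^{\conj}$ lies in $\SO$. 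This is exactly why the orientation $\epsilon'$ on $(X',\xi')$ is chosen to be the one transported from $\epsilon$ via $\psi\tensor\IZ_2$, and why one uses that CSpin-isogenies preserve the orientation tensors $\bd_2$; your proposal never addresses the $\SO$ versus $\O$ issue or the choice of $\epsilon'$.

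Second, and more fundamentally, you never get from the characteristic-$p$ isogeny to the Betti isomorphism. The sentence ``the Kuga--Satake isogeny $\wt\psi$ realizes $\psi_{\dR,K}(\IC)$ on $\H^2(-,\IQ)$ via an absolute Hodge (motivated) correspondence'' has no content as stated: $\wt\psi$ is an isogeny of abelian varieties over $k$ and has no Betti realization until it is lifted to characteristic zero. The paper's key step is precisely this lifting: after adjusting $f$ as above and making it prime-to-$p$ via \ref{relpos}, one checks that $f$ preserves the Hodge filtrations on $\bH_{\dR}$ induced by the lifts $s_V,s_V'$ (using $\H^1_\dR(\shA_{s_V}/V)=\ker\Fil^1\bL_{\dR,s_V}$ and the assumption that $\Psi_\cris$ is filtration-compatible), and then applies \cite[Thm~3.15]{BO} to lift $g=p^m f$ to an isogeny $g_V:\shA_{s_V}\to\shA_{s'_V}$ over $V$ (the points $s_V,s'_V$ themselves being produced by lifting $\rho(t_V),\rho(t'_V)$ along the \'etale map $\shS_{\sK_0}(L)\to\shS^\ad_{\sK_0}(L)$). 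Only the Betti realization $g_V(\IC)$, acting by conjugation on $\bL_{B}$, produces a $\IQ$-rational $\psi_B$ with $\psi_B\tensor\IC=\psi_{\dR,K}(\IC)$; the André/absolute-Hodge input is already built into the period morphism and does not substitute for this lifting. Your final integrality bookkeeping ($V=W$ case via the prime-to-$p$ isogeny, $p\ge 5$ case via \ref{lem: induce lifting}(c)) does agree with the paper, but it only becomes meaningful once $\psi_B$ exists rationally, which your argument does not establish.
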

\begin{proof}
One easily deduces from \ref{defPLLem} and \ref{tateImprove} that $\PL(X, \xi) = \PL(X', \xi')$. Let this pointed lattice be $(\Lambda, \lambda)$. Set $L = \lambda^\perp$. Note that by our assumptions $L_p$ is self-dual. We endow $(X, \xi)$ with trivializations $\Delta : \disc(L^p) \stackrel{\sim}{\to} \disc(\P^2_\et(X, \what{\IZ}^p))$ and $\epsilon : \det(L_2) \stackrel{\sim}{\to} \det(\P^2_\et(X, \IZ_2))$. Similarly, endow $(X', \xi')$ with $(\Delta', \epsilon')$. We choose $\epsilon'$ to be the one induced by $\epsilon$ via $\psi \tensor \IZ_2$. With these choices, $(X, \xi)$ and $(X', \xi')$ give rise to $k$-points $t$ and $t'$ on $\wt{\Mod}^{(\Lambda, \lambda), \#}_{n, d}$. The liftings $(X_V, \xi_V)$ and $(X_V', \xi_V')$ correspond to some $V$-valued points $t_V, t_V'$ which specialize to $t, t'$. Recall that in \ref{3.1.3} we explained that for a chosen generator $\delta \in \det(L)$, there are global sections $\bd_\ell$ on $\det(\bL_{\ell})$ over $\shS_{\sK_0}(L)$ for every $\ell \neq p$. 

Consider the period morphism $\rho : \wt{\Mod}^{(\Lambda, \lambda), \#}_{n, d} \to \shS^\ad_{\sK_0}(L)$. Let $s, s'$ be $k$-points on $\shS_{\sK_0}(L)$ which lift $\rho(t), \rho(t')$. We will freely make use of isomorphisms in \ref{compMotive}. Consider the universal abelian scheme $\shA$ over $\shS_{\sK_0}(L)$. By \ref{ssIsog}, there exists a CSpin-isogeny $f : \shA_s \to \shA_{s'}$. Let $\< \xi \>^\perp, \< \xi' \>^\perp$ be the orthogonal complements of $\xi, \xi'$ in $\NS(X), \NS(X')$ and let $\psi^\perp : \< \xi \>^\perp \stackrel{\sim}{\to} \< \xi' \>^\perp$ be the restriction of $\psi$. By \ref{compMotive}(e), $\psi^\perp$ can be identified with an isomorphism $\LEnd(\shA_s) \stackrel{\sim}{\to} \LEnd(\shA_{s'})$. $f$ also induces by conjugation an isomorphism $f^\conj : \LEnd(\shA_s)_\IQ \stackrel{\sim}{\to} \LEnd(\shA_{s'})_\IQ$. The composition $ (\psi^\perp \tensor \IQ)^{-1} \circ f^\conj$ is an element of $\O(\LEnd(\shA_s)_\IQ)$. By our assumption on $\epsilon$ and $\epsilon'$, the isomorphism $\bL_{\what{\IZ}^p, s} \to \bL_{\what{\IZ}^p, s'}$ induced by $\psi^\perp$ respects the orientation tensors $\bd_{2, s}$ and $\bd_{2, s'}$. Since CSpin-isogenies also preserve orientation tensors, $(\psi^\perp \tensor \IQ)^{-1} \circ f^\conj$ lies in $\SO(\LEnd(\shA_s)_\IQ)$. Note that the group of CSpin-isogenies from $\shA_s$ to itself can be identified with $\CSpin(\LEnd(\shA_s)_\IQ)$, which surjects to $\SO(\LEnd(\shA_s)_\IQ)$. Therefore, up to composing with CSpin-isogeny $\shA_s \to \shA_s$, we may assume that $f$ induces $\psi^\perp$ by conjugation. By assumption, $\psi^\perp$ extends to an isomorphism $\bL_{\cris, s} \stackrel{\sim}{\to} \bL_{\cris, s'}$, so by \ref{relpos} we may assume that $f$ is a prime-to-$p$ isogeny. Since the map $\shS_{\sK_0}(L) \to \shS^\ad_{\sK_0}(L)$ is \'etale, the points $\rho(t_V), \rho(t_V')$ lift to some $V$-valued points $s_V, s'_V$ on $\shS_{\sK_0}(L)$ extending $s, s'$. 

Let $s_K, s_K'$ be the generic points of $s_V, s_V'$ and $s_{\bar{K}}, s_{\bar{K}'}$ be the geometric points corresponding to an algebraic closure $\bar{K}$ of $K$. We claim that for some $m \ge 1$, $g := p^m f$ lifts to an isogeny $g_V: \shA_{s_V} \to \shA_{s'_V}$. Indeed, recall that $\Fil^1 \bL_{\dR, s_V}$ is free of rank $1$, and $\H^1_\dR(\shA_{s_V}/V) = \bH_{\dR, s_V} = \ker \Fil^1 \bL_{\dR, s_V} \text{ and } \Fil^\bullet \bL_{\dR, s_V}(-1) = \Fil^\bullet \P^2_\dR(X_V/V)$, where $\bL_{\dR, s_V}$ is considered as a subspace of $\End (\bH_{\dR, s_V})$ via left multiplication. The same holds verbatim for $X_V'$ and $s'_V$, so the map $f$ preserves the Hodge filtrations induced by $\shA_{s_K}$ and $\shA_{s_K'}$ via the Berthelot-Ogus isomorphisms. The claim now follows from \cite[Thm~3.15]{BO}. 

Finally, the isomorphism $\P^2_\dR(X_K/K) \stackrel{\sim}{\to} \P^2_\dR(X'_K/K)$ given by the restriction of $\psi_{\dR, K}$ is equal to the isomorphism $\bL_{\dR, s_K} \stackrel{\sim}{\to} \bL_{\dR, s'_K}$ induced by $f_V$ after we identify $\P^2_\dR(X_K/K)$ and $\P^2_\dR(X'_K/K)$ with $\bL_{\dR, s_K}$ and $\bL_{\dR, s'_K}$. Given $K \into \IC$, $g_V(\IC)$ induces an isomorphism $g_B: \H^1(\shA_{s_V}(\IC), \IQ) \sto \H^1(\shA_{s'_V}(\IC), \IQ)$ which induces by conjugation $\mathrm{conj}(g_B): \bL_{B, s_\IC} \tensor \IQ \sto \bL_{B, s'_\IC} \tensor \IQ$, where $s_\IC, s'_\IC$ are the $\IC$-points induced by $s_K, s'_K$. This gives rise to an isomorphism $\psi_B : \H^2(X_K(\IC), \IQ) \sto \H^2(X'_K(\IC), \IQ)$. 

It remains to check that $\psi_B$ is $\IZ_{(p)}$-integral. If $V = W$, then in constructing $g_V$ we can simply take $g = f$, so that $g_V$ is a prime-to-$p$ isogeny. Hence $g_B$ and $\mathrm{conj}(g_B)$ both preserves the underlying $\IZ_{(p)}$-lattices. If $p \ge 5$, apply \ref{lem: induce lifting}(c). 
\end{proof}

\subsubsection{Proof of Theorem~\ref{crysTor}.} For now let us just assume that $\psi$ preserves the Beauville-Bogomolov forms, sends some polarization $\xi \in \NS(X)$ to another polarization $\xi' \in \NS(X')$ and fits into a commutative diagram with an isomorphism $\psi_\cris$ as in the theorem. By \cite[Prop.~3.13]{Ogus} the $\IZ_p$-lattice $\H^2_\cris(X/W)^{F = p}$ is of the form $p T_0 \oplus T_1$ for some self-dual $\IZ_p$-lattices $T_0, T_1$ and $T_0$ has even rank. Therefore, there exists some line bundle $\zeta \in \NS(X)$ with $p \nmid q_X(\zeta)$. Set $\zeta' := \psi(\zeta)$. Note that for $m \gg 0$, $p^m \xi + \zeta$ and $p^m \xi' + \zeta'$ are ample and $p \nmid (p^m \xi + \zeta)^{2n} = (p^m \xi' + \zeta')^{2n}$. When $n - 1 = 0, 1$ or a prime power, we assume that $p \neq q_X(\xi) = q_{X'}(\xi')$ to start with, and take $\zeta, \zeta'$ to be $\xi, \xi'$.  

By \ref{liftpair}, for some finite flat extension $V$ of $W$, there exists a lift $(X_V, \xi_V, \zeta_V)$ of the tuple $(X, \xi, \zeta)$ over $V$. As we have explained, this induces a lift $(X'_V, \xi'_V, \zeta'_V)$ of $(X', \xi', \zeta')$. When $n - 1 = 0, 1$ or a prime power, we may take $V = W$ by Prop.~\ref{flatloc}(b). 

Choose an isomorphism $\bar{K} \iso \IC$. Write $X_\IC, X_\IC'$ for $X_V \tensor \IC, X'_V \tensor \IC$. Lem.~\ref{AHLift} tells that $\psi_\cris \tensor \IC$, which we may identify with an isomorphism $\H^2_\dR(X_\IC/\IC) \stackrel{\sim}{\to} \H^2_\dR(X'_\IC/\IC)$, restricts to $\psi_B : \H^2(X_\IC, \IZ) \stackrel{\sim}{\to} \H^2(X'_\IC, \IZ)$. If $n - 1 = 0, 1$, or a prime power, then $\Mon^2(X_\IC) = \O_+(\H^2(X_\IC, \IZ))$, so that $\psi_B$ is automatically a parallel transport operator up to a sign. By \ref{spinorientation}, $\psi_B$ has the correct sign, so that by Verbitsky's theorem it is induced by an isomorphism $f_\IC : X'_\IC \stackrel{\sim}{\to} X_\IC$, which specializes to the desired isomorphism $f : X' \stackrel{\sim}{\to} X$ by \cite[Thm~2]{MM}. This gives the second statement of the theorem. 

Now we treat the general case, for which we need the assumption that $\psi_\et$ is induced by a polarized family $(\sX \to S, \bxi)$ of $\HKn$-type varieties, for some connected scheme $S$ of finite type over $W$. Let $s, s'$ be the points on $S$ whose fibers are $(X, \xi)$ and $(X', \xi')$ and $\varepsilon$ be the \'etale path connecting $s$ and $s'$ which induces $\psi_\et$. Let $(\Lambda, \lambda)$ be the pointed lattice $\PL(X, \xi) = \PL(X', \xi')$. Construct $\wt{S}^\#$ as in \ref{period+}, so that $\wt{S}^\#$ is equipped with universal trivializations $\epsilon$ and $\Delta$ of $\det(\bP^2_2)$ and $\disc(\bP^2_{\what{\IZ}^p})$, and we get a morphism $\iota : \wt{S}^\# \to \wt{\Mod}^{(\Lambda, \lambda), 
\#}_{n, d}$. Let $\epsilon^p$ be the induced artificial orientation on $\wt{S}^\#$. Let $\sK^p$ be a compact open subgroup of $\sK_0^p \subset \CSpin(L \tensor \IA_f^p)$ such that $\wt{\Mod}^{(\Lambda, \lambda)}_{n, d, \sK^p}$ is a scheme. Construct a finite \'etale cover $\wt{S}_{\sK^p}$ of $\wt{S}^\#$ such that a morphism $T \to \wt{S}_{\sK^p}$ corresponds to a morphism $T \to \wt{S}^\#$ together with a choice of $\sK^p$-level structure on $(\sX_T \to T, \bxi_T)$ which is compatible with $\epsilon^p$ and $\Delta$. Now we obtain a morphism $\iota : \wt{S}_{\sK^p} \to \wt{\Mod}^{(\Lambda, \lambda)}_{n, d, \sK^p} \tensor W$. By \ref{GalEx}, each connected component $\wt{S}_{\sK^p}$ is a Galois cover of $S$. We can lift $s, s'$ and $\varepsilon$ to a connected component of $\wt{S}_{\sK^p}$. This means that we might as well replace $S$ by this connected component.


For each $k$-point $t \in S$, we choose an affine open neighborhood $U_{\iota(t)}$ of $\iota(t)$ in $\wt{\Mod}^{(\Lambda, \lambda)}_{n, d, \sK^p} \tensor W$. Then we take $U := \union_{t} U_{\iota(t)}$ as $t$ runs through the $k$-points of $S$. Since the morphism $\iota$ factors through $U$, the morphism $\psi_\et$ is now induced by an \'etale path from $u:= \iota(s)$ to $u':=\iota(s')$. The liftings $(X_V, \xi_V)$ and $(X_V', \xi_V')$ induce $V$-points $u_V, u_V'$ lifting $u, u'$. Let $u_\IC, u_\IC'$ be the $\IC$-points on $U$ given by $u_V \tensor \IC, u_V' \tensor \IC$. There is a commutative diagram 
\begin{center}
    \begin{tikzcd}
    \H^2_\et(X_\IC, \what{\IZ}^p) \arrow{r}{\psi_B \tensor \what{\IZ}^p} \arrow{d}{} & \H^2_\et(X'_\IC, \what{\IZ}^p) \arrow{d}{} \\
    \H^2_\et(X, \what{\IZ}^p) \arrow{r}{\psi_\et} & \H^2_\et(X', \what{\IZ}^p)
    \end{tikzcd}
\end{center}
where the vertical maps are given by the canonical \'etale paths along $V$ from $u_\IC$ and $u'_\IC$ to $u$ and $u'$ respectively. Therefore, $\psi_B \tensor \what{\IZ}^p$ is induced by an \'etale path from $u_\IC$ to $u'_\IC$. 

Clearly $U$ is connected. Applying \ref{Sasha} to $U_{\iota(t)}$'s, we see that $U \tensor_W \IC$ is also connected. We now choose a topological path from $u'_\IC$ to $u_\IC$, parallel transport along which induces an isomorphism $\gamma : \H^2(X'_\IC, \IZ) \stackrel{\sim}{\to} \H^2(X'_\IC, \IZ)$. Note that by \ref{spinorientation}, both $\psi_B$ and $\gamma$ are orientation preserving. Therefore, $\gamma \circ \psi_B \in \O_+(\H^2(X_\IC, \IZ))$. Since $(\gamma \circ \psi_B) \tensor \what{\IZ}^p$ lies in the image of \'etale monodromy representation 
$$ \pi_1^\et(U, u_\IC) \to \O(\H^2_\et(X_\IC, \what{\IZ}^p), c_1(\xi_\IC)), $$ $(\gamma \circ \psi_B) \tensor \what{\IZ}^p$ acts trivially on $\disc(\H^2_\et(X_\IC, \what{\IZ}^p))$, by the existence of $\sK^{p, \ad}$-level structures on $U$. Markman's theorem \ref{Markman} then ensures that $\gamma \circ \psi_B \in \Mon^2(X_\IC)$, so $\psi_B$ is a parallel transport operator. Now we conclude as before: The isomorphism $f_\IC : X'_\IC \stackrel{\sim}{\to} X_\IC$ provided by Verbitsky's theorem specializes to the desired $f : X' \stackrel{\sim}{\to} X$. \qed
 
\subsection{Supersingular Specialization Operators}
\'Etale parallel transport operators are defined as rather formal objects. In this subsection, we explain that for supersingular $\HKn$-type varieties, deformations of line bundles naturally give rise to such operators. In particular, these operators come up naturally if one studies the crystalline period morphism for higher dimensional $\HKn$-type varieties in the style of Ogus. 

Again let $k$ denote an algebraically closed field of characteristic $p > n + 1$. 
\begin{definition}
Let $S$ be a scheme over $k$. We say that a $\HKn$-space $f: \sX \to S$ is supersingular if its geometric fibers are all supersingular. We say that the family $\sX/S$ \textit{admits a marking}, if the natural map $\underline{\Pic}_{\sX/S}(S) \to \Pic(\sX_s)$ has finite $p$-power torsion cokernel for every geometric point $s$. 
\end{definition}

The above definition is of course motivated by Ogus' definition of a marking \cite[Def.~2.1]{Ogus2}. Supersingular $\HKn$-spaces are easy to find:  

\begin{lemma}
Let $S$ be a smooth $k$-variety and $\sX \to S$ be any supersingular $\HKn$-space over $S$. There exists a \'etale cover $S' \to S$ such that $\sX_{S'}$ admits a marking. 
\end{lemma}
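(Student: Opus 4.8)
The plan is to construct the marking étale-locally on $S$ by spreading out a $\IZ$-basis of the Néron--Severi group of a single fibre, and then to patch by quasi-compactness; supersingularity enters only to guarantee that ``there are enough line bundles away from $p$'', through the divisorial Tate conjecture.

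Set $\mathcal{N} := \underline{\Pic}_{\sX/S}$. Since $\H^1(\sX_{\bar u},\shO_{\sX_{\bar u}})=0$ for every geometric point $\bar u\to S$, the relative Picard functor has trivial identity component, so $\mathcal{N}$ is an \'etale sheaf of abelian groups on $S$ whose stalks are the finitely generated torsion-free groups $\NS(\sX_{\bar u})$ (torsion-freeness by \ref{NSfree}(c)). The structural input I would establish is that for every $\ell\neq p$ the sheaf $\mathcal{N}\tensor\IZ_\ell$ is lisse. Indeed, the relative Kummer sequences furnish a morphism of \'etale sheaves $c_1\colon \mathcal{N}\tensor\IZ_\ell\to R^2f_*\IZ_\ell(1)$ which on each stalk is the Chern class map $\NS(\sX_{\bar u})\tensor\IZ_\ell\to\H^2_\et(\sX_{\bar u},\IZ_\ell(1))$; for a supersingular $\HKn$-type variety this is an isomorphism --- injective with torsion-free cokernel by \ref{NSfree}(a), surjective by \ref{tateImprove} (applicable to each fibre since, by Definition \ref{def}(b), every $\HKn$-type variety over an algebraically closed field of characteristic $>n+1$ carries a primitive polarization lifting to characteristic zero). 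As $R^2f_*\IZ_\ell(1)$ is lisse ($f$ smooth proper, $\ell$ invertible on $S$) and $c_1$ is a stalkwise isomorphism, $\mathcal{N}\tensor\IZ_\ell$ is lisse.

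Next, fix a geometric point $\bar s_0$ of $S$ and a $\IZ$-basis $L_1,\dots,L_{b_2}$ of $\NS(\sX_{\bar s_0})$, where $b_2$ is the second Betti number. Because $\sX$ is of finite presentation over $S$, each $L_i$ descends to a line bundle over some connected \'etale neighbourhood $(S_{\bar s_0},\bar s_0')\to(S,\bar s_0)$; intersecting finitely many, we get line bundles $\mathcal{L}_1,\dots,\mathcal{L}_{b_2}$ on $\sX_{S_{\bar s_0}}$ restricting to the $L_i$ at $\bar s_0'$. These give a marking of $\sX_{S_{\bar s_0}}\to S_{\bar s_0}$: over the connected base $S_{\bar s_0}$ the image of each $\mathcal{L}_i$ in the lisse sheaf $\mathcal{N}\tensor\IZ_\ell$ is a monodromy-invariant global section, so parallel transport along a path from any geometric point $\bar u$ to $\bar s_0'$ carries $\{\mathcal{L}_i|_{\bar u}\}$ isomorphically to $\{\mathcal{L}_i|_{\bar s_0'}\}$, which is a $\IZ_\ell$-basis of the stalk. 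Hence for every $\bar u$ the classes $\mathcal{L}_i|_{\bar u}$ span a subgroup of $\NS(\sX_{\bar u})$ of finite index prime to $\ell$, for every $\ell\neq p$; since they are $b_2$ linearly independent elements of a group of rank $b_2$, this index is finite, hence a power of $p$, which is exactly the marking condition. Finally, the images in $S$ of the various $S_{\bar s_0}$ form an open cover, so by quasi-compactness of $S$ finitely many of them $S_{\bar s_1},\dots,S_{\bar s_m}$ already cover $S$, and $S':=\bigsqcup_{j=1}^m S_{\bar s_j}\to S$ is a surjective \'etale morphism for which $\sX_{S'}$ admits a marking componentwise.

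The only substantive point beyond routine manipulation of \'etale neighbourhoods is the local constancy of $\mathcal{N}\tensor\IZ_\ell$, i.e.\ the surjectivity of the $\ell$-adic cycle class map for supersingular $\HKn$-type varieties (the divisorial Tate conjecture, here borrowed from \ref{tateImprove}); once that is in hand, the rest of the argument is formal.
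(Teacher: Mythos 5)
There is a genuine gap, and it sits at the central step of your argument. You assert that the stalks of $\mathcal{N}=\underline{\Pic}_{\sX/S}$ at a geometric point $\bar u$ are the groups $\NS(\sX_{\bar u})$, and that a $\IZ$-basis $L_1,\dots,L_{b_2}$ of $\NS(\sX_{\bar s_0})$ ``descends to line bundles over some connected \'etale neighbourhood because $\sX$ is of finite presentation over $S$.'' Neither claim is correct as stated. The stalk of the \'etale-sheafified relative Picard functor at $\bar s_0$ is $\varinjlim_U \Pic(\sX_U)/\Pic(U)$, i.e.\ the Picard group of $\sX$ over the strict henselization of $S$ at $\bar s_0$; it maps to $\NS(\sX_{\bar s_0})$ but is not equal to it, and the failure of surjectivity of this map is exactly the obstruction to deforming line bundles: for a $\HKn$-type fiber $\H^2(\sO)\cong k\neq 0$, and $\Def(X;\xi)$ is cut out by a nontrivial equation inside $\Def(X)$ (cf.\ \ref{2.2.1}, \ref{loc}). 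The finite-presentation spreading-out argument only moves sections \emph{from} such filtered limits (the strict henselization, or a generic fiber) out to an \'etale neighbourhood; it cannot move a class from the special fiber $\sX_{\bar s_0}$ into a neighbourhood, which is precisely the nontrivial content of the lemma. Your only use of supersingularity (the surjectivity of $c_1$ from \ref{tateImprove}, making $\mathcal{N}\tensor\IZ_\ell$ lisse) enters at a later, unproblematic step — comparing the span of the restricted classes with $\NS$ at nearby fibers — but it does not produce the line bundles on the family in the first place.

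The missing ingredient, and the one the paper uses, is that supersingularity forces line bundles to deform up to $p$-power index: by Ogus' crystalline form of Artin's theorem (\cite[Prop.~5.5]{Ogus}, which applies verbatim here thanks to the K3-crystal structure from \ref{K3crystal}), one finds $\xi_1,\dots,\xi_{23}$ generating $\Pic(\sX_s)$ up to inverting $p$ which extend over the complete local ring of $S$ at $s$; Artin approximation applied to the relative Picard functor then algebraizes these extensions over an \'etale neighbourhood $U$ of $s$, and one covers $S$ by finitely many such $U$. If you want to salvage your route, you must insert this deformation statement (or an equivalent use of supersingularity at the level of the formal neighbourhood) before the spreading-out step; taking $\bar s_0$ to be a geometric generic point instead does let you spread a basis over a dense open, but it does not by itself give the marking over the closed fibers you have discarded, so the deformation input cannot be avoided.
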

\begin{proof}
Let $s \in S$ be any $k$-point. Recall that Artin's theorem (\cite[Prop.~5.5]{Ogus}) works verbatim in our situation, so that we can find elements $\xi_1, \cdots, \xi_{23}$ which generate $\Pic(\sX_s)$ up to inverting $p$ and deform to the complete local ring of $s$. Then by applying Artin's approximation theorem to the relative Picard functor, we obtain an \'etale neighborhood $U$ of $s$ such that $\xi_i$'s deform to $\underline{\Pic}_{\sX_U/U}(U)$.    
\end{proof}

According to Ogus' philosophy, there is the following table of analogies: \begin{center}
    \begin{tabular}{|c|c | c |}\hline
      & supersingular & complex \\ \hline 
     single K3 $X$ &  $\NS(X)$ + F-crystal $\H^2_\cris(X/W)$ & HS on $\H^2(X, \IZ)$ \\ \hline
     K3 family $f : \sX \to S$ & $\underline{\Pic}_{\sX/S}$ + $\IR^2 f_{\cris*} \sO_{\sX/S}$  & VHS on $\IR^2 f_* \underline{\IZ}$ \\ \hline
\end{tabular}
\end{center}

Now we fit the notion of parallel transport into the picture. Note that if $f : \sY \to T$ is a complex analytic family of hyperk\"ahler manifolds such that $T$ is connected and $R^2 f_* \underline{\IZ}$ is a constant local system, then for any two points $t, t' \in T$, there is a \textit{canonical} isomorphism $\H^2(\sY_t, \IZ) \iso \H^2(\sY_{t'}, \IZ)$, which is a parallel transport operator. Conversely, every parallel transport operator is given this way, because we can always pass to the universal cover of the base. Similarly, on a supersingular $\HKn$-space $\sX$ over a connected scheme $S/k$ which admits a marking, the sheaf of abelian groups $\Pic_{\sX/S}[1/p]$ is constant and for any two $k$-points $s, s' \in S$, there is a canonical isomorphism $\NS(\sX_s)[1/p] \iso \NS(\sX_{s'})[1/p]$. Hence we make the following definition: 

\begin{definition}
\label{defssp}
Let $X$ and $X'$ be two supersingular $\HKn$-type varieties over $k$. An isomorphism $\psi : \NS(X)[1/p] \stackrel{\sim}{\to} \NS(X')[1/p]$ is called a \textit{supersingular specialization operator} if it is induced by a supersingular $\HKn$-space $\sX$ over a connected scheme $S$ over $k$ which admits a marking and contains both $X, X'$ as fibers. We say that $\psi$ is in addition (primitively) \textit{polarizable}, if moreover $\sX/S$ admits a (primitive) polarization.
\end{definition}

By specialization and generization via discrete valuation rings, one easily checks that supersingular specialization operators give rise to \'etale parallel transport operators in a natural way: 
\begin{proposition}
\label{ssptoetPT}
Let $X$ and $X'$ be two supersingular $\HKn$-type varieties over $k$. Suppose $\psi : \Pic(X)[1/p] \stackrel{\sim}{\to} \Pic(X')[1/p]$ is a supersingular specialization operator induced by a supersingular $\HKn$-space $\sX$ over a connected scheme $S$ over $k$ which admits a marking. Then the isomorphism $\psi \tensor \what{\IZ}^p : \H^2_\et(X, \what{\IZ}^p) \stackrel{\sim}{\to} \H^2_\et(X', \what{\IZ}^p)$ is induced by an \'etale path on $S$. 
\end{proposition}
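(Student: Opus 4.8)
The plan is to reduce the claim to a statement about \'etale cohomology of a family with constant second cohomology sheaf, then invoke \ref{compMotive}-style Chern class compatibilities to identify the algebraically-defined map $\psi \tensor \what{\IZ}^p$ with the one induced by an \'etale path. First I would note that, since $\sX/S$ admits a marking, the sheaf $\underline{\Pic}_{\sX/S}[1/p]$ is a constant sheaf of abelian groups on the connected scheme $S$ (this is the whole point of Ogus' notion of a marking, as recalled just before \ref{defssp}); write $P$ for the common value. For any two $k$-points $s, s' \in S$ the specialization maps identify $P$ with $\NS(\sX_s)[1/p]$ and $\NS(\sX_{s'})[1/p]$, and by construction $\psi$ is exactly the composite isomorphism $\NS(X)[1/p] \stackrel{\sim}{\to} P \stackrel{\sim}{\to} \NS(X')[1/p]$ coming from a choice $\sX_b \iso X$, $\sX_{b'} \iso X'$.

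Next, I would analyze the prime-to-$p$ \'etale realization. The relative second \'etale cohomology $\bH^2_{\what{\IZ}^p} := \prod_{\ell \neq p} \IR^2 f_{\et *} \underline{\IZ}_\ell(1)$ is a lisse sheaf on $S[1/\ell]$ for each $\ell$, hence corresponds to a representation of $\pi_1^\et(S, b)$; an \'etale path $\varepsilon$ from $b$ to $b'$ gives a canonical isomorphism $\varepsilon^* : \H^2_\et(X, \what{\IZ}^p) \stackrel{\sim}{\to} \H^2_\et(X', \what{\IZ}^p)$. The Chern class map $c_1 : \underline{\Pic}_{\sX/S}[1/p] \to \bH^2_{\what{\IZ}^p}$ is a morphism of \'etale sheaves on $S$ (it is compatible with base change, being defined via the Kummer sequence as in the proof of \ref{NSfree}), so it intertwines specialization of line bundles with \'etale-path transport of cohomology classes. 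Concretely: for a section $\lambda \in P = \underline{\Pic}_{\sX/S}(S)[1/p]$, the class $c_1(\lambda_{b}) \in \H^2_\et(X, \what{\IZ}^p)$ is sent by $\varepsilon^*$ to $c_1(\lambda_{b'}) \in \H^2_\et(X', \what{\IZ}^p)$, because $c_1$ of a global section is a global (hence monodromy-invariant) section of $\bH^2_{\what{\IZ}^p}$ after restricting to the appropriate locus and the two are identified under the path. Since $X$ and $X'$ are supersingular, \ref{tateImprove} tells us that $c_1 : \NS(X) \tensor \IZ_\ell \to \H^2_\et(X, \IZ_\ell)$ and likewise for $X'$ are isomorphisms for every $\ell \neq p$, so $c_1(P)$ spans $\H^2_\et(X, \what{\IZ}^p)$ over $\what{\IZ}^p$ (after inverting nothing — the integral statement of \ref{tateImprove} suffices). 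Therefore the two isomorphisms $\varepsilon^*$ and $\psi \tensor \what{\IZ}^p$ agree on a spanning set and hence coincide.

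The one genuine technical point — which I expect to be the main obstacle — is making precise that $c_1$ of a global section of $\underline{\Pic}_{\sX/S}[1/p]$ really does match up under the \'etale path, i.e.\ that the \'etale monodromy on $\bH^2_{\what{\IZ}^p}$ fixes classes coming from $S$-sections of the relative Picard functor. This is where one must be slightly careful: a section of $\underline{\Pic}_{\sX/S}$ over $S$ need not come from a line bundle on the total space $\sX$, only fppf-locally; but the cycle class in $\bH^2_{\what{\IZ}^p}$ is still well-defined and is still a global section of the sheaf $\bH^2_{\what{\IZ}^p}$ on $S$, hence $\pi_1^\et$-invariant, because the cycle class map on relative Picard functors is compatible with arbitrary base change (one can check this on $S$-points valued in strictly henselian rings, reducing to the classical statement). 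Granting this, there is no choice to be made and the two transport isomorphisms literally agree on the lattice $c_1(P)$, which is all of $\H^2_\et(\cdot,\what{\IZ}^p)$ by supersingularity. Finally, for the "in addition (primitively) polarizable" refinement one simply observes that if $\sX/S$ carries a (primitive) polarization $\bxi$ then the \'etale path produced above automatically sends $c_1(\bxi_b)$ to $c_1(\bxi_{b'})$, so the resulting \'etale parallel transport operator is (primitively) polarizable in the sense of \ref{defetPT}; this is immediate from the same compatibility applied to the section $\bxi \in \underline{\Pic}_{\sX/S}(S)$.
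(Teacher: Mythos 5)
Your argument is correct, and it reaches the conclusion by a mechanism different from the one the paper indicates. The paper does not actually write out a proof of \ref{ssptoetPT}: the sentence preceding it says the verification is done ``by specialization and generization via discrete valuation rings'', i.e.\ one connects $b$ and $b'$ by a chain of DVR traits inside $S$, takes the cospecialization isomorphisms furnished by smooth proper base change as the \'etale path, and checks at each step that specialization of line bundles is compatible with $c_1$. You instead argue globally: a section of $\underline{\Pic}_{\sX/S}$ has a prime-to-$p$ Chern class which is a global section of the lisse sheaf $\bH^2_{\what{\IZ}^p}(1)$ (no locus needs to be removed, since $S$ lives over $k$ of characteristic $p$ and only $\ell \neq p$ occur), hence its stalks at $b$ and $b'$ are matched by \emph{any} \'etale path; the marking makes these classes span $\NS$ of each fiber up to $p$-power torsion, and \ref{tateImprove} makes $c_1 : \NS \tensor \IZ_\ell \to \H^2_\et(\,\cdot\,, \IZ_\ell)$ an isomorphism on supersingular fibers, so the two $\what{\IZ}^p$-linear maps agree on a spanning set and coincide. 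Both proofs rest on the same two ingredients (compatibility of $c_1$ with transport, and spanning of $\H^2_\et(\,\cdot\,,\what{\IZ}^p)$ by algebraic classes), but yours buys a slightly stronger statement: every \'etale path from $b$ to $b'$ induces $\psi \tensor \what{\IZ}^p$, i.e.\ the prime-to-$p$ monodromy of a marked supersingular family acts trivially, which is the \'etale counterpart of the constancy of $\underline{\Pic}_{\sX/S}[1/p]$ asserted before \ref{defssp}. What the DVR route buys in exchange is that it sidesteps the one technical point you rightly flag, namely defining the cycle class of a section of the relative Picard functor that is only fppf-locally represented by a line bundle; your resolution of that point (the Kummer boundary map is a map of sheaves on $S$, compatible with base change, checked on strictly henselian points) is the standard one and is fine.
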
 
In the above statement we implicitly used the natural isomorphisms $\Pic(X) \tensor \what{\IZ}^p \stackrel{c_1}{\iso} \H^2_\et(X, \what{\IZ}^p)$ and  $\Pic(X') \tensor \what{\IZ}^p \stackrel{c_1}{\iso} \H^2_\et(X', \what{\IZ}^p)$ provided by \ref{tateImprove}. We remark that, if $S$ is noetherian, and $\underline{\Pic}_{\sX/S}(S)$ admits an element $\bzeta$ such that $p \nmid \bzeta_s^{2n}$,\footnote{For example, this condition is always satisfied for $N$-marked families of supersingular K3's when $N$ is one of the supersingular K3 lattices.} then $\sX/S$ admits a primitive polarization if and only if it admits any polarizaiton. Indeed, $\bzeta$ has to be primitive and if $\bxi$ is any polarization on $\sX$, use $p^m \bxi + \bzeta$ for $m \gg 0$. 

\section{Examples and Applications}
\subsection{Moduli of Sheaves on K3's} \subsubsection{}\label{ModSh} Now we discuss some concrete examples of $\HKn$-type varieties. Let $k$ denote an algebraically closed field and $S$ be a K3 surface over $k$. We can consider moduli spaces of stable sheaves on $S$ (cf. \cite[\S2.1]{Charles}, \cite[\S4.2]{LiFu}). Let $\alpha$ be the numerical equivalence class of a closed point on $S$. A Mukai vector $v$ on $S$ is an element in the \textit{algebraic Mukai lattice} $N(S) := \IZ \oplus \NS(S) \oplus \IZ \alpha$. Denote by $c_1(v)$ the component of $v$ in $\NS(S)$. The lattice $N(S)$ is equipped with a Makai pairing defined by $\<(a, b, c), (a', b', c')\> = bb' - ac' - a'c$. Let $H$ be a polarization on $S$ and $v$ be a Mukai vector such that $v^2 + 2 = 2n$. If $H$ is in general position with respect to $v$, the moduli space of $H$-stable sheaves with Mukai vector $v$ on $S$ is representable by a smooth projective variety $\sM_H(S, v)$. Now suppose $\mathrm{char\,} k = p > n + 1$ and for some finite flat extension $V$ of $W$, the triple $(S, H, c_1(v))$ lifts to $(S_V, H_V, c_1(v)_V)$. Then the Mukai vector $v$ also lifts to $S_V$ and the relative moduli space $\sM_{H_V}(S_V, v)$ is a lift of $\sM_H(S, v)$. $\sM_{H_V}(S_V, v)$ is projective over $V$ and its generic fiber is of $\HKn$-type (cf. the proof of \cite[Thm~2.4]{Charles2}).

\begin{proposition}
\label{ModSh+}
$\sM_H(S, v)$ is a variety of $\HKn$-type if $\sM_H(S, v)$ has the same Hodge numbers as a complex $\HKn$-type variety.
\end{proposition}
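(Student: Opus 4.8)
The plan is to reduce the claim to part (b) of Definition~\ref{def}, using the explicit lift $\sM_{H_V}(S_V, v)$ described just above. Recall that by Definition~\ref{def}(b), to show that $X := \sM_H(S, v)$ is a $\HKn$-type variety over $k = \bar{k}$ with $\mathrm{char\,} k = p > n+1$, it suffices to produce a finite flat extension $V$ of $W$ and a smooth projective scheme $X_V$ over $V$ lifting $X$ such that (i) $X_V$ carries a lifting of a primitive polarization on $X$, and (ii) some geometric generic fiber of $X_V$ is of $\HKn$-type and has the same Hodge numbers as $X$.

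First I would take the lift $X_V := \sM_{H_V}(S_V, v)$ provided by the discussion preceding the statement: for a suitable finite flat extension $V$ of $W$, the triple $(S, H, c_1(v))$ lifts to $(S_V, H_V, c_1(v)_V)$, the Mukai vector $v$ lifts, and $\sM_{H_V}(S_V, v)$ is smooth projective over $V$ with generic fiber of $\HKn$-type (referencing the proof of \cite[Thm~2.4]{Charles2}). This already gives condition (ii) except for the Hodge-number bookkeeping: the generic fiber is of $\HKn$-type, hence has the Hodge numbers of a complex $\HKn$-type manifold; and the hypothesis of the proposition is precisely that $X$ itself has those same Hodge numbers. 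So (ii) holds.

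Next I would address (i): one needs a \emph{primitive} polarization on $X$ that lifts to $X_V$. The moduli space $\sM_H(S, v)$ carries a natural ample line bundle (a determinant/theta line bundle built from the polarization $H$ on $S$ and the universal sheaf, in the style of \cite[\S2.1]{Charles} or \cite[\S4.2]{LiFu}), and this construction is compatible with the relative moduli space $\sM_{H_V}(S_V, v)$, so some polarization $\xi$ on $X$ lifts to $X_V$. To upgrade to a primitive polarization, I would invoke the remark following \ref{loc}: since $p > n+1$, for any line bundle $\xi$ on $X$ and any $m$ prime to $p$ one has $\Def(X;\xi) = \Def(X; m\xi)$, so a polarization lifts if and only if its primitive root (which differs from it by a prime-to-$p$ factor, because $\NS(X)$ is torsion-free by \ref{NSfree} and divisibility by $p$ in $\NS$ can be controlled — or more simply, one may simply take a large prime-to-$p$ power of the primitive polarization) lifts. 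Writing $\xi = m \xi_0$ with $\xi_0$ primitive, if $p \nmid m$ we conclude $\xi_0$ lifts to $X_V$; if $p \mid m$ we replace the originally chosen ample line bundle by $\ell \xi$ for a large prime $\ell \neq p$, which is still ample and whose primitive root is again $\xi_0$. Hence $X_V$ carries a lifting of the primitive polarization $\xi_0$, establishing (i).

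With (i) and (ii) verified, Definition~\ref{def}(b) applies and $X = \sM_H(S,v)$ is a $\HKn$-type variety over $k$; for $k$ not algebraically closed one passes to $\bar{k}$ via Definition~\ref{def}(c), though in the setting at hand $k$ is already taken algebraically closed. The main obstacle, I expect, is the primitivity/lifting bookkeeping in step (i): one must be careful that the natural polarization on the moduli space is compatible with the relative construction over $V$ and that the prime-to-$p$ trick genuinely reduces primitive lifting to lifting of a convenient (prime-to-$p$ multiple) ample class; all of this is routine given the deformation-theoretic input of \ref{loc} and its remark, but it is the only place where something must actually be checked rather than quoted.
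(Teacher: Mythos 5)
There is a genuine gap in your step (i), and it is exactly the point the paper's proof is organized around. Your reduction ``a polarization lifts iff its primitive root lifts'' is only justified when the multiplicity is prime to $p$: the remark after \ref{loc} says $\Def(X;\xi)=\Def(X;m\xi)$ for $p\nmid m$, but if the natural ample class $\xi$ on $\sM_H(S,v)$ satisfies $\xi=p^a m'\xi_0$ with $a\ge 1$ (and nothing in your argument rules this out), then knowing that $\xi$ extends to $X_V$ does not give an extension of $\xi_0$: over an Artinian or ramified base the lifting criterion of \ref{loc}(b) is an orthogonality condition against $c_{1,\cris}$, and orthogonality to $p^a c_1(\xi_0)$ is strictly weaker than orthogonality to $c_1(\xi_0)$ once $p$ is a zero divisor. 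Your proposed fix of replacing $\xi$ by $\ell\xi$ for a prime $\ell\neq p$ does not change the $p$-divisibility of $\xi$ relative to $\xi_0$, so it does not repair this. The only situation where one may ``divide by $p$'' is when the lift lives over the unramified, $p$-torsion-free base $V=W$ (this is what \cite[Thm~3.8]{BO} / \cite[Prop.~1.12]{Ogus} buy), and that is available only when $S$ is non-supersingular, where \cite[Thm~4.1]{LM} gives a lift of $(S,H,c_1(v))$ over $W$ itself.

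The supersingular case is where the real work lies and your argument does not touch it: there the lift $(S_V,H_V,c_1(v)_V)$ is forced over a ramified $V$, so the division trick is unavailable. The paper instead \emph{chooses} the lift carefully: it finds $w\in v^\perp\subset N(S)$ with $p\nmid w^2$ (a discriminant count using $\disc(\NS(S)_p)\iso(\IZ/p\IZ)^{2\sigma}$, $\sigma\le 10$), lifts $(S,H,c_1(v),c_1(w))$ via \cite[Prop.~1.5]{Charles2}, transports $w$ through the Mukai morphism $\theta_v$ to a line bundle $\zeta$ on $\sM_H(S,v)$ with $p\nmid \zeta^2$ that lifts by construction, and then uses $p^h\xi+\zeta$ for $h\gg 0$: its Beauville--Bogomolov square is prime to $p$, so its primitive root differs from it by a prime-to-$p$ factor, and only now does the prime-to-$p$ trick legitimately produce a liftable \emph{primitive} polarization. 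Without some device of this kind (manufacturing an ample class of square prime to $p$ that lifts), your step (i) does not go through in the supersingular case.
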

\begin{proof}
Our goal is to show that $\sM_{H_V}(S_V, v)$ as in the preceeding paragraph can be chosen such that some primitive polarization on $\sM_H(S, v)$ lifts to $\sM_{H_V}(S_V, v)$. Note that the Hodge-de Rham spectral sequence necessarily degenerates at $E_1$-page for $\sM_{H_V}(S_V, v)$ and its generic and special fibers are equipped with Beauville-Bogomolov forms (cf. \ref{extBBF0}).

If $S$ is non-supersingular, then by \cite[Thm~4.1]{LM}, there exists a lift $S_V$ as above with $V = W$. Since $\sM_{H_V}(S_V, v)$ is still projective, there exists some polarization $\wt{\xi}$ on $\sM_{H_V}(S_V, v)$. By \cite[Thm~3.8]{BO} or \cite[Prop.~1.12]{Ogus}, the primitive polarization associated to the restriction of $\wt{\xi}$ at the special fiber lifts to $\sM_{H_V}(S_V, v)$. 

Now suppose $S$ is supersingular. Let $v^\perp$ be the orthogonal complement of $v$ in $N(S)$. We claim that there exists an element $w \in v^\perp$ such that $p \nmid w^2$. Indeed, since $ p \nmid v^2$, we have $\disc((v^\perp)_p) = \disc(N(S)_p) = \disc(\NS(S)_p) \iso (\IZ/ p \IZ)^{2 \sigma}$ for some $\sigma \le 10$. If such $w$ does not exist, then we must have $|\disc((v^\perp)_p)| \ge p^{\mathrm{rank\,} v^\perp} = p^{23}$, which contradicts $|\disc((v^\perp)_p)| \le p^{20}$. By \cite[Prop.~1.5]{Charles2}, we can find a lift $S_V$ of $S$ over a finite flat extension $V$ of $W$ as in the first paragraph such that $c_1(w)$ also lifts to $S_V$. A lift of $c_1(w)$ induces a lift of $w$. Let $K$ be the fraction field of $V$ and $\bar{K}$ be an algebraic closure of $K$. By Thm~2.4(iv) in \textit{loc. cit.}, there exists an injective isometry
$$ \theta_v : [\wt{v}^\perp \subset N(S_V \tensor \bar{K})] \to \NS(\sM_{H_V}(S_V, v) \tensor \bar{K}) $$
where $\wt{v}$ is the lift of $v$ to $S_V \tensor \bar{K}$. This implies that up to replacing $V$ by a further finite flat extension, the lift of $w$ to $S_V \tensor \bar{K}$ induces via $\theta_v$ a line bundle $\wt{\zeta}$ on $\sM_{H_V}(S_V, v)$ which specializes to a line bundle $\zeta$ on $\sM_H(S, v)$. Since $\theta_v$ is an isometry, $w^2 = \zeta^2$. Now let $\wt{\xi}$ be any polarization on $\sM_{H_V}(S_V, v)$ and let $\xi$ be its restriction to the special fiber. For $h \gg 0$, $p^h \wt{\xi} + \wt{\zeta}$ is polarization on $\sM_{H_V}(S_V, v)$. Since $p\nmid (p^h \xi + \zeta)^2$ and the Beauville-Bogomolov form on $\NS(\sM_H(S, v))$ takes integral values, $p^h \xi + \zeta$ cannot be a $p$-power of another line bundle. The primitive line bundle associated to $p^h \xi + \zeta$ lifts to $\sM_{H_V}(S_V, v)$. 
\end{proof}

In practice it is hard to check whether a particular $\sM_H(S, v)$ has the expected Hodge numbers, i.e., those of a complex $\HKn$-type variety, although we believe this should be generically true. The following gives a sufficient condition: 

\begin{proposition}
\label{prop: example 2n + 1}
Suppose $p > 2n + 1$ and one of the following is satisfied: $S$ is non-supersingular, or $c_1(v)$ is a multiple of $H$ and $c_{1, \dR}(H) \not\in \Fil^2 \H^2_\dR(X/k)$. Then $\sM_H(S, v)$ has the same Hodge numbers as a complex $\HKn$-type variety. 
\end{proposition}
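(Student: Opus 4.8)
The plan is to lift the moduli space to characteristic zero and compare Hodge numbers across the special and generic fibers, using the fact that the Hodge numbers of a complex $\sM_H(S,v)$ with $v^2+2 = 2n$ are exactly those of an $\HKn$-type manifold (this is classical, since $\sM_H(S,v)$ is then a complex hyperkähler manifold of $\HKn$-type by the work of Yoshioka/O'Grady/Mukai). So it suffices to produce a projective lift $\sM_{H_V}(S_V,v)$ over some finite flat extension $V$ of $W$ whose Hodge–de Rham spectral sequence degenerates at $E_1$ and whose Hodge numbers therefore agree on both fibers. The point where the hypothesis $p > 2n+1$ and the dichotomy on $S$ enter is in guaranteeing that such a degeneration (equivalently, the freeness of the relevant Hodge cohomology sheaves in a family) actually holds for $\sM_{H}(S,v)$ itself, i.e.\ that the special fiber is not ``pathological.''

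First I would set up the lift. In the non-supersingular case, by \cite[Thm~4.1]{LM} the K3 surface $S$ together with $H$ and $c_1(v)$ lifts to $(S_W, H_W, c_1(v)_W)$ over $W$ itself; in the case where $c_1(v)$ is a multiple of $H$ and $c_{1,\dR}(H) \notin \Fil^2 \H^2_\dR(X/k)$ — wait, here one must be careful, the condition stated is on the cohomology of $S$, so $c_{1,\dR}(H) \notin \Fil^2\H^2_\dR(S/k)$ — the deformation space $\Def(S;H)$ is smooth over $W$ by \ref{flatloc}(b) applied to $S$ (a K3 surface is an excellent reduction of a $\HK{1}$-type variety), so again $(S,H)$ lifts over $W$, and since $c_1(v)$ is a multiple of $H$ it lifts too. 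In either case we get a lift over $W$ (unramified), the Mukai vector $v$ lifts to $S_W$, and the relative moduli space $\sM_{H_W}(S_W,v)$ is smooth projective over $W$ with special fiber $\sM_H(S,v)$ and $\HKn$-type geometric generic fiber (as recalled in \ref{ModSh}).

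Next, because the generic fiber is of $\HKn$-type, it has the Hodge numbers of a complex $\HKn$-type manifold and its Hodge–de Rham spectral sequence degenerates at $E_1$; in particular the dimensions $h^{i,j}$ of the generic fiber are the ``expected'' ones. To transfer this to the special fiber I would invoke the degeneration of the Hodge–de Rham spectral sequence in positive characteristic: by Deligne–Illusie, since $\dim \sM_H(S,v) = 2n < p$ (which is exactly where $p > 2n+1$ — in fact $p > 2n$ would suffice, but the extra room is harmless — is used) and $\sM_H(S,v)$ lifts to $W_2(k)$ (a fortiori to $W$), the Hodge–de Rham spectral sequence of $\sM_H(S,v)/k$ degenerates at $E_1$. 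Hence $\dim_k \H^r_\dR(\sM_H(S,v)/k) = \sum_{i+j=r} h^{i,j}(\sM_H(S,v))$. Meanwhile $\dim_k \H^r_\dR(\sM_H(S,v)/k) = \dim_{K_0}\H^r_\dR(\sM_{H_W}(S_W,v)_{K_0}/K_0)$ by flatness and base change for de Rham cohomology of a smooth proper morphism, and the latter equals $\sum_{i+j=r} h^{i,j}$ of a complex $\HKn$-type manifold. Now upper semicontinuity of $h^{i,j}$ in the flat proper family $\sM_{H_W}(S_W,v)/W$ forces each individual $h^{i,j}(\sM_H(S,v)) \ge h^{i,j}(\text{generic fiber})$, while the two sums over each diagonal $i+j=r$ coincide; therefore equality holds term by term, i.e.\ $\sM_H(S,v)$ has exactly the Hodge numbers of a complex $\HKn$-type variety.

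The main obstacle, and the only genuinely delicate point, is the application of Deligne–Illusie: one needs a lift of $\sM_H(S,v)$ over $W_2(k)$, which is provided by the relative moduli space $\sM_{H_W}(S_W,v)$ constructed above, and one needs the dimension bound $\dim < p$, which is $2n < p$, hence guaranteed by $p > 2n+1$. The two hypotheses in the proposition are precisely what make the $W$-lift available without passing to a ramified extension (a ramified base would still give a $W_2$-reduction, so even that is not strictly essential, but the clean statement follows most directly from the unramified case). One should also double-check that the generic fiber of $\sM_{H_W}(S_W,v)$ being of $\HKn$-type really does pin down all its Hodge numbers: this is \ref{sec: known BBFs} together with the classification of complex $\HKn$-type Hodge numbers, and it is exactly the input that makes the semicontinuity-plus-matching-sums argument close.
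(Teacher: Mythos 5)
Your overall route---lift $(S,H,c_1(v))$ over $W$, form $\sM_{H_W}(S_W,v)$, apply Deligne--Illusie to the special fiber, and then match Hodge numbers against the $\HKn$-type generic fiber by semicontinuity together with equality of the sums $\sum_{i+j=r}h^{i,j}$---is the same as the paper's. The gap is the step where you assert
$\dim_k \H^r_\dR(\sM_H(S,v)/k) = \dim_{K_0}\H^r_\dR\bigl(\sM_{H_W}(S_W,v)_{K_0}/K_0\bigr)$
``by flatness and base change for de Rham cohomology of a smooth proper morphism.'' This is not automatic: $\H^r_\dR(X_W/W)$ is a finitely generated $W$-module that may a priori contain $p$-torsion, and the universal coefficient sequence
\begin{equation*}
0 \to \H^r_\dR(X_W/W)\otimes_W k \to \H^r_\dR(X/k) \to \mathrm{Tor}_1^W\bigl(\H^{r+1}_\dR(X_W/W),k\bigr)\to 0
\end{equation*}
shows that any torsion (in degree $r$ or $r+1$) makes the special-fiber dimension strictly larger than the generic-fiber one; semicontinuity only gives inequalities in the wrong direction for your argument to close. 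Killing this torsion is exactly the delicate point, and it is where $p>2n+1$ is genuinely used---not in Deligne--Illusie, for which $p>2n$ suffices, as you noticed without drawing the consequence.

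The paper closes this as follows: $\H^j_\et(X_W\otimes\bar K_0,\IZ_p)$ is torsion-free for all $j$ (the integral cohomology of a complex $\HKn$-type manifold is torsion-free, \cite{MarkmanGen}), hence by Fontaine--Messing \cite[Rmk~6.4]{FM} the groups $\H^j_\cris(X/W)$ are torsion-free for $j<p-1$, i.e.\ for all $j\le 2n$ precisely because $p>2n+1$; this gives freeness of $\H^j_\dR(X_W/W)$ for $j\le 2n$, and Poincar\'e duality for de Rham cohomology handles the degrees $j>2n$. Only after this does the equality of de Rham Betti numbers of the two fibers hold, and then your semicontinuity-plus-matching-sums argument (and the $E_1$-degeneration from Deligne--Illusie) finishes the proof as in the paper. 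So you need to add the torsion-freeness of the relative de Rham cohomology; without it the claimed base-change equality is unjustified.
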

\begin{proof}
The hypothesis guarantees that we can always find a lift $(X_W, H_W, c_1(v)_W)$ of $(X, H, c_1(v))$ over $W$. Then we can consider $\sM_{H_W}(S_W, v)$ as before. Let us write $X$ for $\sM_H(S, v)$ and $X_W$ for $\sM_{H_W}(S_W, v)$. Since $\dim X = 2n$, a well known theorem of Deligne-Illusie guarantees that the Hodge-de Rham spectral sequence of $X$ degenerates at $E_1$-page. Since $\H^j_\et(X_W \tensor \bar{K}_0, \IZ_p)$ is torsion-free for all $j$, $H^j_\cris(X/W)$ is torsion-free for $j < p - 1$ (\cite[Rmk~6.4]{FM}). This shows that $\H^j_\dR(X_W/W)$ is free over $W$ for $j \le 2n$. Poincar\'e duality of algebraic de Rham cohomology (\cite[Tag 0FW7]{stacks-project}) ensures that $\dim \H^j_\dR$ of the special fiber agrees with that of the generic fiber.
\end{proof}

As a special case, the Hilbert scheme of $n$ points $S^{[n]}$ can be viewed as $\sM_H(v)$ for any polarization $H$ and $v = (1, 0, 1-n)$. Therefore, $S^{[n]}$ is a $\HKn$-type variety whenever $p > 2n + 1$. 

\begin{remark}
\label{ChQues}
In the introduction of \cite{Charles2}, Charles asked whether for fixed $n, d$, there exists bounds $m, N, D$ such that if $X$ is a moduli space of stable sheaves on K3 surfaces of dimension $2n$ and $L$ is a big and nef line bundle on $X$ with $L^{2n} = d$, the complete linear series $|m L|$ induces a birational map from $X$ to a subvariety of $\IP^{n'}$ of degree at most $D$ for some $n' \le N$. Suppose now we fixed a characteristic $p> n + 1$ for the base field of $X$. By \ref{ModSh+}, $X$ is a $\HKn$-type variety as long as it has the expected Hodge numbers. If we restrict to considering such $X$'s, \ref{bounded} gives an affimative answer to Charles' question when $L$ is ample, in which case birational maps can be replaced by isomorphisms. The bounds $m, N$ we give are not explicit and may depend on $p$.
\end{remark}

Finally, we remark that non-explicit examples are very easy to construct: Take a complex $\HKn$-type variety. By spreading out and the arguments for \ref{prop: example 2n + 1}, one produces a $\HKn$-type variety in all but finitely prime characteristic. Once we find one $\HKn$-type variety, we obtain a whole 20-dimensional (if $n \ge 2$) equicharacteristic family by deformation. Therefore, abstractly we know there are lots of $\HKn$-type varieties to which our theorems apply. 

\subsection{Cubic Fourfolds}
We first review some basic theory about complex cubic fourfolds (cf. \cite{BD}). Let $Y_\IC \subset \IP^5_\IC$ be a smooth cubic hypersurface. Let $h$ be its hyperplane section. The Fano variety of lines on $Y_\IC$, which we denote by $F(Y_\IC)$, is a $\mathrm{K3}^{[2]}$-type hyperk\"ahler variety. Via the Pl\"ucker embedding, $h$ induces a canonical polarization on $F(Y_\IC)$, which we denote by $g$. The natural incidence correspondence on $Y_\IC \times F(Y_\IC)$ induces an Abel-Jacobi map 
$$ \mathrm{AJ}: \H^4(Y_\IC, \IZ) \to \H^2(F(Y_\IC), \IZ) $$
which sends $h^2$ to $g$ and restricts to a Hodge isometry $P^4(Y_\IC, \IZ(2)) \stackrel{\sim}{\to} \P^2(F(Y_\IC), \IZ(1))$. Here the pairing on the LHS is the usual Poincar\'e pairing and the pairing on the RHS is the Beauville-Bogomolov form, and $\P^4(Y_\IC, \IZ(1))$ (resp. $\P^2(F(Y_\IC), \IZ)$) stands for the orthogonal complement of $h^2$ (resp. $g$). Note that before we restric to primitive parts, $\AJ^\vee$ is not an isometry, because $h^4 = 3$ and $g^2 = 6$. We also remark that as $Y_\IC \subset \IP^5_\IC$ is smooth ample divisor, the natural map $\H^j(\IP_\IC^5, \IZ) \to \H^j(Y_\IC, \IZ)$ is bijective for $j \le 3$.

Let $k$ be an algebraically closed field of characteristic $p > 0$. Let $Y$ be a cubic fourfold over $k$ with hyperplane section $h$. We again denote the Fano variety of lines on $Y$ by $F(Y)$ and the canonical polarization on $F(Y)$ by $g$. Clearly, we can lift $Y$ to a cubic fourfold $Y_W$ over $W$. A lift $Y_W$ of $Y$ over $W$ induces a lift $F(Y_W)$ of $F(Y)$. $Y$ is said to be supersingular if the Newton polygon of $\H^4_\cris(Y/W)$ is a straight line. The incidence correspondence on $Y \times F(Y)$ induces Abel-Jacobi maps $\AJ : \H^4_\et(Y, \IZ_\ell(2)) \stackrel{\sim}{\to} \H^2_\et(F(Y), \IZ_\ell(1))$ and $\AJ : \H^4_\cris(Y/W) \tensor K_0(2)  \stackrel{\sim}{\to} \H^2_\cris(F(Y)/W) \tensor K_0(1)$ which send the class of $h^2$ to $g$ and restrict to isometries on the primitive parts. We are primarly concerned with the case $p \ge 7$, which ensures that $\H^*_\cris(Y/W)$ and $\H^*_\cris(F(Y)/W)$ are torsion-free.
We denote the orthogonal complement of $h^2 \in \H^4_*(Y)$ ($* = \dR, \ell, \cris, $etc.) by $\P^4_*$. 

Charles and Pirutka established the \textit{integral} Tate conjecture for cubic fourfolds (\cite{CharlesIntTate}), in particular:
\begin{proposition}
\label{C-P}
\emph{(Charles-Pirutka)}
Let $p \neq 2, 3$ be a prime and $Y$ be a supersingular cubic fourfold over $\bar{\IF}_p$. Then for any $\ell \neq p$, the cycle class map $\cl_\et : \Ch^2(Y)_\num \tensor \IZ_\ell \to \H^4_\et(Y, \IZ_\ell)$ is an isomorphism. 
\end{proposition}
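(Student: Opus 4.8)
The plan is to deduce this from the theorem of Charles and Pirutka \cite{CharlesIntTate} establishing the integral Tate conjecture for cubic fourfolds over fields finitely generated over $\IF_p$; the only points that still require comment are the passage from a finite base field to $\bar{\IF}_p$ and the identification of numerical equivalence with $\ell$-adic homological equivalence in the supersingular range.

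First I would dispose of the torsion-freeness of the target and of the equality $\Ch^2(Y)_\num = \Ch^2(Y)_{\hom}$. Since a complex cubic fourfold has torsion-free integral cohomology, smooth and proper base change gives that $\H^\ast_\et(Y,\IZ_\ell)$ is torsion-free for every $\ell\neq p$. Next, because $Y$ is supersingular, every Frobenius eigenvalue on $\H^4_\cris(Y/W)$ has $p$-adic valuation equal to that of $p^2$, hence (after dividing by $p^2$, and using Weil together with Kronecker's theorem) is a root of unity; so over a suitable finite subfield Frobenius acts trivially on $\H^4_\et(Y,\IQ_\ell(2))$, and the rational Tate conjecture for cubic fourfolds --- known, e.g. by transporting the divisorial Tate conjecture for the $\mathrm{K3}^{[2]}$-type variety $F(Y)$, which holds by \ref{tateImprove}, through the Abel--Jacobi correspondence (an isometry on primitive cohomology) --- shows that $\cl_\et\otimes\IQ_\ell\colon \Ch^2(Y)_{\hom}\otimes\IQ_\ell\to\H^4_\et(Y,\IQ_\ell)$ is surjective, hence an isomorphism. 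Combined with the perfectness of the Poincar\'e pairing on the torsion-free lattice $\H^4_\et(Y,\IZ_\ell)$, this forces $\Ch^2(Y)_\num=\Ch^2(Y)_{\hom}$, so that $\cl_\et$ is well defined on $\Ch^2(Y)_\num$ and is injective with torsion cokernel.

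The remaining content is the vanishing of this cokernel. I would reduce to the case that $Y$, together with a finite generating set of $\Ch^2(Y)_\num$, is defined over a finite field $\IF_q$, using $\bar{\IF}_p=\varinjlim_q\IF_q$ and $\Ch^2(Y)_\num=\varinjlim_{q'}\Ch^2(Y_{\IF_{q'}})_\num$; by the paragraph above, for $q'$ large enough the invariants $\H^4_\et(Y,\IZ_\ell(2))^{\Gal(\bar{\IF}_p/\IF_{q'})}$ already exhaust $\H^4_\et(Y,\IZ_\ell(2))$. Over $\IF_q$, Charles and Pirutka identify (via the coniveau/Bloch--Ogus spectral sequence and the Colliot-Th\'el\`ene--Kahn analysis) the torsion in the cokernel of $\Ch^2(Y_{\IF_q})\otimes\IZ_\ell\to\H^4_\et(Y_{\bar{\IF}_p},\IZ_\ell(2))^{\Gal(\bar{\IF}_p/\IF_q)}$ with a subquotient of the unramified cohomology $\H^3_{\mathrm{nr}}(\IF_q(Y),\IQ_\ell/\IZ_\ell(2))$, and prove its vanishing by degenerating $Y$ to a cubic fourfold containing a plane, so that $Y$ becomes birational to a quadric surface bundle over $\IP^2$, where the relevant unramified invariant is computable. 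Taking the colimit over $q'$ of the resulting surjective cycle class maps yields surjectivity over $\bar{\IF}_p$, and together with the first two steps this gives that $\cl_\et\colon\Ch^2(Y)_\num\otimes\IZ_\ell\to\H^4_\et(Y,\IZ_\ell)$ is an isomorphism.

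The main obstacle is exactly the vanishing of the unramified cohomology group, which is the genuinely new geometric input of \cite{CharlesIntTate}; I would cite their theorem for this step rather than reproduce the quadric-bundle computation, confining the argument given here to the two bookkeeping reductions (finite field to $\bar{\IF}_p$, and $\hom=\num$ under supersingularity) together with the Abel--Jacobi comparison with $F(Y)$ that is used elsewhere in the paper.
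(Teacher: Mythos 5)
Your proposal is correct and follows essentially the same route as the paper, which offers no argument of its own for \ref{C-P} and simply quotes the theorem of \cite{CharlesIntTate}; the extra bookkeeping you supply (torsion-freeness of $\H^4_\et(Y,\IZ_\ell)$, the identification of numerical with homological equivalence once supersingularity plus the Abel--Jacobi comparison with $F(Y)$ and \ref{tateImprove} show that algebraic classes span $\H^4$ rationally, and the passage from a finite field to $\bar{\IF}_p$) is exactly the implicit content of the paper's ``in particular.'' The only presentational caution is that Kronecker's theorem alone gives unipotence, not triviality, of Frobenius after a finite extension; triviality (equivalently semisimplicity on $\H^4$) comes from the rational algebraicity statement you invoke in the same breath, so the argument stands as written.
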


We fill in the crystalline counterpart of the above result, in analogy to \ref{tateImprove}. 
\begin{proposition}
\label{C-Pcris}
Let $k$ be algebraically closed field of characteristic $p \ge 7$ and $Y$ be a supersingular cubic fourfold over $k$. The map $\cl_\cris : \Ch^2(Y)_\num \tensor \IZ_p \to H_\cris^4(Y/W)^{F = p^2}$ is an isomorphism.
\end{proposition}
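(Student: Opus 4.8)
The statement is the crystalline analogue of Proposition~\ref{tateImprove}, and the plan is to mimic that proof via the Fano variety of lines. The key tool is the Abel--Jacobi correspondence: for $Y$ a supersingular cubic fourfold over $k$ with $p\ge 7$, the incidence correspondence induces an isomorphism $\AJ:\H^4_\cris(Y/W)\tensor K_0(2)\stackrel{\sim}{\to}\H^2_\cris(F(Y)/W)\tensor K_0(1)$ which sends $\cl(h^2)$ to $\cl(g)$ and carries the primitive part $\P^4_\cris(Y)$ isometrically onto $\P^2_\cris(F(Y))$. Moreover $\AJ$ is induced by an algebraic cycle, so it is compatible with cycle class maps: it identifies $[\cl(h^2)]^\perp\subset\Ch^2(Y)_\num$ with $\<g\>^\perp\subset\NS(F(Y))$ up to isogeny, intertwining $\cl_\cris$ on both sides (after the appropriate Tate twist, $F=p^2$ on the $Y$-side matches $F=p$ on the $F(Y)$-side). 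Since $F(Y)$ is a $\HKn$-type variety for $n=2$ (it lifts to $F(Y_W)$ whose generic fiber is of $\mathrm{K3}^{[2]}$-type, by the discussion preceding the proposition), and $Y$ supersingular forces $F(Y)$ supersingular (the Newton polygon of $\H^2_\cris(F(Y)/W)$ is a straight line because $\AJ$ is an isomorphism of isocrystals up to twist), Proposition~\ref{tateImprove} applies to $F(Y)$.

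First I would record that $\Ch^2(Y)_\num$ is torsion-free: since $p\ge 7$, $\H^*_\cris(Y/W)$ is torsion-free, and by Charles--Pirutka (Proposition~\ref{C-P}) together with $\H^4_\et(Y,\IZ_\ell)$ torsion-free, $\Ch^2(Y)_\num\tensor\IZ_\ell\into\H^4_\et(Y,\IZ_\ell)$ shows $\Ch^2(Y)_\num$ has no $\ell$-torsion for any $\ell\neq p$; a parallel crystalline argument (or simply the rational injectivity of $\cl_\cris$ on the numerical Chow group) handles the $p$-part. Second, I would decompose $\Ch^2(Y)_\num = \IZ\cdot h^2 \oplus ([\cl(h^2)]^\perp)$ up to finite index, and likewise $\NS(F(Y)) = \IZ\cdot g \oplus \<g\>^\perp$ up to finite index, with $\AJ$ matching the perpendicular parts. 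Since $p\nmid h^4 = 3$ and $p\nmid g^2 = 6$ (as $p\ge 7$), these decompositions are integral after tensoring with $\IZ_p$, so no $p$-index issues arise. Third, I would apply Proposition~\ref{tateImprove} to $F(Y)$: it gives that $c_1:\NS(F(Y))\tensor\IZ_p\to\H^2_\cris(F(Y)/W)^{F=p}$ is an isomorphism, hence in particular on the primitive part. Transporting back through $\AJ$ and reassembling with the $h^2$-component (which maps isomorphically onto $W\cdot\cl_\cris(h^2)$, a rank-one $F=p^2$ sublattice that is a direct summand since $p\nmid 3$), I would conclude that $\cl_\cris:\Ch^2(Y)_\num\tensor\IZ_p\to\H^4_\cris(Y/W)^{F=p^2}$ is an isomorphism.

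The step requiring the most care is checking that the Abel--Jacobi map is genuinely \emph{integral} on the relevant sublattices and compatible with $\cl_\cris$ on the nose --- i.e. that the cycle-theoretic and crystalline incidence correspondences agree and that the discrepancy between $\AJ$ and an isometry (coming from $h^4=3$ versus $g^2=6$) is concentrated away from $p$. This should follow from the explicit description of the Abel--Jacobi correspondence in characteristic $p$ (the same correspondence used by Charles--Pirutka and in the construction preceding \ref{cubicTorelli}), combined with the fact that $\P^4_\cris(Y)$ and $\P^2_\cris(F(Y))$ are each self-dual at $p$ (since $p\nmid\disc$ of the relevant primitive lattices, which divides a power of $2$ and $3$). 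A subtlety to double-check: that $F(Y)$ is an \emph{excellent} reduction, so that \ref{tateImprove} (which is stated for $\HKn$-type varieties and invokes \ref{NSfree}) genuinely applies; this is guaranteed because $F(Y_W)$ has degenerate Hodge--de Rham spectral sequence and the right Betti numbers, as in \S5.2. Once these compatibilities are in place the argument is a formal transport through $\AJ$.
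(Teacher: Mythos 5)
Your overall skeleton (reduce to \ref{tateImprove} for the supersingular variety $F(Y)$ via the incidence correspondence, using that $p\nmid 3$ and $p\nmid 6$ to split off $h^2$ and $g$) matches the paper, but the step you flag as delicate is a genuine gap, and your proposed fix does not close it. The paper only establishes the crystalline Abel--Jacobi map after tensoring with $K_0$; self-duality at $p$ of $\P^4_\cris(Y/W)$ and $\P^2_\cris(F(Y)/W)$ does \emph{not} imply that a rational isometry between them respects the $W$-lattices (a rational isometry of a quadratic space need not preserve a given self-dual lattice). Integrality can in fact be extracted --- e.g.\ both $\AJ$ and its adjoint $\AJ^\vee$ are integrally defined because they are induced by algebraic correspondences on torsion-free cohomology, and the rational identity $\AJ^\vee\circ\AJ=\mathrm{id}$ on primitive parts then holds integrally and forces $\AJ$ to be an integral isomorphism by a rank count --- but you do not give such an argument, and it is exactly the kind of input the paper's proof is engineered to avoid.

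More seriously, even granting the integral crystalline $\AJ$, ``transporting back'' is not a formal step. \ref{tateImprove} tells you that \emph{divisor classes on $F(Y)$} span $\H^2_\cris(F(Y)/W)^{F=p}$ over $\IZ_p$; what you need are \emph{codimension-two algebraic cycles on $Y$} spanning $\H^4_\cris(Y/W)^{F=p^2}$. The cycle-level incidence map goes $\Ch^2(Y)_\num\to\NS(F(Y))$, and the assertion that it is surjective after $\tensor\,\IZ_p$ (your ``no $p$-index issues arise'') is essentially equivalent to the statement being proved, so your argument is circular at this point. The paper breaks the circle differently: it uses the adjoint correspondence $\Ch^3(F(Y))_\num\to\Ch^2(Y)_\num$ together with the key integral lemma that $\cup\, g^2:\H^2_\cris(F(Y)/W)\to\H^6_\cris(F(Y)/W)$ is an isomorphism of $W$-modules (perfectness of $(x,y)\mapsto x\cup y\cup g^2$, using $q(g)=6$ and $g^4=108$ being units for $p\ge 7$). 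Thus products of divisors with $g^2$ are honest algebraic cycles on $F(Y)$ whose classes integrally generate the degree-six Tate module, and their images under the transpose correspondence are algebraic cycles on $Y$ generating $\H^4_\cris(Y/W)^{F=p^2}$. Your write-up is missing both this mechanism and any substitute for it.
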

\begin{proof}
We write $X$ for $F(Y)$. Consider the adjoint $\AJ^\vee : \P^6_\cris(X/W) \to \P^4_\cris(Y/W)$ of $\AJ$. Note that $\AJ^\vee$ is also induced by the incidence correspondence, so we have a compatible map on Chow groups $\Ch^3(X) \to \Ch^2(Y)$. 

For every $j \ge 0$ set $\P^{j}(X) = \{ b \in \Ch^j(X)_\num : b \cup g^{4 - j} = 0 \} $ and $\P^j(Y) = \{ a \in \Ch^j(Y)_\num : a \cup h^{4 - j} = 0 \}$. Note that since $h^4 = 3$ is prime to $p$, $\Ch^2(Y)_\num$ splits into an orthogonal direct sum $W h^2 \oplus \P^2(Y)$. Therefore, it suffices to show that $\P^2(Y) \tensor \IZ_p \to \P^6_\cris(Y/W)^{F = p^2}$ is an isomorphism. First, we remark that $\AJ^\vee$ restricts to a map $P^3(X) \to \P^2(Y)$. Indeed, for any $b \in \Ch^3(Y)_\num$ and $a \in \Ch^2(X)_\num$, $\AJ^\vee(b) \cup a = \AJ(a) \cup b$. Therefore, if $b \in \P^3(Y)$, then $\AJ^\vee(b) \cup h^2 = \AJ(h^2) \cup b = g \cup b = 0$. 

Now we have a diagram 
\begin{center}
    \begin{tikzcd}
    \P^3(X) \tensor \IZ_p \arrow{d}{} \arrow{r}{} & \P^2(Y) \tensor \IZ_p \arrow{d}{} \\
    \P^6_\cris(X/W(3))^{F = 1} \arrow{r}{} & \P^4_\cris(Y/W(2))^{F = 1}
    \end{tikzcd},
\end{center}
so it suffices to show that the bottom arrow is an isomorphism, or equivalently, $\Ch^3(X)_\num \tensor \IZ_p \to \H^6_\cris(Y/W(3))^{F = 1}$ is an isomorphism. Consider the following diagram:
\begin{center}
    \begin{tikzcd}
    \Ch^1(X)\tensor \IZ_p \arrow{r}{\cup g^2} \arrow{d}{} & \Ch^3(X)\tensor \IZ_p \arrow{d}{} \\
    \H^2_\cris(X/W(1))^{F = 1} \arrow{r}{\cup g^2} & \H^6_\cris(X/W(3))^{F = 1}
    \end{tikzcd}
\end{center}
The left vertical arrow is an isomorphism by \ref{tateImprove}. Therefore, it suffices to show that the bottom horizontal arrow is an isomorphism, for which it suffices to show $\cup g^2 : \H^2_\cris(X/W) \to \H^6_\cris(X/W)$ is an isomorphism of $W$-modules. Since the Poincar\'e pairing $\H^2_\cris(X/W) \times \H^6_\cris(X/W) \to W$ is perfect, we only need to show that the pairing $\H^2_\cris(X/W)^{\tensor 2} \to W$ defined by $(x, y) \mapsto x \cup y \cup g^2$ is perfect. Since $q_X(g) = 6$ and $6 \in W^\times$ $\H^2_\cris(X/W)$ splits into an orthogonal direct sum $W g \oplus \P^2_\cris(X/W)$. We claim that $(x, y) \mapsto x \cup y \cup g^2$ defines a perfect pairing for both $W g$ and $\P^2_\cris(X/W)$. For the former, this follows from $g^4 = \lambda_2 q_X(g)^2 = 108$. For the latter, this pairing is perfect because it differs from the restriction of $q_X$ to $\P^2_\cris(X/W)$ by a scalar in $W^\times$. 
\end{proof}

\noindent \textit{Proof of Theorem~\ref{cubicTorelli}.} There is a Kuga-Satake period map for cubic fourfolds (\cite[6.13]{Keerthi}). Let $L$ be the primitive lattice of a cubic fourfold, which is isomorphic to
\begin{align}
\label{primCF}
    U^{\oplus 2} \oplus E_8^{\oplus 2} \oplus \begin{bmatrix}2 & 1 \\ 1 & 2
\end{bmatrix}
\end{align}
where $U$ and $E_8$ are as introduced in \ref{sec: known BBFs}. Let $\CF_{\IZ_{(p)}}$ be the moduli stack of cubic fourfolds over $\IZ_{(p)}$. There is a period morphism $\rho : \tilde{\CF}_{\IZ_{(p)}} \to \shS^\ad_{\sK_0}(L)$, where $\tilde{\CF}_{\IZ_{(p)}}$ is an orientation double cover of $\tilde{\CF}_{\IZ_{(p)}}$. \ref{compMotive} holds verbatim when $\P^2_*$ is replaced by $P^4_*$ ($* = \dR, \ell, \cris, $etc.)

Choose an isomorphism $\bar{K}_0 \iso \IC$. Choose liftings $Y_W, Y_W'$ of $Y, Y'$ such that $\psi$ preserves the induced Hodge filtrations. By the same proof of \ref{AHLift}, there exists a Hodge isomorphism $\psi(\IC) : \H^2(Y_W'(\IC), \IZ) \stackrel{\sim}{\to} \H^2(Y_W(\IC), \IZ)$ which is compatible with $\psi$ in the obvious sense. In fact, one first produces an rational absolutely Hodge isomorphism $\psi(\IC) : \H^2(Y_W'(\IC), \IZ_{(p)}) \stackrel{\sim}{\to} \H^2(Y_W(\IC), \IZ_{(p)})$ and show that it preserves the integral structures using \ref{C-P}. By the Torelli theorem for cubic fourfolds (\cite{Voisin}, see also \cite{CharlesCubic}), $\psi(\IC)$ is induced by an isomorphism $f_\IC : Y_W(\IC) \stackrel{\sim}{\to} Y_W(\IC)$ which preserves the hyperplane sections. Clearly $f_\IC$ induces an isomorphism $F(f_\IC) : F(Y_W(\IC)) \stackrel{\sim}{\to} F(Y'_W(\IC))$. By \cite[Thm~2]{MM} again, $F(f_\IC)$ specializes to an isomorphism $F(f) : F(Y) \stackrel{\sim}{\to} F(Y')$. Note that $F(f_\IC)$ and $F(f)$ preserve hyperplane sections induced by Pl\"ucker embeddings. By \cite[Prop.~4]{CharlesCubic}, $F(f)$ is induced by an isomorphism $f : Y \stackrel{\sim}{\to} Y'$.   \qed

\begin{remark}
Our arguments will have to go through Fano varieties of lines because Matsusaka-Mumford's theorem cannot be applied directly to cubic fourfolds---they may be rational, hence ruled. Unlike Theorem \ref{crysTor}, Theorem \ref{cubicTorelli} is restricted to the $k = \bar{\IF}_p$ because we do not know whether \ref{C-P} holds for a general algebraically closed field. In fact, only $\ell = 2, 3$ may cause a problem, because these are the primes dividing $q_X(g) = 6$. Otherwise, one can just carry out the arguments in \ref{C-Pcris} for \'etale cohomology. We do conjecture that \ref{C-P} is true for a general algebraically closed field, based on the crystalline variational Tate conjecture (cf. \cite[Conj.~0.1]{Morrow}). 
\end{remark}

\subsubsection{} \label{sspCubic} Let $k$ be an algebraically closed field in characterstic $p \ge 7$. Let $Y$ be a supersingular cubic fourfold over $k$. Since the Abel-Jacobi map induces an isometry $P^4_\cris(Y/W(1)) \stackrel{\sim}{\to} \P^2_\cris(F(Y)/W)$, and $\H^4_\cris(Y/W) \iso W h^2 \oplus P^4_\cris(Y/W)$, $\H^4_\cris(Y/W(1))$ has the structure of a supersingular K3 crystal. Therefore, we can freely make use of results in \cite{Ogus}. In particular, the $\IZ_p$ lattice $\Ch^2(Y)_\num \tensor \IZ_p \iso \H^4_\cris(Y/W)^{F = p^2}$ has discriminant $(\IZ/p \IZ)^{2 \sigma}$ for some $\sigma \ge 1$ (cf. \cite[Prop.~3.13]{Ogus}). We call $\sigma$ the \textit{Artin invariant} of $Y$ and if $\sigma = 1$, we say that $Y$ is superspecial. 

We will use the theory of complex multiplication to produce superspecial cubic fourfolds, in the spirit of Honda-Tate theory. First, we make a preliminary definition.

\begin{definition}
The transcendental lattice of a cubic fourfold $Y_\IC$ (resp. K3 surface $X_\IC$), which we denoted by $T(Y_\IC)$ (resp. $T(X_\IC)$), is the orthogonal complement of the $\H^{0,0}$ part in $\H^4(Y_\IC, \IZ(2))$ (resp. $\H^2(X_\IC, \IZ(1))$. $Y_\IC$ (resp. $X_\IC$) is said to have complex multiplication (CM) if the Mumford-Tate group $\MT(T(Y_\IC))$ (resp. $\MT(X_\IC)$) is commuative. 
\end{definition}

The following lemma gives a sufficient condition for a cubic fourfold to be superspecial.
\begin{lemma}
\label{prodssp}
Suppose $Y_\IC$ is complex cubic fourfold such that $\rank \, T(Y_\IC) = 2$ and $p$ is prime to $|\mathrm{disc}(T(Y_\IC))|$. Let $k$ be an algebraically closed field over $\IF_p$. Suppose $Y$ is a supersingular cubic fourfold over $k$ such that for some finite flat extension $V$ of $W$, there exists a lift $Y_V$ of $Y$ over $V$ and for some embedding $V \into \IC$, $Y_V \tensor \IC \iso Y_\IC$. Then $Y$ has Artin invariant $1$. 
\end{lemma}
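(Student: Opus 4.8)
The strategy is to reduce the statement to a computation of the discriminant of $\H^4_\cris(Y/W)^{F=p^2}$ by transporting the arithmetic of the CM point $Y_\IC$ to characteristic $p$ through the lift $Y_V$. First I would observe that since $\rank T(Y_\IC)=2$, the full transcendental lattice $T(Y_\IC)$ carries a CM Hodge structure of K3 type, so $\MT(T(Y_\IC))$ is a torus and the reduction $Y$ is indeed forced to be supersingular (this is consistent with the hypothesis). The key point is that the primitive crystalline cohomology $\P^4_\cris(Y/W(1))$, which is a supersingular K3 crystal of rank $22$ by the discussion in \ref{sspCubic}, and its Tate module $\H^4_\cris(Y/W)^{F=p^2}\cap \P^4_\cris$, can be computed from the $p$-adic comparison with $Y_V$. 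Concretely, after choosing $V\into\IC$ with $Y_V\tensor\IC\iso Y_\IC$, the lattice $\P^4(Y_\IC,\IZ(2))$ contains $T(Y_\IC)$ as the orthogonal complement of the algebraic part, and $\disc(T(Y_\IC))$ is prime to $p$ by hypothesis.

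The heart of the argument is then to show that the Artin invariant $\sigma$ of $Y$ equals $1$, i.e.\ that $\disc(\Ch^2(Y)_\num\tensor\IZ_p)\iso(\IZ/p\IZ)^2$. I would argue as follows. Write $N_\IC := \P^4(Y_\IC,\IZ(2))$, a lattice of rank $22$ containing $T:=T(Y_\IC)$ of rank $2$ with $p\nmid \disc(T)$, so that $N_\IC\tensor\IZ_p = (N_\IC\cap T^\perp)_p \oplus T_p$ with $T_p$ self-dual. Under the $p$-adic comparison isomorphism coming from the lift $Y_V$ (using \cite[Thm~1.1(i)]{BMS}, exactly as in the proof of \ref{AHLift} and \ref{lem: induce lifting}(c)), the $\IZ_p$-lattice $\P^4_\et(Y_{V,\bar K},\IZ_p)$ is identified with a lattice in $\P^4_\cris(Y/W)[1/p]$, and by the integral Tate conjecture \ref{C-P} together with its crystalline counterpart \ref{C-Pcris}, the algebraic classes $\Ch^2(Y)_\num\tensor\IZ_p$ fill up $\P^4_\cris(Y/W)^{F=p^2}$ (after splitting off $Wh^2$). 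The supersingular K3 crystal $\P^4_\cris(Y/W(1))$ has the property (Ogus, \cite[Prop.~3.13]{Ogus}) that its Tate module $T_0$ satisfies $p T_0^\vee \subseteq T_0 \subseteq T_0^\vee$ with $T_0^\vee/T_0 \iso (\IZ/p)^{2\sigma}$, and $T_0 \tensor_{\IZ_p} W$ recovers the F-crystal. The point is that the self-dual summand $T_p$ of $N_\IC\tensor\IZ_p$ persists: the characteristic class of the transcendental part becomes algebraic after reduction (since $Y$ is supersingular), and the orthogonal decomposition over $\IZ_p$ is preserved by the crystalline comparison because it is defined by idempotents in $\O(N)\tensor\IZ_p$ which lift canonically. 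This forces $2\sigma \le 2$, hence $\sigma = 1$.

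In more detail, the main steps in order are: (i) set up the lift $Y_V$ and the comparison isomorphisms, recording that $\P^4_\cris(Y/W(1))$ is a supersingular K3 crystal by \ref{sspCubic}; (ii) use \ref{C-P} and \ref{C-Pcris} to identify $\Ch^2(Y)_\num\tensor\IZ_p$ (mod $Wh^2$) with the Tate module of this K3 crystal, so $\disc = (\IZ/p)^{2\sigma}$; (iii) transport the orthogonal splitting $N_\IC\tensor\IZ_p = (N_\IC\cap T^\perp)_p\oplus T_p$ across the $p$-adic comparison to get an analogous splitting of the Tate module, with the $T$-part self-dual and the complement contributing all the discriminant; (iv) invoke Ogus' constraints on supersingular K3 crystals to conclude $\sigma = 1$. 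The main obstacle I anticipate is step (iii): one must ensure that the $\IZ_p$-lattice produced by the comparison isomorphism is genuinely the Tate module (not merely commensurable with it) and that the transcendental/algebraic orthogonal decomposition on the Betti side really does descend — this requires knowing that the special endomorphism structure (equivalently, the algebraic classes) on $Y$ is exactly the specialization of $\Ch^2(Y_V\tensor\IC)$ enlarged by the newly-algebraic transcendental classes, which follows from supersingularity and the fact that Chern classes determine line bundles (\ref{NSfree}-style), combined with the compatibility of $\AJ$ with all realizations. Once that matching is pinned down, the discriminant bound is a short lattice-theoretic computation.
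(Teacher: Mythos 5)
Your endgame is exactly the paper's: exhibit inside the Tate module $\H^4_\cris(Y/W)^{F=p^2}$ a self-dual $\IZ_p$-sublattice of corank $2$, and then invoke Ogus' structure theorem \cite[Prop.~3.13]{Ogus} (the discriminant group is $(\IZ/p)^{2\sigma}$, so a corank-$2$ unimodular sublattice forces $2\sigma\le 2$, i.e.\ $\sigma=1$). Where you diverge is in how that sublattice is produced. The paper does it in one line: specialization of cycles through $Y_V$ gives an isometric injection $\Ch^2_\num(Y_\IC)\into\Ch^2_\num(Y)$, and since $p\nmid|\disc(T(Y_\IC))|$ and $\H^4(Y_\IC,\IZ)$ is self-dual, the lattice $\Ch^2_\num(Y_\IC)\tensor\IZ_p$ (of rank $21$) is self-dual; its image in $\Ch^2(Y)_\num\tensor\IZ_p\iso\H^4_\cris(Y/W)^{F=p^2}$ (the identification from \ref{C-Pcris}, already built into the definition of $\sigma$ in \ref{sspCubic}) is the desired sublattice. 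No $p$-adic comparison theorem, no $\ell$-adic statement \ref{C-P}, and no transport of orthogonal decompositions is needed: the classes land in the $F=p^2$ eigenspace automatically because they are crystalline classes of algebraic cycles on $Y$, and intersection numbers are preserved by specialization.

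This is also where your proposal has a genuine soft spot. Your step (iii) justifies the descent of the splitting $N_\IC\tensor\IZ_p=(N_\IC\cap T^\perp)_p\oplus T_p$ by saying it is ``defined by idempotents in $\O(N)\tensor\IZ_p$ which lift canonically.'' An isometric identification of lattices coming from the BMS/Breuil--Kisin comparison transports the quadratic form, but it says nothing by itself about Frobenius eigenvalues: you still have to prove that the image of the algebraic summand lies in $\H^4_\cris(Y/W)^{F=p^2}$ and is saturated there, which is precisely the content you flag as the ``main obstacle.'' The clean way to close that gap is the cycle-theoretic one above (compatibility of crystalline cycle classes with specialization), and once you use it the entire comparison-isomorphism apparatus in your steps (i)--(iii) becomes superfluous. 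So the argument can be repaired, but as written the key step is not established; I would rewrite it along the paper's lines, keeping your (correct) lattice-theoretic conclusion in step (iv) unchanged.
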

\begin{proof}
There is an isometric injection $\Ch^2_\num(Y_\IC) \into \Ch^2_\num(Y)$. Since $p$ is prime to $|\mathrm{disc}(T(Y_\IC))|$ and $\H^4(Y_\IC, \IZ)$ is self-dual, $\Ch^2_\num(Y_\IC) \tensor \IZ_p$ is self dual. Therefore, the Tate module $\H^4_\cris(Y/W)^{F = p^2}$ admits a self-dual sublattice of corank $2$. By \cite[Prop.~3.13]{Ogus}, $\H^4_\cris(Y/W)^{F = p^2}$ is of the form $T_1 \oplus pT_0$ for some perfect $\IZ_p$-lattices $T_0, T_1$ and $\mathrm{rank\,}T_0 = 2 \sigma \ge 2$. Therefore, $\sigma$ has to be $1$.  
\end{proof}

\begin{lemma}
\label{ArtInv1}
For every quadratic imaginary field $E$, there exists a complex cubic fourfold $Y_\IC$ such that $\End_{\Hdg} T(Y_\IC)_\IQ = E$ and $F(Y_\IC) \iso X_\IC^{[2]}$ for some complex K3 surface $X_\IC$. 
\end{lemma}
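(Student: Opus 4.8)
The plan is to produce $Y_\IC$ by explicitly specifying the Hodge structure on the transcendental lattice and then invoking the Torelli theorem for cubic fourfolds to algebraize it. First I would recall the lattice-theoretic classification of complex cubic fourfolds: by the global Torelli theorem (\cite{Voisin}, \cite{CharlesCubic}) together with the surjectivity of the period map (due to Laza and Looijenga), the isomorphism classes of cubic fourfolds correspond to periods $\w \in \IP(L \tensor \IC)$ in the relevant period domain modulo the arithmetic group, where $L$ is the lattice in (\ref{primCF}), subject to avoiding the two "bad" Heegner divisors coming from non-cubic degenerations. So it suffices to find an appropriate period point whose transcendental lattice is rank $2$ with endomorphism field $E$, and which does not lie on the excluded divisors, and which moreover has Fano variety of the form $X_\IC^{[2]}$.

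For the CM condition: I would fix a rank-$2$ even sublattice $T \subseteq L$ of signature $(2,0)$ such that $T_\IQ$ carries a structure of one-dimensional $E$-vector space with the quadratic form being a scalar multiple of the norm form; this is possible because $L$ has signature $(2, 20)$ and contains many such positive-definite rank-$2$ primitive sublattices (one can produce them inside, say, the $U^{\oplus 2}$ part). Choosing $\w \in T \tensor \IC$ generic gives a weight-$2$ Hodge structure of K3 type on $L$ with $\H^{3,1} \oplus \H^{1,3}$ spanning $T_\IR$, so that the transcendental lattice is exactly $T$ and $\End_{\Hdg}(T_\IQ) = E$; genericity also keeps $\w$ off the two excluded Heegner divisors since those are proper closed subsets. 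Torelli then yields a cubic fourfold $Y_\IC$ realizing this period. The condition $\rank T(Y_\IC) = 2$ is automatic, and $\End_{\Hdg} T(Y_\IC)_\IQ = E$ by construction.

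The more delicate requirement is that $F(Y_\IC) \iso X_\IC^{[2]}$ for an honest K3 surface $X_\IC$. Here I would use Hassett's theory of special cubic fourfolds: $F(Y_\IC)$ is isomorphic to a Hilbert scheme $X_\IC^{[2]}$ of a K3 surface precisely when $Y_\IC$ lies in a Hassett divisor $\mathcal{C}_d$ with $d$ admissible and satisfying the numerical condition that $d$ be of the form $\tfrac{2(n^2 + n + 1)}{a^2}$ (equivalently, $Y_\IC$ has an associated K3 surface and the Fano variety is the Hilbert scheme of that K3 — see Hassett, and Addington's work relating this to the Fano variety being birational, resp. isomorphic, to $X^{[2]}$). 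Concretely I would arrange for $L$ to contain, besides $T$, a specific rank-$2$ sublattice $K_d$ of the right discriminant with $T \subseteq K_d^\perp$, so that the algebraic lattice of $Y_\IC$ contains $K_d$ and $Y_\IC \in \mathcal{C}_d$ with $d$ chosen admissible and of the special form making $F(Y_\IC) = X_\IC^{[2]}$; since we only need one such $Y_\IC$ per $E$, we have enormous freedom in choosing $d$ and the embedding, and a single fixed admissible value of $d$ (e.g. $d = 14$, or $d = 26$) will do for every $E$, after checking that $T$ can be primitively embedded into $K_d^\perp$ inside $L$ (this is a routine lattice-embedding computation using Nikulin's criteria, given the large signature and the fact that $K_d^\perp$ still contains copies of $U$).

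I expect the main obstacle to be the last step — pinning down the exact lattice-embedding bookkeeping that simultaneously forces (i) the transcendental lattice to be the prescribed rank-$2$ CM lattice $T$, (ii) membership in a Hassett divisor $\mathcal{C}_d$ with $d$ of the special form so that $F(Y_\IC)$ is a Hilbert scheme (not merely birational to one), and (iii) avoidance of the excluded period divisors. Each condition individually is standard, but compatibly realizing all three inside the fixed lattice (\ref{primCF}) requires a careful choice; the resolution is that, having fixed $E$, one first picks $d$, then embeds the saturation of $K_d \oplus T$ into $L$, appealing to Nikulin's existence theorem for primitive embeddings of even lattices into an even lattice with sufficiently large indefinite part, and finally chooses $\w \in T\tensor\IC$ outside the countably many bad loci.
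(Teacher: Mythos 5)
There is a genuine gap at the step you yourself flag as delicate, and it is not repaired by the lattice bookkeeping you propose. Your period point is, by design, maximally special: once the rank-$2$ positive-definite lattice $T$ is fixed, the period $\w$ has no moduli (the isotropic locus in $\IP(T\tensor\IC)$ is two conjugate points), so the resulting $Y_\IC$ has algebraic rank $21$ and lies on countably many Hassett divisors. But the results you invoke to conclude $F(Y_\IC)\iso X_\IC^{[2]}$ do not apply to such a member: Hassett's theorem produces the isomorphism with a Hilbert scheme only for a \emph{generic} member of $\mathcal{C}_d$ (with $d=2(n^2+n+1)$), while Addington's numerical criterion $d=2(n^2+n+1)/a^2$ only controls \emph{birationality} to some $X^{[2]}$. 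Arranging that your algebraic lattice contains a labelling $K_d$ therefore gives membership in $\mathcal{C}_d$ but not the isomorphism $F(Y_\IC)\iso X_\IC^{[2]}$ for this particular CM point; to get an actual isomorphism at a special point one would have to verify a K\"ahler-cone/wall condition on $F(Y_\IC)$ (via the hyperk\"ahler Torelli theorem), which is far from the ``routine lattice-embedding computation'' you describe and is not carried out. A secondary problem of the same nature: you cannot avoid the excluded divisors $\mathcal{C}_2\cup\mathcal{C}_6$ ``by genericity of $\w$,'' since $\w$ is rigid once $T$ is chosen; avoidance is a lattice-theoretic condition on the chosen embedding (no discriminant-$2$ or -$6$ labelling through $h^2$ inside $T^\perp$) that must be checked explicitly.

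The paper's proof is organized precisely to sidestep this tension between ``CM'' (which forces the point to be special) and ``$F\iso X^{[2]}$'' (which is known generically). It fixes $d=14$, uses Hassett's identification of the marked divisor $\sC^{\mathrm{mar}}_{14}$ with an open part of the moduli of degree-$14$ polarized K3 surfaces, transports the CM condition through the Hodge isometry $T(Y_\IC)(1)\iso T(X_\IC)$, and then quotes Rizov's density of K3 surfaces with CM by the given $E$ to find a CM point \emph{inside} the dense open locus of $\sC^{\mathrm{mar}}_{14}$ where Hassett's theorem gives $F(Y_\IC)\iso X_\IC^{[2]}$. If you want to salvage your direct lattice-theoretic construction, you would need either a pointwise criterion for the isomorphism (not just birationality) applicable to rank-$21$ Picard lattices, or to replace your single period point by a density argument of the paper's kind.
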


\begin{proof}
Set $d = 14$. Let $\sC^{\mathrm{mar}}_{d}$ be the moduli space of marked special cubic fourfolds of discriminant $d$ (cf. \cite[\S5.2]{Hassett}). By \cite[Thm~1.0.2]{Hassett}, there is an open immersion $\iota$ from $\sC^{\mathrm{mar}}_d$ to the moduli space of polarized K3 surfaces of degree $d$. If the moduli point of a cubic fourfold $Y_\IC$ is sent to that of a polarized K3 surface $(X_\IC, \xi_\IC)$, then there is an isomorphism of Hodge structures $T(Y_\IC(1)) \iso T(X_\IC)$. In particular, $Y_\IC$ has CM by $E$ if and only if $X_\IC$ does. By \cite[Cor.~3.8.3]{Rizov}, the set of polarized K3 surfaces $(X_\IC, \xi_\IC)$ of degree $d$ with CM by $E$ is dense in its moduli. Therefore, the set of cubic fourfolds with CM is also dense in $\sC^{\mathrm{mar}}_{d}$. By \cite[Thm~1.0.3]{Hassett}, $F(Y_\IC)$ is the Hilbert scheme of some K3 surface for a generic choice of $Y_\IC$ parametrized by $\sC^{\mathrm{mar}}_{d}$. 
\end{proof}

\begin{lemma}
\label{prodsupsing}
Let $d \in \IZ_{> 0}$ be any positive integer. For all but finitely many $p$ which is inert in $\IQ(\sqrt{-d})$, there exists a supersingular cubic fourfold $Y$ over $\bar{\IF}_p$ with Artin invariant $1$ such that $\Phi : X^{[2]} \stackrel{\sim}{\to} F(Y)$ for some supersingular K3 surface $X$.
\end{lemma}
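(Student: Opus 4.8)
The idea is to combine Lemma~\ref{ArtInv1} with a CM-lifting/reduction argument and Lemma~\ref{prodssp}. First I would fix the quadratic imaginary field $E = \IQ(\sqrt{-d})$ and invoke \ref{ArtInv1} to produce a complex cubic fourfold $Y_\IC$ with $\End_{\Hdg} T(Y_\IC)_\IQ = E$ and $\rank\, T(Y_\IC) = 2$, together with a complex K3 surface $X_\IC$ such that $F(Y_\IC) \iso X_\IC^{[2]}$. Since $Y_\IC$ has CM, the whole datum $(Y_\IC, X_\IC, F(Y_\IC) \iso X_\IC^{[2]})$ is defined over a number field $M$; enlarging $M$ we may assume $M \supseteq E$ and that there is a model $(Y_M, X_M)$ with $F(Y_M) \iso X_M^{[2]}$. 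Write $\mathfrak{o}$ for the ring of integers of $M$.

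\textbf{Reduction at a good prime.} After inverting finitely many primes I may spread $(Y_M, X_M, F(Y_M)\iso X_M^{[2]})$ out to a smooth proper family over $\mathrm{Spec}\,\mathfrak{o}[1/N]$. Now suppose $p \nmid N \cdot |\disc(T(Y_\IC))| \cdot 6$ is a rational prime inert in $E$, and pick a place $v \mid p$ of $M$ with residue field a finite field $\IF_q$; set $k = \bar{\IF}_p$. Let $Y$ and $X$ be the reductions at $v$ (base changed to $k$), so that $F(Y) \iso X^{[2]}$ still holds. The key point is that $Y$ and $X$ are \emph{supersingular} for such $p$: since $Y_\IC$ has CM by $E$ and $E$ embeds into $\End$ of the crystalline (or $\ell$-adic) realization of $T$, the Frobenius at $v$ acts on the rank-$2$ transcendental part $T_\cris$ with eigenvalues that, by $p$ being inert in $E$, are $p$-Weil numbers of slope exactly $1$ after the Tate twist; combined with the fact that the algebraic part of $\H^4_\cris(Y/W)(2)$ is all slope $0$, the Newton polygon of $\H^4_\cris(Y/W)$ is a straight line. (Concretely one can also deduce supersingularity of $X$ from the standard CM-reduction statement for K3's and transport it to $Y$ via the Abel--Jacobi isometry $P^4_\cris(Y/W(1)) \iso \P^2_\cris(F(Y)/W)$, then to $Y$.) The condition $p \nmid 6$ guarantees $\H^*_\cris$ is torsion-free, as in \S5.2, so the K3-crystal formalism of \cite{Ogus} applies to $\H^4_\cris(Y/W(1))$.

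\textbf{Artin invariant $1$.} Now apply Lemma~\ref{prodssp}: we have a supersingular cubic fourfold $Y/k$, a finite flat extension $V$ of $W$ (take $V = \mathfrak{o}_{M_v}$ or a finite extension thereof so that the residue field becomes $k$), a lift $Y_V$ of $Y$ with $Y_V \tensor_\tau \IC \iso Y_\IC$ for a suitable embedding, and $p$ is prime to $|\disc(T(Y_\IC))|$. Hence $Y$ has Artin invariant $\sigma = 1$, i.e. $Y$ is superspecial. Since $F(Y) \iso X^{[2]}$ and the Abel--Jacobi map gives an isometry of supersingular K3 crystals $P^4_\cris(Y/W(1)) \iso \P^2_\cris(X^{[2]}/W)$ compatible with the algebraic classes, and $\H^2_\cris(X^{[2]}/W) \iso \H^2_\cris(X/W) \oplus W(2-2\cdot 2)$ with the extra summand algebraic, one reads off that $\H^2_\cris(X/W)$ is also a supersingular K3 crystal; by \ref{tateImprove} and the Ogus classification its Artin invariant is likewise controlled, so $X$ is a supersingular K3 surface (its Artin invariant being $1$ as well, but supersingularity is all we need). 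This produces the required $Y$ and $\Phi : X^{[2]} \sto F(Y)$.

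\textbf{Main obstacle.} The step I expect to require the most care is establishing supersingularity of the reduction $Y$ for \emph{every} (cofinitely many) inert $p$, rather than just for a density-one set. The clean statement is that a variety whose transcendental motive has CM by $E$ becomes supersingular upon reduction at primes inert in the relevant CM field; for K3 surfaces this is classical (via the Kuga--Satake abelian variety, whose reduction is then isogenous to a power of a supersingular elliptic curve over $\bar{\IF}_p$), and I would import it through the Kuga--Satake period morphism for cubic fourfolds \cite[6.13]{Keerthi} together with \ref{compMotive} (with $\P^2_*$ replaced by $P^4_*$): the point on $\shS^\ad_{\sK_0}(L)$ attached to $Y_\IC$ is a CM point, its reduction at $v$ lands in the supersingular (= basic) locus precisely when $p$ is inert in $E$, and then $\H^4_\cris(Y/W(2))$ being pure slope $1$ on the transcendental part follows from \ref{ssdefLEnd} applied to the reduction. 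One must check the bookkeeping that finitely many bad primes (dividing $N$, $\disc(T(Y_\IC))$, $6$, or where the CM point fails to reduce to the supersingular locus) are excluded; the resulting set of good inert $p$ is still cofinite among primes inert in $E$, which is exactly the assertion.
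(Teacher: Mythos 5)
Your proposal is correct and follows essentially the same route as the paper: Lemma \ref{ArtInv1}, descent to a number field and spreading out, supersingular reduction at primes inert in $\IQ(\sqrt{-d})$ via the Kuga--Satake construction, specialization of $F(Y) \iso X^{[2]}$, and Lemma \ref{prodssp} for Artin invariant $1$ (note that both arguments implicitly use that the $Y_\IC$ produced by \ref{ArtInv1} can be taken with $\rank\, T(Y_\IC) = 2$, which is what \ref{prodssp} requires). The only substantive difference is the inert-prime step, where the paper makes your ``CM point reduces into the basic locus iff $p$ is inert'' claim concrete by identifying the Kuga--Satake abelian varieties of both $Y_\IC$ and $X_\IC$ as isogenous to powers of an elliptic curve with CM by $\IQ(\sqrt{-d})$ (Morrison's computation plus the Kummer lattice) and applying the reciprocity law, then transferring supersingularity through the inclusions (\ref{KSCF}); your alternative transfer of supersingularity from $Y$ to $X$ via the Abel--Jacobi isometry and the decomposition of $\H^2$ of $X^{[2]}$ works just as well.
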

\begin{proof}
Let $E := \IQ(\sqrt{-d})$ and let $Y_\IC$ be a cubic fourfold such that $\End_{\Hdg} T(Y_\IC)_\IQ = E$ and $F(Y_\IC) \iso X_\IC^{[2]}$ for some K3 surface $X_\IC$. Let $\tilde{\CF}_\IQ$ be the orientation double cover of the moduli stack of cubic fourfolds over $\IQ$ and $\rho : \tilde{\CF}_\IQ \to \Sh^\ad_{\sK_0}(L)$ be the Kuga-Satake period morphism, where $L$ is the lattice in (\ref{primCF}) (see \cite[\S6.13]{Keerthi}).  Let $s \in \tilde{\CF}_{\IQ}(\IC)$ be a point which corresponds to $Y_\IC$. Then $\rho(s) \in \Sh^\ad_{\sK_0}(L)(\IC)$ is a special point with reflex field $E$. Let $t \in \Sh_{\sK_0}(L)(\IC)$ be a point which maps to $\rho(s)$. Let $A_\IC$ be the fiber over $t$ of the universal abelian scheme over $\shS_{\sK_0}(L)$, i.e., the Kuga-Satake abelian variety for $Y_\IC$. Choose a polarization of $X_\IC$ and apply the similar construction to $X_\IC$, we obtain a Kuga-Satake abelian variety $B_\IC$ for $X_\IC$. By Morrison's computation \cite[Prop.~4.3.3]{K3book} and the theory of Kummer lattices  \cite[Rmk~14.3.22]{K3book}, $A_\IC$ and $B_\IC$ are both isogenous to a power of $\sE_\IC$ for an elliptic curve $\sE$ with CM by $E$. 

Since $A_\IC$ and $B_\IC$ correspond to special points with reflex field $E$ on spin Shimura varieties, $A_\IC, B_\IC$, and hence $Y_\IC$ and $X_\IC$ all descend to some finite extension $F$ of $E$. Denote their $F$-models by $A, B, Y, X$. The main point of the Kuga-Satake construction is that there are natural $\Gal(\bar{F}/F)$-equivariant inclusions 
\begin{align}
\label{KSCF}
    \P^4_\et(Y_{\bar{F}}, \IQ_\ell(2)) \subset \End (\H^1_\et(A_{\bar{F}}, \IQ_\ell)) \text{ and } \P^2_\et(X_{\bar{F}}, \IQ_\ell(1)) \subset \End (\H^1_\et(B_{\bar{F}}, \IQ_\ell)).
\end{align}

Up to extending $F$, we may assume that $A, B$ are isogenous to powers of an $F$-model $\sE$ of $\sE_\IC$ and the isomorphism $Y_\IC \iso X^{[2]}_\IC$ descends to $F$. Let $\sO_F$ be the ring of integers of $F$. For some open subscheme $U \subset \Spec \sO_F$ $Y$,  $A, B, X, Y, \sE$ spread to schemes $\shA, \mathscr{B}, \shX, \shY, \shE$ over $U$. By the reciprocity law applied to $\sE$, for any $\fp \in \Spec U$ above $p$ with reside field $\kappa(\fp)$, $\shA \tensor \kappa(\fp)$ and $\mathscr{B} \tensor \kappa(\fp)$ are supersingular if and only if $p$ is inert in $\IQ(\sqrt{-d})$. If $\shA \tensor \kappa(\fp)$ and $\mathscr{B} \tensor \kappa(\fp)$ are supersingular, then so are $\shY \tensor \kappa(\fp)$ and $\shX \tensor \kappa(\fp)$ by (\ref{KSCF}). 
Choose an embedding $\kappa(\fp) \into \bar{\IF}_p$. For all but finite many $p \ge 7$, there exists a place $\fp$ above $p$ such that the isomorphism $F(Y) \iso X^{[2]}$ specializes to an isomorphism $F(\shY_{\kappa(\fp)}) \iso \shX_{\kappa(\fp)}^{[2]}$. Let $p$ be such a prime. If furthermore $p$ does not divide $|\disc(T(Y_\IC))|$, then by \ref{prodssp}, $Y_{\bar{\IF}_p}$ has Artin invariant $1$. 
\end{proof}

\paragraph{Example} By Shioda and Katsura's results, we know that the Fermat cubic fourfold in characteristic $p$ is supersingular if and only if $p \equiv -1 \mod 3$ (cf. \cite[Thm~2.10, Rmk.~3.8]{Fermat}). We explain how their results relate to our arguments: we view the Fermat equation as defining a cubic fourfold $\shY$ over $\IZ[1/3, e^{\pi i /3}]$. The quadratic form $T(\shY_\IC)$ is given by (cf. \cite[Thm~1.8(1)]{LazaZheng})
$$ - \begin{bmatrix} 6 & 3 \\ 3 & 6 \end{bmatrix} $$
This is a Kummer lattice which is equipped with a \textit{unique} Hodge structure of K3 type (cf. \cite[\S14.3.4]{K3book}), so 
$$ \End_{\Hdg} T(\shY_\IC)_\IQ \iso \IQ(\sqrt{-3}). $$
By the proof of \ref{prodsupsing}, when $p \ge 7$,\footnote{We are working with $p \ge 7$ in order to make sure that the crystalline cohomology of cubic fourfolds are torsion-free, but it should be totally possible to prove this supersingularity statement using Shimura varieties for all primes.} we see that the reduction $\shY \tensor \IF_p$ is supersingular if and only if $p$ is inert in $\IQ(\sqrt{-3})$ if and only if $p \equiv -1 \mod 3$, which agrees with Shioda and Katsura's result. \ref{ArtInv1} also tells us that when $p \ge 7$, the Fermat cubic is \textit{superspecial} whenever it is supersingular. 

\begin{remark}
Laza and Zheng studied a number of cubic fourfolds with big symmetric groups. In particular, they gave explicit formulas of these fourfolds and their transcendental lattices (cf. \cite[Thm~1.8]{LazaZheng}). Using these explicit formulas, one can compute the range of primes for which those cubic fourfolds have supersingular reduction. 
\end{remark} 

We write $\mu(S)$ for the Dirichlet density for a set of primes $S$. 

\begin{theorem}
\label{ssext}
If $S$ is the set of primes $p$ such that there exists a superspecial cubic fourfold over $\bar{\IF}_p$, then $\mu(S) = 1$.
\end{theorem}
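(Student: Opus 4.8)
The plan is to deduce Theorem~\ref{ssext} from Lemma~\ref{prodsupsing} by a counting/density argument over the rational quadratic imaginary fields. The point is that Lemma~\ref{prodsupsing} produces, for each fixed squarefree $d>0$, a superspecial cubic fourfold over $\bar{\IF}_p$ for all but finitely many primes $p$ that are inert in $\IQ(\sqrt{-d})$. So the task reduces to showing that the union over a suitable collection of $d$'s of the sets $\{ p : p \text{ inert in } \IQ(\sqrt{-d})\}$, after removing a finite set for each $d$, has Dirichlet density $1$.

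First I would recall that for a fixed squarefree $d>0$, the set of primes inert in $\IQ(\sqrt{-d})$ is, up to the finitely many primes dividing $2d$, exactly the set of $p$ with $\left(\tfrac{-d}{p}\right) = -1$, and by Chebotarev (or just quadratic reciprocity plus Dirichlet) this set has density $\tfrac12$. Removing the extra finite bad set coming from Lemma~\ref{prodsupsing} (the primes where the spreading-out of $\shY$, $\shX$ and the isomorphism $F(\shY)\iso\shX^{[2]}$ fails to behave well, plus the primes dividing $|\disc(T(Y_\IC))|$ and the primes $< 7$) does not change the density. So for each fixed $d$ we get a density-$\tfrac12$ set $S_d \subseteq S$. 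The naive issue is that a single $S_d$ only has density $\tfrac12$; I need to take several $d$'s and show their union exhausts density $1$.

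The key elementary step is this: choose finitely many squarefree $d_1, \dots, d_r > 0$ such that no prime $p$ is simultaneously split in all of $\IQ(\sqrt{-d_1}), \dots, \IQ(\sqrt{-d_r})$ except for a density-zero (in fact finite, or at worst controlled) exceptional set. Concretely, one can take $d_1, \dots, d_r$ so that the characters $\chi_{-d_i} = \left(\tfrac{-d_i}{\cdot}\right)$ generate a large enough subgroup of the group of quadratic Dirichlet characters; for instance taking $-d_1, -d_2, -d_3$ to be $-1, -2$ and a small odd prime $-q$, the conditions "$p$ split in all three" translate to a system of congruences on $p$ modulo $8q$ carving out a proper, hence density $< 1$, subset, and by iterating (adding more primes $q$) the density of "split in all of $\IQ(\sqrt{-d_i})$" can be made arbitrarily small. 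In fact it is cleanest to argue: for any $N$, the set of primes split in $\IQ(\sqrt{-q})$ for every odd prime $q \le N$ has density $\prod_{q\le N}\tfrac12 \to 0$; so for $N$ large, all but a density-$<\epsilon$ set of primes $p$ are inert in some $\IQ(\sqrt{-q})$ with $q \le N$ prime, and for such $p$ (with $p \ge 7$ and $p \nmid |\disc T(Y_\IC)|$ for the relevant $Y_\IC$, and outside the finite bad set of Lemma~\ref{prodsupsing}) there is a superspecial cubic fourfold over $\bar{\IF}_p$. Letting $\epsilon \to 0$ and noting that finitely many primes are removed at each stage gives $\mu(S) = 1$.

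The main obstacle — really the only genuine content beyond bookkeeping — is making the "for all but finitely many $p$" in Lemma~\ref{prodsupsing} uniform enough to combine across the infinitely many fields $\IQ(\sqrt{-q})$: each application of the lemma discards a finite set of primes depending on $q$, and a priori the union of these could be everything. But this is harmless for the density conclusion: a countable union of finite sets of primes is still density zero (more precisely, in the argument above, for each fixed $N$ only the finitely many $q \le N$ contribute, so at each finite stage only finitely many primes are discarded, and the density of the good set at stage $N$ is $\ge 1 - \prod_{q\le N}\tfrac12 - o(1)$). I would close by remarking that the Fermat example in \S\ref{sspCubic} together with Lemma~\ref{ArtInv1} (which supplies, for every quadratic imaginary $E$, a complex cubic fourfold $Y_\IC$ with $\End_{\Hdg}T(Y_\IC)_\IQ = E$ and $F(Y_\IC)\iso X_\IC^{[2]}$) guarantees the input $Y_\IC$ needed to invoke Lemma~\ref{prodsupsing} for each $d$, so the argument is unconditional.
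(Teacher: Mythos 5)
Your argument is correct and is essentially the paper's own proof: the paper likewise fixes a finite set of primes $p_1,\dots,p_r$, applies Lemma~\ref{prodsupsing} to each $\IQ(\sqrt{-p_i})$, and notes that the primes inert in at least one of these fields have density $1-2^{-r}$ (with the finitely many exceptions from each application of the lemma not affecting the density), then lets $r\to\infty$. Your extra remarks on the independence of the quadratic characters and on the harmlessness of the finite bad sets are just the bookkeeping the paper leaves implicit.
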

\begin{proof}
Let $r$ be any number and $\Pi_r = \{ p_1, \cdots, p_r \}$ be any set of $r$ primes. Consider the set of primes $$S(\Pi_r) := \{ p \textit{ is inert in } \IQ(\sqrt{-p_i}) \textit{ for some } p_i \in \Pi_r \} $$
Then $\mu(S(\Pi_r)) = 1 - 2^r$. By \ref{prodsupsing}, $\mu(S) \ge \mu(S(\Pi_r))$. Since $r$ can be arbitrarily large, $\mu(S) = 1$. 
\end{proof}


\begin{appendix}
\section{Formal Brauer Groups, Serre's Witt Vector Cohomology, and Line Bundles}

Let $Y$ be a smooth proper variety over an algebraically closed field $k$ of characteristic $p > 0$. Assume that $H^1(\sO_Y) = H^3(\sO_Y) = 0$, i.e., $h^{0,1} = h^{0,3} = 0$. Then the work of Artin and Mazur tells us that the formal Brauer group $\hat{\Br}_Y$ is pro-represented by a smooth formal Lie group over $k$. Moreover, there are natural isomorphism $\Lie(\hat{\Br}_Y) \iso \H^2(\sO_Y)$ and $\ID(\hat{\Br}_Y^*) \iso \H^2(Y, W(\sO_Y))$ (cf. \cite[\S II.4]{AM})\footnote{Unlike our convention, in \cite{AM} $\ID(-)$ stands for the \textit{covariant} Dieudonn\'e functor, so our isomorphism differs from the one in \cite[II Cor.~4.3]{AM} by a dual.}. Here we are making use of Serre's Witt vector cohomology. By the slope spectral sequence \cite[\S II, Cor.~3.5]{Illusie}, we know that there is a natural isomorphism of F-isocrystals $\ID(\hat{\Br}_Y^*)[1/p] \iso \H^2_\cris(X/W)[1/p]_{< 1}$. 

In this appendix, we give an exposition of the proof of \cite[Prop.~10.3]{GK}, in order to show that it holds in the following generality:
\begin{AppProp}
\label{GK} Let $Y$ be a smooth proper variety over $k$ such that $h^{0,1} = h^{3,0} = 0$, $h^{1,0} = 0$, $h^{0,2} = 1$ and the height $h$ of $\hat{\Br}_Y$ is finite. Then the map $d \log : \NS(Y) \tensor_\IZ k \to H^1(\Ohm_Y^1)$ is injective. 
\end{AppProp}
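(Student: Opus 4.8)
The plan is to reduce the injectivity of $d\log$ on $\NS(Y)\tensor_\IZ k$ to a statement about the formal Brauer group, following the strategy of \cite[Prop.~10.3]{GK} but checking that only the hypotheses listed in \ref{GK} are used. First I would recall that under the assumptions $h^{0,1}=h^{1,0}=0$ the Picard scheme $\Pic_{Y/k}$ is \'etale, so $\NS(Y)=\Pic(Y)$ is a finitely generated free $\IZ$-module, and the map $d\log$ factors as $\NS(Y)\tensor_\IZ k \to H^1(Y,\Ohm^1_Y)$ through the truncated de Rham complex. The kernel of $d\log$ is controlled by the exact sequence coming from the Kummer-type sequence $0\to\sO_Y^\times\xra{p}\sO_Y^\times\to\cdots$ in the flat or syntomic topology; more precisely, one uses that an element of $\NS(Y)\tensor k$ dies under $d\log$ exactly when the corresponding line bundle lifts to $W_2(\shO_Y)$-coefficients, i.e.\ one is looking at the image of $\Pic(Y)$ in $H^2(Y, W(\shO_Y))$ under the composite of the first Chern class to Witt-vector cohomology with the projection to the unit-root (slope $<1$) part.

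The key geometric input is Artin--Mazur: the hypotheses $h^{0,1}=h^{0,3}=0$ guarantee $\widehat{\Br}_Y$ is pro-represented by a smooth one-parameter (since $h^{0,2}=1$) formal Lie group, and $\ID(\widehat{\Br}_Y^*)\iso H^2(Y,W(\shO_Y))$, while by Illusie's slope spectral sequence $\ID(\widehat{\Br}_Y^*)[1/p]\iso \H^2_\cris(Y/W)[1/p]_{<1}$. The finiteness of the height $h$ is used to ensure $\widehat{\Br}_Y$ is $p$-divisible, so that its Dieudonn\'e module is a free $W$-module of rank $h$ on which $F$ is injective. Then I would argue as in \textit{loc.\ cit.}: the first Chern class map $c_1\colon \NS(Y)\to H^2_\cris(Y/W)$ composed with projection onto the slope-$<1$ part $\H^2_\cris(Y/W)_{<1}\iso \ID(\widehat{\Br}_Y^*)[1/p]$ is injective modulo torsion (indeed algebraic classes have slope $1$, hence map to a $W$-lattice whose reduction behaves well), and the composite with reduction mod $p$ — which factors through $\Lie(\widehat{\Br}_Y)\iso H^2(\shO_Y)$ in one description and through $H^1(\Ohm^1_Y)$ via $d\log$ in the other — has the same kernel. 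One makes this comparison precise using the compatibility of the crystalline Chern class, the Hodge filtration, and the identification of $\gr$ of the Witt-vector/de Rham filtrations.

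More concretely, the argument I would write runs: suppose $\sum a_i\, d\log(\xi_i)=0$ in $H^1(\Ohm^1_Y)$ with $a_i\in k$ not all zero and $\xi_i\in\NS(Y)$ spanning a subgroup. Lift the $a_i$ to $W$ and consider $v:=\sum a_i\, c_1(\xi_i)\in \H^2_\cris(Y/W)$; the vanishing of the reduction of $v$ in the appropriate graded piece forces $v\in p\,\H^2_\cris(Y/W)$, and iterating (using that $F$ acts as multiplication by $p$ on algebraic classes together with the fact that $F$ is topologically nilpotent on the slope-$<1$ part, equivalently that $\ID(\widehat{\Br}_Y^*)$ has no nonzero element fixed by $F/p$) yields $v\in\bigcap_m p^m\H^2_\cris(Y/W)=0$; since $\NS(Y)$ is torsion-free and the $c_1(\xi_i)$ are $\IZ_p$-linearly independent by Proposition~\ref{NSfree}-type injectivity (here I would instead invoke that $c_1\tensor\IZ_p$ is injective with torsion-free cokernel, which holds for such $Y$ by the cited Deligne remark), this contradicts the choice of the $a_i$. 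The main obstacle I anticipate is bookkeeping the precise identification of the reduction-mod-$p$ of the slope-$<1$ crystalline Chern class with $d\log$ — i.e.\ matching the Witt-vector-cohomology description of Artin--Mazur with the de Rham/Hodge description — and verifying that the only place the original proof in \cite{GK} used extra hypotheses (such as $Y$ being a specific type of variety or low-dimensionality) can be replaced by $h^{1,0}=0$, $h^{0,2}=1$ together with finiteness of the height; this is where I would need to be most careful, reproducing the relevant diagram chase from \cite[\S10]{GK} and Illusie's slope spectral sequence rather than citing it as a black box.
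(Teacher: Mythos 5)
There is a genuine gap, and it sits exactly where the finite‑height hypothesis has to do the work. Your iteration scheme only invokes facts that are equally true for supersingular K3 surfaces: torsion‑freeness of $\H^2_\cris$, $E_1$-degeneration, $F(c_1(\xi)) = p\,c_1(\xi)$ on algebraic classes, injectivity of $c_1 \tensor \IZ_p$ with torsion‑free cokernel, and the absence of nonzero elements of $\H^2(W(\sO_Y)) \iso \ID(\hat{\Br}_Y^*)$ satisfying $Fx = px$ (this last property holds for \emph{infinite} height as well, since then $F$ kills or contracts everything; and note "$F$ topologically nilpotent on the slope $<1$ part" is false when $h = 1$). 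Since a supersingular K3 satisfies all of these and yet $d\log$ on $\NS \tensor k$ has a nontrivial kernel there, no argument built only from these inputs can close. Concretely, the step that fails is the iteration: from $\sum a_i\, d\log(\xi_i) = 0$ and Mazur--Ogus divisibility you get $\bar v \in \Fil^2$, hence (using $Fv = pv^\sigma$) only $v^\sigma \in F M + pM$ with $M = \H^2_\cris(Y/W)$; to continue you would need that the $k$-span of the classes $\bar c_1(\xi_i)$ in $M/pM$ meets the image of Frobenius (equivalently, meets $\Fil^2$) trivially — but that is essentially the statement being proved, and it is precisely what finite height must force. Your proposal never supplies this.

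A secondary problem is hypothesis creep: Mazur--Ogus divisibility, torsion‑freeness of $\H^2_\cris$, and Deligne's remark on $c_1\tensor\IZ_p$ are not consequences of the assumptions of the proposition ($h^{0,1}=h^{0,3}=0$, $h^{1,0}=0$, $h^{0,2}=1$, $h<\infty$); they hold in the paper's applications (excellent reductions) but not in the stated generality, so even a repaired crystalline argument would prove a weaker result. The paper's proof avoids crystalline cohomology entirely: it works with Serre's Witt vector cohomology $\H^2(W_i(\sO_Y))$ and the sheaves $B_i \subset \Ohm^1_Y$, uses Serre's maps $D_i : W_i \to B_i$ and the Cartier operator, and exploits finite height through the identities $h = \min\{i : F \neq 0 \text{ on } \H^2(W_i)\}$ and the stabilization $\dim_k \H^1(B_i) = h-1$ for $i \ge h-1$, which feed the Čech‑cocycle argument showing $\H^1(B_n) \to \H^1(\Ohm^1_Y)$ is injective for all $n$; injectivity of $d\log$ then follows as in Gros--Kurke/van der Geer--Katsura. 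If you want to salvage a crystalline route, you must isolate and prove the genuinely height‑sensitive statement (that the reduction of the lattice of algebraic classes meets $\Fil^2$, or the image of $\bar F$, trivially), not merely quote slope facts about $\hat{\Br}_Y$.
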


For the rest of the appendix, let $Y$ be as above. In the proofs below, the assumptions on $Y$ are boxed when they are used. Note that \fbox{$h^{0, 2} = 1$} implies that $\hat{\Br}_Y$ is an one-dimensional formal group over $k$. Let $W_i = W_i(\sO_Y)$ denote the sheaf Witt rings of length $i$. Recall that $\H^2(W(\sO_Y))$ is defined as the limit $\varprojlim_i \H^2(W_i)$. Denote by $F : W_i \to W_i, V : W_i \to W_{i + 1}, R : W_{i+1} \to W_i$ the Frobenius, verschiebung and restriction maps. There are exact sequences of abelian sheaves $0 \to W_{i - 1} \stackrel{V}{\to} W_i \stackrel{R^{i - 1}}{\to} \sO_Y \to 0$ and $0 \to \sO_Y \to W_i \stackrel{R}{\to} W_{i - 1} \to 0$. By taking cohomology of the these sequences and making use of \fbox{$h^{0, 1} = h^{0, 3} = 0$}, we see that $H^1(W_i) = H^3(W_i) = 0$, and there are exact sequences (cf. \cite[Lem.~4.1,4.2,4.5]{GK}): 
\begin{align}
\label{b1}
0 &\to \H^2(\sO_Y)\to \H^2(W_i) \stackrel{R}{\to} \H^2(W_{i - 1}) \to 0 \\ \label{b2} 0 &\to \H^2(W_{i - 1}) \stackrel{V}{\to} \H^2(W_i) \to \H^2(\sO_Y) \to 0
\end{align}
In particular, $\H^2(W_i) \stackrel{R}{\to} \H^2(W_{i - 1})$ is surjective. Using that $R$ commutes with $V$, we easily deduce that $RV \H^2(W_i) = V \H^2(W_{i - 1})$ (cf. \cite[Lem~4.3]{GK}). Set $H := \H^2(W(\sO_Y))$. Because of the isomorphism $\ID(\hat{\Br}_Y^*) \iso H$, we deduce using Dieudonn\'e theory that \fbox{$h = \mathrm{rank\,} H$}, $H/ VH = \Lie(\hat{\Br}_Y) \iso \H^2(\sO_Y)$, \fbox{$\dim_k H/FH = h - 1$}, and $V^{h - 1} H = FH$. 
\begin{AppLem}
\label{5.2}
\emph{(\cite[Cor.~5.2]{GK})}
$h = \min \{ i \ge 1: \H^2(W_i) \stackrel{F}{\to} \H^2(W_i) \text{ is nonzero.} \}$
\end{AppLem}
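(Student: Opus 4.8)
The plan is to characterize the height $h$ via the first appearance of a nonzero Frobenius on the finite-length Witt vector cohomology groups $\H^2(W_i)$, using the exact sequences \eqref{b1} and \eqref{b2} and the relations between $F$, $V$, $R$ derived above. First I would record the key Dieudonn\'e-theoretic facts already assembled in the excerpt: $H := \H^2(W(\sO_Y))$ is a free $W$-module of rank $h$, $V^{h-1} H = FH$, and $\dim_k H/FH = h - 1$. The strategy is to transfer the statement ``$F = 0$ on $\H^2(W_i)$'' back and forth between $H$ and its truncations, exploiting that $\H^2(W_i) \iso H / V^i H$ (which follows by taking the inverse limit description apart using \eqref{b1}, since $R : \H^2(W_i) \to \H^2(W_{i-1})$ is surjective with kernel $\H^2(\sO_Y) \iso H/VH$, so by dévissage $\ker(H \to \H^2(W_i))$ is exactly $V^i H$).

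With that identification, ``$F$ is nonzero on $\H^2(W_i) = H/V^i H$'' becomes ``$F(H) \not\subseteq V^i H$''. So the quantity $\min\{ i \ge 1 : F \text{ nonzero on } \H^2(W_i)\}$ equals $\min\{ i \ge 1 : F(H) \not\subseteq V^i H\}$. Now I would argue both inequalities. For the bound $\le h$: since $F$ is injective on $H$ (as $H$ is an $F$-crystal, Frobenius is injective after inverting $p$ hence on the torsion-free module $H$), $F(H) \ne 0$; and $\bigcap_i V^i H = 0$ because $V$ is topologically nilpotent on $H$ (the Dieudonn\'e module of a formal group), so $F(H) \not\subseteq V^i H$ for $i$ large, and in particular — using $V^{h-1} H = FH \supsetneq V^h H$ — one checks $F(H) = V^{h-1}H \not\subseteq V^h H$, giving $i = h$ works, so the minimum is $\le h$. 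For the reverse bound $\ge h$: I must show $F(H) \subseteq V^{i} H$ for all $i \le h-1$, i.e. $FH \subseteq V^{h-1} H$; but this is exactly the equality $FH = V^{h-1}H$ recorded above, so $F(H) \subseteq V^i H$ for every $i \le h - 1$, meaning $F$ is zero on $\H^2(W_i)$ for $i \le h-1$. Combining, the minimum is exactly $h$.

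The main obstacle I anticipate is making the identification $\H^2(W_i) \iso H/V^i H$ fully rigorous — in particular that the transition maps $R$ are surjective (already noted from \eqref{b1}) so that $H = \varprojlim \H^2(W_i)$ surjects onto each $\H^2(W_i)$ and the kernel is precisely $V^i H$ rather than merely contained in it. This requires a small induction using \eqref{b2} to pin down $\ker(R : \H^2(W_i) \to \H^2(W_{i-1}))$ and a Mittag-Leffler / completeness argument to identify the kernel of $H \twoheadrightarrow \H^2(W_i)$; once that bookkeeping is done the $F$-$V$ manipulations are formal. An alternative, perhaps cleaner, route would be to argue entirely on the level of the one-dimensional formal group $\hat{\Br}_Y$ and its Dieudonn\'e module, translating the statement into the standard fact that for a connected one-dimensional formal group of height $h$ one has $F = p V^{-1}$ acting with the stated vanishing pattern on the length-$i$ truncations of the Dieudonn\'e module; but since the excerpt already develops the cohomological exact sequences, I would keep the proof within that framework and cite \cite[Cor.~5.2]{GK} for the original argument.
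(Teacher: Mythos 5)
Your argument is correct, but it takes a different route from the paper's. You first establish the $F$-equivariant identification $\H^2(W_i) \iso H/V^i H$ (via the sheaf sequences $0 \to W_{i-n} \xrightarrow{V^n} W_i \xrightarrow{R^{i-n}} W_n \to 0$, the vanishing $\H^1(W_j) = \H^3(W_j) = 0$, and a limit argument), and then the lemma becomes the lattice statement ``$FH \subseteq V^iH$ iff $i \le h-1$,'' which follows from $FH = V^{h-1}H$ together with $V^{h-1}H \not\subseteq V^hH$; the latter strict non-containment you assert but do not prove — it follows from injectivity of $V$ on the torsion-free module $H$ (since $FV = p$) plus $H \neq VH$ (as $H/VH \iso \H^2(\sO_Y) \neq 0$), and incidentally the appeal to topological nilpotence of $V$ is not needed once you have this. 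The paper never proves the identification $\H^2(W_i) \iso H/V^iH$: it stays at finite level, showing for the ``only if'' direction that $F = 0$ on $\H^2(W_i)$ forces $F = 0$ on all $\H^2(W_j)$, $j \le i$, hence (via $FVR = p$) that these are $k$-vector spaces of dimension $j$ by (\ref{b1}), and then compares with the surjection $H/FH \twoheadrightarrow \H^2(W_i)/F\H^2(W_i)$ and $\dim_k H/FH = h-1$ to get $i \le h-1$; the ``if'' direction is essentially your containment $FH = V^{h-1}H$ pushed down to level $h-1$. Your version is more conceptual once the dévissage identifying $\ker(H \to \H^2(W_i))$ with $V^iH$ is carried out, at the cost of that extra bookkeeping; the paper's dimension-count avoids it entirely and only uses (\ref{b1}), (\ref{b2}) and the surjectivity of the maps on $F$-coinvariants. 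So: correct modulo the two small verifications noted above, both of which are available from facts already recorded in the appendix.
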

\begin{proof}
First, we remark that for every $j \le i$, since $R^{i-j} : \H^2(W_i) \to \H^2(W_j)$ is surjective and commutes with $F$, the natural map $\H^2(W_i)/ F \H^2(W_i) \to \H^2(W_j)/ F \H^2(W_j)$ is surjective (cf. \cite[Lem.~4.4]{GK}). 

Suppose now $F$ is the zero map on $\H^2(W_i)$. Then it must also be zero on $\H^2(W_j)$ for every $j \le i$. Since $FVR = p$, $p$ annhilates $\H^2(W_j)$, so that $\H^2(W_j)$ is a $k$-vector space. By (\ref{b1}), $\dim_k H^{2}(W_j) = j$. Since $H/FH$ always surjects to $\H^2(W_i) / F\H^2(W_i)$, and $\dim_k H/F H = h - 1$, we deduce that $i \le h - 1$. 

Conversely, we show that if $i \le h - 1$. It suffices to show the statement for $i = h - 1$, in which case the we have $F\H^2(W_{h - 1}) = V^{h - 1} \H^2(W_{h - 1}) = R^{h - 1} V^{h - 1} \H^2(W_{h - 1}) = V^{h - 1} \H^2(W_0) = 0$. 
\end{proof}
As a corollary to the above lemma, we deduce that (cf. \cite[Cor.~5.6]{GK})
\begin{align}
\label{stabdim}
    \dim_k \ker(\H^2(W_i) \stackrel{F}{\to} \H^2(W_i)) = \min(i, h - 1). 
\end{align}
Again, since $FVR = p$, the above kernel is indeed a $k$-vector space. The above lemma already tells us that (\ref{stabdim}) is true if $i \le h - 1$. To see the other half, note that $H \iso \ID(\hat{\Br}_Y^*)$ has a $W$-basis of the form $\{ w, Vw, \cdots, V^{h - 1} w \}$. Let $i$ be any number and $\bar{w}$ be the image of $w$ in $\H^2(W_i)$. $F = 0$ on $\H^2(W_i)$ if and only if $F(\bar{w}) = 0$. For any $j \le i$, we have $F(R^j V^j \bar{w}) = V^j(F(R^j \bar{w})) = 0$ if and only if $F(R^j \bar{w}) = 0$, as $V^j$ is injective by (\ref{b2}). If $i \ge h - 1$, $F(R^j \bar{w}) = 0$ excatly when $j \ge i - (h - 1)$. Hence we have (\ref{stabdim}). 

Let $B_i$ be the subsheaves of $\Ohm_Y$ defined inductively as follows: $B_0 = 0, B_1 = d \sO_Y, B_{i + 1} = C^{-1} B_i$, where $C : \Ohm_{Y, \mathrm{closed}} \to \Ohm_Y$ is the Cartier operator (cf. \cite[\S6]{GK}).

\begin{AppLem}
\emph{(cf. \cite[Thm~6.1]{GK})}
\label{6.1}
$\dim H^1(B_i) = h - 1$ for $i \ge h - 1$. 
\end{AppLem}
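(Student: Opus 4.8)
The strategy is to relate $H^1(B_i)$ to the Frobenius action on $\H^2(W_i)$ via the exact sequences that define the $B_i$'s and the sheaves of Witt vectors, following the structure of \cite[\S6]{GK}. First I would record the fundamental exact sequence $0 \to \sO_Y \to \sO_Y \to B_1 \to 0$ (the Artin-Schreier-type sequence, with the first map being $F - 1$ or rather the plain Frobenius $x \mapsto x^p$ depending on the normalization) and, more importantly, the inductively defined exact sequences $0 \to B_i \to B_{i+1} \to B_1 \to 0$ coming from $B_{i+1} = C^{-1}B_i$, together with $0 \to \Ohm^1_{Y,\mathrm{closed}} \xrightarrow{C} \Ohm^1_Y$-type data. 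The cleanest route, though, is the sequence relating $B_i$ to Witt vectors: there is an exact sequence of sheaves $0 \to \sO_Y \xrightarrow{F^i} \sO_Y \to B_i \to 0$ is \emph{not} quite right, but $0 \to W_i(\sO_Y) \xrightarrow{F} W_i(\sO_Y) \to B_i \to 0$ (suitably interpreted, cf. the dlog exact sequences in Illusie) does hold, or at least there is a sequence whose $H^\bullet$ computes $H^1(B_i)$ in terms of $\ker F$ and $\coker F$ on $\H^1$ and $\H^2$ of $W_i$.

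Concretely, I would use the exact sequence $0 \to W_i \xrightarrow{F} W_i \to B_i \to 0$ of abelian sheaves on $Y$. Taking cohomology and using $H^1(W_i) = 0$ and $H^3(W_i) = 0$ (both established above from $h^{0,1} = h^{0,3} = 0$ and the sequences \eqref{b1},\eqref{b2}), the long exact sequence yields
$$ 0 \to H^0(B_i) \to \H^1(W_i) = 0, \qquad 0 \to \coker\big(H^0(W_i) \xrightarrow{F} H^0(W_i)\big) \to H^1(B_i) \to \ker\big(\H^2(W_i) \xrightarrow{F} \H^2(W_i)\big) \to \coker\big(\H^1(W_i) \xrightarrow{F}\H^1(W_i)\big) = 0. $$
Since $H^0(W_i) = W_i(k)$ and $F$ is surjective on $W_i(k)$ (as $k$ is perfect, hence algebraically closed), the cokernel term vanishes, so $H^1(B_i) \cong \ker\big(\H^2(W_i) \xrightarrow{F} \H^2(W_i)\big)$. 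Then \eqref{stabdim} gives $\dim_k H^1(B_i) = \min(i, h-1)$, which for $i \ge h - 1$ equals $h - 1$, as desired. I should double-check the precise form of the sheaf sequence $0 \to W_i \xrightarrow{F} W_i \to B_i \to 0$: this is essentially the content of Illusie's analysis of the de Rham-Witt complex in low degrees, or one can assemble it by dévissage from the case $i = 1$ where $0 \to \sO_Y \xrightarrow{x \mapsto x^p} \sO_Y \xrightarrow{d} B_1 \to 0$ is exact (the sheaf-theoretic Artin-Schreier sequence, kernel being the constant sheaf only after taking invariants under $F-1$; here the kernel of raw Frobenius on $\sO_Y$ is $0$ since $Y$ is reduced), combined with the Cartier operator recursion and induction on $i$ using $0 \to B_i \to B_{i+1} \xrightarrow{C^i} B_1 \to 0$ versus $0 \to W_{i-1} \xrightarrow{V} W_i \to \sO_Y \to 0$.

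\textbf{Main obstacle.} The genuinely delicate point is justifying the exact sequence $0 \to W_i(\sO_Y) \xrightarrow{F} W_i(\sO_Y) \to B_i \to 0$ of abelian sheaves — i.e.\ that the quotient of raw Frobenius on truncated Witt vectors is precisely $B_i$ — and confirming that $F$ on $W_i(\sO_Y)$ is injective as a sheaf map (injectivity on sections is clear since $\sO_Y$ is reduced and $W_i$ of a reduced ring has injective Frobenius on the Teichmüller part, but one must be careful with the additive structure; in fact $F$ on $W_i$ of a reduced $\IF_p$-algebra need \emph{not} be injective — e.g.\ $V$-divisible elements — so the correct statement likely involves $F$ composed with something, or the sequence is $0 \to B_i \to W_i \otimes \Ohm^{\le \cdot} \to \cdots$). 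I expect to have to replace the naive sequence by the correct one from \cite[\S II]{Illusie}: the identification $B_i \cong \mathrm{coker}(F \colon W_i\Ohm^0_Y \to W_i\Ohm^0_Y)$ may fail, and instead one uses $F\colon W_{i+1}\sO_Y \to W_i\sO_Y$ or a dlog-type map. Sorting out exactly which $i$-indexed sequence is exact, and checking that the error terms ($H^1(W_\bullet)$, $\coker$ of $F$ on $H^0$) all vanish under the boxed hypotheses $h^{0,1} = h^{0,3} = h^{1,0} = 0$ and $h^{0,2} = 1$, is where the real work lies; everything after that is a mechanical application of \eqref{stabdim}.
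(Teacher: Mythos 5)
Your route is exactly the paper's: the sequence you were unsure about is precisely Serre's isomorphism $D_i\colon W_i/FW_i \stackrel{\sim}{\to} B_i$ (cited in the proof via [GK, (4)]), and $F$ is injective on $W_i(\sO_Y)$ because on an $\IF_p$-algebra the Witt-vector Frobenius is the componentwise $p$-th power, hence injective since $\sO_Y$ is reduced; so $0 \to W_i \xrightarrow{F} W_i \to B_i \to 0$ is exact, and with $H^1(W_i)=H^3(W_i)=0$ it gives $H^1(B_i)\cong \ker\bigl(F\colon \H^2(W_i)\to \H^2(W_i)\bigr)$, after which (\ref{stabdim}) finishes the argument just as you say. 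One indexing slip: in the long exact sequence the term $\coker\bigl(F|_{H^0(W_i)}\bigr)$ feeds into $H^0(B_i)$, not $H^1(B_i)$; the term preceding $H^1(B_i)$ is $\coker\bigl(F|_{H^1(W_i)}\bigr)$, which vanishes because $H^1(W_i)=0$, so your conclusion is unaffected.
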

\begin{proof}
Serre constructed maps $D_i : W_i \to B_i$ which induces isomorphisms of abelian sheaves $W_i / F W_i \iso B_i$ (cf. \cite[(4)]{GK}). Since $H^1(W_i) = H^3(W_i) = 0$, the exact sequence $0 \to W_i \stackrel{F}{\to} W_i \to W_i / F W_i \to 0$ gives rise to an exact sequence 
\begin{align}
    0 \to H^1(W_i/ F W_i) \to \H^2(W_i) \stackrel{F}{\to} \H^2(W_i) \to \H^2(W_i / FW_i) \to 0.
\end{align}
Now the lemma follows from \ref{5.2} and (\ref{stabdim}). 
\end{proof}

From now on we consider maps $\beta_n : H^1(B_n) \to H^1(\Ohm_Y^1)$ induced by the inclusion $B_n \subset \Ohm_Y^1$. Note that $B_n$ is a subsheaf of $B_m$ for every $m \ge n$. 
\begin{AppLem}
\label{9.2}
\emph{(\cite[Cor.~9.2]{GK})}
For every $n \ge 1$, the map $\beta_n$ is injective. 
\end{AppLem}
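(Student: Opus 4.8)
The statement to be proven is \ref{9.2}, the injectivity of $\beta_n : H^1(B_n) \to H^1(\Ohm_Y^1)$ for every $n \ge 1$, where $B_n \subset \Ohm_Y^1$ is the usual chain of subsheaves built from the Cartier operator. My plan is to follow the strategy of \cite[Cor.~9.2]{GK}, adapting the arguments to the present hypotheses on $Y$ (namely $h^{0,1} = h^{3,0} = h^{1,0} = 0$, $h^{0,2} = 1$, and $\hat{\Br}_Y$ of finite height $h$). The case $n = 1$ should be addressed first and directly: $\beta_1 : H^1(B_1) = H^1(d\sO_Y) \to H^1(\Ohm_Y^1)$ factors through the exact sequence $0 \to \sO_Y \to \Ohm^1_{Y,\mathrm{closed}} \xrightarrow{C} \Ohm^1_Y \to 0$ (or rather the exact sequence $0 \to \sO_Y \xrightarrow{d} Z^1_Y \to B^1_Y \to 0$ together with $B^1_Y = d\sO_Y$), and the assumption $h^{1,0} = 0$, i.e. $H^0(\Ohm^1_Y) = 0$, kills the relevant boundary term so that $H^1(d\sO_Y) \hookrightarrow H^1(Z^1_Y)$, which then maps to $H^1(\Ohm^1_Y)$; one checks the composite is injective using that the kernel would come from $H^1(\sO_Y) = 0$.

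\textbf{Key steps in order.} First I would set up, for each $n$, the exact sequences relating $B_n$, $B_{n+1}$, and the exactness properties of the Cartier operator: there is a short exact sequence $0 \to B_n \to B_{n+1} \xrightarrow{C} B_n \to 0$ refining the structure, but more useful is $0 \to \Omega^1_{Y,\mathrm{cl}}/B_n \xrightarrow{C} \Omega^1_{Y,\mathrm{cl}}/B_{n-1} \to \cdots$; the cleanest route, as in \textit{loc.~cit.}, is to use the identification $W_n/FW_n \cong B_n$ from Serre's map $D_n$ (already recalled above \ref{6.1}) together with the identification of $H^1(B_n)$ with $\ker(F : \H^2(W_n) \to \H^2(W_n))$ that came out of the proof of \ref{6.1}. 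Next I would proceed by induction on $n$: assuming $\beta_{n-1}$ is injective, consider the commutative diagram with rows the inclusion sequences $0 \to B_{n-1} \to B_n \to B_n/B_{n-1} \to 0$ mapping into $\Omega^1_Y$, and analyze the induced map $H^1(B_n/B_{n-1}) \to H^1(\Omega^1_Y/B_{n-1})$. The Cartier operator gives $B_n/B_{n-1} \xrightarrow{\sim} B_1 = d\sO_Y$ (via $C^{n-1}$), so this reduces to controlling $H^1$ of $d\sO_Y$ inside an appropriate quotient of $\Omega^1_Y$, and here one invokes the dimension count \ref{6.1} ($\dim H^1(B_n) = h-1$ for $n \ge h-1$, so the $H^1(B_n)$ stabilize) together with the fact that $\beta_n$ for $n$ large must be injective because it factors through the slope $<1$ part of $\H^2_\cris$, whose rank is exactly $h$, and $H^1(\Omega^1_Y)$ receives the de Rham-to-crystalline comparison. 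Then for general $n$ one uses $B_n \subseteq B_m$ with $m$ large and the injectivity of $\beta_m$ together with injectivity of $H^1(B_n) \to H^1(B_m)$ (which follows from the snake lemma applied to $0 \to B_n \to B_m \to B_m/B_n \to 0$ and the vanishing $H^0(B_m/B_n) \hookrightarrow H^0(\Omega^1_Y) = 0$, using $h^{1,0}=0$).

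\textbf{Main obstacle.} The delicate point I anticipate is verifying that $H^1(B_n) \to H^1(B_m)$ is injective for $n \le m$; this is where the hypothesis $h^{1,0} = 0$ is essential and must be threaded carefully through the long exact sequences, since $B_m/B_n$ is a successive extension of copies of $d\sO_Y$ and one needs all the connecting maps $H^0$ of these quotients into $H^1(B_n)$ to vanish — equivalently $H^0(\Omega^1_Y) = 0$ plus the compatibility of the Cartier operator with the filtration. A secondary subtlety is making the comparison with crystalline cohomology precise enough to conclude $\beta_m$ is injective for $m \gg 0$ directly from the rank count $\operatorname{rank} H = h = \dim H^1(B_m) + 1$ versus the slope decomposition of $\H^2_\cris(Y/W)$; here one uses the slope spectral sequence \cite[\S II, Cor.~3.5]{Illusie} exactly as in the appendix's opening paragraph, identifying the image of $\beta_m$ with (the reduction mod $p$ of) the slope-$<1$ Hodge-theoretic piece, which injects into $H^1(\Omega^1_Y) \subseteq \H^2_\dR(Y/k)$. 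Once injectivity of $\beta_m$ for large $m$ and of the transition maps $H^1(B_n) \to H^1(B_m)$ are both in hand, the conclusion for all $n$ is immediate.
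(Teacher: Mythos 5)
Your overall reduction---first show the transition maps $H^1(B_n) \to H^1(B_m)$ are injective (via the Cartier identification $B_m/B_n \cong B_{m-n}$ and $H^0(B_{m-n}) \subseteq H^0(\Omega^1_Y) = 0$), then show $\beta_m$ is injective for a single large $m$---is logically sound, and the first half is exactly the first step of the paper's proof. The gap is the second half: you never prove that $\beta_m$ is injective for $m \gg 0$, and the crystalline/slope argument you propose for it does not work. The isomorphism $\ID(\what{\Br}_Y^*)[1/p] \cong \H^2_\cris(Y/W)[1/p]_{<1}$ coming from the slope spectral sequence is purely rational (isocrystal-level) information; it says nothing about whether classes in $H^1(B_m) \cong \ker\bigl(F : \H^2(W_m(\sO_Y)) \to \H^2(W_m(\sO_Y))\bigr)$ survive the inclusion-induced map to the Hodge group $H^1(\Omega^1_Y)$, which is a mod-$p$ statement. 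Asserting that the image of $\beta_m$ ``is the reduction mod $p$ of the slope $<1$ piece, which injects into $H^1(\Omega^1_Y) \subseteq \H^2_\dR(Y/k)$'' is essentially a restatement of the lemma, not a consequence of a rank count; moreover Hodge--de Rham degeneration, which you would need even to view $H^1(\Omega^1_Y)$ inside $\H^2_\dR(Y/k)$, is not among the hypotheses of the appendix. Your intermediate induction sketch ($\beta_{n-1}$ injective $\Rightarrow$ analyze $H^1(B_n/B_{n-1}) \to H^1(\Omega^1_Y/B_{n-1})$) is never carried out, and the base case $n=1$ contains a concrete error: the kernel of $H^1(\Omega^1_{Y,\mathrm{closed}}) \to H^1(\Omega^1_Y)$ is governed by $H^0(\Omega^1_Y/\Omega^1_{Y,\mathrm{closed}})$, which sits (via $d$) inside the global $2$-forms, not by $H^1(\sO_Y)$; that group is not assumed to vanish, so injectivity of $\beta_1$ is not formal either.

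What is missing is the actual mechanism of the paper's proof (following \cite[Prop.~9.1]{GK}): if $\beta_n$ had a nonzero kernel class, represent it by a \v{C}ech cocycle $(s_{ij})$ in $B_n$, use its vanishing in $H^1(\Omega^1_Y)$ to write $s_{ij} = \w_i - \w_j$ with $\w_i \in \Omega^1_Y(U_i)$, and use surjectivity of the Cartier operator on closed forms over affines to lift everything one level, producing a class in $H^1(B_{n+1})$ that does \emph{not} lie in the image of $H^1(B_n)$ (the verification uses $h^{1,0}=0$ once more, through the vanishing of global $1$-forms). Iterating forces $\dim_k H^1(B_m)$ to grow strictly without bound, contradicting the stabilization $\dim_k H^1(B_m) = h-1$ for $m \ge h-1$ of \ref{6.1}, which is where the finite-height hypothesis enters. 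Note that the paper never proves ``$\beta_m$ injective for large $m$'' as a separate input---it rules out non-injectivity at every level simultaneously by this growth-versus-stabilization contradiction. Without this lifting argument, or a genuine substitute for your claimed injectivity at large $m$, the proposal does not close.
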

\begin{proof}
We first prove the following claim (cf. \cite[Prop.~9.1]{GK}): If for some $n \ge 0$, $\beta_n$ is not injective, then for all $m \ge n$, $\beta_m$ is not injective, and $\dim_k H^1(B_{m + 1}) < \dim H^1(B_{m + 1})$. 

Since \fbox{$h^{1, 0} = 0$}, $H^0(B_i) = 0$ for every $i$. By taking cohomology of the exact sequence $0 \to B_i \to B_{i + 1} \stackrel{C^i}{\to} B_1 \to 0$, we see that $H^1(B_i) \to H^1(B_{i + 1})$ is injective (cf. \cite[Lem.~6.1]{GK}). This shows that when $n \le m$, $\beta_m$ cannot be injective if $\beta_n$ is not injective.

It remains to show that if $\beta_n$ is not injective, then there exists a class in $H^1(B_{n + 1})$ which does not come from $H^1(B_n)$. Let $U_i$ be an affine covering of $Y$ and let $(s_{ij} \in B_n(U_{ij} := U_i \cap U_j))$ be a $1$-cocycle which represents a nontrivial class in $\ker \beta_n$. Since $(s_{ij})$ vanishes as a class in $H^1(\Ohm_Y^1)$, there exists a collection of $\w_i \in \Ohm^1_Y(U_i)$ such that $s_{ij} = \w_i - \w_j$. For every affine open $U \subseteq Y$, the Cartier isomorphism gives a surjective map $H^0(\Ohm_{U, \mathrm{closed}}) \to H^0(\Ohm_U)$ with kernel the exact forms. Therefore, we can find lifts $\wt{s}_{ij} \in B_{n + 1}(U)$, $\wt{\w}_{i} \in \Ohm_{U, \mathrm{closed}}$ of $s_{ij}$ and $\w_i$ such that for some $g_{ij} \in \sO_Y(U_{ij})$, we have 
$$ \wt{s}_{ij} + d g_{ij} = \wt{w}_i - \wt{w}_j. $$
Note that $(\wt{s}_{ij} + d g_{ij})$ defines a $1$-cocycle for the sheaf $B_{n + 1}$, as $d g_{ij} \in B_1(U_{ij}) \subseteq B_{n + 1}(U_{ij})$. We claim that this define a class in $B_{n+1}$ which does not come from $B_n$. Otherwise, we can find a $1$-cocyle $(t_{ij} \in B_n(U_{ij}))$ which represent the same class as $(\wt{s}_{ij} + d g_{ij})$, so that there are relations $t_{ij} = \hat{\w}_i - \hat{\w}_j$ with $\hat{w}_i$ differs from $\wt{\w}_i$ by an element of $B_{n + 1}({U_i})$. In particular, $\hat{\w}_i$ is closed. Since $C^n$ kills $t_{ij}$, $C^n \hat{\w}_i$ defines a global $1$-form on $Y$, which must be zero by the assumption \fbox{$h^{1,0} = 0$}. This implies that $(t_{ij})$ defines a trivial class in $H^1(B_{n + 1})$. However, the image $(t_{ij})$ under the map $C : \H^2(B_{n + 1}) \to \H^2(B_n)$ is the class defined by $(s_{ij})$, which is nonzero, so we get a contradiction and the claim is now proved. 

To conclude the lemma, as \fbox{$h < \infty$}, we note that by \ref{6.1} $\dim H^1(B_n)$ eventually stabilizes. Therefore, it cannot happen that $\beta_n$ is not injective for some $n$. 
\end{proof} 

At this point, assumptions on $Y$ are no longer needed and the proofs of Prop.~10.2 and 10.3 in \cite{GK} work verbatim of $Y$ to give Prop.~\ref{GK}. We refer the reader to \textit{loc. cit.} for more details. 

\end{appendix}

\medskip
 
\printbibliography

\end{document}